\renewenvironment{proof}[1][\proofname]{%
   \par\pushQED{\qed}\normalfont%
   \topsep6\p@\@plus6\p@\relax
   \trivlist\item[\hskip\labelsep\bfseries#1\@addpunct{.}]%
   \ignorespaces
}{%
   \popQED\endtrivlist\@endpefalse
}
\newtheorem{theorem}{Theorem}
\newtheorem{proposition}[theorem]{Proposition}
 \numberwithin{theorem}{section}
 \newtheorem{corollary}[theorem]{Corollary}
\newtheorem{lemma}[theorem]{Lemma}
\newtheorem{remark}[theorem]{Remark}
\numberwithin{equation}{section}
\renewcommand{\P}{\mathbb{P}}
\newcommand{\E}{\mathbb{E}}
\newcommand{\R}{\mathbb{R}}
\newcommand{\mR}{\mathcal{R}}
\newcommand{\cL}{\mathcal{L}}
\newcommand{\pmR}{\partial\mathcal{R}}
\newcommand{\Q}{\mathbb{Q}}
\newcommand{\N}{\mathbb{N}}
\newcommand{\cC}{\mathcal C}
\newcommand{\cF}{\mathcal F}
\newcommand{\cE}{\mathcal E}
\newcommand{\cK}{\mathcal K}
\newcommand{\cW}{\mathcal W}
\newcommand{\cY}{\mathcal Y}
\newcommand{\cO}{\mathcal{O}}
\newcommand{\eps}{\varepsilon}
\newcommand{\veps}{\varepsilon}
 \newcommand{\nn}{\nonumber}
 \newcommand{\no}{\noindent}
 \newcommand{\lL}{\mathsf{L}}
\newcommand{\rR}{\mathsf{R}}
\newcommand{\pf}[1]{{\color{black}#1}}
\renewcommand{\pf}[1]{}
\newcommand{\ARXIV}[1]{{\color{blue}#1}}
\renewcommand{\ARXIV}[1]{}
\begin{document}
\author{
Jieliang Hong\footnote{Department of Mathematics, University of British Columbia, Canada, E-mail: {\tt jlhong@math.ubc.ca} }
}
\title{On the boundary local time measure of super-Brownian motion}
%\affil{Department of Mathematics, The University of Brithish Columbia,1984 Mathematics Road, Vancouver, BC V6T1Z3, Canada E-mail address: jlhong@math.ubc.ca}
%\affil[$\dagger$]{Department of Mathematics, Pennsylvania State University,Pittsburgh, Pennsylvania 13593}
\date{\today}
\maketitle
\begin{abstract}
     If $L^x$ is the total occupation local time of $d$-dimensional super-Brownian motion, $X$, for $d=2$ and $d=3$, we construct a random measure $\cL$, called the boundary local time measure, as a rescaling of $L^x e^{-\lambda L^x} dx$ as $\lambda\to \infty$, thus confirming a conjecture of \cite{MP17} and further show that the support of $\cL$ equals the topological boundary of the range of $X$, $\pmR$. This latter result uses a second construction of a boundary local time $\widetilde{\cL}$ given in terms of exit measures and we prove that $\widetilde{\cL}=c\cL$ a.s. for some constant $c>0$. We derive reasonably explicit first and second moment measures for $\cL$ in terms of negative dimensional Bessel processes and use it with the energy method to give a more direct proof of the lower bound of the Hausdorff dimension of $\pmR$ in \cite{HMP18}.  The construction requires a refinement of the $L^2$ upper bounds in \cite{MP17} and \cite{HMP18} to exact $L^2$ asymptotics. The methods also refine the left tail bounds for $L^x$ in \cite{MP17} to exact asymptotics. We conjecture that  the Minkowski content of $\pmR$ is equal to the total mass of the boundary local time $\cL$ up to some constant.
\end{abstract}

\section{Introduction and main results}

\subsection{Introduction}
Let $M_F=M_F(\R^d)$ be the space of finite measures on $(\R^d,\mathfrak{B}(\R^d))$ equipped with the topology of weak convergence of measures.  A super-Brownian motion (SBM) $(X_t, t\geq 0)$ starting at $X_0 \in M_F$ is a continuous $M_F$-valued strong Markov process defined on some filtered probability space $(\Omega, \cF, \cF_t, P)$ described below and we let $\P_{X_0}$ denotes any probability under which $X$ is as above. We write $\mu(\phi)=\int \phi(x) \mu(dx)$ for any measure $\mu$ and take our branching rate to be one so that for any non-negative bounded Borel functions $\phi,f$ on $\R^d$, 
\begin{equation}\label{ev6.1}
\E_{X_0}\Bigl(\exp\Bigl(-X_t(\phi)-\int_0^t X_s(f)ds\Bigr)\Bigr) =\exp\Big(-X_0(V_t(\phi,f))\Big).
\end{equation}

\no Here $V_t(x)=V_t(\phi,f)(x)$ is the unique solution of the mild form of 
\begin{equation}\label{ev6.2}
\frac{\partial V}{\partial t}=\frac{\Delta V_t}{2}-\frac{V_t^2}{2}+f,\quad V_0=\phi,
\end{equation}
that is,
\begin{equation*}
V_t=P_t(\phi)+\int_0^tP_s\Bigl(f-\frac{V_{t-s}^2}{2}\Bigr)\,ds.
\end{equation*}
In the above $(P_t)$ is the semigroup of standard $d$-dimensional Brownian motion. See Chapter II 
of \cite{Per02} for the above and further properties.  

It is known that the extinction time of $X$ is a.s. finite (see, e.g., Chp II.5 in \cite{Per02}). The total occupation time measure of $X$ is the (a.s. finite) measure defined as
 \begin{eqnarray*}
I(A)=\int_0^\infty X_s(A)ds.
\end{eqnarray*}
Let $S(\mu)=\text{Supp}(\mu)$ denote the closed support of a measure $\mu$. We 
define the range, $\mR$, of $X$ to be 
%\begin{equation*}
$\mR=\textnormal{Supp}(I).$
%\end{equation*}
%=\overline{\{x:L^x>0\}}.\]
%A slightly smaller set is often used in the literature (see \cite{DIP89} or Corollary 9 in Ch. IV of \cite{Leg99}) but the definitions agree under $\P_{\delta_x}$ or the canonical measures $\N_x$ defined below, and also give the same
%outcomes for $\mR\capS(X_0)^c$ and $\partial\mR\capS(X_0)^c$. Therefore the two definitions will
%be equivalent for our purposes. 
%Let us recall some useful properties of $\mR$. First of all note that 
In dimensions $d\leq 3$, the occupation measure $I$ has a density, $L^x$, which is called (total) local time of $X$, that is,
$$I(f)=\int_0^\infty X_s(f)\,ds=\int_{\R^d} f(x)L^x\,dx\text{ for all  non-negative measurable }f.$$
Moreover, $x\mapsto L^x$ is lower semicontinuous, is continuous on $S(X_0)^c$, and  for $d=1$ is globally continuous (see Theorems~2 and 3 of \cite{Sug89}).
Thus one can see that in dimensions $d\leq 3$,
\begin{equation*}
\mR=\overline{\{x:L^x>0\}},
\end{equation*}
and $\mR$ is a closed set of positive Lebesgue measure. In dimensions $d\ge 4$, $\mR$ is a Lebesgue null
set of Hausdorff dimension $4$ for SBM starting from $\delta_0$ (see Theorem~1.4 of \cite{DIP89}), which explains our restriction to $d\le 3$ in this work.

%The boundaries of SBM are   $\pmR$ and $F=\partial\{x:L^x>0\}$

We will largely be considering the case when $X_0=\delta_0$. The Hausdorff dimensions of the boundaries of SBM have been studied in \cite{MP17} and \cite{HMP18}. Let $\pmR$ be the topological boundary of the range $\mR$ and define $F$ to be the boundary of the set where the local time is positive, i.e. $F:=\partial\{x:L^x>0\}$.
Let dim denote the Hausdorff dimension and introduce:
\begin{equation}\label{ep1.1}
p=p(d)=\begin{cases}
3 &\text{ if }d=1\\
2\sqrt 2 &\text{ if }d=2\\
\frac{1+\sqrt {17}}{2} &\text{ if }d=3,
\end{cases}
\text{ and } \alpha=\frac{p-2}{4-d}=\begin{cases}
1/3 &\text{ if }d=1\\
\sqrt 2-1 &\text{ if }d=2\\
\frac{\sqrt {17}-3}{2} &\text{ if }d=3.
\end{cases}
\end{equation}
%The main results in~\cite{MP17} dealt with the dimension of $F$:

\begin{theorem}[\cite{MP17},\cite{HMP18}]\label{tm1} 
With $\P_{\delta_0}$-probability one, 
$$\textnormal{dim}(F)=\textnormal{dim}(\pmR)= d_f:=d+2-p=\begin{cases}0&\text{ if } d=1\\
4-2\sqrt2\approx1.17&\text{ if }d=2\\
\frac{9-\sqrt{17}}{2}\approx 2.44&\text{ if }d=3.
\end{cases}$$
\end{theorem}

It is also natural to consider SBM under the canonical measure $\N_{x_0}$. Recall from Section II.7 in \cite{Per02} that $\N_{x_0}$ is a $\sigma$-finite measure on $C([0,\infty),M_F)$, which is the space of continuous $M_F(\R^d)$-valued paths furnished with the compact-open topology, such that if we let
$\Xi=\sum_{i\in I}\delta_{\nu^i}$ be a Poisson point process on $C([0,\infty),M_F)$ with intensity $\N_{X_0}(d\nu)=\int \N_{x}(d\nu) X_0(dx)$, then
\begin{equation}\label{ev2.1}
X_t=\sum_{i\in I}\nu_t^i=\int \nu_t\ \Xi(d\nu),\ t>0, 
\end{equation}
has the law, $\P_{X_0}$, of a super-Brownian motion $X$ starting from $X_0$. In this way, $\N_{x_0}$ describes the contribution of a cluster from a single ancestor at $x_0$ and the super-Brownian motion is then obtained by a Poisson superposition of such clusters. 
We refer the readers to Theorem II.7.3(c) in \cite{Per02} for more details. The existence of the local time $L^x$ under $\N_{x_0}$ will follow from this decomposition and the existence under $\P_{\delta_{x_0}}$. Therefore the local time $L^x$ under $\P_{X_0}$ may be decomposed as 
\begin{align}\label{e1.21}
L^x=\sum_{i\in I}L^x(\nu^i)=\int L^x(\nu)\Xi(d\nu).
\end{align}
The global continuity of local times $L^x$ under $\N_{x_0}$ is given in Theorem 1.2 of \cite{Hong18}. 
It is not surprising that Theorem \ref{tm1} continues to hold under the canonical measure.
\begin{theorem}[\cite{MP17},\cite{HMP18}]
$\N_{0}$-a.e. $\textnormal{dim}(F)=\textnormal{dim}(\pmR)= d_f$.
\end{theorem}

The definition of $F$ is natural from an analytical perspective but the topological boundary $\pmR$ is a more natural random set from a geometrical point of view. One can check that 
\begin{equation}
\pmR\subset F.
\end{equation}
In $d=1$, it has been shown in Theorem 1.7 in \cite{MP17} and Theorem 1.4 in \cite{Hong19} that there exist random variables $\lL$ and $\rR$ such that 
\begin{equation}
F=\pmR=\{\lL, \rR\} \text{ where } \lL<0<\rR,\quad \N_0-a.e. \text{ or } \P_{\delta_0}-a.s.
\end{equation}
 Whether or not $F=\pmR$ remains open in $d=2$ and $d=3$. Given the simple nature of $F=\pmR$ in $d=1$, we largely will focus on $d=2$ and $d=3$ in what follows.
 
 %In order to better study the properties of $F$ and $\pmR$, we hope to construct a random measure supported on $F$ or $\pmR$.  
%Let dim denote the Hausdorff dimension. Now we recall some previous results on  $\textnormal{dim}(F)$ and $\textnormal{dim}(\pmR)$ studied in \cite{MP17} and \cite{HMP18}.
Our main goal in this paper is to construct a random measure on $\pmR$ or $F$. Recall $\alpha$ as in \eqref{ep1.1}.
 For any $\lambda>0$, under $\P_{\delta_0}$ and $\N_{0}$ we define a random measure $\cL^\lambda$ on $\R^d$ by
\begin{align}\label{e7.3}
d{\cL}^\lambda(x)=\lambda^{1+\alpha} L^x e^{-\lambda L^x} dx.
\end{align}

The two authors in \cite{MP17} conjecture that as $\lambda \to \infty$, ${\cL}^\lambda$ converges in probability in the space $M_F (\R^d)$ to a finite measure ${\cL}$ which necessarily is supported on $F$. In this paper, we confirm this conjecture and further show that the support of $\cL$ is precisely $\pmR$.\\

\no ${\bf Convention\ on\ Functions\ and\ Constants.}$ Constants whose value is unimportant and may change from line to line are denoted $C, c, c_d, c_1,c_2,\dots$, while constants whose values will be referred to later and appear initially in say, Lemma~i.j are denoted $c_{i.j},$ or $ \underline c_{i.j}$ or $C_{i.j}$. \\

 \no ${\bf Notation.}$ Let $M_F$ be equipped with any complete metric $d_0$ inducing the weak topology and let $\{\mu_t, t\in T\}$ be a collection of $M_F$-valued random vectors. We use $\mu_t \overset{P}{\rightarrow} \mu_{t_0}$ as $t\to t_0$ to denote the convergence in probability under $\P_{X_0}$ if for any $\eps>0$, we have $\P_{X_0}(d_0(\mu_t,\mu_{t_0})>\eps) \to 0$ as $t\to t_0$. We slightly abuse the notation and use $\mu_t \overset{P}{\rightarrow}  \mu_{t_0}$ as $t\to t_0$ to denote the convergence in measure under $\N_{X_0}$ if for any $\eps>0$, we have $\N_{X_0}(\{d_0(\mu_t,\mu_{t_0})>\eps\} \cap A) \to 0$ as $t\to t_0$ where $A$ is any measurable set such that $\N_{X_0}(A)<\infty$.

\subsection{Main Results}

\begin{theorem}\label{t0}
Let $d=2$ or $3$. Under both $\N_{0}$ and $\P_{\delta_0}$, there exists a random measure ${\cL} \in M_F(\R^d)$, supported on $\pmR$, such that ${\cL}^\lambda \overset{P}{\rightarrow} {\cL}$ as $\lambda \to \infty$ and there is a sequence $\lambda_n \to \infty$ such that ${\cL}^{\lambda_n} \to {\cL}$ a.s. as $n \to \infty$.
\end{theorem}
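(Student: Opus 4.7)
The plan is to establish $L^2$-convergence of $\cL^\lambda(\phi)$ for $\phi$ ranging over a countable dense family of nonnegative continuous test functions by a first/second moment analysis, and then to pass to the level of random measures by tightness. Under $\P_{\delta_0}$, Fubini gives
$$\E_{\delta_0}[\cL^\lambda(\phi)] = \lambda^{1+\alpha} \int \phi(x)\,\E_{\delta_0}[L^x e^{-\lambda L^x}]\,dx,$$
and the inner expectation is extracted from the stationary semilinear PDE associated to \eqref{ev6.2}. The scaling $\lambda^{1+\alpha}$ is calibrated to the left tail $\P_{\delta_0}(0 < L^x < \varepsilon) \asymp \varepsilon^\alpha$ of \cite{MP17}: sharpening this to an exact asymptotic, as promised in the abstract, yields $\lambda^{1+\alpha}\E_{\delta_0}[L^x e^{-\lambda L^x}] \to h(x)$ for an explicit $h(x)$ computed from a negative-dimensional Bessel process, identifying the candidate limit first moment $\int \phi\,h\,dx$.

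For the second moment,
$$\E_{\delta_0}\bigl[\cL^{\lambda_1}(\phi)\cL^{\lambda_2}(\phi)\bigr] = \lambda_1^{1+\alpha}\lambda_2^{1+\alpha} \int\!\!\int \phi(x)\phi(y)\, \E_{\delta_0}[L^x L^y e^{-\lambda_1 L^x - \lambda_2 L^y}]\,dx\,dy,$$
and the joint exponential moment is again controlled by a two-point PDE. The key technical input is the promised refinement of the $L^2$ \emph{upper bounds} of \cite{MP17, HMP18} to exact two-parameter asymptotics for the integrand. This identifies a common limit $\int\!\!\int \phi(x)\phi(y)G(x,y)\,dx\,dy$ that is independent of how $(\lambda_1, \lambda_2) \to (\infty, \infty)$, and combined with the first-moment calculation shows $\{\cL^\lambda(\phi)\}$ is Cauchy in $L^2(\P_{\delta_0})$ with some limit $Y(\phi)$. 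Picking a countable dense $\{\phi_n\} \subset C_c(\R^d)_+$ and extracting diagonally gives a sequence $\lambda_n \to \infty$ with $\cL^{\lambda_n}(\phi_j) \to Y(\phi_j)$ a.s.\ for each $j$; the first-moment bound on $\cL^\lambda(\R^d)$ furnishes tightness of $\{\cL^{\lambda_n}\}$ in $M_F$, so an a.s.\ weak subsequential limit $\cL$ exists whose integrals against $\{\phi_n\}$ coincide with $Y(\phi_n)$. This uniquely characterizes $\cL$ as an $M_F$-valued random variable and upgrades the a.s.\ subsequential convergence to convergence in probability of the full family via the standard subsequence argument. The canonical-measure version runs identically, with the Poisson cluster decomposition \eqref{e1.21} reducing $\N_0$-computations to $\P_{\delta_0}$-computations for individual clusters.

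For the support statement, $L^x e^{-\lambda L^x}$ vanishes on $\{L^x = 0\}$, so $\cL^\lambda$ lives on $\mR$; since $L^x$ is lower semicontinuous, $\{L^x > 0\}$ is open, and on any compact $K \subset \{L^x > 0\}$ one has $\inf_K L^x > 0$, whence $\lambda^{1+\alpha} L^x e^{-\lambda L^x} \to 0$ uniformly on $K$. This gives $\cL(\{L^x > 0\}) = 0$ and hence $\textnormal{Supp}(\cL) \subset F$. Upgrading the inclusion to $\textnormal{Supp}(\cL) \subset \pmR$ appears to require the second construction $\widetilde{\cL}$ via exit measures (which is manifestly supported on $\pmR$) together with the identity $\widetilde{\cL} = c\,\cL$ a.s.\ announced in the abstract. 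The principal obstacle is the second-moment step: turning the $L^2$ \emph{bounds} of \cite{MP17, HMP18} into two-parameter asymptotics that are sufficiently uniform in $(x, y, \lambda_1, \lambda_2)$ — particularly near the diagonal $x = y$ and near the starting point $0$ — to justify exchanging limit and integration. This is the point at which the negative-dimensional Bessel-process representation becomes indispensable.
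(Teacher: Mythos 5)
Your overall architecture coincides with the paper's: exact first and second moment asymptotics for the Laplace-type functionals of $(L^{x_1},L^{x_2})$ (obtained from the one- and two-point semilinear PDEs via a Girsanov change of measure to a Bessel process of negative dimension $2-2\nu$), an $L^2(\N_0)$-Cauchy argument for $\cL^\lambda(\phi)$ over a countable determining class, a diagonal subsequence, and the identification $\widetilde{\cL}(\kappa)=c(\kappa)\cL$ via a mixed second moment to upgrade $\textnormal{Supp}(\cL)\subset F$ to $\textnormal{Supp}(\cL)\subset\pmR$. That last point in particular is exactly right: the direct argument only reaches $F$, and the exit-measure construction is what pins the support to $\pmR$.

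There is, however, one step that fails as written: you claim that ``the first-moment bound on $\cL^\lambda(\R^d)$ furnishes tightness of $\{\cL^{\lambda_n}\}$ in $M_F$.'' The limiting mean density is $K_{\ref{p0.0}}|x|^{-p}$ (Theorem \ref{p0.0}(a)), and $\int_{\R^d}|x|^{-p}\,dx=\infty$ in both $d=2$ (divergence at the origin, since $p=2\sqrt 2>2$) and $d=3$ (divergence at infinity, since $p<3$); the same divergence already afflicts the uniform upper bound $\lambda^{1+\alpha}V_1^\lambda(x)\le c_{\ref{p1.1}}|x|^{-p}$ of Proposition \ref{p1.1}. So no first-moment control of the total mass is available, and indeed $\N_0(\cL(1))=\infty$. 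The moment method only controls $\cL^\lambda$ restricted to annuli $\{k^{-1}\le|x|\le k\}$, and one must supply separate pathwise arguments for the two ends: near infinity, the a.s.\ compactness of $\mR$ (together with the fact that $\cL^\lambda$ charges only $\{L^x>0\}\subset\mR$, or, in the exit-measure version, the special Markov property as in Proposition \ref{p4.0}) kills all mass outside a random compact set; near the origin, the a.s.\ strict positivity of $L^x$ on a neighbourhood of $0$ (Theorem 1.5 of \cite{HMP18}) forces $\cL^\lambda(\{|x|\le k^{-1}\})\to 0$. Only after these two facts does $\sup_n\cL^{\lambda_n}(1)<\infty$ a.s.\ hold and Prohorov apply. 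This is exactly how the paper closes the argument, and without it your passage from convergence of $\cL^{\lambda_n}(\phi_j)$ to weak convergence of the measures is incomplete.
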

%The convergence in measure works under any metric on $M_F(\R^d)$ inducing the weak topology. % and the a.s. convergence means for $\P_{X_0}$ and $\N_{X_0}$-a.e. $\omega$ we have ${\cL}^{\lambda_n} (\omega)$ converges  to ${\cL}(\omega)$ in $M_F(\R^d)$.
%A different problem is studied in \cite{Hug18} where a random measure is constructed on the boundary of the zero set of the density $X(t,x)$ for any fixed $t>0$ in $d=1$.

 Next we consider the case under $\N_{X_0}$ or $\P_{X_0}$ for general initial condition $X_0$. Since the above theorem holds under $\N_x$ for any $x$ by translation invariance of SBM, and $\N_{X_0}(\cdot)=\int \N_x(\cdot) X_0(dx)$, it is easy to see that the above result continues to hold under $\N_{X_0}$ for any $X_0$. However, the case under $\P_{X_0}$ is somehow more delicate--instantaneous extinction at time $t=0$ will make the behavior of $\pmR\cap S(X_0)$ quite different than that under $\P_{\delta_0}$ or $\N_0$; see Proposition 1.6 and Remark 1.8(b) of \cite{MP17} for such examples. Therefore under $\P_{X_0}$ we will restrict our interest in $S(X_0)^c$. For any $\lambda>0$, under $\P_{X_0}$ we define a random measure $\cL^\lambda$ supported on $S(X_0)^c$ by
\begin{align}\label{em7.3}
d{\cL}^\lambda(x)=
\lambda^{1+\alpha} L^x e^{-\lambda L^x} 1(x\in S(X_0)^c)dx.
\end{align}

${\bf Notation.}$ For any $\delta>0$ and any set $K$, we let $K^{\geq \delta}=\{x: d(x,K)\geq \delta\}$ where $d(x, K)=\inf\{|x-y|: y\in K\}$. Similarly we define $K^{>\delta}, K^{\leq \delta}$ and $K^{<\delta}$. For any measure $\mu$ and any set $K$, we use $\mu|_{K}(\cdot)\equiv \mu(\cdot \cap K)$ to denote the restriction of $\mu$ to $K$.
\begin{theorem}\label{t0.0.1}
Let $d=2$ or $3$ and let $X_0\in M_F(\R^d)$. Under $\P_{X_0}$ there exists a $\sigma$-finite random measure ${\cL}$, supported on $\pmR \cap S(X_0)^c$, such that for any $k\geq 1$, ${\cL}^\lambda|_{S(X_0)^{\geq 1/k}} \overset{P}{\rightarrow} {\cL}|_{S(X_0)^{\geq 1/k}}$ as $\lambda \to \infty$ and there is a sequence $\lambda_n \to \infty$ such that ${\cL}^{\lambda_n}|_{S(X_0)^{\geq 1/k}} \to {\cL}|_{S(X_0)^{\geq 1/k}}$, $\forall k\geq 1$ a.s.
 %as $n \to \infty$.
 \end{theorem}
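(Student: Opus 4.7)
The plan is to transfer the convergence of $\cL^\lambda$ from the canonical measure case $\N_{X_0}$ (which follows from Theorem~\ref{t0} and linearity, since $\N_{X_0} = \int \N_y\, X_0(dy)$) to $\P_{X_0}$ by exploiting the Poisson cluster decomposition \eqref{ev2.1}. Two issues need to be overcome: the functional $L^x \mapsto L^x e^{-\lambda L^x}$ is nonlinear and so does not split as a sum over clusters $\nu^i$, and near $S(X_0)$ infinitely many small clusters accumulate and contribute to $L^x$---which is precisely why the theorem is stated with the localization to $S(X_0)^{\geq 1/k}$.

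The first step is to truncate the Poisson process. For $T\geq 1$, let $\Xi^T$ be the restriction of $\Xi$ to clusters $\nu^i$ with extinction time $\zeta(\nu^i)>1/T$. Since $\N_{X_0}(\zeta>1/T)<\infty$, $\Xi^T$ is a.s.\ finite. Using the standard estimate that a canonical cluster with extinction time at most $\eta$ has range of diameter $O(\sqrt{\eta\log(1/\eta)})$ with high probability, together with a Borel--Cantelli argument, I would produce a random $T_0(k,\omega)<\infty$ such that for $T\geq T_0(k)$ no cluster in $\Xi\setminus\Xi^T$ has range meeting $S(X_0)^{\geq 1/k}$. On this set one then has the finite-sum representation $L^x=\int L^x(\nu)\,\Xi^T(d\nu)$.

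Next, I would compute first and second moments of $\cL^\lambda(\phi)$ under $\P_{X_0}$ for test functions $\phi$ supported in $S(X_0)^{\geq 1/k}$. The key factorization $e^{-\lambda L^x}=\prod_i e^{-\lambda L^x(\nu^i)}$, combined with the Campbell formula for the PPP $\Xi$ of intensity $\N_{X_0}$, turns these moments into integrals against $\N_{X_0}$ of products of Laplace-type quantities; these are exactly the objects whose asymptotics as $\lambda\to\infty$ are given by the refined $L^2$-asymptotics of the paper used in the proof of Theorem~\ref{t0}. This yields (i) a limit for $\E_{X_0}[\cL^\lambda(\phi)]$, and (ii) a Cauchy-in-$L^2$ bound $\E_{X_0}[(\cL^\lambda(\phi)-\cL^{\lambda'}(\phi))^2]\to 0$ as $\lambda,\lambda'\to\infty$. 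Taking a countable determining family of $\phi$, extracting an a.s.\ convergent subsequence $\lambda_n$, and diagonalizing over $k$ produces the asserted $\lambda_n$ and the consistent restrictions $\cL^{(k)}=\cL|_{S(X_0)^{\geq 1/k}}$, which patch into a $\sigma$-finite $\cL$ on $S(X_0)^c$. The support inclusion $\mathrm{supp}(\cL)\subset \pmR\cap S(X_0)^c$ follows from the fact that $\cL^\lambda$ concentrates on $\{0<L^x\lesssim 1/\lambda\}$ (whose limit lies in $\partial\{L^x>0\}\cap S(X_0)^c$), together with the $\pmR$-identification already carried out for Theorem~\ref{t0}.

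The main obstacle is the nonlinearity of $L^x e^{-\lambda L^x}$, which prevents a direct decomposition of $\cL^\lambda$ into independent cluster pieces. The factorization of the exponential over Poisson atoms is the crucial trick that saves the calculation: it reduces the required moments under $\P_{X_0}$ to integrals against $\N_{X_0}$ and thus to canonical-measure estimates already established in the paper, once the localization to $S(X_0)^{\geq 1/k}$ has been used to reduce to the finite collection $\Xi^T$.
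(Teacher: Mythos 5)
Your proposal is correct and follows essentially the same route as the paper: the first and second moments of $\cL^\lambda(\phi)$ under $\P_{X_0}$ are obtained by differentiating the Poisson exponential formula $\E_{X_0}(e^{-\sum_i\lambda_i L^{x_i}})=e^{-X_0(V^{\vec{\lambda},\vec{x}})}$ (the paper's \eqref{m8.1}, Theorem \ref{tl3}, Corollary \ref{c1}), giving $L^2$-Cauchy convergence on a countable determining class supported in $B_k\cap S(X_0)^{>1/k}$, then an a.s.\ diagonal subsequence, tightness via the boundedness of $\mR\cap S(X_0)^{\geq \delta}$ (Corollary III.1.5 of \cite{Per02}, which plays exactly the role of your small-cluster Borel--Cantelli step), and patching the restrictions over $k$ into a $\sigma$-finite measure. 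The only cosmetic difference is that your truncation of the Poisson process is unnecessary for the moment computation itself, since the Laplace-functional identity and its $\lambda$-derivatives already hold for the full superposition whenever $x_1,x_2\in S(X_0)^c$.
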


\begin{remark}
(a) The behavior of $\pmR$ on the boundary of $S(X_0)$ depend largely on the mass distribution of $X_0$ and is still quite different from that under $\N_0$ or $\P_{\delta_0}$. In the proof we first give the existence of a finite measure $l_k$ by restricting our interest to $S(X_0)^{\geq 1/k}$ for any $k\geq 1$ and then construct a $\sigma$-finite measure $\cL$ supported on $S(X_0)^c$ by defining $\cL|_{S(X_0)^{\geq 1/k}}=l_k$ for any $k\geq 1$. In most cases we will only be considering the properties of $\cL$ on sets with positive distance away from $S(X_0)$ and the above theorem suffices for our purposes. \\
(b) One sufficient condition on $X_0$ to give the a.s finiteness of $\cL(1)$ goes back to the renormalization of local times in $d=2$ or $3$ (see \cite{Hong18}). For example in $d=2$, if we have $\inf_{x\in S(X_0)} \int \log^{+}(1/|y-x|) X_0(dy)=\infty$, then Theorem 1.11 of \cite{Hong18} will imply that $\P_{X_0}$-a.s. there is some $\delta>0$ so that $S(X_0)^{<\delta} \subset \text{Int}(\mR)$, and therefore $S(X_0)^{<\delta}$ is not in the support of $\cL$. Hence $\cL=\cL|_{S(X_0)^{\geq \delta}}$ and the a.s. finiteness of $\cL(1)$ follows.
\end{remark}
%While we state in the Theorem \ref{tl1} that $\cL$ is supported on $F\cap S(X_0)^c$, we do conjecture that the support of $\cL$ is $\pmR\cap S(X_0)^c$. %This is also a simple consequence if we conjecture that $\pmR=F$.
\begin{theorem}\label{tl1.1}
 $\P_{X_0}$-a.s. and  $\N_{X_0}$-a.e. for any open set  $U\subset S(X_0)^c$,
\begin{align}\label{er1.3}
U\cap \pmR\neq \emptyset \Rightarrow \cL(U)>0.
\end{align}
In particular we have  $\P_{X_0}$-a.s. that 
\begin{align}\label{ae1.5}
S(X_0)^c\cap \pmR\neq \emptyset \Rightarrow \cL>0.
\end{align}
and $\text{Supp}(\cL)=S(X_0)^c\cap \pmR$.
\end{theorem}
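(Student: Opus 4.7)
The plan is to derive the theorem from the second construction $\widetilde{\cL}$ of the boundary local time via exit measures and the identity $\widetilde{\cL}=c\cL$ a.s.\ (for some constant $c>0$) established earlier in this paper. Because the two measures agree up to a positive constant, they share the same closed support, so it suffices to prove the lower support statement for $\widetilde{\cL}$, namely that every point of $\pmR\cap S(X_0)^c$ lies in $\mathrm{Supp}(\widetilde{\cL})$. The reverse inclusion is already contained in Theorems~\ref{t0} and~\ref{t0.0.1}, and once \eqref{er1.3} is established, \eqref{ae1.5} is the special case $U=S(X_0)^c$ and the support identification $\mathrm{Supp}(\cL)=S(X_0)^c\cap\pmR$ follows by combining \eqref{er1.3} with the support containment already known from Theorem~\ref{t0.0.1}.

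First I would reduce to a countable family of open balls. Fix a countable basis $\{B_k\}$ of open balls with rational centers and rational radii whose closures lie in $S(X_0)^c$. Since any open $U\subset S(X_0)^c$ with $U\cap\pmR\neq\emptyset$ contains some $B_k$ with $B_k\cap\pmR\neq\emptyset$, monotonicity $\widetilde{\cL}(U)\geq \widetilde{\cL}(B_k)$ reduces \eqref{er1.3} to showing, for each $k$,
\begin{equation*}
\P_{X_0}\bigl(\pmR\cap B_k\neq\emptyset,\ \widetilde{\cL}(B_k)=0\bigr)=0,
\end{equation*}
and the analogous statement on sets of finite $\N_{X_0}$-measure. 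Via the Poisson cluster decomposition \eqref{ev2.1} and cluster-wise additivity of the exit-measure construction, the $\P_{X_0}$-statement reduces to the statement under $\N_x$ for $x$ in $S(X_0)$; since $\overline{B_k}\cap S(X_0)=\emptyset$, such $x$ lies outside $\overline{B_k}$.

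For a single cluster under $\N_x$, the exit-measure construction writes $\widetilde{\cL}|_{B_k}$ as a (suitably rescaled) limit of exit measures of sets approximating the interior of the positive local time set (e.g.\ super-level sets of the form $\{L>\varepsilon\}$) intersected with $B_k$. The geometric event $\pmR\cap B_k\neq\emptyset$ forces Brownian-snake excursions to enter $B_k$ and to have boundary contributions of the super-level sets inside $B_k$, so the approximating exit measures have non-trivial mass on $B_k$. A Paley--Zygmund / second-moment argument based on the exact $L^2$ asymptotics refined earlier in the paper then gives $\widetilde{\cL}(B_k)>0$ with positive probability on this event, and a localization / $0$-$1$ argument upgrades this to a.s.\ positivity conditional on $\{\pmR\cap B_k\neq\emptyset\}$.

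The main obstacle is bridging the geometric condition $\pmR\cap B_k\neq\emptyset$ with the asymptotic positivity $\widetilde{\cL}(B_k)>0$. The latter is a statement about a limit of exit measures, so one needs the approximations to localize correctly, i.e.\ $\widetilde{\cL}|_{B_k}$ should be reconstructible from snake excursions visiting $B_k$ alone, and the approximation must retain a uniform positive lower bound on the geometric event. Promoting the second-moment lower bound (which only yields an event of positive probability) to genuine a.s.\ positivity on $\{\pmR\cap B_k\neq\emptyset\}$ is where most of the technical work lies, and is expected to use a zero-one law coming from the Brownian-snake excursion structure at small scales together with the refined moment asymptotics of $\widetilde{\cL}|_{B_k}$.
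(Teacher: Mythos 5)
Your high-level architecture (reduce to a countable basis of balls, pass to single clusters under $\N_x$, get positivity with positive probability from a second-moment/Paley--Zygmund argument, then upgrade to a.s.\ positivity on $\{\pmR\cap B\neq\emptyset\}$) matches the paper's strategy, and the reduction via $\widetilde{\cL}=c\,\cL$ is legitimate though not actually needed (the paper works with $\cL$ directly, using the exit measures only as conditioning variables). But the step you yourself flag as "where most of the technical work lies" is precisely the step you have not supplied, and an appeal to "a zero-one law coming from the Brownian-snake excursion structure at small scales" does not fill it. The actual mechanism has two specific ingredients. First, the second-moment bound must be made \emph{uniform over initial conditions}: Lemma~\ref{l7.1} shows that for any $X_0'$ supported on a sphere of radius $r$ with small total mass $\delta\le r^2$, one has $\P_{X_0'}(\cL(B(0,r-\sqrt{\delta}/2))>0)\ge q>0$ with $q$ independent of $r$ and $\delta$; this uses the scaling invariance of SBM, a Poisson cluster decomposition conditioning on exactly one cluster reaching the inner sphere, and only then the moment formulas of Theorem~\ref{p0.0}. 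A Paley--Zygmund bound "on the event $\pmR\cap B_k\neq\emptyset$" with no such uniformity cannot be iterated.

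Second, the upgrade to a.s.\ positivity is not a tail zero-one law but a martingale/predictability argument built on the special Markov property for the filtration $\cE_r$ of excursions outside the shrinking balls $G_{r_0-r}$. One considers the cadlag exit-mass process $Y_r(1)=X_{G_{r_0-r}}(1)$, which is an $(\cE_r^+)$-supermartingale with no negative jumps, so its extinction time $T_0$ is predictable and announced by $T_{n^{-1}}=\inf\{r:Y_r(1)\le 1/n\}$. At each $T_{n^{-1}}$ the exit mass equals $n^{-1}$, so the uniform bound of Lemma~\ref{l7.1} combined with Proposition~\ref{pv0.2}(iii) gives $Q_{x_0}(\cL(B_{r_0})>0\mid \cE^+_{T_{n^{-1}}})\ge q$; letting $n\to\infty$ and using the crucial measurability fact $\cL(B_{r_0})\in\cE^+_{T_0-}$ (Lemma~\ref{ae9.6}) forces the conditional probability to equal the indicator $1(\cL(B_{r_0})>0)$, which being bounded below by $q>0$ must equal $1$ on $\{0<T_0\le r_0-r_1\}$. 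The bridge from the geometric event $\pmR\cap B\neq\emptyset$ to the exit-measure hypotheses $X_{G_{r_1}^{x_1}}(1)=0$, $X_{G_{r_0}^{x_1}}(1)>0$ is then the covering argument of \cite{HMP18}. Without the uniform lower bound and the predictable-stopping-time/measurability argument, your plan does not close. Also, a minor inaccuracy: $\widetilde{\cL}$ is built from exit measures of small balls $G_\eps^x$ around each point $x$, not from exit measures of super-level sets of the local time.
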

\noindent The hypothesis in \eqref{ae1.5} is necessary--an example is given in Proposition~1.5 of \cite{MP17} where
it fails with positive probability.

Let $B(x_0, \eps)=B_\eps(x_0)=\{x:|x-x_0|<\varepsilon\}$ and set $B_\eps=B(\eps)=B_\eps(0)$. 
\begin{corollary}\label{c1.6}
$\P_{\delta_0}$-a.s. and $\N_0$-a.e. for any open set  $U$,
\begin{align}\label{aea12.1}
U\cap \pmR\neq \emptyset \Rightarrow \cL(U)>0.
\end{align}
In particular, $\text{Supp}(\cL)=\pmR$ and $\cL>0$, $\P_{\delta_0}$-a.s. and $\N_0$-a.e.
\end{corollary}
\begin{proof}
We know from the proof of Corollary 1.4 and Theorem 1.5 of \cite{HMP18} that $\P_{\delta_0}$-a.s. or  $\N_{0}$-a.e. there exists some $\delta>0$ such that $L^x>0$ for all $|x|<\delta$ and so $0\notin \pmR$, which implies \[U\cap \pmR\neq \emptyset \Rightarrow (U\backslash \{0\}) \cap \pmR\neq \emptyset.\] Then we may apply Theorem \ref{tl1.1} with $U\backslash \{0\}$ in place of $U$ to complete the proof of \eqref{aea12.1}. 
Next for any $x\in \pmR$, take $U=B(x,\eps)$ for any $\eps>0$ and use the above to get $\pmR \subset \text{Supp}(\cL)$. Together with Theorem \ref{t0} we conclude $\text{Supp}(\cL)=\pmR$, $\P_{\delta_0}$-a.s. and $\N_0$-a.e. By \eqref{aea12.1}, it follows immediately that $\cL>0$, $\P_{\delta_0}$-a.s. and $\N_0$-a.e.
%
%It suffices to show that $\P_{\delta_0}$-a.s. and  $\N_{0}$-a.e. that $0\notin \pmR$, which will give \[U\cap \pmR\neq \emptyset \Rightarrow (U\backslash \{0\}) \cap \pmR\neq \emptyset.\] Then we may apply Theorem \ref{tl1.1} with $U\backslash \{0\}$ in place of $U$ to complete the proof of \eqref{aea12.1}.
%By Theorem 1.4 and Theorem 1.5 of \cite{Hong18}, in $d=2$ and $d=3$ we have $\P_{\delta_0}$-a.s. that $L^x\to \infty$ as $x\to 0$. By the continuity of $L^x$ for $x\neq 0$, we conclude $\P_{\delta_0}$-a.s. that $L^x>0$ for all $|x|<k^{-1}$ for $k\geq 1$ large and so $\P_{\delta_0}$-a.s. $0\notin \pmR$. Turning to the $\N_0$ case, we recall (4.4) in \cite{Hong19} that $\N_0$-a.e. $L^0>0$. Although the arguments there were given for $d=1$, they work in any dimension $d\leq 3$. Since $L^x$ is globally continuous under $\N_0$ by Theorem 1.2 of \cite{Hong18}, we have 
%\begin{align}\label{ev4.2}
%L^x>0,\quad  \forall  |x|\leq k^{-1} \text{ for } k\geq 1 \text{ large }, \N_0-a.e.,
%\end{align}
%thus giving $\N_{0}$-a.e. $0\notin \pmR$. 
\end{proof}
%\noindent Similarly we give the results under the canonical measure:
%\begin{theorem}\label{tl1.1.2}
%$\N_{0}$-a.e. $\cL>0$ and for any open set  $U$,
%\begin{align*}
%U\cap \pmR\neq \emptyset \Rightarrow \cL(U)>0.
%\end{align*}
%\end{theorem}
Now we proceed to the first and second moment measures of $\cL$. Define
\begin{align}\label{ev1.5}
\mu=
\begin{cases}
-1/2\ &\text{if}\ d=1\\
0\ &\text{if}\ d=2\\
1/2\ &\text{if}\ d=3,
\end{cases}
\text{ and }
 \nu=\sqrt{\mu^2+4(4-d)}.
%\begin{cases}
%7/2\ &\text{if}\ d=1\\
%2\sqrt{2}\ &\text{if}\ d=2\\
%\frac{\sqrt{17}}{2}\ &\text{if}\ d=3,
%\end{cases}
\end{align}
so that (recall \eqref{ep1.1})
%\begin{align}\label{ev1.6}
$d=2+2\mu \text{ and } p=\mu+\nu.$ 
%\end{align}
Let $\Hat{P}_{x}^{(2-2\nu)}$ denote the law of the $d$-dimensional process $\{Y_t: t\geq 0\}$ such that
  \begin{align}\label{be1.0}
       \begin{cases}
        &Y_t=x+\Hat{B}_t+\int_0^t (-\nu-\mu)\frac{Y_s}{|Y_s|^2}ds, \quad t< \tau_0,\\
        &Y_t=0, \quad t\geq \tau_0.
        \end{cases}
  \end{align} 
 Here $\tau_0=\inf\{t\geq 0: |Y_t|=0\}$ and $\Hat{B}$ is a standard $d$-dimensional Brownian motion starting from $x$ under $\Hat{P}_{x}^{(2-2\nu)}$. Remark \ref{r1.7}(b) below shows why $\Hat{P}_{x}^{(2-2\nu)}$ is well-defined. 
  Let $V^\infty(x):=\N_0(L^x>0)$ for all $x\neq 0$ and
for any $x_1\neq x_2$, we define for all $x\neq x_1, x_2$, 
\begin{align}\label{ev}
V^{\vec{\infty},\vec{x}}(x):=\N_x(\{L^{x_1}>0\} \cup \{L^{x_2}>0\}).
\end{align}
For $i=1,2$ we define
\begin{align}\label{eu1u2}
U_i^{\vec{\infty},\vec{x}}(x):=\frac{1}{|x-x_i|^{p}} \Hat{E}_{x-x_i}^{(2-2\nu)}\Big(  e^{-\int_0^{\tau_{0}} (V^{\vec{\infty},\vec{x}}(Y_s+x_i)-V^{\infty}(Y_s))ds}\Big),
\end{align}
and set
\begin{align}\label{eu12}
U_{1,2}^{\vec{\infty},\vec{x}}(x):=-E_x\Big(\int_0^{\infty} \prod_{i=1}^2 U_i^{\vec{\infty},\vec{x}}(B_t) \exp\Big(-\int_0^{t} V^{\vec{\infty},\vec{x}}(B_s) ds\Big) dt\Big),
\end{align}
where $B$ is a $d$-dimensional Brownian motion starting from $x$ under $P_x$. 
 \begin{theorem}\label{p0.0}
(a) There is some constant $K_{\ref{p0.0}}>0$ such that for any nonnegative measurable $\phi: \R^d \to \R$, we have
\begin{align}\label{e6.1}
    \N_0\Big(\int \phi(x) d\cL(x)\Big)=K_{\ref{p0.0}}\int |x|^{-p}  \phi(x) dx.
\end{align}
(b) For any nonnegative measurable $h: \R^d \times \R^d \to \R$, we have
\begin{align}\label{e6.2}
    \N_0\Big( (\cL\times \cL)(h)\Big)=K_{\ref{p0.0}}^2 \int  h(x_1, x_2) (-U_{1,2}^{\vec{\infty},\vec{x}}(0)) dx_1 dx_2.
\end{align}
Moreover, there is some constant $c_{\ref{p0.0}}>0$ such that
\begin{align}\label{e6.3}
     \N_0\Big(&\int h(x_1, x_2) d\cL(x_1) d\cL(x_2)\Big)\nonumber\\
    &\leq K_{\ref{p0.0}}^2 \int c_{\ref{p0.0}} (|x_1|^{-p}+|x_2|^{-p})|x_1-x_2|^{2-p} h(x_1, x_2)  dx_1 dx_2.
\end{align}
\end{theorem}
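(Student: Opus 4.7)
The plan is to obtain parts (a) and (b) by Fubini-interchanging the $\N_0$-expectation and the spatial integration in the moments of $\cL^\lambda$, applying exact one- and two-point Laplace asymptotics for $L^x$ as $\lambda\to\infty$ (a refinement of the $L^2$ bounds from \cite{MP17}, \cite{HMP18} announced in the abstract), and then transferring the conclusion from $\cL^\lambda$ to $\cL$ via Theorem \ref{t0}.

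For part (a), Fubini together with $L^x e^{-\lambda L^x}=\partial_\lambda(1-e^{-\lambda L^x})$ give
\begin{equation*}
\N_0(\cL^\lambda(\phi)) \;=\; \int \phi(x)\,\lambda^{1+\alpha}\,\partial_\lambda V^\lambda(0,x)\,dx,
\end{equation*}
where $V^\lambda(y,x):=\N_y(1-e^{-\lambda L^x})$ converges to $V^\infty(x-y)$ as $\lambda\to\infty$. The core step is to upgrade this convergence to the exact asymptotic
\begin{equation*}
\lambda^{1+\alpha}\N_0(L^x e^{-\lambda L^x}) \;\longrightarrow\; K_{\ref{p0.0}}\,|x|^{-p},\qquad \lambda\to\infty,
\end{equation*}
together with a $\lambda$-uniform bound of the same order. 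I would derive this by a scaling change in the mild form for $V^\lambda$, combined with a Feynman--Kac representation for its $\lambda$-derivative, and by identifying the limiting radial dynamics as those of $\widehat P_\cdot^{(2-2\nu)}$ from \eqref{be1.0}; the constant $K_{\ref{p0.0}}$ then emerges as the corresponding Bessel functional. Combined with dominated convergence in the $x$-variable and the a.s.\ convergence $\cL^{\lambda_n}(\phi)\to\cL(\phi)$ (with the uniform $L^2$-control of the paper justifying the interchange of limit and $\N_0$-expectation), this yields \eqref{e6.1}.

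For part (b), Fubini and a bivariate derivative in $(\lambda_1,\lambda_2)$ give
\begin{equation*}
\N_0\bigl((\cL^\lambda\!\times\!\cL^\lambda)(h)\bigr) \;=\; \int h(x_1,x_2)\,\lambda^{2+2\alpha}\,\bigl[\partial_{\lambda_1}\partial_{\lambda_2}V^{\vec\lambda,\vec x}(0)\bigr]_{\lambda_1=\lambda_2=\lambda}\,dx_1\,dx_2,
\end{equation*}
with $V^{\vec\lambda,\vec x}(x):=\N_x(1-e^{-\lambda_1 L^{x_1}-\lambda_2 L^{x_2}})$. The mild form for $V^{\vec\lambda,\vec x}$ yields a Brownian-motion representation whose $\lambda_i$-derivatives produce factors $\lambda_i^{1+\alpha}\partial_{\lambda_i}V^{\vec\lambda,\vec x}(B_t)$; by the one-point analysis localized near $x_i$ these converge to $U_i^{\vec\infty,\vec x}(B_t)$ as defined in \eqref{eu1u2}, while $\exp(-\int_0^t V^{\vec\lambda,\vec x}(B_s)ds)$ tends to its $\lambda=\infty$ analogue. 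Passing to the limit in the $t$-integral yields the two-point density $-U_{1,2}^{\vec\infty,\vec x}(0)$, proving \eqref{e6.2} after another application of dominated convergence and the $L^2$-control of $\cL^\lambda$.

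For the upper bound \eqref{e6.3}, the inequality $V^{\vec\infty,\vec x}(Y_s+x_i)\ge V^\infty(Y_s)$ (both sides expressing $\N_\cdot$-measures of events involving $x_i$ after translation) allows us to drop the nonnegative exponential in \eqref{eu1u2} and obtain $|U_i^{\vec\infty,\vec x}(x)|\le|x-x_i|^{-p}$. Inserted into \eqref{eu12}, this reduces the bound to a Brownian-motion integral of a product of two Newtonian-type singularities, which is controlled by standard Green-function estimates (split according to whether $B_t$ is closer to $x_1$ or to $x_2$), delivering the stated $c(|x_1|^{-p}+|x_2|^{-p})|x_1-x_2|^{2-p}$. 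The main obstacle I anticipate is upgrading the Laplace-tail asymptotics from order-of-magnitude form to exact asymptotics carrying $\lambda$-uniform errors that remain integrable against these singular weights; uniformly controlling the Bessel-semigroup error terms near both $0$ and the observation points $x_i$ is the delicate step, and is precisely the refinement of the $L^2$ bounds that the paper develops.
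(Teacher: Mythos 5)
Your proposal follows essentially the same route as the paper: Fubini plus the exact asymptotics $\lambda^{1+\alpha}\N_0(L^xe^{-\lambda L^x})\to K|x|^{-p}$ (obtained via the Feynman--Kac representation for $V_1^\lambda$, the change of measure to the Bessel process of Proposition \ref{p20.1}, scaling, and Lemma \ref{c13.5}) with the uniform bound of Proposition \ref{p1.1}, the analogous two-point limit of Proposition \ref{p3.2}, transfer to $\cL$ via a.s.\ convergence plus the $L^2$ control of Corollary \ref{c0}, and for \eqref{e6.3} the bound $U_i^{\vec\infty,\vec x}(x)\le|x-x_i|^{-p}$ inserted into the Brownian integral and estimated by splitting according to the nearer singularity. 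This matches the paper's argument, so no further comparison is needed.
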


\begin{remark} \label{r1.7}
(a) The superscript $2-2\nu<0$ on $\Hat{P}_{x}^{(2-2\nu)}$ is used to indicate the fact that  $\{|Y_s|,  s\geq 0\}$ under $\Hat{P}_{x}^{(2-2\nu)}$ is a stopped Bessel process of dimension $2-2\nu$ starting from $|x|>0$ (see, e.g., \eqref{ev2.5}), thus giving the connection between the moment measures of $\cL$ and Bessel process of negative dimension. We refer the reader to \cite{GY03} for more information on Bessel process of negative dimensions. See also \cite{Leg15} where a connection is made in $d=1$ between the left-most point in the range of SBM and the Bessel process of dimension $2-2\nu=-5$ where $\nu=7/2$ as in \eqref{ev1.5} for $d=1$.\\
(b) Under $\Hat{P}_{x}^{(2-2\nu)}$, we have $\tau_0$ is the hitting time  of a $(2-2\nu)$-dimensional Bessel process and so with $\Hat{P}_{x}^{(2-2\nu)}$-probability one, $\tau_0<\infty$ (see, e.g., Exercise (1.33) in Chp. XI of \cite{RY94}). For any $\eps>0$, we have the drift in \eqref{be1.0} is bounded for all $0\leq t\leq \tau_\eps$ and hence the uniqueness of solutions to \eqref{be1.0} holds for all $0\leq t\leq \tau_\eps$ (see also \eqref{e2.5}). It then follows by continuity that the uniqueness of solutions to \eqref{be1.0} will hold for all $0\leq t\leq \tau_0$.
%(c) As an intermediate result, we show in Corollary \ref{c1.4} that for any $\gamma>0$, the law of a $(2-2\gamma)$-dimensional Bessel process is equal to the law of a ``downward conditioned'' $(2+2\gamma)$-dimensional Bessel process, which could of independent interest. Note by letting $\gamma=1/2$ in Corollary \ref{c1.4}, we can get the well-known result by Williams \cite{Wil74} that the one-dimensional Brownian motion is ``downward conditioned three-dimensional Bessel process'' (see more details in Remark \ref{r7.5}). 
 \end{remark}
%Similarly we can get the moment measures for $\P_{X_0}$.
\begin{theorem}\label{p0.0_1}
(a) For any nonnegative measurable $\phi: \R^d \to \R$, we have
\begin{align}\label{ae8.4}
    \E_{X_0}(\cL(\phi))=K_{\ref{p0.0}}\int_{  S(X_0)^c}  \phi(x)  e^{-X_0(V^\infty(x-\cdot))} X_0(|x-\cdot|^{-p})dx.% \int |y-x|^{-p} X_0(dy)dx.
\end{align}
(b) For any nonnegative measurable $h: \R^d\times \R^{d} \to \R$, we have
\begin{align}\label{ae8.5}
   \E_{X_0}\Big(&(\cL\times \cL)(h)\Big)=K_{\ref{p0.0}}^2\int_{(S(X_0)^c)^2} h(x_1,x_2)\nn\\
   & e^{-X_0(V^{\vec{\infty},\vec{x}})} \Big(X_0(U_1^{\vec{\infty},\vec{x}}) X_0(U_2^{\vec{\infty},\vec{x}})-X_0(U_{1,2}^{\vec{\infty},\vec{x}})\Big)dx_1dx_2.
\end{align}
Moreover,
\begin{align}\label{ae8.6}
    \E_{X_0}\Big( (\cL\times& \cL)(h)\Big)\leq K_{\ref{p0.0}}^2 \int_{(S(X_0)^c)^2} h(x_1, x_2) \Bigg(X_0(|x_1-\cdot|^{-p})X_0(|x_2-\cdot|^{-p})\nonumber\\
    &  +c_{\ref{p0.0}}\Big(X_0(|x_1-\cdot|^{-p})+X_0(|x_2-\cdot|^{-p})\Big)|x_1-x_2|^{2-p}\Bigg)  dx_1 dx_2.
\end{align}
\end{theorem}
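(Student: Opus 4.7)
The plan is to lift the moment formulae of Theorem \ref{p0.0} from $\N_0$ to $\P_{X_0}$ via the Poisson cluster decomposition \eqref{ev2.1}--\eqref{e1.21}. Since $L^x = \int L^x(\nu)\,\Xi(d\nu)$ under $\P_{X_0}$ with $\Xi$ a Poisson point process of intensity $\N_{X_0} = \int \N_y\,X_0(dy)$, the two-parameter Laplace functional factors cleanly:
\begin{equation*}
F(\lambda_1,\lambda_2) := \E_{X_0}\bigl(e^{-\lambda_1 L^{x_1}-\lambda_2 L^{x_2}}\bigr) = \exp\bigl(-G(\lambda_1,\lambda_2)\bigr),\quad G(\lambda_1,\lambda_2) := \N_{X_0}\bigl(1-e^{-\lambda_1 L^{x_1}-\lambda_2 L^{x_2}}\bigr).
\end{equation*}
I would then compute moments of $\cL^\lambda$ by differentiating $F$ and pass to $\lambda \to \infty$ using the already-established $\N_0$ asymptotics of Theorem \ref{p0.0}.

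For part (a), differentiating $F(\lambda,0)$ gives $\E_{X_0}(L^x e^{-\lambda L^x}) = \N_{X_0}(L^x e^{-\lambda L^x}) \cdot \exp(-\N_{X_0}(1-e^{-\lambda L^x}))$. By the translation invariance relation $\N_y(L^x \in \cdot) = \N_0(L^{x-y} \in \cdot)$, the first factor equals $\int \N_0(L^{x-y} e^{-\lambda L^{x-y}})\,X_0(dy)$; multiplied by $\lambda^{1+\alpha}$ it converges to $K_{\ref{p0.0}}\,X_0(|x-\cdot|^{-p})$ by the first-moment formula \eqref{e6.1}. The second factor converges monotonically to $\exp(-X_0(V^{\infty}(x-\cdot)))$ since $\N_y(L^x>0) = V^\infty(x-y)$. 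This identifies the limit of $\E_{X_0}(\cL^\lambda(\phi))$ with the right side of \eqref{ae8.4}; combined with the convergence $\cL^\lambda \to \cL$ of Theorem \ref{t0.0.1} and $L^2$ uniform integrability (from a bound analogous to the one in part (b)) this yields \eqref{ae8.4}.

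For part (b), the same Laplace factorization and two differentiations yield
\begin{equation*}
\partial_{\lambda_1}\partial_{\lambda_2}F = F\cdot\bigl[(\partial_{\lambda_1}G)(\partial_{\lambda_2}G) - \partial_{\lambda_1}\partial_{\lambda_2}G\bigr].
\end{equation*}
Setting $\lambda_1=\lambda_2=\lambda$, multiplying by $\lambda^{2(1+\alpha)}$, and integrating against $h$ gives two terms. The prefactor $F$ tends to $\exp(-X_0(V^{\vec{\infty},\vec{x}}))$. Decomposing $\lambda^{1+\alpha}\partial_{\lambda_i}G|_{\lambda_1=\lambda_2=\lambda} = \int \lambda^{1+\alpha}\N_y(L^{x_i}e^{-\lambda(L^{x_1}+L^{x_2})})\,X_0(dy)$, the inner $\N_y$-expression is exactly the quantity whose $\lambda\to\infty$ limit is computed to be $K_{\ref{p0.0}}\,U_i^{\vec{\infty},\vec{x}}(y)$ in the proof of \eqref{e6.2}; likewise $-\lambda^{2(1+\alpha)}\partial_{\lambda_1}\partial_{\lambda_2}G$ converges to $K_{\ref{p0.0}}^2\,X_0(-U_{1,2}^{\vec{\infty},\vec{x}})$. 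Summing gives \eqref{ae8.5}. The bound \eqref{ae8.6} is then immediate: the exponential in \eqref{eu1u2} is at most one so $U_i^{\vec{\infty},\vec{x}}(y) \leq |y-x_i|^{-p}$, while translation invariance $U_{1,2}^{\vec{\infty},(x_1,x_2)}(y) = U_{1,2}^{\vec{\infty},(x_1-y,x_2-y)}(0)$ lets us read \eqref{e6.3} as the pointwise estimate $-U_{1,2}^{\vec{\infty},\vec{x}}(y) \leq c_{\ref{p0.0}}(|x_1-y|^{-p}+|x_2-y|^{-p})|x_1-x_2|^{2-p}$; integrating against $X_0$ and dropping the harmless factor $e^{-X_0(V^{\vec{\infty},\vec{x}})} \leq 1$ yields \eqref{ae8.6}.

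The main obstacle is justifying the interchange of the $\lambda\to\infty$ limit with the $X_0(dy)$ integration and with the outer $dx_1\,dx_2$ integration: I need $\lambda$-uniform domination of $\lambda^{1+\alpha}\N_y(L^{x_i}e^{-\lambda(L^{x_1}+L^{x_2})})$ and of $\lambda^{2(1+\alpha)}\N_y(L^{x_1}L^{x_2}e^{-\lambda(L^{x_1}+L^{x_2})})$ by locally integrable kernels of $(y,x_1,x_2)$. For this I would lean on the exact $L^2$ asymptotics announced in the abstract (refinements of the $L^2$ bounds of \cite{MP17} and \cite{HMP18}) which already underlie Theorem \ref{p0.0}; these provide both the pointwise convergence of the rescaled $\N_y$-integrals and the uniform dominating kernel $|x-y|^{-p}$, respectively $c(|x_1-y|^{-p}+|x_2-y|^{-p})|x_1-x_2|^{2-p}$. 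A secondary technical point is that $\cL$ is only $\sigma$-finite under $\P_{X_0}$, so I would reduce to the finite restrictions $\cL|_{S(X_0)^{\geq 1/k}}$ provided by Theorem \ref{t0.0.1} and then send $k\to\infty$ by monotone convergence in \eqref{ae8.4}--\eqref{ae8.6}.
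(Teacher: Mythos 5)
Your proposal is correct and follows essentially the same route as the paper: the factorization $\E_{X_0}(e^{-\lambda_1 L^{x_1}-\lambda_2 L^{x_2}})=e^{-X_0(V^{\vec\lambda,\vec x})}$ and its derivatives are exactly \eqref{e9.9.1} and \eqref{m8.1}, the pointwise limits and dominating kernels you invoke are Propositions \ref{p3.1}(i), \ref{p3.2}(i) together with \eqref{m10.0} and \eqref{eu12b2} (packaged in the paper as Corollary \ref{ca1.3} and Theorem \ref{tl3}), and the transfer from $\cL^{\lambda}$ to $\cL$ via $L^2$ convergence plus uniform integrability on the cutoffs $S(X_0)^{\geq 1/k}$ is Corollary \ref{c1} combined with the argument in the proof of Theorem \ref{p0.0}(b). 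The only slight imprecision is attributing the limit of $\lambda^{1+\alpha}V_i^{\vec\lambda,\vec x}(y)$ to the proof of \eqref{e6.2} (under $\N_0$ only $V_{1,2}$ at $y=0$ enters there); the general-$y$ statement you need is Proposition \ref{p3.1}(i), which the paper proves.
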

%The proofs of the moment measures of $\cL$ under $\N_0$ and $\P_{X_0}$ will be deferred to Section \ref{s6}.\\

Now that we have $\text{Supp}(\cL)=\pmR$ a.s. under $\N_0$ and $\P_{\delta_0}$,  one immediate application with the above moment measures would be to use the energy method (see, e.g., Theorem 4.27 of \cite{Mor10}) to find the lower bound of the Hausdorff dimension of $\pmR$.
\begin{theorem}\label{p0.0.1}
For any $\eta>0$, we have for all $k\geq 1$,
\begin{align*}%\label{eb0.0.1}
%\begin{cases}
&(i)\  \N_{0}\Big(\int 1_{\{k^{-1}\leq |x_1|, |x_2|\leq k\}} |x_1-x_2|^{-(d+2-p-\eta)}\cL(dx_1) \cL(dx_2) \Big)<\infty,\\
%\end{align}
%\begin{align}\label{eb0.0.2}
&(ii)\  \P_{\delta_0}\Big(\int 1_{\{k^{-1}\leq |x_1|, |x_2|\leq k\}} |x_1-x_2|^{-(d+2-p-\eta)}\cL(dx_1) \cL(dx_2) \Big)<\infty.
%\end{cases}
\end{align*}
In particular, $dim(\pmR)\geq d+2-p$, $\N_0$-a.e. and $\P_{\delta_0}$-a.s.
\end{theorem}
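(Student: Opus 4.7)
The plan is a direct application of the second moment bounds \eqref{e6.3} and \eqref{ae8.6} with the test function
$$h(x_1,x_2) = 1_{A_k}(x_1)\,1_{A_k}(x_2)\,|x_1-x_2|^{-(d+2-p-\eta)},\qquad A_k:=\{x\in\R^d:k^{-1}\leq|x|\leq k\},$$
after which (i) and (ii) reduce to elementary Lebesgue integrability checks. The Hausdorff dimension bound then follows by combining this with the standard energy method of Frostman (see, e.g., Theorem 4.27 of \cite{Mor10}) and with Corollary \ref{c1.6}.

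For (i), the right-hand side of \eqref{e6.3} becomes a constant multiple of
$$\int_{A_k\times A_k}(|x_1|^{-p}+|x_2|^{-p})\,|x_1-x_2|^{\,2-p-(d+2-p-\eta)}\,dx_1\,dx_2,$$
and the exponent of $|x_1-x_2|$ collapses to $-d+\eta$. Since $|x_i|^{-p}\leq k^p$ on $A_k$, finiteness reduces to the local integrability of $|y|^{-(d-\eta)}$ on a bounded set, which is standard. For (ii), taking $X_0=\delta_0$ in \eqref{ae8.6} adds the further term
$$\int_{A_k\times A_k}|x_1|^{-p}\,|x_2|^{-p}\,|x_1-x_2|^{-(d+2-p-\eta)}\,dx_1\,dx_2,$$
which is likewise finite because $|x_i|^{-p}\leq k^p$ on $A_k$ and $d+2-p-\eta<d$ for every $\eta>0$ (recall $p>2$ when $d=2,3$ by \eqref{ep1.1}).

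For the dimension lower bound, fix $\eta>0$. By (i) and (ii), for each $k\geq 1$ the integrand in the theorem is $\N_0$-a.e. and $\P_{\delta_0}$-a.s. finite; taking a countable intersection over $k$, $\N_0$-a.e. and $\P_{\delta_0}$-a.s.\ the random measure $\cL|_{A_k}$ has finite $(d+2-p-\eta)$-energy for every $k\geq 1$. On this exceptional-free event, Corollary \ref{c1.6} gives $\text{Supp}(\cL)=\pmR$ with $\cL\neq 0$, while a.s.\ compactness of the range of SBM combined with the fact (used in the proof of Corollary \ref{c1.6}) that $0\notin\pmR$ shows $\pmR\subset A_K$ for some random finite $K$. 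Hence $\cL=\cL|_{A_K}$ is a nonzero, compactly supported measure on $\pmR$ with finite $(d+2-p-\eta)$-energy, and Frostman's energy method yields $\dim(\pmR)\geq d+2-p-\eta$. Letting $\eta\downarrow 0$ through a countable sequence finishes the proof. No substantive obstacle arises at this stage: all the analytic work has been done in Theorems \ref{p0.0} and \ref{p0.0_1}, and what remains is the exponent bookkeeping above together with a routine application of the energy method.
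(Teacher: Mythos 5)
Your proposal is correct and follows essentially the same route as the paper: apply the second-moment upper bounds \eqref{e6.3} and \eqref{ae8.6} with the indicated test function $h$, check that the resulting exponents $-d+\eta$ and $-(d+2-p-\eta)>-d$ are locally integrable on the bounded annulus, and then combine a countable union of null sets, Corollary \ref{c1.6}, compactness of the range, and the energy method before letting $\eta\downarrow 0$. Your exponent bookkeeping and the explicit remark that $\cL\neq 0$ is needed for the energy method are both accurate; no gap.
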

\begin{proof}%[Proof of Theorem \ref{p0.0.1}]
For any $k\geq 1$ and $\eta>0$ small, we apply Theorems \ref{p0.0}(b) and \ref{p0.0_1}(b) with \[h(x_1, x_2)=|x_1-x_2|^{-(d+2-p-\eta)} 1(k^{-1}\leq |x_1| \leq k)1(k^{-1}\leq |x_2| \leq k)\] to get (i) and (ii). %\eqref{eb0.0.1} and \eqref{eb0.0.2}.  
Take a countable union of null sets to get $\N_0$-a.e. and $\P_{\delta_0}$-a.s. that
\begin{align}%\label{ex2.1}
\int 1_{\{k^{-1}\leq |x_1|, |x_2|\leq k\}} |x_1-x_2|^{-(d+2-p-\eta)}\cL(dx_1) \cL(dx_2)<\infty, \forall k\geq 1.
\end{align}
By the compactness of the range of SBM (see, e.g., Corollary III.1.7 of \cite{Per02} and Theorem IV.7(iii) of \cite{Leg99}) and that $L^x$ is strictly positive for $x$ near $0$ (see the proof of Corollary \ref{c1.6}), we can conclude $\N_0$-a.e. and $\P_{\delta_0}$-a.s. that
%\begin{align}\label{ex2.1}
   $ \text{Supp}(\cL)=\pmR \subset \{x: k^{-1}\leq |x|\leq k\}$ for $k$ large enough.
%\end{align}
Therefore it follows from Theorem 4.27 of \cite{Mor10} that $\N_0$-a.e. and $\P_{\delta_0}$-a.s. $\text{dim}(\pmR)\geq d+2-p-\eta$. Let $\eta\downarrow 0$ to get the desired result.
\end{proof}
%[PROBABLY CAN DO THE ABOVE FOR GENERAL INITIAL CONDITION $X_0$ BUT NOT SURE IF IT'S NECESSARY]

Now we say a few words on the ideas underlying Theorem \ref{t0}. For any point $x$ near $F$ and $\pmR$, its local time $L^x$ will either be zero or small and positive, and hence the asymptotics of $\P_{\delta_0}(0<L^x<\eps)$ as $\eps \downarrow 0$ will be useful in studying $F$ and $\pmR$. The Laplace transform of $L^x$ derived in Lemma 2.2 in~\cite{MP17} is given by
\begin{align}\label{ev1.0}
\E_{X_0}(e^{-\lambda L^x})&=\exp\Big(-\int \N_y(1-e^{-\lambda L^x})X_0(dy)\Big)=e^{-X_0(V^\lambda(x-\cdot))},
%\exp\Big(-\int \N_y(1-e^{-\lambda L^x})X_0(dy)\Big)\nn\\
%&=\exp\Big(-\int V^\lambda(x-y)X_0(dy)\Big),
\end{align}
where $V^\lambda$ is the unique solution (see Section~2 of \cite{MP17} and the references given there) to 
\begin{equation}\label{ev1.1}
\frac{\Delta V^\lambda}{2}=\frac{(V^\lambda)^2}{2}-\lambda\delta_0,\ \ V^\lambda>0\text{ on }\R^d.
\end{equation}
Recall $V^\infty(x)=\N_0(L^x>0)$. Let $\lambda\uparrow\infty$ in~\eqref{ev1.0} and \eqref{ev1.1} to see that $V^\lambda(x)\uparrow V^\infty(x)$ and
\begin{align}\label{ev1.2}
\P_{X_0}(L^x=0)&=\exp\Big(-\int \N_y(L^x>0) X_0(dy)\Big)=e^{-X_0(V^\infty(x-\cdot))}.
%\exp\Big(-\int \N_y(L^x>0) X_0(dy)\Big)\nn\\
%&=\exp\Big(-\int V^\infty(x-y) X_0(dy)\Big).
\end{align}
It is explicitly known that (see, e.g., (2.17) in \cite{MP17})
\begin{equation}\label{ev1.3}
V^\infty(x)=\frac{2(4-d)}{|x|^2}:=\lambda_d |x|^{-2},
\end{equation}
and in particular $V^\infty$ solves
\begin{equation}\label{ev1.4}
\frac{\Delta V^\infty}{2}=\frac{(V^\infty)^2}{2}\text{ for }x\neq 0.
\end{equation}
 %$V^\infty$ sometimes is called the  {\it  very singular solution} to \eqref{ev1.4}, see, e.g.,~\cite{brez86}.

Write $f(t)\sim g(t)$ as $t\downarrow 0$ iff $f(t)/g(t)$ is bounded below and above by constants $c, c'>0$ for small positive $t$, and similarly for $f(t)\sim g(t)$ as $t\to \infty$. By an application of Tauberian theorem, it is shown in Theorem 1.3 of \cite{MP17} that for any $x\neq 0$,
\begin{align}\label{ef2.2}
\P_{\delta_0}(0<L^x<\frac{1}{\lambda})\sim V^\infty(x)-V^\lambda(x)\sim |x|^{-p} \lambda^{-\alpha} \text{ as } \lambda\to \infty.
\end{align}
The above bounds justify our explicit construction of $\cL^\lambda$ in some way--one can check that as $\lambda$ is getting larger and larger, $\cL^\lambda$ will concentrate more and more on the set of points $x$ whose local time $L^x$ is approximately $1/\lambda$ and the probability is of order $\lambda^{-\alpha}$.  In the end as $\lambda \to \infty$ the limiting measure will be supported on $F$ or $\pmR$. 

%and Proposition 6.1 of the same reference gives that for any $\eps_0>0$, there is some constant $C(\eps_0)>0$ so that for all $|x_1|\geq \eps_0$ and $|x_2|\geq \eps_0$,
%\begin{align}\label{7.0}
%\E_{\delta_0}( L^{x_1} L^{x_2} e^{-\lambda L^{x_1}-\lambda L^{x_2}} )\leq  C(\eps_0)(1+ |x_1-x_2|^{2-p})\lambda^{-2-2\alpha}, \quad  \forall \lambda\geq 1,
%\end{align}
%which allow the two authors in \cite{MP17} to calculate the Hausdorff dimension of $F$. However, constructing a random measure supported on $F$ needs more than lower and upper bounds--it needs the exact convergence of the first and second moments (see Theorem \ref{tl2} and Theorem \ref{tl3}), which requires more delicate analysis.\\
 In fact we can refine the above bounds in \eqref{ef2.2} to exact asymptotics.
\begin{proposition}\label{p4.1}
There is some constant $c_{\ref{p4.1}}>0$ so that for all $x\neq 0$, we have
\begin{align*}%\label{1.25}
(i) &\lim_{\lambda \to \infty} \lambda^{\alpha} \N_0(0<L^x<1/\lambda) = c_{\ref{p4.1}}|x|^{-p}.\\
%\end{align}
%and 
%\begin{align}\label{am1.25}
(ii) &\lim_{\lambda \to \infty} \lambda^{\alpha} \P_{\delta_0}(0<L^x<1/\lambda) = c_{\ref{p4.1}}|x|^{-p}e^{-V^\infty(x)}.
\end{align*}
%\begin{equation}\label{1.25}
%\begin{cases}
%&\lim_{\lambda \to \infty} \lambda^{\alpha} \N_0(0<L^x<1/\lambda) = c_{\ref{p4.1}}|x|^{-p},\\
%&\lim_{\lambda \to \infty} \lambda^{\alpha} \P_{\delta_0}(0<L^x<1/\lambda) = c_{\ref{p4.1}}|x|^{-p}e^{-V^\infty(x)}.
%\end{cases}
%\end{equation}
\end{proposition}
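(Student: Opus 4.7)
The plan is to obtain exact asymptotics for the Laplace transforms
\[
\N_0(e^{-\lambda L^x};\, L^x>0) \;=\; V^\infty(x) - V^\lambda(x),\qquad \E_{\delta_0}(e^{-\lambda L^x};\, L^x>0) \;=\; e^{-V^\lambda(x)} - e^{-V^\infty(x)},
\]
and then invoke Karamata's Tauberian theorem. The key technical step is upgrading the two-sided bound in \eqref{ef2.2} to a genuine limit for $V^\infty-V^\lambda$; everything else is a fairly standard Tauberian argument.

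Step 1 (scaling reduction). The PDE \eqref{ev1.1} is scale invariant: a short computation with $V^\lambda(x)=aU(bx)$ forces $a=b^2$ and $b^{4-d}=\lambda$, hence $V^\lambda(x)=\lambda^{2/(4-d)}V^1(\lambda^{1/(4-d)}x)$. Combined with $V^\infty(x)=\lambda_d|x|^{-2}$, this yields
\[
V^\infty(x) - V^\lambda(x) \;=\; \lambda^{2/(4-d)}\, W\!\bigl(\lambda^{1/(4-d)}x\bigr),\qquad W(y) := V^\infty(y) - V^1(y).
\]
Since $(2-p)/(4-d)=-\alpha$, proving $\lim_{\lambda\to\infty}\lambda^\alpha(V^\infty(x)-V^\lambda(x))=C_0|x|^{-p}$ is equivalent to proving $|y|^p W(y)\to C_0$ as $|y|\to\infty$. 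Step 2 establishes the latter: on $y\neq 0$ one has $\Delta W=(V^\infty+V^1)W$, so $W$ admits a Feynman--Kac representation in terms of $d$-dimensional Brownian motion killed at rate $(V^\infty+V^1)/2$, and by radial symmetry the problem collapses to a single ODE. After the substitution implicit in \eqref{be1.0}, the linearization around $V^\infty$ becomes the generator of a Bessel process of negative dimension $2-2\nu$, and one refines the upper-bound arguments of \cite{MP17},\cite{HMP18} to a genuine limit; the resulting $C_0$ has a natural expression as a Bessel expectation.

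Step 3 (Tauberian). The map $t\mapsto\N_0(0<L^x\leq t)$ is non-decreasing with Laplace--Stieltjes transform $V^\infty(x)-V^\lambda(x)$, so by Karamata's Tauberian theorem applied with regularly varying index $\alpha$,
\[
\N_0(0<L^x<1/\lambda)\;\sim\;\frac{C_0}{\Gamma(1+\alpha)}\,|x|^{-p}\,\lambda^{-\alpha},
\]
which is (i) with $c_{\ref{p4.1}}=C_0/\Gamma(1+\alpha)$. For (ii), factor
\[
e^{-V^\lambda(x)}-e^{-V^\infty(x)} \;=\; e^{-V^\infty(x)}\bigl(e^{V^\infty(x)-V^\lambda(x)}-1\bigr)\;\sim\; e^{-V^\infty(x)}\,\bigl(V^\infty(x)-V^\lambda(x)\bigr)
\]
since $V^\infty(x)-V^\lambda(x)\to 0$, and apply Karamata a second time to $t\mapsto\P_{\delta_0}(0<L^x\leq t)$, producing the extra factor $e^{-V^\infty(x)}$.

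The main obstacle is Step~2: extracting the \emph{exact} leading constant $C_0$ rather than the two-sided bounds already in \cite{MP17},\cite{HMP18}. This is precisely the refinement of the $L^2$ upper bounds to $L^2$ asymptotics that is announced in the abstract. Concretely, one must justify the interchange of limits in a Bessel-type Feynman--Kac formula for $W$ and handle the non-integrable singularity of $V^\infty+V^1$ at the origin; the formulas \eqref{be1.0}--\eqref{eu1u2} signal the correct Bessel-process framework to carry this out, and once $C_0$ is identified, the Tauberian steps and the passage from $\N_0$ to $\P_{\delta_0}$ are essentially automatic.
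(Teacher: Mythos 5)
Your reductions are sound and follow the paper's own route: the Laplace-transform identities, the observation that the whole problem reduces to the exact large-$|y|$ asymptotics of $W(y)=V^\infty(y)-V^1(y)$ (equivalently, of $V^\infty-V^\lambda$ after scaling), and the two applications of Karamata's theorem (including the factorization $e^{-V^\lambda}-e^{-V^\infty}=e^{-V^\infty}(e^{V^\infty-V^\lambda}-1)\sim e^{-V^\infty}(V^\infty-V^\lambda)$ for part (ii)) are all correct and match the paper. But Step 2 --- the existence of the limit $C_0=\lim_{r\to\infty}r^pW(r)$ --- is the entire content of the proposition beyond bookkeeping, and you have only named it as ``the main obstacle'' rather than proved it. As written, the proposal has a genuine gap precisely at its load-bearing step.

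Here is how the paper closes it, and why your stated worry about the singularity at the origin is a red herring. Writing $d^\lambda=V^\infty-V^\lambda$, one does not run the Feynman--Kac representation down to $0$; instead one stops at radius $r_\lambda=\lambda_0\lambda^{-1/(4-d)}$, giving
\begin{align*}
d^\lambda(x)=d^\lambda(r_\lambda)\,E_x\Big(1_{\{\tau_{r_\lambda}<\infty\}}\exp\Big(-\int_0^{\tau_{r_\lambda}}\tfrac{(V^\infty+V^\lambda)(B_s)}{2}\,ds\Big)\Big),
\end{align*}
with $d^\lambda(r_\lambda)=r_\lambda^{-2}d^{\lambda_0^{4-d}}(1)$ by scaling, so the singular potential is never integrated near the origin. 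The change of measure of Proposition \ref{p20.1} then converts the killing $-\tfrac12(V^\infty+V^\lambda)$ into the \emph{nonnegative} exponent $+\tfrac12(V^\infty-V^\lambda)$ under the conditioned $(2+2\nu)$-dimensional Bessel law, and after Bessel and PDE scaling everything reduces to showing that
\begin{align*}
f(r):=E_r^{(2+2\nu)}\Big(\exp\Big(\tfrac12\int_0^{\tau_1}(V^\infty-V^{\lambda_0^{4-d}})(\rho_s)\,ds\Big)\Big|\tau_1<\infty\Big)
\end{align*}
converges as $r\to\infty$. This is Lemma \ref{c13.5}, and the missing idea is elementary once isolated: the strong Markov property at $\tau_R$ ($1<R<r$) shows $f(r)\ge f(R)$ because the exponent is nonnegative, so $f$ is nondecreasing; and choosing $\lambda_0$ large enough that $c_{\ref{l12}}\lambda_0^{-(p-2)}<\nu^2/2$ makes the bound $V^\infty-V^{\lambda_0^{4-d}}\le c_{\ref{l12}}\lambda_0^{-(p-2)}\rho^{-p}$ from Lemma \ref{l12} compatible with the uniform exponential-moment bound of Lemma \ref{l13.5}(ii), so $f$ is bounded. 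Monotone plus bounded gives the limit, and hence $C_0=d^{\lambda_0^{4-d}}(1)\lambda_0^{p-2}\lim_{r\to\infty}f(r)$. Without this (or an equivalent) argument --- note that it is a monotonicity device, not an ``interchange of limits in a Feynman--Kac formula'' --- the proof is incomplete.
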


\no The above exact asymptotic results may allow us to get an insight of the Minkowski content of $\pmR$. 

\no $\bf{Conjecture\ 1:}$ There is some constant $c_1=c_{\ref{p4.1}} K_{\ref{p0.0}}^{-1} >0$ such that
\begin{align}
\lambda^{\alpha} 1_{\{0<L^x<1/\lambda\}}dx \overset{P}{\rightarrow} c_1\cL \text{ as } \lambda \to \infty  \text{ under } \N_0 \text{ or } \P_{\delta_0}.
\end{align}
Recall $\alpha=(p-2)/(4-d)$. By an application of the improved $4-d-\eta$ H\"{o}lder continuity of $L^x$ for $x$ near $\pmR$ for any $\eta>0$ (see \cite{Hong19}), we further conjecture that

\no $\bf{Conjecture\ 2:}$ There is some constant $c_2>0$ such that
\begin{align}
\lambda^{p-2} 1_{\{d(x,\pmR)\leq 1/\lambda\}}dx \overset{P}{\rightarrow} c_2\cL \text{ as } \lambda \to \infty  \text{ under } \N_0 \text{ or }  \P_{\delta_0},
\end{align}
which gives our conjecture on the Minkowski content of $\pmR$:

\no $\bf{Conjecture\ 3:}$ 
\begin{align}
\text{Cont}_{d+2-p}(\pmR)=c_2\cL(1), \ \N_0\text{-a.e.} \text{ or }  \P_{\delta_0}\text{-a.s.}
\end{align}
Here $\text{Cont}_{\delta}(A)$ is the $\delta$-dimensional Minkowski content of any compact set $A\subset \R^d$ defined by $\text{Cont}_{\delta}(A)=\lim_{r\to \infty} r^{(d-\delta)} |A^{\leq 1/r}|$, provided the limit exists. Here we use $|\cdot|$ to denote the $d$-dimensional volume (Lebesgue measure) in $\R^d$.
%where $d(x, K)=\inf\{|x-y|: y\in K\}$. 
We hope to return to these problems in a future work.

\subsection{An Alternate model}

While it is easy to derive from the definition of $\cL^\lambda$ that the limiting measure $\cL$ will be supported on $F$, it is not obvious that its support is actually on the smaller set $\pmR$. To handle this issue we will construct another random measure $\widetilde{\cL}(\kappa)$ supported on $\pmR$ for any $\kappa>0$ by utilizing exit measures and show that there is some constant $c(\kappa)>0$ such that $\cL=c(\kappa)\widetilde{\cL}(\kappa)$ a.s., thus proving that $\cL$ indeed lives on $\pmR$. We also feel that the construction of $\widetilde{\cL}(\kappa)$ may be of independent interest, given the central role exit measures have played in the study of the boundaries of the range.
%The reason behind this is that exit measure is more flexible and has richer structure than local times
%This also indicates that the boundary local time measure we obtained is in some way universal. 
%One advantage of this model is that all the constants appearing in its first and second moments will be explicit.
% 
% While the construction of ${\cL}$ is straightforward, . In order to get explicit constants, For points $x$ near the boundary of SBM, not only its local time $L^x$ is small and positive, but also the exit measure 
% 
% 
%Let $d(x, K)=\inf\{|x-y|: y\in K\}$ and $K^\veps=\{x:d(x,K)\le \veps\}$.
 We first introduce the definition of exit measure. For $K_1,K_2$ non-empty, set 
$d(K_1,K_2)=\inf\{|x-y|: x\in K_1, y\in K_2\}.$
Define
\begin{align} \label{Gdef}
\cO_{X_0}\equiv &\{ \text{open sets }  D \text{ satisfying } d(D^c,S(X_0))>0\text{ and a Brownian }\nn\\
&\text{  path starting from any $x\in\partial D$ will exit $D$ immediately}\}.
\end{align}
In what follows we always assume that $G\in \cO_{X_0}$. The exit measure of SBM $X$ from an open set $G$ under $\P_{X_0}$ or $\N_{X_0}$ is denoted by $X_G$ (see Chp. V of \cite{Leg99} for the construction of the exit measure). Intuitively $X_{G}$ is a random finite measure supported on $\partial G$,  which corresponds to the mass started at $X_0$ which is stopped at the instant it leaves $G$. What follows may be found in Chp. V of \cite{Leg99} (see also Section 1 of \cite{HMP18}).
The Laplace functional of $X_G$ is given by 
\begin{align}\label{e7.1}
\E_{X_0}(e^{-X_G(g)})&=\exp\Bigl(-\N_{X_0}\big(1-e^{-X_G(g)}\big)\Bigr)= e^{- X_0(U^g)},
\end{align}
where $g:\partial G\to[0,\infty)$ is continuous and $U^g\ge 0$ is the unique continuous function on $\overline G$ which is $C^2$ on $G$ and solves
\begin{equation}\label{e7.2}
\Delta U^g=(U^g)^2\text{ on }G,\quad U^g=g\text{ on }\partial G.
\end{equation}
Define
%\begin{equation}%\label{e0.0.1}
$G_\varepsilon^{x_0}=G_\varepsilon(x_0)=\{x:|x-x_0|>\varepsilon\} \text{ and set } G_\veps=G_\veps(0).$
%\end{equation}
For $\veps>0$ and $\lambda\geq 0$, we let $U^{\lambda,\veps}$ denote the unique continuous function on $\{|x|\ge \veps\}$ such that (cf. \eqref{e7.2})
\begin{equation}\label{ev5.2}
\Delta U^{\lambda,\veps}=(U^{\lambda,\veps})^2\ \ \text{for }|x|>\veps,\ \ \text{ and }\  U^{\lambda,\veps}(x)=\lambda\ \ \text{for }|x|=\veps.
\end{equation}
Uniqueness of solutions implies the scaling property (see (3.3) of \cite{MP17})
\begin{equation}\label{ev5.3}
U^{\lambda,\veps}(x)=\veps^{-2} U^{\lambda \veps^2,1}(x/\veps)\quad\text{for all }|x|\ge\veps,
\end{equation}
and also shows $U^{\lambda, \veps}$ is radially symmetric, thus allowing us to write $U^{\lambda,\veps}(|x|)$ for the value at $x\in\R^d$. 
 By \eqref{e7.1} we have for any $X_0\in M_F(\R^d)$ satisfying $S(X_0)\subset G_\veps$,
\begin{equation}\label{ev5.4}
\E_{X_0}(e^{-\lambda X_{G_\veps}(1)})=\exp\Bigl(-\N_{X_0}\big(1-e^{-\lambda X_{G_\veps}(1)}\big)\Bigr)=e^{-X_0(U^{\lambda, \veps})}.
\end{equation}
Let $\lambda\uparrow\infty$ in the above to see that $U^{\lambda,\veps}\uparrow U^{\infty,\veps}$ on $G_\veps$ and 
\begin{equation}\label{ev5.5}
\P_{X_0}(X_{G_\veps}(1)=0)=\exp(-X_0(U^{\infty,\veps})).
\end{equation}
Proposition V.9(iii) of \cite{Leg99} readily implies (see also (3.5) and (3.6) of \cite{MP17})
\begin{align}\label{ev5.6}
U^{\infty,\veps}\text{ is }C^2\text{ and }&\Delta U^{\infty,\veps}=(U^{\infty,\veps})^2\text{ on }G_\veps,\\
 \nonumber &\lim_{|x|\to\veps,|x|>\veps}U^{\infty,\veps}(x)=+\infty,\ \lim_{|x|\to\infty}U^{\infty,\veps}(x)=0.
\end{align}
%Moreover, a simple application of \eqref{ev1.4}, \eqref{ev5.2}, \eqref{ev5.6} and the maximum principle implies for all $\lambda\geq \lambda_d$,
%\begin{align} \label{eb4.3}
%V^\infty(x)\leq U^{\lambda,1}(x)\leq U^{\infty,1}(x),\quad \forall |x|>1.
%\end{align}
%Corollary 4.3 of [HMP] gives
%\begin{align} \label{eb4.4}
%U^{\infty,1}(x)\leq V^\infty(x)+\frac{c}{|x|^p},\quad \forall |x|>C.
%\end{align}
%Combining \eqref{eb4.3} and \eqref{eb4.4} to conclude
%\begin{align} \label{eb4.5}
%\lim_{|x|\to \infty} \frac{U^{\lambda,1}(x)}{V^\infty(x)}=\frac{U^{\infty,1}(x)}{V^\infty(x)}=1,\quad \forall \lambda \geq \lambda_d.
%\end{align}
Theorem 1.1 of \cite{Hong20} gives a construction of the local time $L^x$ in terms of the local asymptotic behavior of the exit measures at $x$. If $\psi_0(\eps)=\pi^{-1}\log^{+}(1/\eps)$ in $d=2$ and $\psi_0(\eps)=1/(2\pi\eps)$ in $d=3$, then for any $x\neq 0$, we have
\begin{equation}\label{e20.1}
X_{G_\eps^x}(1)\psi_0(\eps) \to L^x \text{ in measure under }   \N_{0} \text{ or } \P_{\delta_0} \text{ as } \eps \downarrow 0.
\end{equation}
%Proposition 5.1 in \cite{HMP18} gives that there is some $\lambda>0$ such that for all $\eps_0>0$ there is some $c(\eps_0)>0$ such that for all $|x_i|\geq \eps_0$,
%\begin{align}
%\E_{\delta_0}\Big(\prod_{i=1}^2 & \frac{X_{G_{\eps}^{x_i}}(1)}{\varepsilon^p}\exp\Big(-\lambda \frac{X_{G_{\eps}^{x_i}}(1)}{\varepsilon^2}\Big) \Big)\leq c(1+|x_1-x_2|^{2-p}),\quad  \forall 0<\eps<\eps_0.
%\end{align}
% Moreover, b
% 
%Similar to \eqref{ef2.2}, it has been shown in Propositions 4.8 and 4.11 of \cite{HMP18} that for any $x\neq 0$, there is some $K>0$ so that
%\begin{align}
 %\P_{\delta_0}\Big(0<X_{G_\eps^x}(1)<K\eps^2 \Big)\sim  |x|^{-p} \eps^{p-2} \text{ as } %  \eps \downarrow 0.
%\end{align}
%By using Palm measure formula for the exit measure $X_{G_\eps^x}$ (see, e.g., Proposition 4.1 of \cite{Leg94}), we derive the following first moment calculation, which will be proved in Section \ref{s7}.
%\begin{proposition}\label{p12}
%For any $|x|>\eps$,
%\begin{align}\label{e0.3.1}
%\N_0\Big(\frac{X_{G_\eps^x}(1)}{\eps^p} \exp(-\lambda_d \frac{X_{G_\eps^x}(1)}{\eps^{2}}) \Big)= |x|^{-p},
%\end{align}
%and for any $x$ such that $B(x,\eps)\subset S(X_0)^{c}$,
%\begin{align}\label{e0.3.2}
%\P_{X_0}\Big(\frac{X_{G_\eps^x}(1)}{\eps^p} \exp(-\lambda_d \frac{X_{G_\eps^x}(1)}{\eps^{2}}) \Big)=  e^{-\int V^\infty(y-x) X_0(dy)}\int |y-x|^{-p} X_0(dy).
%\end{align}
%\end{proposition}
Motivated by the above, for any $\kappa, \eps>0$, under $\P_{\delta_0}$ and $\N_{0}$ we define a measure $\widetilde{\cL}(\kappa)^\eps$ by 
\begin{equation}\label{edef}
d\widetilde{\cL}(\kappa)^\eps(x)=\frac{X_{G_\eps^x}(1)}{\eps^p} \exp(-\kappa \frac{X_{G_\eps^x}(1)}{\eps^{2}} ) 1(X_{G_{\eps/2}^x}=0)1(|x|>\eps)dx
\end{equation}
It is easy to derive from the definition of $X_{G_\eps^x}(1)$ (see Proposition V.1 and Lemma V.2 of \cite{Leg99}) that for any fixed $\eps>0$, $(\omega, x)\mapsto X_{G_\eps^x}(1)(\omega)$ is \break $\cF\times \mathfrak{B}(\R^d)$ measurable and so $\widetilde{\cL}(\kappa)^\eps$ is well defined and $\cF$-measurable.%
%[ED, DO YOU THINK THIS IS OK?]\\

%Compared to the ${\cL}^\lambda$ defined in \eqref{e7.3}, w
We can deduce from \eqref{e20.1} that $\widetilde{\cL}(\kappa)^\eps$ is closely related to ${\cL}^\lambda$(as in \eqref{e7.3}): for example in $d=3$, we have $\psi_0(\eps)=1/(2\pi \eps)$ and so $X_{G_\eps^x}(1) \sim \eps L^x$ as $\eps \downarrow 0$ by \eqref{e20.1}. Hence if $\lambda=\kappa \eps^{-1}$,
\begin{align}\label{edef3}
\frac{X_{G_\eps^x}(1)}{\eps^p} \exp(-\kappa\frac{X_{G_\eps^x}(1)}{\eps^{2}})\sim \eps^{1-p} L^x e^{-\kappa \eps^{-1} L^x}\sim \lambda^{1+\alpha} L^x e^{-\lambda L^x}
\end{align}
as $\eps \downarrow 0$, where in the last approximation we have used the fact that $\alpha=p-2$ in $d=3$. In \eqref{edef}, the indicator function $1(|x|>\eps)$ is to ensure that $X_{G_\eps^x}$ is well defined and the extra indicator $1(X_{G_{\eps/2}^x}=0)$ is to ensure that the limiting measures will be supported on $\pmR$ rather than $F$. 
We will show below that they indeed differ only up to some constant.

%and the richer structure in $X_{G_\eps^x}$ enables us to explicitly determines all the constant appearing in first and second moments of the limiting measure $\widetilde{\cL}(\kappa)$.

\begin{theorem}\label{tl1}
Let $d=2$ or $3$. For any $\kappa>0$, under both $\N_{0}$ and $\P_{\delta_0}$, there exists a random measure $\widetilde{\cL}(\kappa) \in M_F(\R^d)$, supported on $\pmR$, such  that $\widetilde{\cL}(\kappa)^\eps \overset{P}{\rightarrow} \widetilde{\cL}(\kappa)$ as $\eps\downarrow 0$ and there is a sequence $\eps_n \downarrow 0$ such that  $\widetilde{\cL}(\kappa)^{\eps_n} \to \widetilde{\cL}(\kappa)$ a.s. as $n \to \infty$. Moreover, there is some constant $c_{\ref{tl1}}(\kappa)>0$ such that $\widetilde{\cL}(\kappa)=c_{\ref{tl1}}(\kappa) {\cL}$ a.s.
\end{theorem}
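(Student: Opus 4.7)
The plan is to develop an $L^2$-theory for $\widetilde{\cL}(\kappa)^\eps$ in parallel with the construction of $\cL$ behind Theorem \ref{t0}, and then identify the a.s. limit with a constant multiple of $\cL$ via a cross-moment computation. Concretely, I will (i) compute first and second moments of $\widetilde{\cL}(\kappa)^\eps$ under $\N_0$ using the exit-measure Laplace functional \eqref{ev5.4} and the special (strong) Markov property, (ii) show these converge as $\eps\downarrow 0$ to finite moment measures proportional to those of $\cL$ in Theorem \ref{p0.0}, (iii) extract an a.s. subsequential limit $\widetilde{\cL}(\kappa)$ by an $L^2$-Cauchy argument, with the statement under $\P_{\delta_0}$ following via the Poisson cluster decomposition \eqref{e1.21}--\eqref{ev2.1}, and (iv) establish $\widetilde{\cL}(\kappa)=c_{\ref{tl1}}(\kappa)\cL$ a.s.\ through a cross-moment identity.

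The indicator $1(X_{G_{\eps/2}^x}(1)=0)$ is the first technical point. Conditional on $X_{G_\eps^x}$, which is supported on the sphere $\{|y-x|=\eps\}$, the special Markov property combined with \eqref{ev5.5} gives $\P(X_{G_{\eps/2}^x}(1)=0\mid X_{G_\eps^x})=\exp(-\int U^{\infty,\eps/2}(y-x)\,X_{G_\eps^x}(dy))$; by the radial symmetry of $U^{\infty,\eps/2}$ and the scaling \eqref{ev5.3} this reduces to $\exp(-c_0 X_{G_\eps^x}(1)/\eps^2)$ with $c_0:=4U^{\infty,1}(2)$. Combining this with $\N_0(X_{G_\eps^x}(1) e^{-\lambda X_{G_\eps^x}(1)})=\partial_\lambda U^{\lambda,\eps}(|x|)$, obtained by differentiating \eqref{ev5.4} under $\N_0$, and the scaling \eqref{ev5.3} yields
\begin{equation*}
\N_0(\widetilde{\cL}(\kappa)^\eps(\phi)) = \int_{|x|>\eps}\phi(x)\,\eps^{-p}\,\partial_\mu U^{\mu,1}(|x|/\eps)\Big|_{\mu=\kappa+c_0}\,dx.
\end{equation*}
An asymptotic of the form $U^{\infty,1}(r)-U^{\mu,1}(r)\sim C(\mu)\,r^{-p}$ as $r\to\infty$, with a matching $\partial_\mu$-version, obtained by linearizing $\Delta U=U^2$ around $U^{\infty,1}$ (the same linearization that produces $p$ in \eqref{ep1.1}) followed by an ODE uniqueness argument in the spirit of Section~4 of \cite{MP17}, then gives the limit $-C'(\kappa+c_0)\int\phi(x)|x|^{-p}\,dx$. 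Iterating the special Markov property at the two centers $x_1,x_2$ treats the second moment analogously and delivers convergence to $c_{\ref{tl1}}(\kappa)^2\,\N_0((\cL\times\cL)(h))$, together with a uniform upper bound matching \eqref{e6.3}.

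These moment estimates make $\widetilde{\cL}(\kappa)^\eps(\phi)$ an $L^2$-Cauchy family for $\phi\in C_c(\R^d\setminus\{0\})$, giving convergence in probability and, along a subsequence, almost surely. To identify $\widetilde{\cL}(\kappa)=c_{\ref{tl1}}(\kappa)\cL$ a.s., I will compute the cross-moment $\N_0(\cL^\lambda(\phi_1)\widetilde{\cL}(\kappa)^\eps(\phi_2))$ by combining the Laplace formula \eqref{ev1.0} for $L^{x_1}$ with the exit-measure functional and the special Markov property on $G_\eps^{x_2}$, pass to the double limit $\lambda\to\infty$, $\eps\downarrow 0$, and verify that it equals $c_{\ref{tl1}}(\kappa)\,\N_0((\cL\times\cL)(\phi_1\otimes\phi_2))$. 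Since the second moment of $\widetilde{\cL}(\kappa)$ matches $c_{\ref{tl1}}(\kappa)^2$ times that of $\cL$, the Hilbert-space identity $\N_0((\widetilde{\cL}(\kappa)(\phi)-c_{\ref{tl1}}(\kappa)\cL(\phi))^2)=0$ forces $\widetilde{\cL}(\kappa)(\phi)=c_{\ref{tl1}}(\kappa)\cL(\phi)$ a.s.; taking $\phi$ in a countable dense subset of $C_c$ upgrades this to the measure identity. Finally, since $X_{G_{\eps/2}^x}(1)=0$ implies $B_{\eps/2}(x)\cap\mR=\emptyset$, any a.s.\ limit of $\widetilde{\cL}(\kappa)^{\eps_n}$ is supported in $\mR^c\cup\partial\mR$; the identification with $c_{\ref{tl1}}(\kappa)\cL$ together with $\text{Supp}(\cL)\subset\mR$ pins the support of $\widetilde{\cL}(\kappa)$ onto $\pmR$.

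The principal obstacle is the refined asymptotic $U^{\infty,1}(r)-U^{\mu,1}(r)\sim C(\mu)\,r^{-p}$ and its $\partial_\mu$-version as $r\to\infty$, uniformly in $\mu$ on compact subsets of $(0,\infty)$; this is the exit-measure counterpart of the delicate Bessel-process asymptotics behind Proposition \ref{p4.1}. A secondary difficulty is the uniform second-moment control needed to legitimize the cross-moment passage to the limit in the identification step, which requires tight control of both $\cL^\lambda$ and $\widetilde{\cL}(\kappa)^\eps$ near the diagonal $x_1=x_2$ where the kernel in \eqref{e6.3} is singular.
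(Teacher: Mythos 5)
Your overall architecture --- $L^2$ convergence from exact first and second moment asymptotics, identification with $c_{\ref{tl1}}(\kappa)\cL$ via a cross-moment computation and the expansion of $\N_0\big((\widetilde{\cL}(\kappa)^\eps(\phi)-c\,\cL^\lambda(\phi))^2\big)$, and the support argument driven by the indicator $1(X_{G_{\eps/2}^x}=0)$ --- is the paper's. But two genuine gaps remain. The first is tightness. Your $L^2$-Cauchy argument gives convergence of $\widetilde{\cL}(\kappa)^\eps(\phi)$ only for $\phi$ compactly supported in $\R^d\setminus\{0\}$, whereas the theorem asserts convergence in $M_F(\R^d)$. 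This cannot be closed by moments: by \eqref{ce9.1.0} the mean density is bounded by (and asymptotically equal to a constant times) $|x|^{-p}$, and $\int|x|^{-p}\,dx$ diverges near the origin when $d=2$ (since $p=2\sqrt2>2$) and at infinity when $d=3$ (since $p<3$), so $\N_0(\widetilde{\cL}(\kappa)^\eps(1))=\infty$ and no moment bound controls the total mass. The paper closes this pathwise: a.s.\ $L^x>0$ on a neighborhood of $0$, so that neighborhood lies in $\textnormal{Int}(\mR)$ and carries no $\widetilde{\cL}(\kappa)^{\eps_n}$-mass by Proposition \ref{p4.0}; and $\mR$ is compact, so $\{|x|\ge k\}\subset\mR^{>1}$ for $k$ large and again carries no mass (see \eqref{e0.3} and \eqref{e0.1}). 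Without these two observations you obtain neither finiteness of the limit nor the stated mode of convergence; your Portmanteau-based support argument also presupposes exactly this a.s.\ weak convergence in $M_F$.

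Second, you have misplaced the difficulty. The one-center asymptotic $U^{\infty,1}(r)-U^{\mu,1}(r)\sim C(\mu)r^{-p}$ that you flag as the principal obstacle is the comparatively soft input (Proposition \ref{p1.0}, imported from \cite{Hong20}). The heart of the matter is the exact second-moment limit, Propositions \ref{p3.2} and \ref{p3.3}: the two-center quantities $U_{1,2}^{\vec{\lambda},\vec{x},\vec{\eps}}$ and the mixed $W_{1,2}^{\vec{\lambda},\vec{x},\eps}$ needed for the cross-moment are not radially symmetric, so ``iterating the special Markov property at the two centers'' does not reduce them to one-center computations or to an ODE linearization. The paper requires a non-radial Girsanov change of measure to a $(2-2\nu)$-dimensional Bessel-type diffusion (Proposition \ref{p8.1}), a Feynman--Kac representation with the two-pole potential $V^{\vec{\infty},\vec{x}}$, and the limiting constants of Lemma \ref{c13.5}; this is the content of Sections \ref{s7}--\ref{s9} and is where the exact multiplicative constants $K_{\ref{p3.1}}$ and $C_{\ref{p3.1}}(\kappa)$ --- hence $c_{\ref{tl1}}(\kappa)$ itself --- actually come from. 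Your outline treats this as a routine analogue of the first moment, which it is not.
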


Turning to the $\P_{X_0}$ case, again we will restrict our interest in $S(X_0)^c$ as in \eqref{em7.3}. For any $\kappa, \eps>0$, under $\P_{X_0}$ we define a measure $\widetilde{\cL}(\kappa)^\eps$ supported on $S(X_0)^c$ by 
\begin{equation}\label{edef2}
d\widetilde{\cL}(\kappa)^\eps(x)=
\frac{X_{G_\eps^x}(1)}{\eps^p} \exp(-\kappa \frac{X_{G_\eps^x}(1)}{\eps^{2}} ) 1(X_{G_{\eps/2}^x}=0)1_{(x\in S(X_0)^{>\eps})} dx
\end{equation}

\begin{theorem}\label{tl1.0.0}
Let $d=2$ or $3$ and $X_0\in M_F$. For any $\kappa>0$, under $\P_{X_0}$ there exists a $\sigma$-finite random measure $\widetilde{\cL}(\kappa)$,  supported on $\pmR\cap S(X_0)^c$, such that for any $k\geq 1$, $\widetilde{\cL}(\kappa)^\eps|_{S(X_0)^{\geq 1/k}}  \overset{P}{\rightarrow} \widetilde{\cL}(\kappa)|_{S(X_0)^{\geq 1/k}} $ as $\eps\downarrow 0$ and there is a sequence $\eps_n \downarrow 0$ such that $\widetilde{\cL}(\kappa)^{\eps_n}|_{S(X_0)^{\geq 1/k}} \to \widetilde{\cL}(\kappa)|_{S(X_0)^{\geq 1/k}}, \forall k\geq 1$ a.s. as $n \to \infty$. Moreover, we have $\widetilde{\cL}(\kappa)=c_{\ref{tl1}}(\kappa) {\cL}$ a.s.
\end{theorem}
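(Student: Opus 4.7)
The plan is to follow the template of Theorem \ref{t0.0.1} with $\widetilde{\cL}(\kappa)^\eps$ playing the role of $\cL^\lambda$, and to import the identification $\widetilde{\cL}(\kappa)=c_{\ref{tl1}}(\kappa)\cL$ from the $\N_0$ and $\P_{\delta_0}$ setting of Theorem \ref{tl1}. Concretely: for each $k\geq 1$ I first construct a finite random measure $\widetilde{l}_k(\kappa)$ as the $L^2$-limit of $\widetilde{\cL}(\kappa)^\eps|_{S(X_0)^{\geq 1/k}}$ as $\eps\downarrow 0$, then patch the $\widetilde{l}_k(\kappa)$ into a $\sigma$-finite measure $\widetilde{\cL}(\kappa)$ on $S(X_0)^c$ via $\widetilde{\cL}(\kappa)|_{S(X_0)^{\geq 1/k}}=\widetilde{l}_k(\kappa)$, extract a single sequence $\eps_n\downarrow 0$ by a diagonal argument so that $\widetilde{\cL}(\kappa)^{\eps_n}|_{S(X_0)^{\geq 1/k}}\to \widetilde{l}_k(\kappa)$ a.s. for every $k$, and finally conclude with the identification $\widetilde{l}_k(\kappa)=c_{\ref{tl1}}(\kappa)\cL|_{S(X_0)^{\geq 1/k}}$ a.s., which delivers support in $\pmR\cap S(X_0)^c$ through Theorem \ref{t0.0.1}.

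For the fixed-$k$ construction, the inputs are first- and second-moment formulas for $\widetilde{\cL}(\kappa)^\eps|_{S(X_0)^{\geq 1/k}}$ under $\P_{X_0}$, derived in parallel to Theorem \ref{p0.0_1}(a)(b). The tool is the joint Laplace functional of $(X_{G_\eps^x}(1),X_{G_{\eps/2}^x}(1))$: since $G_\eps^x\subset G_{\eps/2}^x$, the strong Markov property of exit measures lets me write $X_{G_{\eps/2}^x}$ as the exit measure, from the annular complement $G_{\eps/2}^x\setminus G_\eps^x$, of a super-Brownian motion starting from $X_{G_\eps^x}$. Combining \eqref{ev5.4}, \eqref{ev5.5}, and the PDE \eqref{ev5.6} yields an explicit representation of the form $\E_{X_0}(e^{-\mu X_{G_\eps^x}(1)}1(X_{G_{\eps/2}^x}(1)=0))=\exp(-X_0(W_{\mu,\eps}(x-\cdot)))$ for a suitable $W_{\mu,\eps}$, and differentiating once in $\mu$ at $\mu=\kappa/\eps^{2}$ inserts the factor $X_{G_\eps^x}(1)e^{-\kappa X_{G_\eps^x}(1)/\eps^{2}}$. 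A two-point analogue produces the second moment. The leading-order asymptotic $\eps^{-p}\N_y\bigl(X_{G_\eps^x}(1)e^{-\kappa X_{G_\eps^x}(1)/\eps^{2}}1(X_{G_{\eps/2}^x}(1)=0)\bigr)\to c(\kappa)|x-y|^{-p}$ as $\eps\downarrow 0$, an exit-measure version of Proposition \ref{p4.1}, then matches the leading-order first and second moments to those of $c_{\ref{tl1}}(\kappa)\cL|_{S(X_0)^{\geq 1/k}}$ from Theorem \ref{p0.0_1}, up to the overall constant. A standard Cauchy-in-$L^2$ argument on pairs $(\eps,\eps')$ applied to test functions supported in $S(X_0)^{\geq 1/k}$ then delivers $L^2$-convergence of $\widetilde{\cL}(\kappa)^\eps|_{S(X_0)^{\geq 1/k}}$.

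The main obstacle is the identification $\widetilde{l}_k(\kappa)=c_{\ref{tl1}}(\kappa)\cL|_{S(X_0)^{\geq 1/k}}$ a.s.; the nonlinearity of the threshold $1(X_{G_{\eps/2}^x}(1)=0)$ and of the exponential in $\widetilde{\cL}(\kappa)^\eps$ blocks a direct cluster-by-cluster application of the Poisson decomposition \eqref{ev2.1}. My strategy is to establish it directly by an $L^2$ comparison
\[
\E_{X_0}\Bigl(\bigl(\widetilde{\cL}(\kappa)^\eps(\phi)-c_{\ref{tl1}}(\kappa)\cL^{\kappa/\eps}(\phi)\bigr)^2\Bigr)\to 0 \quad \text{as }\eps\downarrow 0
\]
for any bounded measurable $\phi$ supported in $S(X_0)^{\geq 1/k}$. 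The expansion uses the same Laplace-functional machinery as above, and the cross term $\E_{X_0}(\widetilde{\cL}(\kappa)^\eps(\phi)\cL^{\kappa/\eps}(\phi))$ is handled by a joint Laplace computation that links $X_{G_\eps^x}(1)$ to $L^x$ through the convergence \eqref{e20.1} and the matching of the PDEs \eqref{ev1.1} and \eqref{ev5.2} in the $\eps\downarrow 0$ scaling limit. Because the analogous identity under $\N_0$ is precisely what pins down $c_{\ref{tl1}}(\kappa)$ in Theorem \ref{tl1}, the same constant comes out under $\P_{X_0}$. Combined with $\cL^{\kappa/\eps}|_{S(X_0)^{\geq 1/k}}\to \cL|_{S(X_0)^{\geq 1/k}}$ in probability from Theorem \ref{t0.0.1}, this yields $\widetilde{l}_k(\kappa)=c_{\ref{tl1}}(\kappa)\cL|_{S(X_0)^{\geq 1/k}}$ a.s., from which the $\sigma$-finiteness of $\widetilde{\cL}(\kappa)$, its support in $\pmR\cap S(X_0)^c$, and the global identity $\widetilde{\cL}(\kappa)=c_{\ref{tl1}}(\kappa)\cL$ a.s. all follow; the simultaneous a.s. convergence along a single subsequence $\eps_n\downarrow 0$ for all $k$ is then extracted by diagonalization as in the proof of Theorem \ref{t0.0.1}.
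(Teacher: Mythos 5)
Your proposal follows essentially the same route as the paper: the indicator $1(X_{G_{\eps/2}^x}=0)$ is absorbed via the special Markov property into an extra exponential factor (as in \eqref{er1.5}), the fixed-$k$ construction is the $L^2$ Cauchy argument of Corollary \ref{c1} built on the two-point second-moment asymptotics of Theorem \ref{tl3}, and the identification $\widetilde{\cL}(\kappa)=c_{\ref{tl1}}(\kappa)\cL$ is exactly the paper's mixed cross-term computation with $W^{\vec{\lambda},\vec{x},\eps}$ (Proposition \ref{p3.3} and Corollary \ref{c4}), followed by diagonalization and patching over $k$ as in Appendix \ref{aa}. The only substantive step you gloss over is the tightness/Prohorov argument needed to upgrade convergence of $\widetilde{\cL}(\kappa)^\eps(\psi)$ along a determining class to convergence of the measures themselves, which the paper handles via the a.s.\ boundedness of $\mR\cap S(X_0)^{\geq\delta}$ and the vanishing of $\widetilde{\cL}(\kappa)^{\eps}$ off a neighbourhood of $\mR$.
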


\no $\mathbf{Organization\ of\ the\ paper.}$  In Section \ref{s2} we give preliminary results on super-Brownian motion, the Brownian snake, exit measures and their special Markov property. In Section \ref{s3} we establish the convergence of the mean measures of $\cL^\lambda$ and $\widetilde \cL(\kappa)^\eps$ and give the proof of Proposition \ref{p4.1}. In Section \ref{s4} the second moment convergence results will be given in Propositions \ref{p3.1}, \ref{p3.2} and \ref{p3.3} while we defer their proofs to Sections \ref{s8} and \ref{s9}. Assuming the results from Section \ref{s4}, we will finish the proofs of our main results Theorems \ref{t0} and \ref{tl1} under $\N_0$ and $\P_{\delta_0}$ in Section \ref{s5} while we include the similar proof of Theorems \ref{t0.0.1} and \ref{tl1.0.0} under $\P_{X_0}$ for general initial condition $X_0$ in the Appendix.  In Section \ref{s5} we also give the proof for the first and second moment measures of $\cL$ stated as in Theorems \ref{p0.0} and \ref{p0.0_1}. In Section \ref{s6} the proof of Theorem \ref{tl1.1} will be finished by utilizing the shrinking ball arguments from \cite{HMP18}. %In Section \ref{s8} we establish the connection between the total mass of exit measure $X_{G_\eps^x}$ and local time $L^x$ as is stated in Theorem \ref{t0.1}.
In Section \ref{s7} a key proposition in terms of a change of measure method is given and finally in Sections \ref{s8} and \ref{s9} we finish the essential proofs of Propositions \ref{p3.1}, \ref{p3.2} and \ref{p3.3}.

\section*{Acknowledgements}
This work was done as part of the author's graduate studies at the University of British Columbia. I would like to thank my supervisor, Professor Edwin Perkins, for suggesting this problem and for the helpful discussions and suggestions throughout this work.

\section{Exit Measures and the Special Markov Property}\label{s2}

We will use Le Gall's Brownian snake construction of a SBM $X$, with initial condition $X_0\in M_F(\R^d)$. Set $\cW=\cup_{t\ge 0} C([0,t],\R^d)$ with the natural metric (see page 54 of \cite{Leg99}), and let $\zeta(w)=t$ be the lifetime of \mbox{$w\in C([0,t],\R^d)\subset\cW$.} The Brownian snake $W=(W_t,t\ge0)$ is a $\cW$-valued continuous strong Markov process and, abusing notation slightly, let $\N_x$ denote its excursion measure starting from the path at $x\in\R^d$ with lifetime zero.  As usual we let $\hat W(t)=W_t(\zeta(W_t))$ denote the tip of the snake at time $t$, and $\sigma(W)>0$ denote the length of the excursion path. We refer the reader to Ch. IV of \cite{Leg99} for the precise definitions.  The construction of super-Brownian motion, $X=X(W)$ under $\N_x$ or $\P_{X_0}$, may be found in Ch. IV of \cite{Leg99}. The ``law" of $X(W)$ under $\N_x$ is the canonical measure of SBM starting at $x$ described in the last Section (and also denoted by $\N_x$). If $\Xi=\sum_{j\in J}\delta_{W_j}$ is a Poisson point process on $\cW$ with intensity $\N_{X_0}(dW)=\int\N_x(dW)X_0(dx)$, then by Theorem~4 of Ch. IV of \cite{Leg99} (cf. \eqref{ev2.1})
\begin{equation}\label{Xtdec}
X_t(W)=\sum_{j\in J}X_t(W_j)=\int X_t(W)\Xi(dW)\text{ for }t>0
\end{equation}
defines a SBM with initial measure $X_0$. We will refer to this as the standard set-up for $X$ under $\P_{X_0}$. It follows that the total local time $L^x$ under $\P_{X_0}$ may also be decomposed as 
\begin{equation}\label{ae4.1}
L^x=\sum_{j\in J}L^x(W_j)=\int L^x(W)\Xi(dW).
\end{equation}

Recall $\mathcal{R}=\overline{\{x:L^x>0\}}$ is the range of the SBM $X$ under $\P_{X_0}$ or $\N_{X_0}$.    Under $\N_{X_0}$ we have (see (8) on p. 69 of \cite{Leg99})
\begin{equation}\label{ea0.0}
\mR=\{\hat W(s):s\in[0,\sigma]\}.
\end{equation}

Let $G\in\cO_{X_0}$ as in \eqref{Gdef}.   
Then 
\begin{equation}\label{ea1.1}
X_G \text{ is a finite random measure supported on }\mathcal{R}\cap \partial G\text{ a.s.}
\end{equation}
Under $\N_{X_0}$ this follows from the definition of $X_G$ on p. 77 of \cite{Leg99} and 
the ensuing discussion, and \eqref{ea0.0}.
  Although \cite{Leg99} works under $\N_x$ for $x\in G$ the above extends immediately to $\P_{X_0}$ because as in (2.23) of \cite{MP17}, 
  \begin{equation}\label{ea1.2}
X_G=\sum_{j\in J} X_G(W_j)=\int X_G(W)d\Xi(W),
\end{equation}
where $\Xi$ is a Poisson point process on $\cW$ with intensity $\N_{X_0}$.  

%We now formulate a stronger conditional form of this result under $\N_{X_0}$. 
Working under $\N_{X_0}$ and following \cite{Leg95}, we define
\begin{align}\label{ae8.1}
S_G(W_u)&=\inf\{t\le \zeta_u: W_u(t)\notin G\}\ \ (\inf\emptyset=\infty),\nn\\
\eta_s^G(W)&=\inf\{t:\int _0^t1(\zeta_u\le S_G(W_u))\,du>s\},\nn\\
\cE_G&=\sigma(W_{\eta_s^G},s\ge 0)\vee\{\N_{X_0}-\text{null sets}\},
\end{align}
where $s\to W_{\eta^G_s}$ is continuous (see p. 401 of \cite{Leg95}).  Write the open set \mbox{$\{u:S_G(W_u)<\zeta_u\}$} as countable union of disjoint open intervals, $\cup_{i\in I}(a_i,b_i)$.
Clearly $S_G(W_u)=S^i_G<\infty$ for all $u\in [a_i,b_i]$ and we may define
\[W^i_s(t)=W_{(a_i+s)\wedge b_i}(S^i_G+t)\text{ for }0\le t\le \zeta_{(a_i+s)\wedge b_i}-S^i_G.\]
Therefore for $i\in I$, $W^i\in C(\R_+,\cW)$ are the excursions of $W$ outside $G$. Proposition 2.3 of \cite{Leg95} implies $X_G$ is $\cE_G$-measurable and Corollary~2.8 of the same reference implies
%\begin{align}
\begin{equation}\label{SMP1}
\left\{\begin{array}{l}
%\nn
\text{Conditional on $\cE_G$, the point measure $\sum_{i\in I}\delta_{W^i}$ is a Poisson}\\
\text{point measure with intensity $\N_{X_G}$.}\end{array}\right.
\end{equation}
%\end{align}
If $D$ is an open set in $\cO_{X_0}$ 
%is an open set as in \eqref{Gdef} 
such that $\bar{G}\subset D$ and $d(D^c,\bar G)>0$, then the definition (and existence) of $X_D(W)$ applies equally well to each $X_D(W^i)$
and it is easy to check that 
\begin{equation}\label{Widecomp}
X_D(W)=\sum_{i\in I}X_D(W^i).
\end{equation}
%[THE ABOVE IS IMPORTANT. PLEASE SEE IF YOU AGREE THAT IT SEEMS
%TO BE TRIVIAL TO DERIVE FROM DEFINITIONS.] [LM: I AGREE THAT IT IS OK]

If $U$ is an open subset of $S(X_0)^c$, then $L_U$, the
restriction of the local time $L^x$ to $U$, is in $C(U)$ which is the set of continuous functions on $U$.
%Here are some simple consequences of \eqref{SMP1}. 
\begin{proposition}\label{pv0.2}
Let $X_0 \in M_F(\R^d)$.\\
(i)  Let $G$ be an open set in $\cO_{X_0}$. Let $\psi_0$ be a  bounded measurable function on $C({\overline{G}}^c)$ and $\Phi_1$ be a bounded measurable function on $M_F(\R^d)^n$ for any $n\geq 1$. Let $D_i$ be open sets in $\cO_{X_0}$, such that $\overline{G}\subset D_i$ and $d(D_i^c,\bar G)>0$, $\forall 1\leq i\leq n$. Then 
\[\N_{X_0} \Big(\psi_0(L_{{\overline{G}}^c}) \Phi_1(X_{D_1},\dots, X_{D_n})|\cE_{G}\Big)= \E_{X_G}\Big( \psi_0(L_{{\overline{G}}^c}) \Phi_1(X_{D_1},\dots,X_{D_n})\Big).\]
(ii) Let $G_1, G_2$ be open sets in $\cO_{X_0}$ such that $\overline{G_1}\subset G_2$ and $d(G_2^c,\overline{G_1})>0$. If $\psi_2:\cK \to \R$ is Borel measurable, then we have 
\begin{equation*} 
\N_{X_0}(\psi_2(\mR\cap G_2^c)|\cE_{G_1})=\E_{X_{G_1}}(\psi_2(\mR\cap G_2^c)),
\end{equation*}
where $\cK$ is the space of compact subsets of $\R^d$ equipped with the Hausdorff metric (see, e.g., Section 2 of \cite{HMP18}).

\no (iii) Let $G_1, G_2$ be open sets in $\cO_{X_0}$ such that $\overline{G_1}\subset G_2$ and $d(G_2^c,\overline{G_1})>0$. If $\psi_3:\R\to \R$ is Borel measurable, then for any $\lambda>0$ we have 
\begin{equation*} 
\N_{X_0}(\psi_3(\cL^\lambda(G_2^c))|\cE_{G_1})=\E_{X_{G_1}}(\psi_3(\cL^\lambda(G_2^c))).
\end{equation*}
\end{proposition}
\begin{proof}
(ii) follows immediately from Proposition 2.2 in \cite{HMP18}. (i) and (iii) will follow in a similar way as Proposition 2.6(b) of \cite{MP17}. 
\end{proof}

We will need a version of the above under $\P_{X_0}$ as well, which is Proposition 2.3 in \cite{HMP18}.
\begin{proposition}\label{pv0.1}
Let $X_0 \in M_F(\R^d)$.\\
(i)  Let $G$ be an open set in $\cO_{X_0}$. Let $\psi_0$ be a  bounded measurable function on $C({\overline{G}}^c)$ and $\Phi_1$ be a bounded measurable function on $M_F(\R^d)^n$ for any $n\geq 1$. Let $D_i$ be open sets in $\cO_{X_0}$, such that $\overline{G}\subset D_i$ and $d(D_i^c,\bar G)>0$, $\forall 1\leq i\leq n$. Then 
\[\E_{X_0} \Big(\psi_0(L_{{\overline{G}}^c}) \Phi_1(X_{D_1},\dots, X_{D_n})|\cE_{G}\Big)= \E_{X_G}\Big( \psi_0(L_{{\overline{G}}^c}) \Phi_1(X_{D_1},\dots,X_{D_n})\Big).\]
(ii) Let $G_1, G_2$ be open sets in $\cO_{X_0}$ such that $\overline{G_1}\subset G_2$ and $d(G_2^c,\overline{G_1})>0$. If $\psi_2:\cK \to \R$ is Borel measurable, then we have 
\begin{equation*} 
\E_{X_0}(\psi_2(\mR\cap G_2^c)|\cE_{G_1})=\E_{X_{G_1}}(\psi_2(\mR\cap G_2^c)).
\end{equation*}
\end{proposition}

 \section{Convergence of the mean measure and proof of Proposition \ref{p4.1}}\label{s3}
In this section we will give the convergence of first moment measures of $\cL^\lambda$ and $\widetilde{\cL}(\kappa)^\eps$ and finish the proof of Proposition \ref{p4.1}.% on the left tail of local times. 

  \subsection{Mean measure for local time}

Recall $V^\lambda(x)=\N_{0}(1-e^{-\lambda L^x})$ as in \eqref{ev1.0} and $V^\lambda$ is also the solution to \eqref{ev1.1}. Uniqueness of solutions implies the scaling property (see (2.13) of \cite{MP17})
\begin{equation}\label{e12.1}
V^{\lambda}(x)=r^{-2} V^{\lambda r^{4-d}}(x/r) \quad\text{ for all } x\neq 0, r>0,
\end{equation}
and also shows $V^{\lambda}$ is radially symmetric, thus allowing us to write $V^{\lambda}(|x|)$ for the value at $x\in\R^d$. 
Monotone convergence and the convexity of $e^{-ax}$ for $a,x>0$ allow us to differentiate $V^\lambda(x)=\N_{0}(1-e^{-\lambda L^x})$ with respect to $\lambda>0$ through the expectation so that for any $\lambda>0$ we can define 
\begin{align}\label{e12.2}
 V_1^\lambda(x):=\frac{\partial}{\partial \lambda} V^\lambda(x)=\N_0(L^x e^{-\lambda L^x}), \forall x\neq 0.
  \end{align}
  By differentiating both sides of \eqref{e12.1} with respect to $\lambda>0$, we obtain
 \begin{align}\label{ef4.4}
 V_1^\lambda(x)=r^{-2} V_1^{\lambda r^{4-d}}(x/r) r^{4-d}=r^{-2}\N_0( r^{4-d} L^{x/r} e^{-\lambda r^{4-d} L^{x/r}}),
   \end{align}
which is also a consequence of the scaling of Brownian snake under $\N_0$ (see, e.g., the proof of Proposition V.9 (i) of \cite{Leg99}). Before turning to the calculation of the mean measure of $\cL^\lambda$, we recall $\alpha$ as in \eqref{ep1.1} and give the following result from Proposition 5.5 of \cite{MP17}.
\begin{lemma}\label{l12}
There is some constant $c_{\ref{l12}}>0$, depending on $d$ so that
\[V^\infty(x)-V^\lambda(x)\leq c_{\ref{l12}} |x|^{-p} \lambda^{-\alpha}, \forall x\neq 0, \lambda>0.\]
\end{lemma}
\no The following is an easy consequence of the above lemma.
\begin{proposition}\label{p1.1}
There is some constant $c_{\ref{p1.1}}>0$, depending on $d$ so that
 \begin{equation}\label{e9.3.1}
 \N_{0}\Big(\lambda^{1+\alpha} L^x e^{-\lambda L^x}\Big)=\lambda^{1+\alpha} V_1^\lambda(x) \leq c_{\ref{p1.1}} |x|^{-p}, \forall x\neq 0, \lambda>0.
\end{equation}
\end{proposition}
 \begin{proof}
The first equality is immediate by definition \eqref{e12.2}. One can also conclude from \eqref{e12.2} that $\lambda\mapsto V_1^\lambda(x)$ is monotone decreasing and so for any $\lambda>0$,
\begin{align}%\label{ea.1}
V_1^\lambda(x)&\leq \frac{2}{\lambda}\int_{\lambda/2}^\lambda V_1^{\lambda'}(x) d\lambda'=\frac{2}{\lambda} (V^\lambda(x)-V^{\lambda/2}(x))\nn\\
&\leq \frac{2}{\lambda}(V^\infty(x)-V^{\lambda/2}(x))\leq  \frac{2}{\lambda} c_{\ref{l12}} |x|^{-p} \lambda^{-\alpha} 2^\alpha,
\end{align}
where the last is from Lemma \ref{l12}. Let $c_{\ref{p1.1}}=2^{1+\alpha} c_{\ref{l12}}$ to finish the proof.
%The result then follows by letting 
 \end{proof}

Let $(B_t)$ denote a $d$-dimentional Brownian motion starting from $x$ under $P_x$. Define $\tau_{r}=\inf\{t\geq 0: |B_t|\leq r\}$ for any $r>0$ and let $r_\lambda=\lambda_0 \lambda^{-\frac{1}{4-d}}$ where $\lambda_0$ will be chosen to be some fixed large constant below. In what follows we will always assume $0<r_\lambda<|x|$.
\begin{lemma}\label{l12.2}
Let $\lambda>0$ and $|x|>r_\lambda>0$. For any $t>0$, we have \[V_1^{{\lambda}}(x)=E_x\Big( V_1^{{\lambda}}(B({t\wedge \tau_{r_\lambda}}))\exp\big(-\int_0^{t\wedge \tau_{r_\lambda}} V^{{\lambda}}(B_s) ds\big)\Big).\]
\end{lemma}
\begin{proof}
It follows in a similar way to that of Lemma 9.4 in \cite{MP17}.
\end{proof}

For $\gamma\in\R$, we let $(\rho_t)$ denote a $\gamma$-dimensional Bessel process starting from $r>0$ under $P_{r}^{(\gamma)}$ and let $(\cF_t^\rho)$ denote the right-continuous filtration generated by the Bessel process. We slightly abuse the notation and define $\tau_R=\tau_R^\rho=\inf\{t\geq 0: \rho_t\leq R\}$ for $R>0$.
The following results (i) and (ii) are from Lemmas 5.2 and 5.3 of \cite{MP17} and the last one follows from (ii) and a simple application of Cauchy-Schwartz inequality.

\begin{lemma} \label{l13.5} 
Assume $0<2\gamma \le \nu^2$ and $q>2$. Then\\
(i) \[E_r^{(2+2\nu)}\Bigl(\exp\Bigl(\int_0^{\tau_1}\frac{\gamma}{\rho_s^2}\,ds\Bigr)\Bigr|\tau_1<\infty\Bigr)=r^{\nu-\sqrt{\nu^2-2\gamma}}, \forall r\geq 1.\]
(ii)
\[\sup_{r\ge 1}E_r^{(2+2\nu)}\Bigl(\exp\Bigl(\int_0^{\tau_1}\frac{\gamma}{\rho_s^q}\,ds\Bigr)\Bigr|\tau_1<\infty\Bigr)\le C_{\ref{l13.5}}(q,\nu)<\infty.\]
(iii)
\[\inf_{r\ge 1}E_r^{(2+2\nu)}\Bigl(\exp\Bigl(-\int_0^{\tau_1}\frac{\gamma}{\rho_s^q}\,ds\Bigr)\Bigr|\tau_1<\infty\Bigr)\ge
c_{\ref{l13.5}}(q,\nu)>0.\]
\end{lemma}
\begin{lemma} \label{c13.5} 
Let $r_\lambda=\lambda_0 \lambda^{-\frac{1}{4-d}}$. There is some constant $c_{\ref{c13.5}}>0$ such that for all $\lambda_0>c_{\ref{c13.5}}$, $0<\gamma\leq 2$, there is some constant $C_{\ref{c13.5}}(\lambda_0, \nu,\gamma)>0$ so that for all $x\neq 0$,
\begin{align}
&\sup_{\lambda>0} E_{|x|}^{(2+2\nu)}\Big(\exp\big(\gamma\int_0^{ \tau_{r_\lambda}}  (V^\infty-V^{{\lambda}})(\rho_s) ds\big)\Big|\tau_{r_\lambda}<\infty\Big) \nn\\
&=\lim_{\lambda\to \infty} E_{|x|}^{(2+2\nu)}\Big(\exp\big(\gamma\int_0^{ \tau_{r_\lambda}}  (V^\infty-V^{{\lambda}})(\rho_s) ds\big)\Big|\tau_{r_\lambda}<\infty\Big)\nn\\
&=C_{\ref{c13.5}}(\lambda_0, \nu,\gamma)<\infty.
\end{align}
\end{lemma}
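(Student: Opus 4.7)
The plan is to reduce the entire problem, via scaling, to a single Bessel question in which both the starting point and the cutoff level depend on $\lambda$ only through the ratio $|x|/r_\lambda$. First, by the scaling identity \eqref{e12.1} applied with $r = r_\lambda$ and the fact that $\lambda r_\lambda^{4-d} = \lambda_0^{4-d}$,
\[
V^{\lambda}(r_\lambda y) = r_\lambda^{-2}\, V^{\lambda_0^{4-d}}(y), \qquad V^{\infty}(r_\lambda y) = r_\lambda^{-2}\, V^{\infty}(y).
\]
Combining this with the Brownian scaling $\tilde\rho_u := \rho_{r_\lambda^2 u}/r_\lambda$ for the Bessel$(2+2\nu)$ process (so $\tilde\tau_1 = \tau_{r_\lambda}/r_\lambda^2$ and $\tilde\rho$ is Bessel$(2+2\nu)$ starting from $|x|/r_\lambda$), a direct change of variables gives
\[
E_{|x|}^{(2+2\nu)}\!\Big(\exp\!\Big(\gamma\!\int_0^{\tau_{r_\lambda}}\!(V^\infty - V^{\lambda})(\rho_s)\,ds\Big)\Big|\tau_{r_\lambda}<\infty\Big) = F\!\left(\tfrac{|x|}{r_\lambda}\right),
\]
where
\[
F(\tilde r) := E_{\tilde r}^{(2+2\nu)}\!\Big(\exp\!\Big(\gamma\!\int_0^{\tau_1}\!(V^\infty - V^{\lambda_0^{4-d}})(\rho_s)\,ds\Big)\Big|\tau_1 < \infty\Big),\quad \tilde r > 1.
\]
Since $r_\lambda\downarrow 0$ monotonically as $\lambda\uparrow\infty$, for fixed $x\ne 0$ both $\sup_{\lambda>0}$ and $\lim_{\lambda\to\infty}$ of the original expression reduce to the corresponding supremum and limit of $F(\tilde r)$ as $\tilde r\to\infty$, so it suffices to prove $F(\tilde r)\uparrow C<\infty$ for a constant $C$ independent of $\tilde r$.

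For the uniform upper bound I would use Lemma~\ref{l12}: since $(4-d)\alpha = p-2$,
\[
(V^\infty - V^{\lambda_0^{4-d}})(y) \le c_{\ref{l12}}\, \lambda_0^{-(4-d)\alpha}\,|y|^{-p} = c_{\ref{l12}}\, \lambda_0^{\,2-p}\,|y|^{-p}.
\]
Define $c_{\ref{c13.5}} := (4 c_{\ref{l12}}/\nu^2)^{1/(p-2)}$; then for $\lambda_0 > c_{\ref{c13.5}}$ and any $\gamma \in (0,2]$ the coefficient $\gamma\, c_{\ref{l12}}\,\lambda_0^{2-p}$ satisfies $2\gamma\, c_{\ref{l12}}\,\lambda_0^{2-p} \le \nu^2$, so Lemma~\ref{l13.5}(ii) with $q = p > 2$ applies and gives
\[
F(\tilde r) \le E_{\tilde r}^{(2+2\nu)}\!\Big(\exp\!\Big(\gamma c_{\ref{l12}} \lambda_0^{2-p}\!\int_0^{\tau_1}\! \rho_s^{-p}\,ds\Big)\Big|\tau_1 < \infty\Big) \le C_{\ref{l13.5}}(p,\nu) < \infty
\]
for every $\tilde r\ge 1$, which yields the required uniform bound.

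For convergence as $\tilde r\to\infty$ I would show monotonicity of $F$ by the strong Markov property. For $\tilde r > R > 1$, continuity of the Bessel paths forces $\tau_R \le \tau_1$ and $\rho_{\tau_R} = R$ on $\{\tau_1<\infty\}$, and the strong Markov property at $\tau_R$ gives the multiplicative factorizations $P_{\tilde r}(\tau_1 < \infty) = P_{\tilde r}(\tau_R < \infty)\,P_R(\tau_1 < \infty)$ and $E_{\tilde r}[e^{\gamma\int_0^{\tau_1}(\cdots)}1_{\tau_1<\infty}] = E_{\tilde r}[e^{\gamma\int_0^{\tau_R}(\cdots)}1_{\tau_R<\infty}]\cdot E_R[e^{\gamma\int_0^{\tau_1}(\cdots)}1_{\tau_1<\infty}]$. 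Dividing through by $P_{\tilde r}(\tau_1 < \infty)$ yields
\[
F(\tilde r) = F(R)\cdot E_{\tilde r}^{(2+2\nu)}\!\Big(\exp\!\Big(\gamma\!\int_0^{\tau_R}\!(V^\infty - V^{\lambda_0^{4-d}})(\rho_s)\,ds\Big)\Big|\tau_R<\infty\Big) \ge F(R),
\]
since the exponential is $\ge 1$. Thus $F$ is nondecreasing, so by the uniform bound $F(\tilde r)\uparrow C_{\ref{c13.5}}(\lambda_0,\nu,\gamma) := \lim_{\tilde r\to\infty} F(\tilde r) < \infty$; for any fixed $x\ne 0$ both $\sup_\lambda$ and $\lim_{\lambda\to\infty}$ then equal this common value, independent of $x$. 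The only real obstacle is the bookkeeping of scaling exponents--using $(4-d)\alpha = p-2$ to convert the small parameter $\lambda_0^{-(4-d)\alpha}$ into the weight $|y|^{-p}$ that fits Lemma~\ref{l13.5}(ii) is what makes the uniform bound non-trivial.
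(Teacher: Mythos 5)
Your proposal is correct and follows essentially the same route as the paper's proof in the appendix: scale to reduce to a single function $f(\tilde r)$ of the ratio $|x|/r_\lambda$, establish monotonicity in $\tilde r$ via the strong Markov property at $\tau_R$, and obtain the uniform bound from Lemma~\ref{l12} combined with Lemma~\ref{l13.5}(ii) after choosing $\lambda_0$ large enough that $2\gamma c_{\ref{l12}}\lambda_0^{-(p-2)}\le 4c_{\ref{l12}}\lambda_0^{-(p-2)}<\nu^2$. No gaps.
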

\begin{proof}
It follows in a similar way to the proof of Lemma 4.5 in \cite{Hong20} (see more details for the proof in Appendix).
 \end{proof}

We also state a result on the application of Girsanov's theorem on Bessel process from \cite{Yor92} (see also Proposition 2.5 of \cite{MP17}).
\begin{lemma}\label{l12.4}
Let $\lambda\geq 0$, $\mu\in \R, r>0$ and $\nu=\sqrt{\lambda^2+\mu^2}$. If $\Phi_t\geq 0$ is $\cF_t^\rho$-adapted, then for all $R<r$, we have
\[E_r^{(2+2\mu)}\Big(\Phi_{t\wedge \tau_R} \exp\Big(-\frac{\lambda^2}{2} \int_0^{t\wedge \tau_R} \frac{1}{\rho_s^2} ds\Big)\Big)=r^{\nu-\mu} E_{r}^{(2+2\nu)} \Big((\rho_{t\wedge \tau_R})^{-\nu+\mu} \Phi_{t\wedge \tau_R}\Big).\]
\end{lemma}
The following result is an easy application of the above lemma and is proved in Proposition 4.3 of \cite{Hong20}.
%(Lemma 4.5 of \cite{HMP18} has proved the case when $g(r)=V^\infty(r)-ar^{-q}$ for any $a\in \R$ and $q>2$).
\begin{proposition}\label{p20.1}
Let $x\in \R^d-\{0\}$ and $0<\eps<|x|$. For any Borel measurable function $g: \R^+ \to \R$ bounded on $[r_0, r_0^{-1}]$ for any $r_0>0$, we have
\begin{align}
E_{x}\Big(&1(\tau_\eps<\infty)  \exp\big(-\int_0^{\tau_{\eps}} g(|B_s|) ds\big)\Big)\nn \\
&=\eps^p |x|^{-p}{E}_{|x|}^{(2+2\nu)}\Big( \exp\big(-\int_0^{\tau_{\eps}} (g(\rho_s)-V^\infty(\rho_s)) ds\big)\Big|\tau_\eps<\infty\Big),
\end{align}
where $B$ is a $d$-dimensional Brownian motion under $P_x$ for $d\leq 3$ and $\nu$ is as in \eqref{ev1.5}.
\end{proposition}
%\begin{proof}
%The proof follows in a similar way to Lemma 4.5 in \cite{HMP18} where $g(r)=V^\infty(r)-ar^{-q}$ is considered there for any $a\in \R$ and $q>2$.
%\end{proof}

%Now we are ready to show that

\begin{proposition}\label{p12}
There is some constant $c_{\ref{p12}}=c_{\ref{p12}}(d)>0$ such that
 \begin{equation*}
\lim_{\lambda\to \infty} \N_{0}\Big(\lambda^{1+\alpha} L^x e^{-\lambda L^x}\Big)= \lim_{\lambda \to \infty} \lambda^{1+\alpha} V_1^{{\lambda}}(x)= c_{\ref{p12}} |x|^{-p}, \forall x\neq 0.
\end{equation*}
\end{proposition}
\begin{proof}
Recall $r_\lambda=\lambda_0 \lambda^{-\frac{1}{4-d}}$. We use Lemma \ref{l12.2}, and the facts that $V_1^\lambda(x) \to 0$ as $|x| \to \infty$ and $V_1^{{\lambda}}(B_{t\wedge \tau_{r_\lambda}})$ is uniformly bounded for all $t\geq 0$ by Proposition \ref{p1.1}, to see that
\begin{align*}
&\lambda^{1+\alpha}V_1^{{\lambda}}(x)=\lambda^{1+\alpha}\lim_{t\to \infty} E_x\Big( V_1^{{\lambda}}(B_{t\wedge \tau_{r_\lambda}})\exp\big(-\int_0^{t\wedge \tau_{r_\lambda}} V^{{\lambda}}(B_s) ds\big)\Big)\\
&=\lambda^{1+\alpha}E_x\Big(1(\tau_{r_\lambda}<\infty) V_1^{{\lambda}}(B_{\tau_{r_\lambda}})\exp\big(-\int_0^{\tau_{r_\lambda}} V^{{\lambda}}(B_s) ds\big)\Big)\\
&=\lambda^{1+\alpha}V_1^{{\lambda}}(r_\lambda) E_{x}\Big(1(\tau_{r_\lambda}<\infty) \exp\big(-\int_0^{\tau_{r_\lambda}} V^{{\lambda}}(|B_s|) ds\big)\Big)\\
&=\lambda^{1+\alpha}V_1^{{\lambda}}(r_\lambda)\frac{r_\lambda^p}{|x|^p}E_{|x|}^{(2+2\nu)}\Big( \exp\big(\int_0^{ \tau_{r_\lambda}} (V^\infty-V^{{\lambda}})(\rho_s) ds\big)\Big|\tau_{r_\lambda}<\infty\Big),
\end{align*}
where the third equality is by the radial symmetry of $V^\lambda$ and $V_1^\lambda$. The last equality follows from Proposition \ref{p20.1} with $g=V^\lambda$. Use the scaling of $V_1^\lambda$ from \eqref{ef4.4} to see that the right-hand side of the above equals 
 \begin{align*}
& \lambda^{1+\alpha}r_\lambda^{2-d}V_1^{\lambda r_\lambda^{4-d}}(1)   \frac{r_\lambda^p}{|x|^p}  E_{|x|}^{(2+2\nu)}\Big(\exp\big(\int_0^{ \tau_{r_\lambda}} (V^\infty-V^{{\lambda}})(\rho_s)  ds\big)\Big|\tau_{r_\lambda}<\infty\Big)\nn\\
&=|x|^{-p}  \lambda_0^{p+2-d}  V_1^{\lambda_0^{4-d}}(1)  E_{|x|}^{(2+2\nu)}\Big(\exp\big(\int_0^{ \tau_{r_\lambda}} (V^\infty-V^{{\lambda}})(\rho_s) ds\big) \Big|\tau_{r_\lambda}<\infty\Big),
\end{align*}
where we have used the definition of $r_\lambda$ and $\alpha$ in the last equality.
 Choose $\lambda_0>c_{ \ref{c13.5}}$ and then apply Lemma \ref{c13.5} with $\gamma=1$ to conclude
   \begin{align}\label{6.3}
 \lim_{\lambda \to \infty} \lambda^{1+\alpha} V_1^{{\lambda}}(x)=  \lambda_0^{p+2-d} V_1^{\lambda_0^{4-d}}(1) C_{ \ref{c13.5}}(\lambda_0, \nu, 1)|x|^{-p},
  \end{align}
and so the proof is complete.
 \end{proof}
%We continue to the general initial condition case.
\begin{corollary}\label{ca1.3}
For any $x\in S(X_0)^c$ we have
\begin{align*}
\lim_{\lambda \to \infty} \E_{X_0}\Big(\lambda^{1+\alpha} L^x e^{-\lambda L^x}\Big) =  e^{-\int V^\infty(y-x) X_0(dy)}c_{\ref{p12}}\int |y-x|^{-p} X_0(dy).
\end{align*}
\end{corollary}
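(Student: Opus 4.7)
The plan is to reduce the corollary to the pointwise asymptotics already established in Proposition~\ref{p12} by first extracting a clean product formula from the Laplace transform identity~\eqref{ev1.0}. Differentiating both sides of~\eqref{ev1.0} in $\lambda$ (valid by monotone convergence and the convexity of $a\mapsto e^{-\lambda a}$, exactly as justified for \eqref{e12.2}) yields
\[
\E_{X_0}(L^x e^{-\lambda L^x}) \;=\; e^{-X_0(V^\lambda(x-\cdot))}\, X_0\bigl(V_1^\lambda(x-\cdot)\bigr).
\]
Multiplying through by $\lambda^{1+\alpha}$ separates the problem into the limits of two factors.

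For the exponential prefactor, since $x \in S(X_0)^c$ there exists $\delta>0$ with $|x-y|\ge\delta$ for all $y\in S(X_0)$, so by \eqref{ev1.3} the functions $V^\lambda(x-y)\le V^\infty(x-y)= \lambda_d|x-y|^{-2}$ are uniformly bounded by $\lambda_d\delta^{-2}$ on the support of $X_0$. Monotone convergence together with $V^\lambda\uparrow V^\infty$ (see \eqref{ev1.2}) gives $X_0(V^\lambda(x-\cdot))\to X_0(V^\infty(x-\cdot))$ and hence $e^{-X_0(V^\lambda(x-\cdot))} \to e^{-X_0(V^\infty(x-\cdot))}$. For the second factor, Proposition~\ref{p12} supplies the pointwise limit $\lambda^{1+\alpha}V_1^\lambda(x-y)\to c_{\ref{p12}}|x-y|^{-p}$ for $y\ne x$, and Proposition~\ref{p1.1} gives the uniform domination $\lambda^{1+\alpha}V_1^\lambda(x-y)\le c_{\ref{p1.1}}|x-y|^{-p}\le c_{\ref{p1.1}}\delta^{-p}$ on $S(X_0)$, which is $X_0$-integrable since $X_0$ is a finite measure. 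Dominated convergence then gives
\[
\lambda^{1+\alpha} X_0\bigl(V_1^\lambda(x-\cdot)\bigr) \longrightarrow c_{\ref{p12}} \int |x-y|^{-p}\, X_0(dy),
\]
and multiplying the two limits produces the stated formula.

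There is essentially no real obstacle here: the corollary is Proposition~\ref{p12} ``averaged'' against the initial measure via the Poisson cluster representation encoded in \eqref{ev1.0}. The only subtlety worth flagging is securing the integrable majorant needed to exchange limit and $X_0$-integration, which is exactly what the hypothesis $x\in S(X_0)^c$ and Proposition~\ref{p1.1} jointly provide; without $x\in S(X_0)^c$ both $V^\infty(x-y)$ and $|x-y|^{-p}$ would blow up on $S(X_0)$ and no such domination is available.
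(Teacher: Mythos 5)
Your proof is correct and follows essentially the same route as the paper: differentiate the Laplace transform identity \eqref{ev1.0} to obtain $\E_{X_0}(L^x e^{-\lambda L^x})=e^{-X_0(V^\lambda(x-\cdot))}X_0(V_1^\lambda(x-\cdot))$, handle the exponential factor by monotone convergence, and handle the integral factor by dominated convergence using the pointwise limit of Proposition~\ref{p12} and the domination of Proposition~\ref{p1.1}, which is integrable precisely because $x\in S(X_0)^c$. No gaps.
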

\begin{proof}
For any $x\in S(X_0)^c$, we have $d(x,S(X_0))>0$. %Similar to \eqref{e12.2}, we can 
Monotone convergence and the convexity of $e^{-ax}$ for $a,x>0$ allow us to differentiate \eqref{ev5.4} to get
\begin{align}\label{e1.2.7}
\E_{X_0}\Big(L^x e^{-\lambda L^x} \Big)=   \int V_1^{\lambda}(y-x) X_0(dy)e^{-\int V^{\lambda}(y-x) X_0(dy)}.
\end{align}
By Proposition \ref{p1.1}  we have
%\begin{equation}
$\lambda^{1+\alpha} V_1^\lambda(y-x) \leq c_{\ref{p1.1}} |y-x|^{-p}, \forall y\neq x, \lambda>0,$
%\end{equation}
and so by Proposition \ref{p12} we may apply Dominated Convergence to get
%to get
%\[\int |y-x|^{-p} X_0(dy) \leq \delta^{-p} X_0(1)<\infty,\]
%thus allowing us to take limits inside the integral to get
\begin{align}\label{e1.2.8}
\lim_{\lambda \to \infty}  \int \lambda^{1+\alpha} V_1^{\lambda }(y-x) X_0(dy)= \int  c_{\ref{p12}} |y-x|^{-p} X_0(dy).
\end{align}
Then it follows easily from \eqref{e1.2.7}, \eqref{e1.2.8}  and monotone convergence.
%\[\lim_{\lambda \to \infty} \exp\Big(-\int V^{\lambda}(y-x) X_0(dy)\Big)=\exp\Big(-\int V^{\infty}(y-x) X_0(dy)\Big),\]
%and so the proof is complete.
\end{proof}
% \no Finally we will give the
\subsection{Left tail of the local time}

 \begin{proof}[Proof of Proposition \ref{p4.1}]
 %The proof is similar to that of Proposition \ref{p12} and so we omit the prood and refer the reader to Appendix for more details.
 First recall $V^\lambda$ and $V^\infty$ from \eqref{ev1.0} and \eqref{ev1.2} to see that for all $|x|>0$, we have
 \begin{align}\label{ae4.2}
\lambda^{\alpha} \N_0\Big(e^{-\lambda L^x}  1(L^x>0)\Big)=\lambda^{\alpha} (V^\infty(x)-V^\lambda(x)).
 \end{align}
Let $d^\lambda(x)=V^\infty(x)-V^\lambda(x)$ and $r_\lambda$ be as in Lemma \ref{l12.2}. By the Feyman-Kac formula for $d^\lambda$ (as in (5.2) of \cite{MP17}), we get for $|x|>r_\lambda>0$,
 \begin{align}\label{ae4.3}
d^\lambda(x)&=d^\lambda(r_\lambda)E_x\Big(1_{\{\tau_{r_\lambda}<\infty\}} \exp\Big(-\int_0^{\tau_{r_\lambda}} \frac{(V^\infty+V^\lambda)(B_s)}{2} ds\Big)\Big).
 \end{align}
 By the scaling of $V^\lambda$ and $V^\infty$ and the definition of $r_\lambda$, we have
 \begin{align*}%\label{ae4.4}
 d^\lambda(r_\lambda)=(V^\infty-V^\lambda)(r_\lambda)=r_\lambda^{-2} (V^\infty(1)-V^{\lambda_0^{4-d}}(1))=r_\lambda^{-2} d^{\lambda_0^{4-d}}(1).
  \end{align*}
Use the above and \eqref{ae4.3} to see that
  \begin{align*}
&\lambda^{\alpha}d^\lambda(x)=\lambda^{\alpha} r_\lambda^{-2} d^{\lambda_0^{4-d}}(1)  E_x\Big(1_{\{\tau_{r_\lambda}<\infty\}} \exp\Big(-\int_0^{\tau_{r_\lambda}} \frac{(V^\infty+V^\lambda)(B_s)}{2} ds\Big)\Big)\nn\\
=&\lambda^{\alpha} r_\lambda^{-2} d^{\lambda_0^{4-d}}(1)r_\lambda^p |x|^{-p} E_{|x|}^{(2+2\nu)}\Big(\exp\Big(\int_0^{\tau_{r_\lambda}} \frac{(V^\infty-V^\lambda)(\rho_s)}{2} ds\Big)\Big|\tau_{r_\lambda}<\infty\Big)\nn\\
=&|x|^{-p}d^{\lambda_0^{4-d}}(1) \lambda_0^{p-2} E_{|x|}^{(2+2\nu)}\Big(\exp\Big(\int_0^{\tau_{r_\lambda}} \frac{(V^\infty-V^\lambda)(\rho_s)}{2} ds\Big)\Big|\tau_{r_\lambda}<\infty\Big),
 \end{align*}
%
% Apply Proposition \ref{p20.1} to get
% \begin{align}\label{ae4.5}
% &E_x\Big(1(\tau_{r_\lambda}<\infty) \exp\big(-\int_0^{\tau_{r_\lambda}} \frac{(V^\infty+V^\lambda)(B_s)}{2} ds\big)\Big)\nn\\
% =&r_\lambda^p |x|^{-p} E_{|x|}^{(2+2\nu)}\Big(\exp\big(\int_0^{\tau_{r_\lambda}} \frac{(V^\infty-V^\lambda)(\rho_s)}{2} ds\big)\Big|\tau_{r_\lambda}<\infty\Big).
% \end{align}
% Use \eqref{ae4.4} and \eqref{ae4.5} to see 
% 
where the second equality is by Proposition \ref{p20.1} and in the last equality we have used the definitions of $r_\lambda$ and $\alpha$. Choose $\lambda_0>c_{ \ref{c13.5}}$ so that we can apply Lemma \ref{c13.5} with $\gamma=1/2$ to get
  \begin{align}\label{ae8.2}
\lim_{\lambda\to \infty} \lambda^{\alpha}d^\lambda(x)=|x|^{-p} d^{\lambda_0^{4-d}}(1)\lambda_0^{p-2} C_{\ref{c13.5}}(\lambda_0, \nu,1/2).
 \end{align}
 Recalling \eqref{ae4.2}, we apply Tauberian theorem (see, e.g., Theorem 5.1 and 5.3 of Chp. XIII of \cite{Fel71}) to get
\begin{align}
\lim_{\lambda \to \infty} \lambda^{\alpha} \N_0\Big(0<L^x<1/\lambda\Big)=c_{\ref{p4.1}} |x|^{-p},  
 \end{align}
 where $c_{\ref{p4.1}}=(\Gamma(\alpha+1))^{-1}d^{\lambda_0^{4-d}}(1) \lambda_0^{p-2} C_{\ref{c13.5}}(\lambda_0, \nu,1/2)$ and the proof of (i) is complete. Turning to (ii) for $\P_{\delta_0}$, we note that for all $|x|>0$, by \eqref{ev1.0} and \eqref{ev1.2} we have
 \begin{align*}%\label{am4.2}
&\lambda^{\alpha} \E_{\delta_0}\Big(e^{-\lambda L^x}  1_{(L^x>0)}\Big)\\
&=\lambda^{\alpha} (e^{-V^\lambda(x)}-e^{-V^\infty(x)})=\lambda^{\alpha} e^{-V^\infty(x)}(e^{V^\infty(x)-V^\lambda(x)}-1)\\
& \to e^{-V^\infty(x)}|x|^{-p} d^{\lambda_0^{4-d}}(1) \lambda_0^{p-2} C_{\ref{c13.5}}(\lambda_0, \nu,1/2) \text{ as }\lambda \to\infty, 
 \end{align*}
 where the last line follows from \eqref{ae8.2}. Then an application of Tauberian theorem will give us (ii) and the proof is complete.
 \end{proof}
 
 \subsection{Mean measure for exit measure}
Now we will turn to the alternate model using exit measures. Recall from \eqref{ev5.4} that
\begin{align}\label{e1.2.5}
U^{\lambda \eps^{-2}, \eps}(x)=\N_{0}\Big(1-\exp(-\lambda \frac{X_{G_\eps^x}(1)}{\eps^{2}}) \Big), \forall |x|>\eps.
\end{align}
Similar to \eqref{e12.2}, we can differentiate the above with respect to $\lambda>0$ through the expectation so that for any $\lambda>0$ and for all $|x|>\eps$, we have
\begin{align}\label{er12.2}
 U_1^{\lambda \eps^{-2}, \eps}(x):=\frac{\partial}{\partial \lambda} U^{\lambda \eps^{-2}, \eps}(x)=\N_{0}\Big( \frac{X_{G_\eps^x}(1)}{\eps^{2}}\exp(-\lambda \frac{X_{G_\eps^x}(1)}{\eps^{2}}) \Big).
  \end{align}

By using Proposition \ref{pv0.2}(i), for any $|x|>\eps>0$ we have (more details can be found in the derivation of (4.2) in \cite{Hong20})
\begin{align}\label{er1.5}
&\N_0\Big(\frac{X_{G_\eps^x}(1)}{\eps^p} \exp(-\kappa \frac{X_{G_\eps^x}(1)}{\eps^{2}})1(X_{G_{\eps/2}^x}=0)\Big)\nn\\
=&\N_{0}\Big( \frac{X_{G_\eps^x}(1)}{\eps^{p}}\exp(-(\kappa+4U^{\infty,1}(2)) \frac{X_{G_\eps^x}(1)}{\eps^{2}}) \Big).
 \end{align}
The following result on the convergence of the mean measure of $\widetilde{\cL}(\kappa)$ is proved in Theorem 1.3 of \cite{Hong20}.
  \begin{proposition}\label{p1.0}
For any $\kappa>0$, there is some constant $C_{\ref{p1.0}}(\kappa)>0$ such that for all $x\neq 0$, 
 \begin{align*}%\label{e9.1.0}
&\lim_{\eps \downarrow 0} \N_0\Big(\frac{X_{G_\eps^x}(1)}{\eps^p} \exp(-\kappa \frac{X_{G_\eps^x}(1)}{\eps^{2}})1(X_{G_{\eps/2}^x}=0)\Big)=C_{\ref{p1.0}}(\kappa) |x|^{-p},
\end{align*}
and 
and for any $x\in S(X_0)^c$,
\begin{align*}
&\lim_{\eps\downarrow 0} \E_{X_0}\Big(\frac{X_{G_\eps^x}(1)}{\eps^p} \exp(-\kappa \frac{X_{G_\eps^x}(1)}{\eps^{2}})1(X_{G_{\eps/2}^x}=0) \Big)\nn\\
&\quad \quad \quad =  e^{-\int V^\infty(y-x) X_0(dy)}\int C_{\ref{p1.0}}(\kappa) |y-x|^{-p} X_0(dy).
\end{align*}
 Moreover, for any $\kappa>0$ and $x\neq 0$, we have
 \begin{align}\label{ce9.1.0}
& \N_0\Big(\frac{X_{G_\eps^x}(1)}{\eps^p} \exp(-\kappa \frac{X_{G_\eps^x}(1)}{\eps^{2}})1(X_{G_{\eps/2}^x}=0) \Big)\leq |x|^{-p}, \ \forall 0<\eps<|x|.
\end{align}
\end{proposition}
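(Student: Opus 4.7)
The plan is to parallel the proof of Proposition \ref{p12}, but for the exit-measure functional. Combining the definition \eqref{er12.2} with identity \eqref{er1.5} and setting $\gamma:=\kappa+4U^{\infty,1}(2)$, one has
\[\N_0\Big(\tfrac{X_{G_\eps^x}(1)}{\eps^p}\exp\big(-\kappa X_{G_\eps^x}(1)/\eps^{2}\big)\,1(X_{G_{\eps/2}^x}=0)\Big)=\eps^{2-p}\,U_1^{\gamma\eps^{-2},\eps}(x).\]
Differentiating the scaling identity \eqref{ev5.3} in $\lambda$ yields $U_1^{\mu,\eps}(x)=\eps^{-2}U_1^{\mu\eps^2,1}(x/\eps)$, so the display above becomes $\eps^{-p}U_1^{\gamma,1}(x/\eps)$, and the problem reduces to the asymptotic behavior of $U_1^{\gamma,1}(y)$ as $|y|\to\infty$.

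Differentiating the PDE \eqref{ev5.2} in $\lambda$ shows that $U_1^{\gamma,1}$ is a positive solution of $\Delta U_1^{\gamma,1}/2=U^{\gamma,1}\cdot U_1^{\gamma,1}$ on $\{|y|>1\}$ with boundary value $1$ on $\{|y|=1\}$. First I would establish a bound of the form $U_1^{\gamma,1}(y)\le C|y|^{-p}$ by the monotonicity-in-$\lambda$ argument used in Proposition \ref{p1.1}, combined with an exit-measure analog of Lemma \ref{l12} whose proof in \cite{MP17} adapts readily to $U^{\infty,1}-U^{\gamma,1}$. This bound then justifies the Feynman-Kac representation
\[U_1^{\gamma,1}(y)=U_1^{\gamma,1}(1)\,E_y\Big(1(\tau_1<\infty)\exp\big(-\textstyle\int_0^{\tau_1}U^{\gamma,1}(B_s)\,ds\big)\Big),\]
in which radial symmetry has been used to pull $U_1^{\gamma,1}(B_{\tau_1})=U_1^{\gamma,1}(1)$ out of the expectation. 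Applying Proposition \ref{p20.1} with $g=U^{\gamma,1}$ and $\eps=1$ converts this to the Bessel expectation
\[U_1^{\gamma,1}(y)=U_1^{\gamma,1}(1)\,|y|^{-p}\,E_{|y|}^{(2+2\nu)}\Big(\exp\big(\textstyle\int_0^{\tau_1}(V^\infty-U^{\gamma,1})(\rho_s)\,ds\big)\,\Big|\,\tau_1<\infty\Big).\]

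The main obstacle is proving that this Bessel expectation converges to a finite, strictly positive constant $C(\gamma)$ as $|y|\to\infty$; this is the direct analog of Lemma \ref{c13.5}, but with integrand $V^\infty-U^{\gamma,1}$ in place of $V^\infty-V^\lambda$. Uniform integrability is immediate from the pointwise bound $0\le V^\infty(r)-U^{\gamma,1}(r)\le V^\infty(r)=\lambda_d/r^{2}$ together with Lemma \ref{l13.5}(ii), which simultaneously delivers a uniform-in-$y$ upper estimate on the Bessel expectation and hence the pointwise bound \eqref{ce9.1.0}. For existence of the limit I would use Lemma \ref{l12.4} with $\lambda=2\nu$ to identify the Bessel law conditioned on $\{\tau_1<\infty\}$ as an $h$-transform governed by a $(2-2\nu)$-dimensional Bessel process, whose hitting time of $1$ has a well-defined limiting distribution as the starting radius diverges, and then invoke dominated convergence using the uniform bound from Lemma \ref{l13.5}(ii).

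For the $\P_{X_0}$ statement, differentiating the Laplace identity \eqref{ev5.4} in $\lambda$ gives
\[\E_{X_0}\big(X_{G_\eps^x}(1)\,e^{-\mu X_{G_\eps^x}(1)}\big)=X_0\big(U_1^{\mu,\eps}(\cdot-x)\big)\,e^{-X_0(U^{\mu,\eps}(\cdot-x))},\]
and I would pass to the limit $\eps\downarrow 0$ using the $\N_0$-asymptotic, the uniform bound \eqref{ce9.1.0}, the convergence $U^{\mu\eps^{-2},\eps}(\cdot-x)\uparrow V^\infty(\cdot-x)$, and dominated convergence, in exact parallel with Corollary \ref{ca1.3}.
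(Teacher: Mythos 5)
Your overall strategy is the right one, and it is essentially the route the paper takes implicitly: the paper itself does not prove this proposition but reduces it via \eqref{er1.5} to the asymptotics of $\N_0\big(\eps^{-p}X_{G_\eps^x}(1)e^{-\gamma X_{G_\eps^x}(1)/\eps^2}\big)$ with $\gamma=\kappa+4U^{\infty,1}(2)$ and then cites Theorem~1.3 of \cite{Hong20}, whose proof runs exactly through the Feynman--Kac representation, Proposition~\ref{p20.1}, and a Bessel limit of the type of Lemma~\ref{c13.5} (see Lemma~\ref{ac13.5} and the proof of Proposition~\ref{p12}). However, there is a genuine error at the heart of your treatment of the Bessel expectation: you claim $0\le V^\infty-U^{\gamma,1}\le V^\infty$, but the inequality goes the other way. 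Since $V^\infty$ restricted to $\{|y|\ge 1\}$ solves \eqref{ev5.2} with boundary value $V^\infty(1)=\lambda_d$, uniqueness gives $U^{\lambda_d,1}=V^\infty$ there, and because $\gamma\ge 4U^{\infty,1}(2)\ge\lambda_d$ and $\lambda\mapsto U^{\lambda,1}$ is increasing, one has $U^{\gamma,1}\ge V^\infty$ on $\{|y|\ge1\}$. Thus the exponent $\int_0^{\tau_1}(V^\infty-U^{\gamma,1})(\rho_s)\,ds$ is \emph{non-positive}: the upper bound \eqref{ce9.1.0} then follows trivially (the expectation is $\le 1$ and $U_1^{\gamma,1}(1)=1$), but the real issue becomes the \emph{lower} bound, i.e.\ showing the limit constant is strictly positive. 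Your proposal never addresses this, because with your (incorrect) sign the expectation is automatically $\ge 1$. What is actually needed is a bound $U^{\gamma,1}(r)-V^\infty(r)\le C r^{-q}$ with $q>2$ (a decay of order $r^{-2}$, which is all your bound would give after correcting the sign, is \emph{not} enough: by Lemma~\ref{l12.4}, $E_r^{(2+2\nu)}\big(e^{-c\int_0^{\tau_1}\rho_s^{-2}ds}\,\big|\,\tau_1<\infty\big)=r^{\nu-\sqrt{\nu^2+2c}}\to0$), followed by Lemma~\ref{l13.5}(iii). Such a bound does hold ($U^{\gamma,1}-V^\infty\le U^{\infty,1}-V^\infty\lesssim r^{-p}$, the exit-measure counterpart of Lemma~\ref{l12} from \cite{MP17}), but it must be invoked for the difference $U^{\gamma,1}-V^\infty$, not for $U^{\infty,1}-U^{\gamma,1}$ as in your a~priori step.

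Two further points. First, your argument for the \emph{existence} of the limit is not correct as stated: under the conditioned $(2+2\nu)$-Bessel law (equivalently the $(2-2\nu)$-Bessel law, via Corollary~\ref{c1.4} rather than ``Lemma~\ref{l12.4} with $\lambda=2\nu$''), the hitting time $\tau_1$ does not converge in distribution as the starting radius diverges --- it tends to infinity. The clean argument, used in the appendix proof of Lemma~\ref{c13.5}, is monotonicity in the starting radius: writing $f(r)$ for the conditioned expectation and applying the strong Markov property at $\tau_R$ shows $f(r)\le f(R)$ for $r>R$ (the extra factor is $\le1$ once the sign is corrected), so $f$ decreases to a limit, which is positive by the $r^{-p}$ bound above. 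Second, your $\P_{X_0}$ step differentiates \eqref{ev5.4}, which has no indicator; you must first repeat the reduction \eqref{er1.5} under $\P_{X_0}$ (via Proposition~\ref{pv0.1}(i)) before differentiating, exactly as in \eqref{e4.3.1}--\eqref{e4.3.2}. With these repairs the proof goes through and coincides with the one in \cite{Hong20}.
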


\section{Second moment convergence}\label{s4}

\noindent One important step in proving the existence of the limiting measure in Theorems \ref{t0}, \ref{t0.0.1} and Theorems \ref{tl1}, \ref{tl1.0.0}  is the exact convergence of the second moment measures, that is to say for any $x_1\neq x_2$, the limits
\begin{align}\label{1.1}
\begin{cases}
&\lim_{\lambda_1, \lambda_2 \to \infty} \lambda_1^{1+\alpha} \lambda_2^{1+\alpha}\N_x\Big( L^{x_1} L^{x_2} \exp\Big(-\sum_{i=1}^2 \lambda_i L^{x_i}\Big)\Big)\\
%\end{align}
%and
%\begin{align}\label{1.2}
&\lim_{\eps_1, \eps_2 \downarrow 0} \N_x\Big(\prod_{i=1}^2 \frac{X_{G_{\eps_i}^{x_i}}(1)}{\varepsilon_i^p}\exp\Big(-\kappa \frac{X_{G_{\eps_i}^{x_i}}(1)}{\varepsilon_i^2}\Big) 1(X_{G_{\eps_i/2}^{x_i}}=0)\Big)
\end{cases}
\end{align}
exist for all $x\neq x_1, x_2$. Similarly for any $x_1, x_2 \in S(X_0)^c$, the existence of the following limits is required for $\P_{\delta_0}$ and $\P_{X_0}$ case:
\begin{align}\label{m8.2}
\begin{cases}
& \lim_{\lambda_1, \lambda_2 \to \infty} \lambda_1^{1+\alpha} \lambda_2^{1+\alpha}\E_{X_0}\Big( L^{x_1} L^{x_2} \exp\Big(-\sum_{i=1}^2 \lambda_i L^{x_i}\Big)\Big)\\
%\end{align}
%and
%\begin{align}\label{m8.3}
&\lim_{\eps_1, \eps_2 \downarrow 0} \E_{X_0}\Big(\prod_{i=1}^2 \frac{X_{G_{\eps_i}^{x_i}}(1)}{\varepsilon_i^p}\exp\Big(-\kappa \frac{X_{G_{\eps_i}^{x_i}}(1)}{\varepsilon_i^2}\Big)1(X_{G_{\eps_i/2}^{x_i}}=0) \Big).
\end{cases}
\end{align}

We first introduce some notations. For $x_1\neq x_2$, we let $\vec{x}=(x_1,x_2)$ and $\vec{\lambda}=(\lambda_1,\lambda_2) \in [0, \infty)^2\backslash \{(0,0)\}$. % Now we give the following definitions.
%\begin{mydef}\label{d1.13}
%\begin{enumerate}[(1)]
%\item
Define $V^{\vec{\lambda},\vec{x}}\geq 0$ to be
\begin{align}\label{e9.1}
V^{\vec{\lambda},\vec{x}}(x) \equiv \N_x\Big(1-\exp\Big(-\sum_{i=1}^2 \lambda_i L^{x_i}\Big) \Big),\ \forall x\neq x_1, x_2,
\end{align} 
so that for any $X_0\in M_F$ with $d(x_i, S(X_0))>0, i=1,2,$ 
\begin{align}\label{e9.9.1}
\E_{X_0}\Big(\exp\Big(-\sum_{i=1}^2 \lambda_iL^{x_i}\Big) \Big)=\exp\Big(-X_0(V^{\vec{\lambda},\vec{x}})\Big),
\end{align} 
where \eqref{e9.9.1} follows by \eqref{ae4.1} (see also Lemma 9.1 of \cite{MP17}). Pick $\eps_1, \eps_2>0$ small enough so that $B(x_1, \eps_1) \cap B(x_2, \eps_2)=\emptyset$. Let $\vec{\eps}=(\eps_1,\eps_2)$ and $G=G_{\varepsilon_1}^{x_1} \cap G_{\varepsilon_2}^{x_2}$. Define $U^{\vec{\lambda},\vec{x},\vec{\varepsilon}}\geq 0$ to be
\begin{align}\label{e4.3}
U^{\vec{\lambda},\vec{x},\vec{\eps}}(x) \equiv \N_x\Big(1-\prod_{i=1}^2 \exp\Big(-\lambda_i \frac{X_{G_{\eps_i}^{x_i}}(1)}{\varepsilon_i^2}\Big)1(X_{G_{\eps_i/2}^{x_i}}=0) \Big),\ \forall x\in G
\end{align} 
so that for any $X_0\in M_F$ with $d(S(X_0), G^c)>0$,
\begin{align}\label{e4.3.1}
\E_{X_0}\Big(\prod_{i=1}^2\exp\Big(-\lambda_i \frac{X_{G_{\eps_i}^{x_i}}(1)}{\varepsilon_i^2}\Big)1(X_{G_{\eps_i/2}^{x_i}}=0) \Big)=\exp\Big(-X_0(U^{\vec{\lambda},\vec{x},\vec{\eps}})\Big).
\end{align} 
%\end{enumerate}
%\end{mydef}
\no The proof of \eqref{e4.3.1} follows easily from a monotone convergence theorem:
\begin{align*}%\label{ae5.5}
&\E_{X_0}\Big(\prod_{i=1}^2\exp\Big(-\lambda_i \frac{X_{G_{\eps_i}^{x_i}}(1)}{\varepsilon_i^2}\Big)1(X_{G_{\eps_i/2}^{x_i}}=0) \Big)\\
=&\lim_{n\to \infty} \E_{X_0}\Big(\exp\Big(-\sum_{i=1}^2\lambda_i \frac{X_{G_{\eps_i}^{x_i}}(1)}{\varepsilon_i^2}-\sum_{i=1}^2 nX_{G_{\eps_i/2}^{x_i}}(1)\Big)\Big)\nn\\
=&\lim_{n\to \infty}\exp\Big(-\int \N_x\Big(1- \exp\Big(-\sum_{i=1}^2\lambda_i \frac{X_{G_{\eps_i}^{x_i}}(1)}{\varepsilon_i^2}-\sum_{i=1}^2 n X_{G_{\eps_i/2}^{x_i}}(1)\Big)\Big)X_0(dx)\Big)\nn\\
=&\exp\Big(-X_0(U^{\vec{\lambda},\vec{x},\vec{\eps}})\Big),\nn
\end{align*}
where the second equality follows from the Poisson decomposition \eqref{ea1.2}. 

Monotone convergence and the convexity of $e^{-ax}$ for $a,x>0$ allow us to differentiate  \eqref{e4.3} with respect to $\lambda_i>0$ and then further differentiate with respect to $\lambda_{3-i}>0$ to get
%\begin{enumerate}[(1)]
%\item 
\begin{align}\label{e4.3.2}
&U_i^{\vec{\lambda},\vec{x},\vec{\eps}}(x):=\frac{d}{d\lambda_i} U^{\vec{\lambda},\vec{x},\vec{\eps}}(x)\nn\\
&=\N_x\Big(\frac{X_{G_{\eps_i}^{x_i}}(1)}{\varepsilon_i^2} \prod_{j=1}^2\exp\Big(-\lambda_j \frac{X_{G_{\eps_j}^{x_j}}(1)}{\varepsilon_j^2}\Big)1(X_{G_{\eps_j/2}^{x_j}}=0) \Big),\ i=1,2,
\end{align} 
and
\begin{align}\label{e3.5}
&U_{1,2}^{\vec{\lambda},\vec{x},\vec{\eps}}(x):=\frac{d^2}{d\lambda_1 d\lambda_2 } U^{\vec{\lambda},\vec{x},\vec{\eps}}(x)\nn\\
&=-\N_x\Big(\prod_{i=1}^2 \frac{X_{G_{\eps_i}^{x_i}}(1)}{\eps_i^2}\exp\Big(-\lambda_i \frac{X_{G_{\eps_i}^{x_i}}(1)}{\eps_i^2} \Big)1(X_{G_{\eps_i/2}^{x_i}}=0) \Big).
\end{align}
Similarly we can differentiate \eqref{e9.1} to get
%\item
\begin{align}\label{e4.4.1}
V_i^{\vec{\lambda},\vec{x}}(x):=\frac{d}{d\lambda_i} V^{\vec{\lambda},\vec{x}}(x)=\N_x\Big(L^{x_i}\exp\Big(-\sum_{j=1}^2 \lambda_j L^{x_j}\Big) \Big),\ i=1,2,
\end{align} 
and
\begin{align}\label{e4.4.2}
V_{1,2}^{\vec{\lambda},\vec{x}}(x):=\frac{d^2}{d\lambda_1 d\lambda_2 } V^{\vec{\lambda},\vec{x}}(x)=-\N_x\Big(L^{x_1} L^{x_2} \exp\Big(-\sum_{i=1}^2 \lambda_i L^{x_i}\Big) \Big).
\end{align}
%\end{enumerate}
For the general initial condition case, we can also differentiate \eqref{e9.9.1} and \eqref{e4.3.1} to get
\begin{align}\label{m8.0}
&\E_{X_0}\Big(\prod_{j=1}^2\frac{X_{G_{\eps_j}^{x_j}}(1)}{\varepsilon_j^2}\exp\Big(-\lambda_j \frac{X_{G_{\eps_j}^{x_j}}(1)}{\varepsilon_j^2}\Big)1(X_{G_{\eps_j/2}^{x_j}}=0)\Big)\nn\\
=&\exp\Big(-X_0(U^{\vec{\lambda},\vec{x},\vec{\eps}})\Big)\Big(X_0(U_1^{\vec{\lambda},\vec{x},\vec{\eps}})X_0(U_2^{\vec{\lambda},\vec{x},\vec{\eps}})-X_0(U_{1,2}^{\vec{\lambda},\vec{x},\vec{\eps}})\Big).
\end{align} 
and 
\begin{align}\label{m8.1}
&\E_{X_0}\Big(L^{x_1} L^{x_2} \exp\Big(-\sum_{i=1}^2 \lambda_i L^{x_i}\Big) \Big)\nn\\
=&\exp\Big(-X_0(V^{\vec{\lambda},\vec{x}})\Big)\Big(X_0(V_1^{\vec{\lambda},\vec{x}})X_0(V_2^{\vec{\lambda},\vec{x}})-X_0(V_{1,2}^{\vec{\lambda},\vec{x}})\Big).
\end{align} 
Hence one can see that it suffices to consider the convergence of $U_{i}^{\vec{\lambda},\vec{x},\vec{\eps}}(x)$, $V_{i}^{\vec{\lambda},\vec{x}}(x)$, $i=1, 2$ and $U_{1,2}^{\vec{\lambda},\vec{x},\vec{\eps}}(x)$, $V_{1,2}^{\vec{\lambda},\vec{x}}(x)$ for the proofs of \eqref{1.1} and \eqref{m8.2}. %We will apply Feyman-Kac formulas as in Lemma 9.4 and Lemma 9.5 of \cite{MP17} to prove the convergence. 

\begin{proposition}\label{p3.1}
Fix any $x_1\neq x_2$. 

\no (i) There exists some constant $K_{\ref{p3.1}}>0$ so that for all $x\neq x_1, x_2$, 
\begin{align*}
%\lim_{\lambda_1, \lambda_2 \to \infty} &\N_x\Big(  \lambda_i^{1+\alpha} %L^{x_i} \exp(-\sum_{i=1}^2 \lambda_i L^{x_i}) \Big)\nn\\
&\lim_{\lambda_1, \lambda_2 \to \infty}  \lambda_i^{1+\alpha} V_{i}^{\vec{\lambda},\vec{x}}(x)= K_{\ref{p3.1}}U_i^{\vec{\infty},\vec{x}}(x),\quad  i=1,2,
\end{align*}
where $U_i^{\vec{\infty},\vec{x}}$ is as in \eqref{eu1u2}. Moreover, $K_{\ref{p3.1}}=c_{\ref{p12}}$.
  
\no (ii) For any $\lambda_1, \lambda_2>0$, there exist some constants $C_{\ref{p3.1}}(\lambda_1), C_{\ref{p3.1}}(\lambda_2)>0$ such that  for all $x\neq x_1, x_2$, we have
\begin{align*}
%\lim_{\eps_1, \eps_2 \downarrow 0} &\N_x\Big( \frac{X_{G_{\eps_i}^{x_i}}(1)}{\varepsilon_i^p}\prod_{i=1}^2\exp\Big(- \lambda_i \frac{X_{G_{\eps_i}^{x_i}}(1)}{\varepsilon_i^2}\Big)1(X_{G_{\eps_i/2}^{x_i}}=0) \Big)\nn\\
\lim_{\eps_1, \eps_2 \downarrow 0} \frac{1}{\eps_{i}^{p-2}}U_i^{\vec{\lambda},\vec{x},\vec{\eps}}(x)=C_{\ref{p3.1}}(\lambda_i) U_i^{\vec{\infty},\vec{x}}(x), i=1,2.
\end{align*}
Moreover, the multiplicative constant $c_{\ref{tl1}}(\kappa)$ in Theorem \ref{tl1} is $C_{\ref{p3.1}}(\kappa)K_{\ref{p3.1}}^{-1}$.
\end{proposition}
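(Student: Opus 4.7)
The plan is to extend the Feynman--Kac/change-of-measure argument underlying the one-point asymptotic of Proposition \ref{p12} to the two-point setting. Differentiating the defining PDE for $V^{\vec{\lambda},\vec{x}}$ in $\lambda_i$ gives the linear equation $\tfrac{1}{2}\Delta V_i^{\vec{\lambda},\vec{x}} = V^{\vec{\lambda},\vec{x}}\,V_i^{\vec{\lambda},\vec{x}} - \delta_{x_i}$, so with $r_{\lambda_i}=\lambda_0\lambda_i^{-1/(4-d)}$ ($\lambda_0$ large and fixed) and $\tau_i=\inf\{t:|B_t-x_i|\leq r_{\lambda_i}\}$, the analogue of Lemma \ref{l12.2} yields
\[
V_i^{\vec{\lambda},\vec{x}}(x)=E_x\Bigl[V_i^{\vec{\lambda},\vec{x}}(B_{\tau_i})\exp\Bigl(-\int_0^{\tau_i}V^{\vec{\lambda},\vec{x}}(B_s)\,ds\Bigr);\tau_i<\infty\Bigr].
\]
Proposition \ref{p20.1} extends essentially verbatim to non-radial potentials (its proof uses only that $y\mapsto |y-x_i|^{-p}$ is harmonic for Brownian motion killed at rate $V^\infty(\cdot-x_i)$, which does not require radial symmetry of the remaining potential), converting the display into
\[
V_i^{\vec{\lambda},\vec{x}}(x)=\frac{r_{\lambda_i}^p}{|x-x_i|^p}\,\Hat{E}_{x-x_i}^{(2-2\nu)}\Bigl[V_i^{\vec{\lambda},\vec{x}}(Y_{\tau_{r_{\lambda_i}}}+x_i)\exp\Bigl(\int_0^{\tau_{r_{\lambda_i}}}\!\bigl[V^\infty(Y_s)-V^{\vec{\lambda},\vec{x}}(Y_s+x_i)\bigr]\,ds\Bigr)\Bigr].
\]

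Next I would split the potential as $V^{\vec{\lambda},\vec{x}}(y)=V^{\lambda_i}(y-x_i)+\Phi_i^{\vec{\lambda},\vec{x}}(y)$, where $\Phi_i^{\vec{\lambda},\vec{x}}$ carries only the $x_{3-i}$-singularity and is regular near $x_i$, and approximate $V_i^{\vec{\lambda},\vec{x}}(Y_{\tau_{r_{\lambda_i}}}+x_i)$ by the one-point quantity $V_1^{\lambda_i}(r_{\lambda_i})$. The exponential factorises into a \emph{radial} piece $\exp(\int_0^{\tau_{r_{\lambda_i}}}[V^\infty-V^{\lambda_i}](|Y_s|)\,ds)$ and a \emph{correction} piece $\exp(-\int_0^{\tau_{r_{\lambda_i}}}\Phi_i^{\vec{\lambda},\vec{x}}(Y_s+x_i)\,ds)$. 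After multiplying by $\lambda_i^{1+\alpha}$, the radial piece together with $V_1^{\lambda_i}(r_{\lambda_i})$ and the prefactor $r_{\lambda_i}^p$ is exactly the combination treated in the proof of Proposition \ref{p12}: by the scaling \eqref{ef4.4} and Lemma \ref{c13.5} with $\gamma=1$, its limit equals $\lambda_0^{p+2-d}V_1^{\lambda_0^{4-d}}(1)\,C_{\ref{c13.5}}(\lambda_0,\nu,1)=c_{\ref{p12}}$. For the correction piece, monotone convergence gives $\Phi_i^{\vec{\lambda},\vec{x}}\uparrow V^{\vec{\infty},\vec{x}}(\cdot)-V^\infty(\cdot-x_i)$ pointwise as $\lambda_1,\lambda_2\to\infty$, and the limit is bounded above by $V^\infty(\cdot-x_{3-i})$, which is a.s.\ integrable along $(Y_s+x_i)_{s\leq\tau_0}$ under $\Hat{P}_{x-x_i}^{(2-2\nu)}$ since in $d\geq 2$ the $Y$-path does not visit $x_{3-i}-x_i$. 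Combined with $\tau_{r_{\lambda_i}}\uparrow\tau_0$, dominated convergence identifies the limit of the correction factor with $|x-x_i|^p\,U_i^{\vec{\infty},\vec{x}}(x)$ via \eqref{eu1u2}, and altogether $\lim_{\lambda_1,\lambda_2\to\infty}\lambda_i^{1+\alpha}V_i^{\vec{\lambda},\vec{x}}(x)=c_{\ref{p12}}\,U_i^{\vec{\infty},\vec{x}}(x)$, giving $K_{\ref{p3.1}}=c_{\ref{p12}}$.

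Part (ii) proceeds along the same lines with $U_i^{\vec{\lambda},\vec{x},\vec{\eps}}$ in place of $V_i^{\vec{\lambda},\vec{x}}$: from $\tfrac{1}{2}\Delta U_i^{\vec{\lambda},\vec{x},\vec{\eps}}=U^{\vec{\lambda},\vec{x},\vec{\eps}}\,U_i^{\vec{\lambda},\vec{x},\vec{\eps}}$ on $G=G_{\eps_1}^{x_1}\cap G_{\eps_2}^{x_2}$ with boundary data encoding the $\lambda_i$'s, one obtains the analogous FK representation stopped at $\tau_{\eps_i}^{x_i}$, applies the non-radial extension of Proposition \ref{p20.1}, and splits off the one-point boundary piece $U^{\lambda_i\eps_i^{-2},\eps_i}(\cdot-x_i)$. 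Here $\eps_i$ (which is sent to $0$) plays the role of $r_{\lambda_i}$ while $\lambda_i$ is fixed, so by \eqref{er1.5} and Proposition \ref{p1.0} the radial one-point factor converges to a constant $C_{\ref{p3.1}}(\lambda_i)$ (different from $c_{\ref{p12}}$ because of the extra indicator $1(X_{G_{\eps_i/2}^{x_i}}=0)$), while the correction exponential converges to $U_i^{\vec{\infty},\vec{x}}(x)$ by the same dominated-convergence argument. Since the mean measures of $\widetilde{\cL}(\kappa)$ and $\cL$ then differ only by the factor $C_{\ref{p3.1}}(\kappa)/K_{\ref{p3.1}}$, this identifies the multiplicative constant in Theorem \ref{tl1} as $c_{\ref{tl1}}(\kappa)=C_{\ref{p3.1}}(\kappa)K_{\ref{p3.1}}^{-1}$.

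The main technical obstacle is justifying the replacement $V_i^{\vec{\lambda},\vec{x}}(Y_{\tau_{r_{\lambda_i}}}+x_i)\approx V_1^{\lambda_i}(r_{\lambda_i})$ (and its exit-measure analogue for part (ii)) uniformly in $\vec{\lambda}$, so that one may exchange limit and $\Hat{E}$-expectation. Heuristically $V_i^{\vec{\lambda},\vec{x}}(y)/V_1^{\lambda_i}(y-x_i)\to 1$ as $y\to x_i$, because the dominant Brownian-snake excursions contributing to $V_1^{\lambda_i}$ at scale $r_{\lambda_i}$ have spatial diameter $O(r_{\lambda_i})\ll|x_1-x_2|$ and so almost never reach $x_{3-i}$, making the extra factor $e^{-\lambda_{3-i}L^{x_{3-i}}}$ essentially $1$. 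Making this rigorous with a bound uniform in $\vec{\lambda}$ is where the real work lies, and will likely require a careful Brownian-snake comparison or an iterative FK bootstrap at the inner radius $r_{\lambda_i}$.
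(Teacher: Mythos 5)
Your architecture is the paper's: Feynman--Kac representation stopped at the inner radii, a genuinely non-radial change of measure (the paper devotes Section 7 and Proposition \ref{p8.1} to exactly the extension you assert of Proposition \ref{p20.1}), a factorisation of the exponential into a radial one-point piece and a two-point correction, and identification of the constant from the one-point asymptotics. However, the step you yourself flag as ``where the real work lies'' --- justifying the replacement of the boundary term by its one-point analogue and exchanging the limit with the $\Hat{E}^{(2-2\nu)}$-expectation --- is precisely the content of the proof, and you have not supplied it. The paper closes it in three separate moves, each of which you would need. First, the boundary term: $\eps_1^{2}U_1^{\vec\lambda,\vec x,\vec\eps}(Y_{\tau_{2\eps_1}}+x_1)$ is rewritten by Brownian-snake scaling (the analogue of \eqref{ev5.3}) as an $\N_{Y_{\tau_2}}$-expectation involving an exit measure centred at $(x_2-x_1)/\eps_1\to\infty$, which vanishes by compactness of the range; this gives convergence \emph{in probability to a constant}, which combined with the deterministic bound from \eqref{mc1.0.1} (namely $J_2\le 2^{-p}$) permits bounded convergence. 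A soft statement that ``the dominant excursions have diameter $O(r_{\lambda_i})$'' does not substitute for this. Second, the radial exponential cannot simply be ``pulled out'': in part (ii) it is $\le 1$, but its limit must be identified as a constant \emph{multiplying} the expectation of the correction factor, which the paper does by cutting at a fixed radius $\delta$, applying the strong Markov property at $\tau_\delta$, converting the inner expectation to a conditioned Bessel$(2+2\nu)$ computation via Corollary \ref{c1.4}, and then letting $\delta\downarrow 0$ uniformly in $\eps_1$. Third, and this your proposal misses entirely for part (i): there the radial factor is $\exp(\int_0^{\tau_{r_{\lambda_1}}}(V^\infty-V^{\lambda_1})(Y_s)\,ds)\ge 1$ and \emph{unbounded}, so bounded convergence fails; the paper instead establishes a uniform $L^2$ bound via Lemma \ref{c13.5} with $\gamma=2$ and concludes by Cauchy--Schwarz. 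Your treatment of parts (i) and (ii) as symmetric overlooks this asymmetry.

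A smaller but genuine error: your final sentence identifies $c_{\ref{tl1}}(\kappa)=C_{\ref{p3.1}}(\kappa)K_{\ref{p3.1}}^{-1}$ on the grounds that the mean measures of $\widetilde{\cL}(\kappa)$ and $\cL$ differ by that factor. Two random measures with proportional first-moment measures need not be a.s.\ proportional. The paper proves $C_{\ref{p3.1}}(\kappa)\cL=K_{\ref{p3.1}}\widetilde{\cL}(\kappa)$ a.s.\ by showing $C_{\ref{p3.1}}(\kappa)\cL^{\lambda}(\psi)-K_{\ref{p3.1}}\widetilde{\cL}(\kappa)^{\eps}(\psi)\to 0$ in $L^2$, which requires the \emph{mixed} second moment asymptotics of Proposition \ref{p3.3}(ii) (the $W^{\vec\lambda,\vec x,\eps}$ machinery), not just the first moments.
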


\begin{proposition}\label{p3.2}

Fix any $x_1\neq x_2$. For all $x\neq x_1, x_2$, we have
\begin{align*}
%\lim_{\lambda_1, \lambda_2 \to \infty} &\N_x\Big(  \lambda_1^{1+\alpha} \lambda_2^{1+\alpha} L^{x_1} L^{x_2}e^{-\lambda_1 L^{x_1}}e^{-\lambda_2 L^{x_2}}\Big)\nn\\
(i) &\lim_{\lambda_1, \lambda_2 \to \infty}  \lambda_1^{1+\alpha} \lambda_2^{1+\alpha} (-V_{1,2}^{\vec{\lambda},\vec{x}}(x))= K_{\ref{p3.1}}^2(-U_{1,2}^{\vec{\infty},\vec{x}}(x)).\\
 (ii) &
%\begin{align*}
%\lim_{\eps_1, \eps_2 \downarrow 0} &\N_x\Big(\prod_{i=1}^2 \frac{X_{G_{\eps_i}^{x_i}}(1)}{\varepsilon_i^p}\exp\Big(- \lambda_i \frac{X_{G_{\eps_i}^{x_i}}(1)}{\varepsilon_i^2}\Big)1(X_{G_{\eps_i/2}^{x_i}}=0)\Big)\nn\\
\lim_{\eps_1, \eps_2 \downarrow 0} \frac{1}{\eps_1^{p-2}}\frac{1}{\eps_2^{p-2}} (-U_{1,2}^{\vec{\lambda},\vec{x},\vec{\eps}}(x))=C_{\ref{p3.1}}(\lambda_1) C_{\ref{p3.1}}(\lambda_2)(-U_{1,2}^{\vec{\infty},\vec{x}}(x)).
\end{align*}
Here $U_{1,2}^{\vec{\infty},\vec{x}}$ is as in \eqref{eu12} and there is some universal constant $c_{\ref{p3.2}}>0$ such that for all $x\neq x_1, x_2$,
\begin{align}\label{eu12b}
0\leq -U_{1,2}^{\vec{\infty},\vec{x}}(x)\leq c_{\ref{p3.2}} (|x-x_1|^{-p}+|x-x_2|^{-p}) |x_1-x_2|^{2-p}.
\end{align}
\end{proposition}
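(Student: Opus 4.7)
My plan is to derive a Feynman--Kac representation for both $-V_{1,2}^{\vec{\lambda},\vec{x}}$ and $-U_{1,2}^{\vec{\lambda},\vec{x},\vec{\eps}}$ and then pass to the limit using the one-variable convergence results of Proposition \ref{p3.1}. Differentiating the mild PDE $\Delta V^{\vec{\lambda},\vec{x}}/2 = (V^{\vec{\lambda},\vec{x}})^2/2 - \lambda_1\delta_{x_1} - \lambda_2\delta_{x_2}$ once in $\lambda_i$ yields $\Delta V_i^{\vec{\lambda},\vec{x}}/2 = V^{\vec{\lambda},\vec{x}} V_i^{\vec{\lambda},\vec{x}} - \delta_{x_i}$, and differentiating a second time kills both point sources, leaving
\begin{align*}
\tfrac{\Delta}{2} V_{1,2}^{\vec{\lambda},\vec{x}} - V^{\vec{\lambda},\vec{x}} V_{1,2}^{\vec{\lambda},\vec{x}} = V_1^{\vec{\lambda},\vec{x}} V_2^{\vec{\lambda},\vec{x}}.
\end{align*}
Classical Feynman--Kac then produces
\begin{align*}
-V_{1,2}^{\vec{\lambda},\vec{x}}(x) = E_x\Bigl[\int_0^\infty V_1^{\vec{\lambda},\vec{x}}(B_t) V_2^{\vec{\lambda},\vec{x}}(B_t) \exp\Bigl(-\int_0^t V^{\vec{\lambda},\vec{x}}(B_s)\,ds\Bigr)dt\Bigr],
\end{align*}
and an entirely analogous derivation from $\Delta U^{\vec{\lambda},\vec{x},\vec{\eps}}/2 = (U^{\vec{\lambda},\vec{x},\vec{\eps}})^2/2$ on $G = G_{\eps_1}^{x_1}\cap G_{\eps_2}^{x_2}$ gives the same identity for $-U_{1,2}^{\vec{\lambda},\vec{x},\vec{\eps}}$ with the time integral stopped at $\tau_G = \inf\{t:B_t\notin G\}$; the mixed-partial boundary term on $\partial G$ vanishes because each boundary piece in \eqref{e4.3} depends on only one of $\lambda_1,\lambda_2$.

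For part (i), I multiply by $\lambda_1^{1+\alpha}\lambda_2^{1+\alpha}$ and push the scalars inside the expectation. Proposition \ref{p3.1}(i) gives $\lambda_i^{1+\alpha} V_i^{\vec{\lambda},\vec{x}}(y) \to K_{\ref{p3.1}} U_i^{\vec{\infty},\vec{x}}(y)$ pointwise in $y$, and monotone convergence in the probabilistic formula \eqref{e9.1} yields $V^{\vec{\lambda},\vec{x}}\uparrow V^{\vec{\infty},\vec{x}}$. The $\vec\lambda$-uniform bound
\begin{align*}
\lambda_i^{1+\alpha} V_i^{\vec{\lambda},\vec{x}}(y) \leq \lambda_i^{1+\alpha} V_1^{\lambda_i}(y-x_i) \leq c_{\ref{p1.1}} |y-x_i|^{-p}
\end{align*}
follows from $\N_y(L^{x_i}e^{-\sum_j\lambda_j L^{x_j}}) \leq \N_y(L^{x_i}e^{-\lambda_i L^{x_i}}) = V_1^{\lambda_i}(y-x_i)$ combined with Proposition \ref{p1.1}. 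Together with the monotone upper bound $\exp(-\int_0^t V^{\vec{\lambda},\vec{x}}(B_s)ds)\leq\exp(-\int_0^t V^{\vec{\lambda_0},\vec{x}}(B_s)ds)$ for any fixed $\lambda_0>0$ and $\lambda_1,\lambda_2\geq\lambda_0$, dominated convergence applies provided the limiting integrand is $E_x\otimes dt$-integrable---this finiteness is exactly \eqref{eu12b}. Part (ii) is handled in the same way with $\eps_i^{2-p}$-rescaling, Proposition \ref{p3.1}(ii) for pointwise convergence, \eqref{ce9.1.0} for the uniform bound, and the facts that $U^{\vec{\lambda},\vec{x},\vec{\eps}}\uparrow V^{\vec{\infty},\vec{x}}$ pointwise on $\R^d\setminus\{x_1,x_2\}$ and $\tau_G\uparrow\infty$ as $\eps_1,\eps_2\downarrow 0$.

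The main obstacle is establishing the bound \eqref{eu12b}. The pointwise estimate $U_i^{\vec{\infty},\vec{x}}(y)\leq|y-x_i|^{-p}$ comes directly from \eqref{eu1u2}: the integrand in the exponential is nonnegative since $V^{\vec{\infty},\vec{x}}\geq V^\infty(\cdot-x_i)$, so the expectation factor is $\leq 1$. Using this, I would split $\R^d$ into $R_i = B(x_i,|x_1-x_2|/2)$ for $i=1,2$ and the complement $R_0$: on $R_0$ both factors are $\leq C|x_1-x_2|^{-p}$; on $R_1$ one bounds $U_2^{\vec{\infty},\vec{x}} \leq C|x_1-x_2|^{-p}$ and keeps $U_1^{\vec{\infty},\vec{x}}(y)\leq|y-x_1|^{-p}$, and symmetrically on $R_2$. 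Each of the three resulting pieces becomes a first-moment-style Feynman--Kac integral that I would evaluate along the lines of Proposition \ref{p12}---replace $V^{\vec{\infty},\vec{x}}$ on the exponential by its lower bound $V^\infty(\cdot-x_i)$, apply the Bessel-process transform of Proposition \ref{p20.1} with cutoff radius $|x_1-x_2|/2$, and use Lemma \ref{c13.5} to control the Bessel expectations. The delicate point is that the resulting Girsanov factor $(|x_1-x_2|/|x-x_i|)^p$ together with the scaling contribution from integrating $|y-x_i|^{-p}$ against the Bessel occupation measure inside the cutoff ball must combine to yield precisely the $|x_1-x_2|^{2-p}|x-x_i|^{-p}$ contribution, and I expect this exponent bookkeeping to be the most delicate technical step.
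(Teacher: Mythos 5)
Your overall architecture is the same as the paper's: a Feynman--Kac identity for the mixed derivative, pointwise convergence of the one-variable factors from Proposition~\ref{p3.1} and of the potential from Lemma~\ref{l7.5}, and a (generalized) dominated convergence argument whose dominating function is controlled by the same estimate that yields \eqref{eu12b}. However, there are two concrete gaps. First, your claim that the boundary term ``vanishes because each boundary piece in \eqref{e4.3} depends on only one of $\lambda_1,\lambda_2$'' is false. For $y\in\partial B(x_1,\eps_1)$ the snake started at $y$ still charges the exit measure from $G_{\eps_2}^{x_2}$, so $U^{\vec{\lambda},\vec{x},\vec{\eps}}(y)$ depends on both parameters and $U_{1,2}^{\vec{\lambda},\vec{x},\vec{\eps}}$ does not vanish on $\partial G$; moreover $U^{\vec\lambda,\vec x,\vec\eps}$ has no classical boundary data there. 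The paper sidesteps this by stopping at radius $2\eps_i$, strictly inside $G$ (Lemma~\ref{l4.4}), which produces a genuinely nonzero boundary term $K_2$ that must then be shown to tend to $0$ after the $\eps_1^{2-p}\eps_2^{2-p}$ rescaling, using Lemma~\ref{l4.2} (which gives $-U_{1,2}^{\vec\lambda,\vec x,\vec\eps}(B_{T_\eps})\le c\lambda_i^{-1}\Delta^{-p}\eps_{3-i}^{p-2}$) together with the hitting estimate $E_x(1_{\{T^i_{2\eps_i}<\infty\}}e^{-\int V^\infty})\le (2\eps_i/|x-x_i|)^p$; the net power is $\eps_i^2\to 0$. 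The same issue appears in part (i): the identity of Lemma~\ref{l4.5} is stopped at radius $r_{\lambda_i}$ and carries a boundary term that is only killed in the limit $\lambda_i\to\infty$, so your unstopped formula $-V_{1,2}^{\vec\lambda,\vec x}(x)=E_x[\int_0^\infty\cdots\,dt]$ is a conclusion of that analysis rather than a valid starting point for finite $\lambda$.

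Second, you do not actually prove the estimate \eqref{eu12b}, which also furnishes the dominating function; you only identify it as ``the most delicate technical step.'' After the region split one must show $\int_0^\infty E_{x-x_i}\bigl(|B_t|^{-p}(|B_t|^{-p}\wedge\Delta^{-p})\exp(-\int_0^t V^\infty(B_s)\,ds)\bigr)dt\le C\Delta^{2-p}|x-x_i|^{-p}$, and the tool you cite, Proposition~\ref{p20.1}, only transforms functionals evaluated at a hitting time, not occupation-time integrals $\int_0^\infty(\cdots)\,dt$. What is actually needed is the fixed-time change of measure of Lemma~\ref{l12.4}, producing $E^{(2+2\nu)}_{|x-x_i|}(\rho_t^{-q-\nu+\mu}\exp(\int_0^t(V^\infty-V^\lambda)(\rho_s)ds)1_{\{t\le\tau\}})$, integrated in $t$ and bounded via Lemma~\ref{l3.1} (the paper imports the finished inequality from (S.18) and (S.20) of the supplement to \cite{HMP18}). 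Since this bound carries the entire quantitative content of the proposition --- both the finiteness of $-U_{1,2}^{\vec\infty,\vec x}$ and the domination needed to pass to the limit --- it cannot be left as exponent bookkeeping.
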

The proofs of Propositions \ref{p3.1} and \ref{p3.2} are long and involving and will be deferred to Sections \ref{s8} and \ref{s9}. 

In order to prove that $\widetilde \cL(\kappa)=c_{\ref{tl1}}(\kappa) \cL$ a.s., we implement ideas from the above: For any $x_1\neq x_2$, $\lambda_1, \lambda_2>0$ and $0<\eps<|x_2-x_1|/4$, we define $W^{\vec{\lambda},\vec{x},{\eps}}\geq 0$ for all $x\neq x_1$ and $|x-x_2|>\eps$ by
\begin{align}\label{e9.2}
W^{\vec{\lambda},\vec{x},{\eps}}(x) \equiv \N_x\Big(1-e^{-\lambda_1 L^{x_1}}\exp\Big(- \lambda_2\frac{X_{G_{\eps}^{x_2}}(1)}{\eps^2}\Big)1(X_{G_{\eps/2}^{x_2}}=0) \Big),
\end{align} 
so that for any $X_0\in M_F$ with $d(x_1, S(X_0))>0$ and $\overline{B(x_2, \eps)}\subset S(X_0)^c$,
\begin{align}\label{e10.01}
\E_{X_0}\Big(\exp\Big(-\lambda_1 L^{x_1}-\lambda_{2} \frac{X_{G_{\eps}^{x_{2}}}(1)}{\eps^2}\Big)1(X_{G_{\eps/2}^{x_2}}=0) \Big)=e^{-X_0(W^{\vec{\lambda},\vec{x},{\eps}})},
\end{align} 
where \eqref{e10.01} follows as in \eqref{e4.3.1}. 
Similar to \eqref{e4.3.2} and \eqref{e3.5}, we can differentiate \eqref{e9.2} with respect to $\lambda_i>0$ and then further differentiate with respect to $\lambda_{3-i}>0$ to get
\begin{align*}%\label{e4.5.1}
\begin{cases}
&W_1^{\vec{\lambda},\vec{x},{\eps}}(x):=\frac{d}{d\lambda_1} W^{\vec{\lambda},\vec{x},{\eps}}(x)\\
&\quad \quad \quad \quad \quad=\N_x\Big(L^{x_1}e^{-\lambda_1 L^{x_1}}\exp\Big(- \lambda_2\frac{X_{G_{\eps}^{x_2}}(1)}{\eps^2}\Big)1(X_{G_{\eps/2}^{x_2}}=0) \Big),\\
& W_2^{\vec{\lambda},\vec{x},{\eps}}(x):=\frac{d}{d\lambda_2} W^{\vec{\lambda},\vec{x},{\eps}}(x)\\
&\quad \quad \quad \quad \quad =\N_x\Big(\frac{X_{G_{\eps}^{x_2}}(1)}{\eps^2}\exp\Big(- \lambda_2\frac{X_{G_{\eps}^{x_2}}(1)}{\eps^2}\Big)1(X_{G_{\eps/2}^{x_2}}=0) e^{-\lambda_1 L^{x_1}}\Big),
\end{cases}
\end{align*} 
 and
\begin{align}%\label{e4.5.3}
&W_{1,2}^{\vec{\lambda},\vec{x},{\eps}}(x):=\frac{d^2}{d\lambda_1 d\lambda_2 } W^{\vec{\lambda},\vec{x},{\eps}}(x)\nn\\
=&-\N_x\Big(L^{x_1} e^{-\lambda_1 L^{x_1}}\frac{X_{G_{\eps}^{x_2}}(1)}{\eps^2}\exp\Big(-\lambda_2\frac{X_{G_{\eps}^{x_2}}(1)}{\eps^2}\Big)1(X_{G_{\eps/2}^{x_2}}=0) \Big).
\end{align}

\no For the general initial condition case, we can differentiate \eqref{e10.01} to get
\begin{align}\label{m8.00}
&\E_{X_0}\Big(L^{x_1}e^{-\lambda_1 L^{x_1}} \frac{X_{G_{\eps}^{x_2}}(1)}{\eps^2}\exp\Big(-\lambda_2\frac{X_{G_{\eps}^{x_2}}(1)}{\eps^2}\Big)1(X_{G_{\eps/2}^{x_2}}=0) \Big)\nn\\
& =\exp\Big(-X_0(W^{\vec{\lambda},\vec{x},\eps})\Big)\Big(X_0(W_{1}^{\vec{\lambda},\vec{x},\eps})X_0(W_{2}^{\vec{\lambda},\vec{x},\eps})-X_0(W_{1,2}^{\vec{\lambda},\vec{x},\eps})\Big).
\end{align} 

\no We will also need the following mixture of Propositions \ref{p3.1} and \ref{p3.2}.

\begin{proposition}\label{p3.3}
Fix any $x_1\neq x_2$. For all $x\neq x_1, x_2$, we have
%\begin{enumerate}[(i)]
%\item
%(i)
 \begin{align*}
%\lim_{\lambda_1 \to \infty, \eps \downarrow 0} &\N_x\Big(\lambda_1^{1+\alpha} L^{x_1}e^{-\lambda_1 L^{x_1}}\exp\Big(-\lambda_2 \frac{X_{G_{\eps}^{x_2}}(1)}{\varepsilon^2}\Big)1(X_{G_{\eps/2}^{x_2}}=0) \Big)\nn \\
(i) &\begin{cases}
&\lim_{\lambda_1 \to \infty, \eps \downarrow 0} \lambda_1^{1+\alpha} W_1^{\vec{\lambda},\vec{x},\eps}(x)=K_{\ref{p3.1}} U_{1}^{\vec{\infty},\vec{x}}(x)\\
&\lim_{\lambda \to \infty, \eps \downarrow 0} \frac{1}{\eps^{p-2}}W_2^{\vec{\lambda},\vec{x},\eps}(x)=C_{\ref{p3.1}}(\lambda_2)U_{2}^{\vec{\infty},\vec{x}}(x).\\
\end{cases}\\
%\end{align*}
%\item
%(ii) \begin{align}
%\lim_{\lambda_1 \to \infty, \eps \downarrow 0} &\N_x\Big( \frac{X_{G_{\eps}^{x_2}}(1)}{\varepsilon^p} \exp\Big(-\lambda_2 \frac{X_{G_{\eps}^{x_2}}(1)}{\varepsilon^2}\Big)1(X_{G_{\eps/2}^{x_2}}=0)  e^{-\lambda_1 L^{x_1}}\Big)\nn\\
%\end{align}
%\item
%(ii)
% \begin{align*}
%\lim_{\lambda_1 \to \infty, \eps \downarrow 0} &\N_x\Big(\lambda_1^{1+\alpha} L^{x_1}e^{-\lambda_1 L^{x_1}}  \frac{X_{G_{\eps}^{x_2}}(1)}{\varepsilon^p}\exp\Big(-\lambda_2 \frac{X_{G_{\eps}^{x_2}}(1)}{\varepsilon^2}\Big)1(X_{G_{\eps/2}^{x_2}}=0)  \Big)\nn\\
(ii) &\lim_{\lambda_1 \to \infty, \eps \downarrow 0} \lambda_1^{1+\alpha}\frac{1}{\eps^{p-2}}(-W_{1,2}^{\vec{\lambda},\vec{x},\eps}(x))=K_{\ref{p3.1}}C_{\ref{p3.1}}(\lambda_2)(-U_{1,2}^{\vec{\infty},\vec{x}}(x)).
\end{align*}
%\end{enumerate}
\end{proposition}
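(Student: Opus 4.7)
My plan is to combine the techniques used for Propositions \ref{p3.1} and \ref{p3.2}, since $W^{\vec{\lambda},\vec{x},\eps}$ is a hybrid interpolating between the local-time log-Laplace $V^{\vec{\lambda},\vec{x}}$ and the exit-measure log-Laplace $U^{\vec{\lambda},\vec{x},\vec{\eps}}$. The first step is to derive Feynman--Kac integral equations for $W_1$, $W_2$ and $W_{1,2}$. From \eqref{e9.2}, $W^{\vec{\lambda},\vec{x},\eps}$ is the log-Laplace functional of an SBM quantity that combines a $\lambda_1\delta_{x_1}$-source (from $e^{-\lambda_1 L^{x_1}}$) with Dirichlet data $\lambda_2/\eps^2$ on $\partial B(x_2,\eps)$ and an infinite boundary condition on $\partial B(x_2,\eps/2)$ enforcing the indicator $1(X_{G_{\eps/2}^{x_2}}=0)$. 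Differentiating this PDE in $\lambda_1$ and/or $\lambda_2$ linearizes it and yields representations for $W_1, W_2, W_{1,2}$ as Brownian expectations weighted by $\exp(-\int_0^t W^{\vec{\lambda},\vec{x},\eps}(B_s)\,ds)$, directly parallel to Lemma \ref{l12.2} for $V_1^\lambda$ and its exit-measure counterpart.

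For part (i), for $W_1^{\vec{\lambda},\vec{x},\eps}$ I follow the proof of Proposition \ref{p12}: stop the Brownian motion in the Feynman--Kac formula at $\tau_{r_{\lambda_1}}^{x_1}$ with $r_{\lambda_1}=\lambda_0\lambda_1^{-1/(4-d)}$, and use that near $x_1$, $W_1^{\vec{\lambda},\vec{x},\eps}$ is approximately radially symmetric and agrees to leading order with $V_1^{\lambda_1}$, so $W_1^{\vec{\lambda},\vec{x},\eps}(B_{\tau_{r_{\lambda_1}}^{x_1}})\approx V_1^{\lambda_1}(r_{\lambda_1})$. The scaling \eqref{ef4.4} yields the $\lambda_1^{-(1+\alpha)}$ prefactor, and applying Proposition \ref{p20.1} converts the Brownian expectation to a $(2+2\nu)$-Bessel expectation with exponential weight $\exp(\int_0^{\tau_{r_{\lambda_1}}}(V^\infty - W^{\vec{\lambda},\vec{x},\eps})(\rho_s)\,ds)$. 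In the joint limit $\lambda_1\to\infty$, $\eps\downarrow 0$ one expects $W^{\vec{\lambda},\vec{x},\eps}$ to approach $V^{\vec{\infty},\vec{x}}$ inside this integral (away from $x_2$, where the Brownian path necessarily lies when $r_{\lambda_1}$ is small), reproducing the defining formula \eqref{eu1u2} for $U_1^{\vec{\infty},\vec{x}}$ as the Bessel process $Y$ of \eqref{be1.0} emerges via Girsanov. The limit for $W_2^{\vec{\lambda},\vec{x},\eps}$ is symmetric: stop at an annulus around $x_2$ and use Proposition \ref{p1.0} in place of Proposition \ref{p12} to produce the factor $C_{\ref{p3.1}}(\lambda_2)$.

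For part (ii), differentiating the Feynman--Kac representation once more yields a formula for $-W_{1,2}^{\vec{\lambda},\vec{x},\eps}$ paralleling \eqref{eu12}:
\[
-W_{1,2}^{\vec{\lambda},\vec{x},\eps}(x)=E_x\Big(\int_0^\infty W_1^{\vec{\lambda},\vec{x},\eps}(B_t)\,W_2^{\vec{\lambda},\vec{x},\eps}(B_t)\,\exp\Big(-\int_0^t W^{\vec{\lambda},\vec{x},\eps}(B_s)\,ds\Big)\,dt\Big).
\]
The two factors $W_i^{\vec{\lambda},\vec{x},\eps}(B_t)$ converge pointwise by part (i), and the exponential weight converges to $\exp(-\int_0^t V^{\vec{\infty},\vec{x}}(B_s)\,ds)$; dominated convergence then delivers the joint limit $K_{\ref{p3.1}}C_{\ref{p3.1}}(\lambda_2)(-U_{1,2}^{\vec{\infty},\vec{x}}(x))$ provided we have uniform-in-$(\lambda_1,\eps)$ bounds $\lambda_1^{1+\alpha}W_1^{\vec{\lambda},\vec{x},\eps}(z)\leq c|z-x_1|^{-p}$ and $\eps^{-(p-2)}W_2^{\vec{\lambda},\vec{x},\eps}(z)\leq c|z-x_2|^{-p}$, together with the integrability implicit in \eqref{eu12b}.

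The main technical obstacle is establishing these uniform upper bounds, since the Feynman--Kac weight couples the two scales $\lambda_1$ and $\eps$. The strategy is to exploit the subadditive bound $1-AB\leq (1-A)+(1-B)$, which yields $W^{\vec{\lambda},\vec{x},\eps}(y)\leq V^{\lambda_1}(y-x_1)+U^{\lambda_2\eps^{-2},\eps}(y-x_2)+U^{\infty,\eps/2}(y-x_2)$, and then to apply the single-parameter uniform bounds of Proposition \ref{p1.1} and \eqref{ce9.1.0} together with a Bessel estimate parallel to Lemma \ref{c13.5}. This Bessel estimate must be uniform under the larger weight $V^{\vec{\infty},\vec{x}}(y)-V^\infty(y)$, and since $V^{\vec{\infty},\vec{x}}$ is bounded above by $V^\infty(\cdot-x_1)+V^\infty(\cdot-x_2)$ (each with its $|\cdot|^{-2}$ singularity), the relevant exponential moments under the $(2+2\nu)$-Bessel process can be controlled by splitting the path into its excursions near $x_1$ versus near $x_2$, handling each as in the proof of Lemma \ref{c13.5}. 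The analogue of \eqref{eu12b} then provides the ambient dominating function needed for dominated convergence in (ii).
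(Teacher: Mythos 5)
Your overall strategy is the paper's: part (i) by the Feynman--Kac representation stopped at $T_{\lambda_1,\eps}=T^1_{r_{\lambda_1}}\wedge T^2_{2\eps}$ followed by the change of measure of Proposition~\ref{p8.1} to the $(2-2\nu)$-process, and part (ii) by dominated convergence in the Green-function representation using the pointwise limits from part (i). But there are two concrete gaps. First, your displayed identity for $-W_{1,2}^{\vec\lambda,\vec x,\eps}$ with the time integral over $[0,\infty)$ is not what differentiation of the stopped Feynman--Kac formula gives: the correct statement (Lemma~\ref{l4.6}) has the integral only up to $T_{\lambda_1,\eps}$ \emph{plus} a boundary term $E_x\bigl(e^{-\int_0^{T_{\lambda_1,\eps}}W}\,1(T_{\lambda_1,\eps}<\infty)\,(-W_{1,2}(B_{T_{\lambda_1,\eps}}))\bigr)$, which is nonzero for fixed $(\lambda_1,\eps)$ because $-W_{1,2}$ does not vanish on the stopping surfaces. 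One must show this boundary term, after multiplication by $\lambda_1^{1+\alpha}\eps^{-(p-2)}$, tends to $0$; this uses the monotonicity bounds of Lemma~\ref{l4.2} (e.g.\ $-W_{1,2}(z)\le 2\lambda_1^{-1}|z-x_2|^{-p}\eps^{p-2}$) together with the hitting estimates \eqref{a1.6}--\eqref{a1.6a}. Your proposal never addresses this term.

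Second, you have misplaced the technical difficulty in the dominated-convergence step. The uniform bounds $\lambda_1^{1+\alpha}W_1^{\vec\lambda,\vec x,\eps}(z)\le c_{\ref{p1.1}}|z-x_1|^{-p}$ and $\eps^{-(p-2)}W_2^{\vec\lambda,\vec x,\eps}(z)\le|z-x_2|^{-p}$ are immediate by simply dropping factors inside the $\N_z$-expectation and quoting Proposition~\ref{p1.1} and \eqref{ce9.1.0}; no excursion decomposition of the Bessel path between $x_1$ and $x_2$ is needed, and indeed the two-singularity exponential weight never has to be controlled jointly: after splitting on $\{|B_t-x_i|\le|B_t-x_{3-i}|\}$ one retains only the single lower bound $W\ge V^{\lambda_1}(\cdot-x_1)\vee V^\infty(\cdot-x_2)$ inside the exponential (note it is a \emph{lower} bound on $W$ that is needed here, not the subadditive upper bound you invoke). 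The genuinely hard point, which your proposal does not identify, is the uniform-in-$\lambda_1$ integrability
\begin{align*}
\sup_{\lambda_1}\int\!\!\int_0^\infty |B_t-x_1|^{-p}\exp\Bigl(-\int_0^t V^{\lambda_1}(B_s-x_1)\,ds\Bigr)1(t\le T^1_{r_{\lambda_1}})\,dt\,dP_x<\infty,
\end{align*}
together with the uniform smallness of the tail $\int_T^\infty$; this requires the Bessel change of measure, the quantitative estimate of Lemma~\ref{l3.1}, and a Cauchy--Schwarz argument giving a $t^{-\nu/2}$ decay (see \eqref{me6.3.4} and \eqref{ae3.1}). Appealing to ``the integrability implicit in \eqref{eu12b}'' is circular, since \eqref{eu12b} is itself a consequence of exactly this estimate.
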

\no The proof of Proposition \ref{p3.3} follows in a similar way to the proofs of Proposition \ref{p3.1} and Proposition \ref{p3.2} and is deferred to Section \ref{s8} and Section \ref{s9}. We will first proceed to the proof of our main results.

\section{Proofs of Theorems \ref{t0} and \ref{tl1} and Theorems \ref{p0.0} and \ref{p0.0_1}}\label{s5}
In this section, we will finish the proofs of Theorems \ref{t0}, \ref{tl1} and Theorems \ref{p0.0}, \ref{p0.0_1} assuming Propositions \ref{p3.1}, \ref{p3.2} and \ref{p3.3}.

\subsection{Preliminaries}
Recall from last section the definitions of $V^{\vec{\lambda},\vec{x}}, U^{\vec{\lambda},\vec{x},\vec{\eps}}$ and $W^{\vec{\lambda},\vec{x},\eps}$ and their first and second derivatives and recall $V^{\vec{\infty},\vec{x}}$ from \eqref{ev}. Fix $x_1\neq x_2$. It is not hard to check that (see Lemma \ref{l7.5}) for all $x\neq x_1, x_2$,
\begin{align}\label{ae7.6}
\lim_{\eps_1, \eps_2 \downarrow 0} U^{\vec{\lambda},\vec{x},\vec{\eps}}(x)=\lim_{\lambda_1, \lambda_2 \to \infty}  V^{\vec{\lambda},\vec{x}}(x)=\lim_{\lambda_1 \to \infty, \eps \downarrow 0} W^{\vec{\lambda},\vec{x},\eps}(x)=V^{\vec{\infty},\vec{x}}(x).
\end{align}
Recall \eqref{e4.4.1} and use Proposition \ref{p1.1} to get for all $\lambda_1, \lambda_2>0$ and $x\neq x_1, x_2$,
 \begin{align}\label{m10.0}
 \lambda_i^{1+\alpha}V_i^{\vec{\lambda},\vec{x}}(x)\leq  \N_{x}\Big(\lambda_i^{1+\alpha} L^{x_i}\exp(-\lambda_i L^{x_i}) \Big)\leq c_{\ref{p1.1}} |x-x_i|^{-p},  i=1,2.
\end{align}
Recall \eqref{e4.3.2}. Similarly we can get for all $\eps_1,\eps_2, \lambda_1, \lambda_2>0$ and for all $x$ so that $|x-x_i|>\eps_i,$ for $i=1,2,$
\begin{align}\label{mc1.0.1}
 \frac{1}{\eps_i^{p-2}}U_i^{\vec{\lambda},\vec{x},\vec{\eps}}(x)\leq&  \N_{x}\Big(\frac{X_{G_{\eps_i}^{x_i}}(1)}{\eps_i^p}\exp(- \lambda_i \frac{X_{G_{\eps_i}^{x_i}}(1)}{\eps_i^{2}}) 1(X_{G_{\eps_i/2}^{x_i}}=0)\Big) \nn\\
 \leq & |x-x_i|^{-p},
\end{align}
where the last equality is by \eqref{ce9.1.0}.  Recall \eqref{m10.0} and \eqref{mc1.0.1}. It follows that for any $\lambda_1, \lambda_2, \eps>0$ and for all $x$ with $x\neq x_1$ and $|x-x_2|>\eps$, we have
 \begin{align}\label{m8.7}
\lambda_1^{1+\alpha} W_1^{\vec{\lambda},\vec{x},\eps}(x)\leq \lambda_1^{1+\alpha} \N_x\Big(L^{x_1}\exp\Big(-\lambda_1 L^{x_1}\Big) \Big)\leq c_{\ref{p1.1}}|x-x_1|^{-p}, 
\end{align} 
and 
 \begin{align}\label{m8.8}
 \frac{1}{\eps^{p-2}} W_2^{\vec{\lambda},\vec{x},\eps}(x)&\leq \N_x\Big(\frac{X_{G_{\eps}^{x_2}}(1)}{\eps^p}\exp\Big(-\lambda_2\frac{X_{G_{\eps}^{x_2}}(1)}{\eps^2}\Big)1(X_{G_{\eps/2}^{x_2}}=0) \Big)\nn\\
&\leq |x-x_2|^{-p}.
\end{align} 

The proof of Proposition 6.1 of \cite{MP17} readily implies that (note $U^{\vec{\lambda},\vec{x}}$ is used there to denote our $V_{1,2}^{\vec{\lambda},\vec{x}}$ here) for all $x_1\neq x_2$,  if $|x-x_1| \wedge |x-x_2|>\eps_0$ for some $\eps_0>0$, then there is some constant $C(\eps_0)>0$ so that
\begin{align}\label{eu12b2}
0\leq \lambda_{1}^{1+\alpha} \lambda_{2}^{1+\alpha}(-V_{1,2}^{\vec{\lambda},\vec{x}}(x)) \leq C(\eps_0)(1+|x_1-x_2|^{2-p}),\quad \forall \lambda_1, \lambda_2\geq 1.
\end{align}
Similarly one can show that (see Lemma \ref{l4.7}) for all $\eps_1, \eps_2>0$ small,
\begin{align}\label{eu12b1}
0\leq \frac{1}{\eps_{1}^{p-2}} \frac{1}{\eps_{2}^{p-2}}(-U_{1,2}^{\vec{\lambda},\vec{x},\vec{\eps}}(x)) \leq C(\eps_0)(1+|x_1-x_2|^{2-p}),
\end{align}
and for all $\lambda_1\geq 1$ large and $\eps>0$ small,
\begin{align}\label{eu12b3}
0\leq \lambda_1^{1+\alpha} \frac{1}{\eps^{p-2}}(-W_{1,2}^{\vec{\lambda},\vec{x},\eps}(x))) \leq C(\eps_0)(1+|x_1-x_2|^{2-p}).
\end{align}

%Now we are ready to show how Propositions \ref{p3.1}, \ref{p3.2} and \ref{p3.3} will lead to our main results Theorems \ref{t0} and \ref{tl1}. 
%For $h: \R^d \times \R^d \to \R$ and $\lambda_1, \lambda_2, \eps_1, \eps_2>0$, we note that
%\[(\cL^{\lambda_1} \times \cL^{\lambda_2})(h)=\int_{x_1, x_2 \in S(X_0)^c}  h(x_1,x_2) \lambda_1^{1+\alpha} \lambda_2^{1+\alpha} L^{x_1} L^{x_2}e^{- \lambda_1 L^{x_1}- \lambda_2 L^{x_2}} dx_1 dx_2,\]
%and
%\begin{align*}
%&(\widetilde{\cL}(\kappa)^{\eps_1} \times \widetilde{\cL}(\kappa)^{\eps_2})(h)\\
%&=\int_{x_i\in S(X_0)^{>\eps_i}, i=1,2} h(x_1,x_2) \prod_{i=1}^2 \frac{X_{G_{\eps_i}^{x_i}}(1)}{\varepsilon_i^p}\exp\Big(-\kappa \frac{X_{G_{\eps_i}^{x_i}}(1)}{\varepsilon_i^2}\Big)1(X_{G_{\eps_i/2}^{x_i}}=0)  dx_1 dx_2.
%\end{align*}
%where 
%\[S(X_0)^{>\delta}:=\{x: d(x,S(X_0))>\delta\} \text{ for any } \delta>0.\] 

\begin{theorem}\label{tl2}
For any bounded Borel function $h: \R^d \times \R^d \to \R$ supported on 
 \mbox{$\{(x_1,x_2): \eps_0 \leq |x_1|, |x_2|\leq \eps_0^{-1}\}$} for some $\eps_0>0$, we have
\begin{align*}%\label{e0.0.0}
%\begin{cases}
(a) &\lim_{\lambda_1,\lambda_2\to \infty} \N_0\Big((\cL^{\lambda_1} \times \cL^{\lambda_2})(h)\Big) =K_{\ref{p3.1}}^2 \int h(x_1,x_2) (-U_{1,2}^{\vec{\infty},\vec{x}}(0))  dx_1dx_2,\\
%\end{align}
%and
%\begin{align}\label{e0.0.0.1}
(b) &\lim_{\eps_1,\eps_2\downarrow 0} \N_0\Big((\widetilde{\cL}(\kappa)^{\eps_1} \times \widetilde{\cL}(\kappa)^{\eps_2})(h)\Big) =C_{\ref{p3.1}}(\kappa)^2\int h(x_1,x_2) (-U_{1,2}^{\vec{\infty},\vec{x}}(0))  dx_1dx_2.
%\end{cases}
\end{align*}

\end{theorem}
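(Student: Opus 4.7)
The plan is a Fubini-plus-dominated-convergence argument that reduces both parts to the pointwise second-moment asymptotics already identified in Proposition \ref{p3.2}. First I would unfold the definitions of $\cL^\lambda$ in \eqref{e7.3} and $\widetilde{\cL}(\kappa)^\eps$ in \eqref{edef}. For $h\ge 0$, Tonelli (the signed case reduces to $h=h^+-h^-$ once integrability is established) combined with \eqref{e4.4.2} rewrites the left-hand side of (a) as
\begin{align*}
\N_0\Big((\cL^{\lambda_1}\times \cL^{\lambda_2})(h)\Big)=\int h(x_1,x_2)\,\lambda_1^{1+\alpha}\lambda_2^{1+\alpha}\bigl(-V_{1,2}^{\vec{\lambda},\vec{x}}(0)\bigr)\,dx_1\,dx_2.
\end{align*}
For (b), pulling out the factor $\eps_i^{-(p-2)}$ via $X_{G_\eps^x}(1)/\eps^p=\eps^{-(p-2)}\cdot X_{G_\eps^x}(1)/\eps^2$ and invoking \eqref{e3.5} with $\vec{\lambda}=(\kappa,\kappa)$ recasts the left-hand side as
\begin{align*}
\N_0\Big((\widetilde{\cL}(\kappa)^{\eps_1}\times\widetilde{\cL}(\kappa)^{\eps_2})(h)\Big)=\int h(x_1,x_2)\,\eps_1^{-(p-2)}\eps_2^{-(p-2)}\bigl(-U_{1,2}^{\vec{\lambda},\vec{x},\vec{\eps}}(0)\bigr)\,dx_1\,dx_2,
\end{align*}
using that the indicator $1(|x_i|>\eps_i)$ in \eqref{edef} is identically one on $\mathrm{supp}(h)$ as soon as $\eps_i<\eps_0$.

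The next step is dominated convergence. At every fixed $(x_1,x_2)\in\mathrm{supp}(h)$, Proposition \ref{p3.2}(i)--(ii) gives pointwise convergence of the integrands to $K_{\ref{p3.1}}^2(-U_{1,2}^{\vec{\infty},\vec{x}}(0))$ and $C_{\ref{p3.1}}(\kappa)^2(-U_{1,2}^{\vec{\infty},\vec{x}}(0))$, respectively. A uniform majorant comes from the a priori bounds \eqref{eu12b2} and \eqref{eu12b1}: with base point $x=0$ at distance at least $\eps_0$ from both $x_1$ and $x_2$ on $\mathrm{supp}(h)$, both bounds produce
\begin{align*}
\|h\|_\infty\,C(\eps_0)\bigl(1+|x_1-x_2|^{2-p}\bigr),
\end{align*}
valid for all $\lambda_1,\lambda_2\ge 1$ in (a) and all sufficiently small $\eps_1,\eps_2$ in (b). This majorant is integrable on the compact support of $h$ since the only singularity is $|x_1-x_2|^{2-p}$ along the diagonal and $2-p>-d$ in both $d=2,3$ (equivalently $d_f=d+2-p>0$) by the explicit values in \eqref{ep1.1}. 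Applying DCT then yields the two claimed identities.

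The hard part is not the DCT routine above but the two inputs it rests on: the pointwise convergence in Proposition \ref{p3.2} and the uniform bounds \eqref{eu12b2}--\eqref{eu12b1}. These are the substantive second-moment asymptotics whose proofs are deferred to Sections \ref{s8}--\ref{s9}. The one subtle point worth checking before invoking them is that \eqref{eu12b1}, phrased ``for all $\eps_1,\eps_2>0$ small'', should be read as uniform over $\eps_1,\eps_2\in(0,\eps_0/2]$ (not merely asymptotic), which is what dominated convergence demands; this is expected to be transparent from the forthcoming proof, which is modelled on the $\lambda$-case of Proposition~6.1 of \cite{MP17}.
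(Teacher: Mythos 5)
Your proposal is correct and follows essentially the same route as the paper: Fubini plus the identities \eqref{e4.4.2} and \eqref{e3.5} to express the second moments through $-V_{1,2}^{\vec{\lambda},\vec{x}}(0)$ and $\eps_1^{-(p-2)}\eps_2^{-(p-2)}(-U_{1,2}^{\vec{\lambda},\vec{x},\vec{\eps}}(0))$, then dominated convergence using Proposition \ref{p3.2} for the pointwise limits and \eqref{eu12b2}, \eqref{eu12b1} for the integrable majorant. Your added remarks on the locally integrable diagonal singularity $|x_1-x_2|^{2-p}$ and on reading \eqref{eu12b1} uniformly over small $\eps_1,\eps_2$ are correct refinements of what the paper leaves implicit.
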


\begin{proof}
It suffices to consider nonnegative bounded Borel function $h$. By an application of Fubini's theorem, we have
\begin{align*}
&\N_0\Big((\cL^{\lambda_1} \times \cL^{\lambda_2})(h)\Big)\\
=& \int_{\eps_0 \leq |x_1|, |x_2|\leq \eps_0^{-1}} h(x_1, x_2) \N_0\Big( \lambda_1^{1+\alpha} \lambda_2^{1+\alpha} L^{x_1} L^{x_2}e^{-\lambda_1 L^{x_1}}e^{-\lambda_2 L^{x_2}}\Big)dx_1 dx_2\\
=& \int_{\eps_0 \leq |x_1|, |x_2|\leq \eps_0^{-1}} h(x_1, x_2) \lambda_1^{1+\alpha} \lambda_2^{1+\alpha} (-V_{1,2}^{\vec{\lambda},\vec{x}}(0)) dx_1 dx_2,
\end{align*}
where the second equality is by \eqref{e4.4.2}.
Since $h$ is bounded, in view of \eqref{eu12b2} we can see that the integrand has an integrable bound and so (a) will follow immediately by Dominated Convergence using Proposition \ref{p3.2}(i). Similarly (b) will follow from Proposition \ref{p3.2}(ii) and \eqref{eu12b1}.
\end{proof}

\begin{corollary}\label{c0}
For any bounded Borel function $\phi$ on $\R^d$ and for any $k\geq 1$, we have
${\cL}^\lambda(\phi \cdot 1(k^{-1}\leq |\cdot|\leq k) )$ converges in $L^2(\N_0)$ as $\lambda \to \infty$ and $\widetilde{\cL}(\kappa)^\eps(\phi \cdot 1(k^{-1}\leq |\cdot|\leq k) )$ converges in $L^2(\N_0)$ as $\eps \downarrow 0$.
\end{corollary}
\begin{proof}
For any bounded Borel function $\phi$ we let  
\begin{align}\label{be1.2}
 h(x_1,x_2)=\phi(x_1)  1(k^{-1}\leq |x_1|\leq k)  \cdot \phi(x_2) 1(k^{-1}\leq |x_2|\leq k),
\end{align}
 and apply Theorem \ref{tl2}(b) with the above $h$ to get 
\begin{align*}
&\lim_{\eps_1,\eps_2\downarrow 0} \N_0\Big(\Big(\widetilde{\cL}(\kappa)^{\eps_1}(\phi \cdot 1(k^{-1}\leq |\cdot|\leq k))-\widetilde{\cL}(\kappa)^{\eps_2}(\phi \cdot 1(k^{-1}\leq |\cdot|\leq k))\Big)^2\Big)\\ 
=&\lim_{\eps_1,\eps_2\downarrow 0} \N_0\Big((\widetilde{\cL}(\kappa)^{\eps_1} \times \widetilde{\cL}(\kappa)^{\eps_1})(h)\Big)-2 \N_0\Big((\widetilde{\cL}(\kappa)^{\eps_1} \times \widetilde{\cL}(\kappa)^{\eps_2})(h)\Big)\\
&\quad \quad+\N_0\Big((\widetilde{\cL}(\kappa)^{\eps_2} \times \widetilde{\cL}(\kappa)^{\eps_2})(h)\Big)=0.
\end{align*}
Therefore $\{\widetilde{\cL}(\kappa)^\eps(\phi \cdot 1(k^{-1}\leq |\cdot|\leq k) ): \eps>0\}$ is a Cauchy sequence in $L^2(\N_0)$ as $\eps \downarrow 0$ and so converges in $L^2(\N_0)$. The case for ${\cL}^\lambda$ is similar.
\end{proof}
\begin{corollary}\label{c3}
For any bounded Borel function $\phi$ on $\R^d$ and for any $k\geq 1$, we have
\mbox{$C_{\ref{p3.1}}(\kappa)\cL^\lambda (\phi \cdot 1_{(k^{-1}\leq |\cdot|\leq k)} )-K_{\ref{p3.1}}\widetilde \cL(\kappa)^\eps(\phi \cdot 1_{(k^{-1}\leq |\cdot|\leq k)} )$} converges to $0$ in $L^2(\N_0)$  as $\lambda \to \infty$ and $\eps \downarrow 0$.
\end{corollary}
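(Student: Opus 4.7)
The plan is to expand the squared $L^2(\N_0)$-norm of the claimed difference into three cross-moment pieces and verify that they cancel exactly in the joint limit $\lambda\to\infty$, $\eps\downarrow 0$. Write $f(x)=\phi(x)1(k^{-1}\le|x|\le k)$, $h(x_1,x_2)=f(x_1)f(x_2)$, and abbreviate $C=C_{\ref{p3.1}}(\kappa)$, $K=K_{\ref{p3.1}}$. Then
\begin{align*}
\N_0\Big[(C\cL^\lambda(f)-K\widetilde{\cL}(\kappa)^\eps(f))^2\Big]
&=C^2\,\N_0\big[(\cL^\lambda\!\times\!\cL^\lambda)(h)\big]+K^2\,\N_0\big[(\widetilde{\cL}(\kappa)^\eps\!\times\!\widetilde{\cL}(\kappa)^\eps)(h)\big]\\
&\quad-2CK\,\N_0\big[\cL^\lambda(f)\,\widetilde{\cL}(\kappa)^\eps(f)\big].
\end{align*}
Applying Theorem \ref{tl2}(a) to the first term (with $\lambda_1=\lambda_2=\lambda$) and Theorem \ref{tl2}(b) to the second term (with $\eps_1=\eps_2=\eps$) shows that both contributions converge to the same limit, namely $C^2K^2\!\int h(x_1,x_2)(-U_{1,2}^{\vec\infty,\vec x}(0))\,dx_1dx_2$.

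For the cross term, Fubini combined with the derivative identity for $W_{1,2}^{\vec\lambda,\vec x,\eps}$ displayed just after \eqref{e10.01} gives
\begin{align*}
\N_0\big[\cL^\lambda(f)\widetilde{\cL}(\kappa)^\eps(f)\big]=\iint h(x_1,x_2)\,\lambda^{1+\alpha}\eps^{2-p}\big(-W_{1,2}^{(\lambda,\kappa),(x_1,x_2),\eps}(0)\big)\,dx_1dx_2.
\end{align*}
Proposition \ref{p3.3}(ii) then yields pointwise convergence of the integrand (at each $x_1\neq x_2$) to $h(x_1,x_2)\,KC\,(-U_{1,2}^{\vec\infty,\vec x}(0))$. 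Since $h$ is supported on the compact set $\{k^{-1}\le|x_1|,|x_2|\le k\}$, which is bounded away from $0$, the uniform bound \eqref{eu12b3} furnishes the dominator $\|h\|_\infty\, C(\eps_0)(1+|x_1-x_2|^{2-p})$. Because $p<d+2$ for $d=2,3$ by \eqref{ep1.1}, the singularity $|x_1-x_2|^{2-p}$ is integrable over this bounded region, so Dominated Convergence gives that the cross term tends to $KC\!\int h(-U_{1,2}^{\vec\infty,\vec x}(0))\,dx_1dx_2$.

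Collecting the three contributions with their coefficients $C^2,K^2,-2CK$, the total limit of the expanded $L^2$-norm equals
\begin{align*}
(C^2K^2+K^2C^2-2CK\cdot KC)\!\int h(x_1,x_2)(-U_{1,2}^{\vec\infty,\vec x}(0))\,dx_1dx_2=0,
\end{align*}
which establishes the desired convergence. There is no substantive obstacle: the quantitative work lies entirely in the earlier Propositions \ref{p3.2} and \ref{p3.3} together with the uniform bound \eqref{eu12b3}. The one place requiring attention is the matching of coefficients, and the whole argument works precisely because the three limiting constants produced by Theorem \ref{tl2}(a), Theorem \ref{tl2}(b), and the mixed Proposition \ref{p3.3}(ii) are $K^2$, $C^2$, and $KC$ respectively, which are exactly what is needed to realize the cancellation $(C^2 + K^2 - 2CK)\to 0$ once factored out of the common integral against $(-U_{1,2}^{\vec\infty,\vec x}(0))$.
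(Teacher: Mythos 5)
Your proposal is correct and follows essentially the same route as the paper: expand the squared $L^2(\N_0)$-norm into the two diagonal terms handled by Theorem \ref{tl2} and the cross term handled via Fubini, the identity for $W_{1,2}^{\vec\lambda,\vec x,\eps}$, Proposition \ref{p3.3}(ii), the bound \eqref{eu12b3}, and Dominated Convergence, after which the coefficients $C^2K^2+K^2C^2-2\,CK\cdot KC=0$ cancel exactly as you describe.
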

\begin{proof}
 It suffices to prove for nonnegative $\phi\geq 0$. Let $h(x_1,x_2)$ be as in \eqref{be1.2} and use Fubini's theorem to get
 \begin{align*}
 &\N_0\Big(\cL^\lambda (\phi \cdot 1_{(k^{-1}\leq |\cdot|\leq k)} )\times \widetilde \cL(\kappa)^\eps(\phi \cdot 1_{(k^{-1}\leq |\cdot|\leq k)} )\Big)\\
 =&\int h(x_1, x_2) \N_0\Big(\lambda^{1+\alpha} L^{x_1} e^{-\lambda L^{x_1}}\frac{X_{G_{\eps}^{x_2}}(1)}{\eps^p}e^{-\kappa \frac{X_{G_{\eps}^{x_2}}(1)}{\eps^2}}1_{\{X_{G_{\eps/2}^{x_2}}=0\}} \Big) dx_1 dx_2\\
 =&\int_{k^{-1}\leq |x_1|, |x_2| \leq k, x_1\neq x_2} h(x_1, x_2) \lambda^{1+\alpha} \frac{1}{\eps^{p-2}} (-W_{1,2}^{\vec{\lambda},\vec{x},\eps}(0)) dx_1 dx_2,
 \end{align*}
 where $\vec{\lambda}=(\lambda, \kappa)$.
 Now apply Proposition \ref{p3.3}(ii), \eqref{eu12b3} and Dominated Convergence to conclude
 \begin{align}\label{a.0.0}
 \lim_{\lambda \to \infty, \eps\downarrow 0} &\N_0\Big(\cL^\lambda (\phi \cdot 1_{(k^{-1}\leq |\cdot|\leq k)} )\times \widetilde \cL(\kappa)^\eps(\phi \cdot 1_{(k^{-1}\leq |\cdot|\leq k)} )\Big) \nn\\
 &=K_{\ref{p3.1}}C_{\ref{p3.1}}(\kappa) \int_{k^{-1}\leq |x_1|, |x_2| \leq k, x_1\neq x_2} h(x_1, x_2) (-U_{1,2}^{\vec{\infty},\vec{x}}(0))dx_1 dx_2.
  \end{align}
Therefore
\begin{align*}
&\lim_{\lambda \to \infty, \eps\downarrow 0} \N_0\Big(\Big(C_{\ref{p3.1}}(\kappa){\cL}^{\lambda}(\phi \cdot 1_{(k^{-1}\leq |\cdot|\leq k)})-K_{\ref{p3.1}} \widetilde{\cL}(\kappa)^{\eps}(\phi \cdot 1_{(k^{-1}\leq |\cdot|\leq k)})\Big)^2\Big)\\ 
&=\lim_{\lambda \to \infty, \eps\downarrow 0} C_{\ref{p3.1}}(\kappa)^2\N_0\Big(({\cL}^{\lambda} \times {\cL}^{\lambda})(h)\Big)+K_{\ref{p3.1}}^2 \N_0\Big((\widetilde{\cL}(\kappa)^{\eps} \times \widetilde{\cL}(\kappa)^{\eps})(h)\Big)\\
&\quad -2K_{\ref{p3.1}} C_{\ref{p3.1}}(\kappa)\N_0\Big(\cL^\lambda (\phi \cdot 1_{(k^{-1}\leq |\cdot|\leq k)} )\times \widetilde \cL^\eps(\phi \cdot 1_{(k^{-1}\leq |\cdot|\leq k)} )\Big)=0,
\end{align*}
where we have used Theorem \ref{tl2} and \eqref{a.0.0} in the last equality.
\end{proof}

\noindent We continue to accommodate $\P_{X_0}$ for the general initial condition case.  

\begin{theorem}\label{tl3}
For any bounded Borel function $h: \R^d \times \R^d \to \R$ supported on $\{(x_1,x_2): x_i\in S(X_0)^{>\eps_0}\cap B(\eps_0^{-1}), i=1,2\}$ for some $\eps_0>0$, we have
% \begin{align}\label{ep1.1.1}
% \begin{cases}
%&\lim_{\lambda_1,\lambda_2\to \infty} \N_{X_0}\Big((\cL^{\lambda_1} \times \cL^{\lambda_2})(h)\Big)=K_{\ref{p3.1}}^2 \int h(x_1,x_2)  \Big(-X_0(U_{1,2}^{\vec{\infty},\vec{x}})\Big)dx_1dx_2,\\
%&\lim_{\eps_1,\eps_2 \downarrow 0} \N_{X_0}\Big((\widetilde{\cL}(\kappa)^{\eps_1} \times \widetilde{\cL}(\kappa)^{\eps_2})(h)\Big)\\
%&\quad \quad \quad=C_{\ref{p3.1}}(\kappa)^2\int h(x_1,x_2)  \Big(-X_0(U_{1,2}^{\vec{\infty},\vec{x}})\Big)dx_1dx_2,
%\end{cases}
%\end{align}
%and 
 \begin{align}\label{ep1.1.2}
  \begin{cases}
&\lim_{\lambda_1,\lambda_2\to \infty} \E_{X_0}\Big((\cL^{\lambda_1} \times \cL^{\lambda_2})(h)\Big)=K_{\ref{p3.1}}^2 \int h(x_1,x_2)\\
&\quad \quad \quad e^{-X_0(V^{\vec{\infty},\vec{x}})} \Big(X_0(U_1^{\vec{\infty},\vec{x}}) X_0(U_2^{\vec{\infty},\vec{x}})-X_0(U_{1,2}^{\vec{\infty},\vec{x}})\Big)dx_1dx_2,\\
&\lim_{\eps_1,\eps_2 \downarrow 0} \E_{X_0}\Big((\widetilde{\cL}(\kappa)^{\eps_1} \times \widetilde{\cL}(\kappa)^{\eps_2})(h)\Big)=C_{\ref{p3.1}}(\kappa)^2\int h(x_1,x_2)\\
&\quad \quad \quad e^{-X_0(V^{\vec{\infty},\vec{x}})} \Big(X_0(U_1^{\vec{\infty},\vec{x}}) X_0(U_2^{\vec{\infty},\vec{x}})-X_0(U_{1,2}^{\vec{\infty},\vec{x}})\Big)dx_1dx_2.
\end{cases}
\end{align}
\end{theorem}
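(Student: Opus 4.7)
The plan is to mirror the proof of Theorem \ref{tl2}, now using the closed-form expressions \eqref{m8.1} and \eqref{m8.0} that already fold in the Poisson superposition over $X_0$, and then apply the dominated convergence theorem at two levels: once inside the $X_0$-integral, and once in the outer $dx_1 dx_2$-integral. All the pointwise convergence inputs and the uniform bounds I need are already available from Section~\ref{s4}; the work is almost entirely bookkeeping.

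For the first identity, Fubini gives
\[
\E_{X_0}((\cL^{\lambda_1}\times\cL^{\lambda_2})(h))=\int h(x_1,x_2)\,\lambda_1^{1+\alpha}\lambda_2^{1+\alpha}\,\E_{X_0}\bigl(L^{x_1}L^{x_2}e^{-\lambda_1L^{x_1}-\lambda_2L^{x_2}}\bigr)\,dx_1\,dx_2,
\]
and \eqref{m8.1} rewrites the inner expectation as
\[
e^{-X_0(V^{\vec{\lambda},\vec{x}})}\Bigl(X_0(\lambda_1^{1+\alpha}V_1^{\vec{\lambda},\vec{x}})X_0(\lambda_2^{1+\alpha}V_2^{\vec{\lambda},\vec{x}})-X_0(\lambda_1^{1+\alpha}\lambda_2^{1+\alpha}V_{1,2}^{\vec{\lambda},\vec{x}})\Bigr).
\]
On the support of $h$ both $x_1,x_2\in S(X_0)^{>\eps_0}$, so for every $y\in S(X_0)$ one has $|y-x_i|>\eps_0$. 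Inside each $X_0(\cdot)$-integral I then invoke the pointwise convergences \eqref{ae7.6}, Proposition~\ref{p3.1}(i), and Proposition~\ref{p3.2}(i), with uniform-in-$\vec\lambda$ majorants provided by \eqref{m10.0}, \eqref{eu12b2}, and $V^{\vec{\lambda},\vec{x}}\le V^\infty(\cdot-x_1)+V^\infty(\cdot-x_2)\le 2\lambda_d\eps_0^{-2}$. Since $X_0$ is a finite measure and the bounds are finite constants depending only on $\eps_0$ (and $|x_1-x_2|^{2-p}$ for the $V_{1,2}$ term), DCT gives
\[
X_0(\lambda_i^{1+\alpha}V_i^{\vec{\lambda},\vec{x}})\to K_{\ref{p3.1}}X_0(U_i^{\vec{\infty},\vec{x}}),\quad X_0(\lambda_1^{1+\alpha}\lambda_2^{1+\alpha}V_{1,2}^{\vec{\lambda},\vec{x}})\to K_{\ref{p3.1}}^2 X_0(U_{1,2}^{\vec{\infty},\vec{x}}),
\]
and analogously for the exponential factor.

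Next I apply DCT to the outer $dx_1\,dx_2$-integral. The exponential factor is bounded by $1$; the two factors involving $V_i$ are bounded by $c_{\ref{p1.1}}\eps_0^{-p}X_0(1)$ via \eqref{m10.0}; and the $V_{1,2}$-term is controlled by \eqref{eu12b2} integrated against $X_0$, giving an $\eps_0$-uniform bound of the form $C(\eps_0)(1+|x_1-x_2|^{2-p})X_0(1)$. Multiplied by $\|h\|_\infty$ and restricted to the bounded support of $h$, this majorant is integrable on $\R^{2d}$ because $|x_1-x_2|^{2-p}$ is locally integrable (recall $p<d+2$, i.e.\ $d_f>0$). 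Dominated convergence therefore delivers the first identity with constant $K_{\ref{p3.1}}^2$.

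The second identity is entirely parallel: start from \eqref{m8.0} in place of \eqref{m8.1}, invoke Proposition~\ref{p3.1}(ii), Proposition~\ref{p3.2}(ii) for pointwise convergence, and \eqref{mc1.0.1}, \eqref{eu12b1} for the uniform-in-$\vec\eps$ majorants; the same two-step DCT argument then yields the limit with $C_{\ref{p3.1}}(\kappa)^2$. The only mild subtlety worth emphasising is that the cutoff $x_i\in S(X_0)^{>\eps_0}$ is what turns the potential $|y-x_i|^{-p}$ singularities (for $y\in S(X_0)$) into harmless constants, leaving only the separation singularity $|x_1-x_2|^{2-p}$ in the outer variables, which is locally integrable; this is the structural reason the proof goes through with no further input beyond the moment-measure bounds of Section~\ref{s4}.
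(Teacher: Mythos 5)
Your proposal is correct and follows essentially the same route as the paper: Fubini plus the identity \eqref{m8.1} (resp.\ \eqref{m8.0}), then dominated convergence using Propositions \ref{p3.1} and \ref{p3.2}, \eqref{ae7.6}, and the majorants \eqref{m10.0}, \eqref{eu12b2} (resp.\ \eqref{mc1.0.1}, \eqref{eu12b1}) together with the support assumption on $h$. Your write-up merely spells out the two-level application of DCT that the paper leaves implicit.
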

\begin{proof}
%By definition we have $\N_{X_0}(\cdot)=\int \N_x(\cdot) dX_0(x)$ and so \eqref{ep1.1.1} follows immediately from Theorem \ref{tl1} and the assumption on $h$. For the $\P_{X_0}$ case in \eqref{ep1.1.2}, i
It suffices to prove for nonnegative $h$. By Fubini's theorem and \eqref{m8.1}, we have
\begin{align*}
& \E_{X_0}\Big((\cL^{\lambda_1} \times \cL^{\lambda_2})(h)\Big)=\int_{x_1, x_2 \in B(\eps_0^{-1})\cap S(X_0)^{>\eps_0} } h(x_1,x_2)\\
  &e^{-X_0(V^{\vec{\lambda},\vec{x}})}  \Big(\lambda_1^{1+\alpha} \lambda_2^{1+\alpha} X_0(V_1^{\vec{\lambda},\vec{x}}) X_0(V_2^{\vec{\lambda},\vec{x}})-\lambda_1^{1+\alpha} \lambda_2^{1+\alpha} X_0(V_{1,2}^{\vec{\lambda},\vec{x}})\Big)dx_1dx_2.
\end{align*}
The result follows by an application of Dominated Convergence using Proposition \ref{p3.1}(i), Proposition \ref{p3.2}(i), \eqref{ae7.6}, \eqref{m10.0}, \eqref{eu12b2},  and the assumption on $h$. The case for $\widetilde{\cL}(\kappa)^{\eps}$ follows in a similar way. 
\end{proof}

\begin{corollary}\label{c1}
For any bounded Borel function $\phi$ on $\R^d$ and for any $k\geq 1$, we have
$\cL^\lambda(\phi \cdot 1_{\{ B_k\cap  S(X_0)^{>1/k}\}})$ converges in $L^2(\P_{X_0})$  as $\lambda \to \infty$ and 
$\widetilde{\cL}(\kappa)^\eps(\phi \cdot 1_{\{ B_k\cap  S(X_0)^{>1/k}\}})$ converges in $L^2(\P_{X_0})$  as $\eps \downarrow 0$.
\end{corollary}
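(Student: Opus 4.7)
The plan is to mirror the proof of Corollary \ref{c0} verbatim, replacing $\N_0$ by $\P_{X_0}$ throughout and invoking Theorem \ref{tl3} in place of Theorem \ref{tl2}. Since $L^2(\P_{X_0})$ is complete, it suffices to establish the Cauchy property of $\{\widetilde{\cL}(\kappa)^\eps(\phi\cdot 1_{\{B_k\cap S(X_0)^{>1/k}\}}): \eps>0\}$ in $L^2(\P_{X_0})$ as $\eps\downarrow 0$, and the analogous statement for $\cL^\lambda$ as $\lambda\to\infty$.

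First I would decompose $\phi=\phi^+-\phi^-$ to reduce to the case $\phi\geq 0$, and then set
\[
h(x_1,x_2)=\phi(x_1) 1_{\{B_k\cap S(X_0)^{>1/k}\}}(x_1)\cdot \phi(x_2) 1_{\{B_k\cap S(X_0)^{>1/k}\}}(x_2).
\]
This $h$ is bounded and supported on $\{(x_1,x_2): x_i\in S(X_0)^{>\eps_0}\cap B(\eps_0^{-1}),\ i=1,2\}$ with $\eps_0=1/k$, so it satisfies exactly the hypothesis of Theorem \ref{tl3}. Expanding the square of the difference $\widetilde{\cL}(\kappa)^{\eps_1}(\phi\cdot 1_{\{B_k\cap S(X_0)^{>1/k}\}})-\widetilde{\cL}(\kappa)^{\eps_2}(\phi\cdot 1_{\{B_k\cap S(X_0)^{>1/k}\}})$, applying $\E_{X_0}$, and using Fubini gives three terms of the form $\E_{X_0}\big((\widetilde{\cL}(\kappa)^{\eps_i}\times\widetilde{\cL}(\kappa)^{\eps_j})(h)\big)$ for $i,j\in\{1,2\}$. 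By Theorem \ref{tl3} each of these converges as $\eps_1,\eps_2\downarrow 0$ to the same integral on the right-hand side of \eqref{ep1.1.2}, which is independent of the indices $i,j$, so the whole expression tends to $0$. The Cauchy property, and hence $L^2(\P_{X_0})$ convergence, follows. The case for $\cL^\lambda$ is word-for-word the same using the first limit in \eqref{ep1.1.2}.

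There is really no obstacle here beyond verifying that $h$ meets the support hypothesis of Theorem \ref{tl3}, which is immediate from the definitions of the cutoffs $B_k$ and $S(X_0)^{>1/k}$. All substantive content has already been absorbed into Theorem \ref{tl3} and the moment asymptotics from Propositions \ref{p3.1}--\ref{p3.3} that feed into it; this corollary is essentially a packaging statement converting second-moment convergence into $L^2$ convergence of the integrals against $\phi$.
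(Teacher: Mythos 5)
Your proposal is correct and is essentially the paper's own argument: the paper likewise defines $h(x_1,x_2)=\phi(x_1)1_{\{B_k\cap S(X_0)^{>1/k}\}}(x_1)\,\phi(x_2)1_{\{B_k\cap S(X_0)^{>1/k}\}}(x_2)$ and then repeats the Cauchy-in-$L^2$ argument of Corollary~\ref{c0} with Theorem~\ref{tl3} in place of Theorem~\ref{tl2}. The only cosmetic difference is your explicit decomposition $\phi=\phi^+-\phi^-$, which is harmless (and not strictly needed, since Theorem~\ref{tl3} already applies to signed bounded $h$ with the stated support).
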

\begin{proof}
For any bounded Borel function $\phi$ we let
\begin{align*}
h(x_1,x_2)=\phi(x_1) & 1_{\{ B_k\cap  S(X_0)^{>1/k}\}}(x_1)\cdot \phi(x_2) 1_{\{ B_k\cap  S(X_0)^{>1/k}\}}(x_2).
%1(x_2\in S(X_0)^{>1/k}) 1(|x_2|<k).
\end{align*}
Then the proof follows in a similar way to that of Corollary \ref{c0} by applying Theorem \ref{tl3} with  the above $h$.
\end{proof}

\begin{corollary}\label{c4}
For any bounded Borel function $\phi$ on $\R^d$ and for any $k\geq 1$, we have
$C_{\ref{p3.1}}(\kappa) \cL^\lambda (\phi \cdot 1_{\{ B_k\cap  S(X_0)^{>1/k}\}} )-K_{\ref{p3.1}}\widetilde \cL(\kappa)^\eps(\phi \cdot 1_{\{ B_k\cap  S(X_0)^{>1/k}\}} )$ converges to $0$ in $L^2(\P_{X_0})$ as $\lambda \to \infty$ and $\eps \downarrow 0$.
\end{corollary}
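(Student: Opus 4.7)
The plan is to follow the template of Corollary \ref{c3}, replacing inputs from Theorem \ref{tl2} and Proposition \ref{p3.3} by their $\P_{X_0}$ analogues, namely Theorem \ref{tl3} and the general initial data formula \eqref{m8.00}. As in Corollary \ref{c3}, it suffices to consider nonnegative $\phi\ge 0$. Write $A=B_k\cap S(X_0)^{>1/k}$ and set
\[h(x_1,x_2)=\phi(x_1)1_A(x_1)\cdot\phi(x_2)1_A(x_2),\]
so that the squared $L^2$-norm I want to control expands as
\begin{align*}
\E_{X_0}\Big((C_{\ref{p3.1}}(\kappa)\cL^\lambda(\phi 1_A)&-K_{\ref{p3.1}}\widetilde{\cL}(\kappa)^\eps(\phi 1_A))^2\Big)\\
=\;&C_{\ref{p3.1}}(\kappa)^2\E_{X_0}((\cL^\lambda\times\cL^\lambda)(h))+K_{\ref{p3.1}}^2\E_{X_0}((\widetilde{\cL}(\kappa)^\eps\times\widetilde{\cL}(\kappa)^\eps)(h))\\
&-2C_{\ref{p3.1}}(\kappa)K_{\ref{p3.1}}\,\E_{X_0}\big(\cL^\lambda(\phi 1_A)\cdot\widetilde{\cL}(\kappa)^\eps(\phi 1_A)\big).
\end{align*}
Theorem \ref{tl3} identifies the limits of the first two terms: each of them tends, as $\lambda\to\infty$ and $\eps\downarrow 0$, to
\[K_{\ref{p3.1}}^2 C_{\ref{p3.1}}(\kappa)^2 \int h(x_1,x_2)\,e^{-X_0(V^{\vec{\infty},\vec{x}})}\big(X_0(U_1^{\vec{\infty},\vec{x}})X_0(U_2^{\vec{\infty},\vec{x}})-X_0(U_{1,2}^{\vec{\infty},\vec{x}})\big)\,dx_1\,dx_2.\]

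The main work is therefore the cross term, and this is where Proposition \ref{p3.3} enters. By Fubini applied with $\vec{\lambda}=(\lambda,\kappa)$, and using the first-and-second derivative identity \eqref{m8.00} for the general initial condition, I obtain
\begin{align*}
&\E_{X_0}\big(\cL^\lambda(\phi 1_A)\widetilde{\cL}(\kappa)^\eps(\phi 1_A)\big)\\
&\quad=\int h(x_1,x_2)\,\lambda^{1+\alpha}\frac{1}{\eps^{p-2}}\,e^{-X_0(W^{\vec{\lambda},\vec{x},\eps})}\Big(X_0(W_1^{\vec{\lambda},\vec{x},\eps})X_0(W_2^{\vec{\lambda},\vec{x},\eps})-X_0(W_{1,2}^{\vec{\lambda},\vec{x},\eps})\Big)dx_1\,dx_2.
\end{align*}
Proposition \ref{p3.3}(i) gives $\lambda^{1+\alpha}W_1^{\vec{\lambda},\vec{x},\eps}\to K_{\ref{p3.1}}U_1^{\vec{\infty},\vec{x}}$ and $\eps^{-(p-2)}W_2^{\vec{\lambda},\vec{x},\eps}\to C_{\ref{p3.1}}(\kappa)U_2^{\vec{\infty},\vec{x}}$, while Proposition \ref{p3.3}(ii) gives $\lambda^{1+\alpha}\eps^{-(p-2)}(-W_{1,2}^{\vec{\lambda},\vec{x},\eps})\to K_{\ref{p3.1}}C_{\ref{p3.1}}(\kappa)(-U_{1,2}^{\vec{\infty},\vec{x}})$; combined with $W^{\vec{\lambda},\vec{x},\eps}\to V^{\vec{\infty},\vec{x}}$ from \eqref{ae7.6}, the integrand converges pointwise on $A\times A\setminus\{x_1=x_2\}$ to
\[h(x_1,x_2)\,K_{\ref{p3.1}}C_{\ref{p3.1}}(\kappa)\,e^{-X_0(V^{\vec{\infty},\vec{x}})}\big(X_0(U_1^{\vec{\infty},\vec{x}})X_0(U_2^{\vec{\infty},\vec{x}})-X_0(U_{1,2}^{\vec{\infty},\vec{x}})\big).\]

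To pass to the limit under the integral I invoke dominated convergence. Since $h$ is bounded and supported in $A\times A$, $x_1,x_2$ stay at positive distance from $S(X_0)$ and in a bounded set, so $|x-x_i|$ is uniformly bounded away from $0$ and $\infty$ for $x\in S(X_0)$. The pointwise bounds \eqref{m8.7} and \eqref{m8.8} yield an $X_0$-integrable bound of the form $c|x-x_i|^{-p}$ for $X_0(W_i^{\vec{\lambda},\vec{x},\eps})$ ($i=1,2$), and \eqref{eu12b3} together with $|x_1-x_2|^{2-p}$ being bounded on the compact support of $h$ in the off-diagonal region controls $X_0(-W_{1,2}^{\vec{\lambda},\vec{x},\eps})$; dropping the $e^{-X_0(W^{\vec{\lambda},\vec{x},\eps})}\le 1$ factor then gives a uniform integrable bound in $dx_1dx_2$. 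Hence dominated convergence applies, and the cross term converges to the same expression as the first two, multiplied by $K_{\ref{p3.1}}C_{\ref{p3.1}}(\kappa)$. Substituting back, all three contributions cancel and the $L^2$-norm of the difference tends to $0$. The main obstacle is thus verifying the integrable domination for the mixed term simultaneously in $\lambda$ and $\eps$; the bounds \eqref{m8.7}--\eqref{m8.8} and \eqref{eu12b3} (which require only the basic estimates from Section \ref{s4}) are exactly tailored for this step, so once they are in hand the argument is mechanical.
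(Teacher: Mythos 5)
Your proposal is correct and follows exactly the route the paper intends: the paper's proof of this corollary is a one-line reference to the argument of Corollary~\ref{c3}, citing precisely the ingredients you assemble, namely \eqref{m8.00}, \eqref{ae7.6}, \eqref{m8.7}, \eqref{m8.8}, \eqref{eu12b3}, Proposition~\ref{p3.3} and Theorem~\ref{tl3}. Your expansion of the square, the Fubini/\eqref{m8.00} treatment of the cross term, and the dominated-convergence step (valid since the support of $h$ keeps $x_1,x_2$ at distance $\geq 1/k$ from $S(X_0)$ and in $B_k$) is exactly the intended argument.
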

\begin{proof}
The proof is similar to that of Corollary \ref{c3} by using \eqref{m8.00}, \eqref{ae7.6}, \eqref{m8.7}, \eqref{m8.8}, \eqref{eu12b3}, Proposition \ref{p3.3} and Theorem \ref{tl3}.
\end{proof}

\subsection{Proofs of Theorems \ref{t0} and \ref{tl1}}\label{s5.2}

\begin{proposition}\label{p4.0}
For any $k\geq 1$ and any sequence $\eps_n \downarrow 0$, we have $\N_{0}$-a.e. or $\P_{X_0}$-a.s. that $\widetilde{\cL}(\kappa)^{\eps_n}(\mR)=0$ for all $\eps_n>0$ and
%\begin{align}\label{ae5.6}
 %$\lim_{\eps_n \downarrow 0}\widetilde{\cL}(\kappa)^{\eps_n}(\mR)=0, \forall \eps_n>0,$
 %\end{align}
%and 
%\begin{align}%\label{ae5.7}
$\widetilde{\cL}(\kappa)^{\eps_n}(\mR^{>1/k})=0$ for all $0<\eps_n<1/k$.
%\end{align}
\end{proposition}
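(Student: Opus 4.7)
The plan is to prove both assertions via a Fubini argument, using the support property \eqref{ea1.1}---that the exit measure $X_G$ is a.s./a.e.\ supported on $\mR\cap\partial G$---and then pass from fixed $\eps$ to the countable sequence $\eps_n$ by a union of null sets. The second assertion is nearly immediate: for $x\in\mR^{>1/k}$ and $\eps_n<1/k$, one has $d(x,\mR)>1/k>\eps_n$, forcing $\mR\cap\partial B(x,\eps_n)=\emptyset$, so \eqref{ea1.1} applied with $G=G_{\eps_n}^x$ gives $X_{G_{\eps_n}^x}(1)=0$ a.s./a.e., making the density of $\widetilde{\cL}(\kappa)^{\eps_n}$ at $x$ vanish. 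Fubini/Tonelli on the nonnegative integrand yields $\E_{X_0}[\widetilde{\cL}(\kappa)^{\eps_n}(\mR^{>1/k})]=0$ (and the analogous $\N_0$-identity, the interchange being justified on the nonnegative integrand thanks to the bound \eqref{ce9.1.0}), hence $\widetilde{\cL}(\kappa)^{\eps_n}(\mR^{>1/k})=0$ a.s./a.e.\ for each fixed $\eps_n<1/k$.

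For the first assertion the key input is the a.s./a.e.\ equivalence
\[
\{X_{G_{\eps/2}^x}(1)=0\} = \{\mR\cap\overline{B(x,\eps/2)}=\emptyset\}.
\]
The inclusion $\supset$ is immediate from \eqref{ea1.1}; the reverse follows by matching the probabilities of the two events---both equal $\exp(-X_0(U^{\infty,\eps/2}(\cdot-x)))$, the first by \eqref{ev5.5} and the second by the standard formula for the range of SBM avoiding a closed ball (cf.\ Proposition V.9 of \cite{Leg99})---upgrading the one-sided containment to equality. Since $x\in\overline{B(x,\eps/2)}$, this gives $1(x\in\mR)\cdot 1(X_{G_{\eps/2}^x}(1)=0)=0$ a.s./a.e.\ for each fixed $x$. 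Applying Fubini/Tonelli to the nonnegative integrand $1(x\in\mR)\cdot(\text{density at }x)$ then produces $\E_{X_0}[\widetilde{\cL}(\kappa)^{\eps}(\mR)]=0$, and so $\widetilde{\cL}(\kappa)^{\eps}(\mR)=0$ a.s.\ for each fixed $\eps>0$; a countable union over the $\eps_n$ completes the proof.

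The main subtlety is the reverse inclusion in the equivalence above, which does not follow from \eqref{ea1.1} alone---matching Laplace functionals via \eqref{ev5.5} is what pins it down and avoids any pathwise snake-crossing argument. Under the $\sigma$-finite measure $\N_0$ the Fubini step remains valid for the nonnegative integrand, yielding $\N_0(\widetilde{\cL}(\kappa)^{\eps}(\mR))=0$ and hence $\N_0$-a.e.\ vanishing. The argument under $\P_{X_0}$ for general $X_0$ is verbatim once we restrict attention to $x$ with $d(x,S(X_0))>\eps$, which is already built into the definition \eqref{edef2} of $\widetilde{\cL}(\kappa)^{\eps}$.
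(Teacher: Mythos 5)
Your proof is correct; the second assertion is handled exactly as in the paper ($x\in\mR^{>1/k}$ and $\eps_n<1/k$ force $\overline{B(x,\eps_n)}\subset\mR^c$, so \eqref{ea1.1} kills $X_{G_{\eps_n}^x}$, and Tonelli finishes), but for the first assertion you take a genuinely different route. The paper applies the special Markov property, Proposition \ref{pv0.2}(ii), with respect to $\cE_{G_{\eps/2}^x}$: inside the Fubini integral it replaces $1_{(x\in\mR)}$ by $\P_{X_{G_{\eps/2}^x}}(x\in\mR)$, which vanishes on the event $\{X_{G_{\eps/2}^x}=0\}$ because the zero measure has empty range. You instead identify, up to null sets, $\{X_{G_{\eps/2}^x}(1)=0\}$ with $\{\mR\cap\overline{B(x,\eps/2)}=\emptyset\}$ by combining the one-sided inclusion from \eqref{ea1.1} with the coincidence of the zero-exit and non-hitting probabilities, both expressed through $U^{\infty,\eps/2}$ (via \eqref{ev5.5} and the maximal-solution characterization in \eqref{ev5.6} / Proposition V.9 of Le Gall). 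That step is sound --- nested events of equal finite mass (finite $\N_y$-mass $U^{\infty,\eps/2}(y-x)$ for $|y-x|>\eps/2$, or equal $\P_{X_0}$-probabilities) coincide a.e. --- and you correctly flag it as the one input your argument needs beyond \eqref{ea1.1}. The trade-off is that your route requires the identification of $U^{\infty,\eps}$ with the hitting function of the closed ball, whereas the paper's conditioning argument needs only the special Markov property and no uniqueness-of-maximal-solution input; both are available in the paper's toolkit, and the Fubini and countable-union steps are identical.
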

\begin{proof}
First for any $\eps>0$, 
\begin{align*}
&\N_{0}\Big(\widetilde{\cL}(\kappa)^{\eps}(\mR)\Big)\leq \N_{0}\Big(\int \frac{X_{G_\eps^x}(1)}{\eps^{p}} \exp\Big(-\kappa \frac{X_{G_{\eps}^{x}}(1)}{\eps^2}\Big)1_{(X_{G_{\eps/2}^{x}}=0)}1_{(x\in \mR)} dx \Big)\nn\\
=&\int\N_{0}\Big(\frac{X_{G_\eps^x}(1)}{\eps^{p}} \exp\Big(-\kappa \frac{X_{G_{\eps}^{x}}(1)}{\eps^2}\Big)1_{(X_{G_{\eps/2}^{x}}=0)}1_{(x\in \mR)} \Big)dx\nn\\
=&\int \N_{0}\Big(\frac{X_{G_\eps^x}(1)}{\eps^{p}} \exp\Big(-\kappa \frac{X_{G_{\eps}^{x}}(1)}{\eps^2}\Big)1_{(X_{G_{\eps/2}^{x}}=0)}\P_{X_{G_{\eps/2}^{x}}}(x\in \mR) \Big)dx=0.
\end{align*}
where the first equality is by Fubini's theorem and the second equality uses Proposition \ref{pv0.2}(ii). Hence $\widetilde{\cL}(\kappa)^{\eps}(\mR)=0,$ $\N_{0}$-a.e.

Next for all $x\in \mR^{>1/k}$ and $0<\eps<1/k$, we have $\overline{B_\eps(x)}\subset \mR^c$ and \eqref{ea1.1} will then imply $X_{G_\eps^x}(1)=0$. 
%all $x\in S(X_0)^c$ such that \mbox{$\overline{B(x,\eps)} \subset S(X_0)^c$,} if we further assume that $x\in (\mR\cap S(X_0)^c)^{>1/k}$ and $0<\eps<1/k$, we have $\overline{B(x,\eps)} \subset (\mR\cap S(X_0)^c)^c$ and so \mbox{$ \overline{B(x,\eps)} \subset\mR^c$,} and \eqref{ea1.1} will then imply $X_{G_\eps^x}(1)=0$. 
Thus if $0<\eps<1/k$,
\begin{align*}
&\N_{0}\Big(\widetilde{\cL}(\kappa)^{\eps}\big(\mR^{>1/k}\big)\Big)\leq \N_{0}\Big(\int \frac{X_{G_\eps^x}(1)}{\eps^{p}}\exp\Big(-\kappa \frac{X_{G_{\eps}^{x}}(1)}{\eps^2}\Big) 1_{\overline{B_\eps(x)} \subset \mR^c} dx \Big)\nn\\
&=\int\N_{0}\Big(\frac{X_{G_\eps^x}(1)}{\eps^{p}} \exp\Big(-\kappa \frac{X_{G_{\eps}^{x}}(1)}{\eps^2}\Big)1_{\overline{B_\eps(x)} \subset \mR^c} \Big)dx=0.
\end{align*}
Take a countable union of null sets to see that $\N_{0}$-a.e.  $\widetilde{\cL}(\kappa)^{\eps_n}(\mR)=0$ for all $\eps_n>0$ and $\widetilde{\cL}(\kappa)^{\eps_n}(\mR^{>1/k})=0$ for all $0<\eps_n<1/k$ and so the proof for $\N_0$ is complete. The proof for $\P_{X_0}$ follows in a similar way. 
\end{proof}

%Finally we are ready to finish the
\begin{proof}[Proof of Theorems \ref{t0} and \ref{tl1}]% and $\P_{\delta_0}$]
% We will only give the proof of Theorems \ref{t0} and \ref{tl1} under $\N_0$ here and include the proof under $\N_{X_0}$ and $\P_{X_0}$ for general initial condition $X_0$ in Appendix. 
 We first give the convergence of $\cL^\lambda$ to $\cL$ and $\widetilde{\cL}(\kappa)^\eps$ to $\widetilde{\cL}(\kappa)$ and then find some constant $c_{\ref{tl1}}(\kappa)>0$ so that %or $\P_{\delta_0}$-a.s.
 $\widetilde{\cL}(\kappa)= c_{\ref{tl1}}(\kappa) \cL$ a.s. Next we show that the support of $\widetilde{\cL}(\kappa)$ is contained in $\pmR$ and it follows that the support of $\cL$ will also be on $\pmR$, thus finishing both proofs of Theorem \ref{t0} and Theorem \ref{tl1}. Since the proof for the convergence of $\cL^\lambda$ and $\widetilde{\cL}(\kappa)^\eps$ are similar, we will only give the proof for the latter. 
 
 We first deal with $\N_0$. Let $\{\phi_m\}_{m=1}^\infty$ be a countable determining class for $M_F(\R^d)$ consisting of bounded, continuous functions and we take $\phi_1=1$. Consider 
\begin{align}\label{e6.4}
\cC=\{\psi_{m,k}: \psi_{m,k}=\phi_m  \chi_k, m\geq 1,k\geq 1\},
%\chi_k(x)=
%\begin{cases}
%1, &\text{ if } k^{-1}\leq |x|\leq k,\\
%0, &\text{ if } |x|\leq (2k)^{-1} \text{ or } |x|\geq k+1\\
%\text{continuous}, &\text{ on } (2k)^{-1}\leq |x|\leq k^{-1} \text{ and } k\leq |x|\leq k+1.
%\end{cases}
\end{align}
where $\chi_k$ is a continuous modification of $1_{(k^{-1}\leq |x|\leq k)}$ so that $\chi_k(x)=1$ for all $k^{-1}\leq |x|\leq k$ and $\chi_k(x)=0$ for all $|x|\leq (2k)^{-1} \text{ or } |x|\geq k+1$.
%We will deal with $\N_0$.
 Corollary \ref{c0} implies that for any $\psi_{m,k}\in \cC$, we have $\widetilde{\cL}(\kappa)^\eps(\psi_{m,k})$ converges in $L^2(\N_0)$ to some $\widetilde{l}(\psi_{m,k})$ in $L^2(\N_0)$ and by taking a subsequence  we get almost sure convergence. Define subsequences iteratively and take a diagonal subsequence $\eps_n\downarrow 0$ (we may assume for all $n\geq 1$ that $0<\eps_n<1$) to get 
\begin{align}\label{e1.5.1}
    \widetilde{\cL}(\kappa)^{\eps_n}(\psi_{m,k}) \to \widetilde{l}(\psi_{m,k}) \text{ as } \eps_n \downarrow 0, \text{ for all } m,k\geq 1, \N_0-a.e.
\end{align}
%Apply Proposition \ref{p0.1} with the above $\{\eps_n\}$ and take a further subsequence (still denoted by $\eps_n$) such that for all $k\geq 1$ and for all $\delta>0$, 
%\begin{align*}
%    \widetilde{\cL}(\kappa)^{\eps_n}\Big(\{x: L^x>\delta\}\cap B(0,k)\Big) \to 0 \text{ as } \eps_n \downarrow 0,  \N_0-a.e.
%\end{align*}
%Take a countable union of null sets with $\delta=1/n$ and $k\geq 1$ and use monotonicity to get
%\begin{align}\label{e1.5}
%    \widetilde{\cL}(\kappa)^{\eps_n}\Big(\{x: L^x>\delta\}\cap B(0,k)\Big) \to 0 \text{ as } \eps_n \downarrow 0, \text{ for all } k\geq 1, \text{ for all } \delta>0,  \N_0-a.e.
%\end{align}
Fix $\omega$ outside a null set such that \eqref{e1.5.1} hold. Choose $m=1$ in \eqref{e1.5.1} to see that $\widetilde{\cL}(\kappa)^{\eps_n}(\chi_k) \to \widetilde{l}(\chi_k)$ for all $k\geq 1$. Note we have $\widetilde{l}(\chi_k)<\infty$ by the choice of $\omega$ and so $\N_0$-a.e. we have
\begin{align}\label{e0.1.0}
\sup_{\eps_n>0} \widetilde{\cL}(\kappa)^{\eps_n}(\{x: k^{-1}\leq |x|\leq k\})\leq \sup_{\eps_n>0} \widetilde{\cL}(\kappa)^{\eps_n}(\chi_k)<\infty, \forall k\geq 1.
\end{align}
The proof of Theorem 1.5 in \cite{HMP18} implies that $\N_0$-a.e. $L^x$ is positive for $x$ near $0$,
%We recall from \eqref{ev4.2} that
%\begin{align}
%L^x>0, \text{ for all } |x|\leq k^{-1} \text{ for } k\geq 1 \text{ large } \N_0-a.e.
%\end{align}
and hence we have $\N_0$-a.e. that $\{x: |x|\leq k^{-1}\}\subset \mR$ for $k\geq 1$ large. Proposition \ref{p4.0} will then imply $\N_0$-a.e. for $k\geq 1$ large,
\begin{align}\label{e0.3}
\widetilde{\cL}(\kappa)^{\eps_n}(\{x: |x|\leq k^{-1}\})\leq \widetilde{\cL}(\kappa)^{\eps_n}(\mR)=0 \text{ for all } \eps_n>0.
\end{align} 
On the other hand, we know that the range of SBM $X$ is compact $\N_0$-a.e. by \eqref{ea0.0} and hence by Proposition \ref{p4.0} we have $\N_0$-a.e. that
\begin{align}\label{e0.1}
 \text{ for } k\geq 1 \text{ large,} \sup_{\eps_n>0} \widetilde{\cL}(\kappa)^{\eps_n}(\{x: |x|\geq k\})\leq \sup_{\eps_n>0} \widetilde{\cL}(\kappa)^{\eps_n}(\mR^{>1})=0.
\end{align} 
Combining \eqref{e0.1.0}, \eqref{e0.3} and \eqref{e0.1}, we get
\begin{align}\label{e0.2}
\sup_{\eps_n>0} \widetilde{\cL}(\kappa)^{\eps_n}(1)<\infty, \N_0-a.e.
\end{align}
Note \eqref{e0.1} also implies the tightness of $\{\widetilde{\cL}(\kappa)^{\eps_n}\}$ and together with \eqref{e0.2}, we get the relative compactness of $\{\widetilde{\cL}(\kappa)^{\eps_n}\}$ by Prohorov's theorem (see, e.g., Theorem 7.8.7 of \cite{AD00}). By relative compactness of $\{\widetilde{\cL}(\kappa)^{\eps_n}\}$, any subsequence admits a further sequence along which the measures converge to some $\widetilde{\cL}(\kappa)$ in the weak topology. It remains to check all limit point coincide which is easy to see by \eqref{e1.5.1} since $\cC$ is a determining class on $M_F(\R^d).$ In conclusion, for any sequence $\eps_k\downarrow 0$, we can find a subsequence $\eps_{k_n} \downarrow 0$ such that  $\N_0$-a.e. $\widetilde{\cL}(\kappa)^{\eps_{k_n}} \to \widetilde{\cL}(\kappa)$, which easily implies that $\widetilde{\cL}(\kappa)^\eps \overset{P}{\rightarrow} \widetilde{\cL}(\kappa)$ under $\N_0$. The case for ${\cL}^\lambda \overset{P}{\rightarrow} {\cL}$ under $\N_0$ is similar. 

%The case under $\N_{X_0}$ or $\P_{X_0}$ is similar and can be found in Appendix.

After establishing the existence of $\cL$ and $\widetilde{\cL}(\kappa)$, we continue to show that they differ only up to some constant. %$C_{\ref{p3.1}}(\kappa){\cL}=K_{\ref{p3.1}}\widetilde{\cL}(\kappa)$ a.s. 
%We first deal with $\N_0$. 
It is easy to check that for any  $\eps, \lambda>0$ and any $\psi_{m,k} \in \cC$,
 \begin{align}\label{e9.4.1}
 &\N_0\Big(\Big(K_{\ref{p3.1}}\widetilde{\cL}(\kappa)(\psi_{m,k})-C_{\ref{p3.1}}(\kappa){\cL}(\psi_{m,k})\Big)^2 \Big)\nn\\
 &\leq 4 \N_0\Big(\Big(K_{\ref{p3.1}}\widetilde{\cL}(\kappa)(\psi_{m,k})-K_{\ref{p3.1}}\widetilde{\cL}(\kappa)^{\eps}(\psi_{m,k})\Big)^2 \Big)\nn\\
 &+ 4\N_0\Big(\Big(K_{\ref{p3.1}}\widetilde{\cL}(\kappa)^{\eps}(\psi_{m,k})-C_{\ref{p3.1}}(\kappa) {\cL}^{\lambda}(\psi_{m,k})\Big)^2 \Big)\nn\\
 &+4\N_0\Big(\Big(C_{\ref{p3.1}}(\kappa){\cL}^{\lambda}(\psi_{m,k})-C_{\ref{p3.1}}(\kappa){\cL}(\psi_{m,k})\Big)^2 \Big).
\end{align}
By letting $\lambda \to \infty$ and $\eps \downarrow 0$, we conclude by Corollary \ref{c0} and Corollary \ref{c3} that each term on the right-hand side of \eqref{e9.4.1} converges to $0$ and hence
\[K_{\ref{p3.1}}\widetilde{\cL}(\kappa)(\psi_{m,k})=C_{\ref{p3.1}}(\kappa) {\cL}(\psi_{m,k}), \N_0-a.e.\]
 Take a countable union of null sets to conclude that $\N_0$-a.e. for all $m,k\geq 1$, we have $C_{\ref{p3.1}}(\kappa) {\cL}(\psi_{m,k})=K_{\ref{p3.1}}\widetilde{\cL}(\kappa)(\psi_{m,k})$  and so $C_{\ref{p3.1}}(\kappa){\cL}=K_{\ref{p3.1}}\widetilde{\cL}(\kappa)$. Let $c_{\ref{tl1}}(\kappa)=C_{\ref{p3.1}}(\kappa) K_{\ref{p3.1}}^{-1}$ to see that $\N_0$-a.e. we have $\widetilde{\cL}(\kappa)=c_{\ref{tl1}}(\kappa) \cL$.

Finally we will show that $\widetilde{\cL}(\kappa)$ (and hence $\cL$) is supported on $\pmR$. Let $\{\eps_n\}_{n\geq 1}$ be any sequence such that $\N_{0}$-a.e. $\widetilde{\cL}(\kappa)^{\eps_n}\to \widetilde{\cL}(\kappa)$. By Proposition \ref{p4.0} we can fix $\omega$ outside a null set such that $\widetilde{\cL}(\kappa)^{\eps_n}\to \widetilde{\cL}(\kappa)$ and \mbox{$\widetilde{\cL}(\kappa)^{\eps_n}(\mR)\to 0$} hold. It follows that
\begin{align}\label{ce1.1}
\widetilde{\cL}(\kappa)(\text{Int}(\mR))\leq \liminf_{\eps_n \downarrow 0} \widetilde{\cL}(\kappa)^{\eps_n}(\text{Int}(\mR))\leq \liminf_{\eps_n \downarrow 0} \widetilde{\cL}(\kappa)^{\eps_n}(\mR)=0,
\end{align}
where the first inequality is by $\widetilde{\cL}(\kappa)^{\eps_n}\to \widetilde{\cL}(\kappa)$. 

Next by Proposition \ref{p4.0} we can take a countable union of null sets and fix $\omega$ outside a null set such that $\widetilde{\cL}(\kappa)^{\eps_n}\to \widetilde{\cL}(\kappa)$ and \mbox{$ \widetilde{\cL}(\kappa)^{\eps_n}(\mR^{>1/k})\to 0$} holds for all $k\geq 1$. Then we have
\begin{align*}
\widetilde{\cL}(\kappa)(\mR^c)=&\widetilde{\cL}(\kappa)\Big(\bigcup_{k =1}^\infty \mR^{>1/k}\Big)\leq \sum_{k=1}^\infty \widetilde{\cL}(\kappa)(\mR^{>1/k})\\
\leq&  \sum_{k=1}^\infty  \liminf_{\eps_n \downarrow 0} \widetilde{\cL}(\kappa)^{\eps_n}(\mR^{>1/k})=0,
\end{align*}
where the second inequality is by $\widetilde{\cL}(\kappa)^{\eps_n}\to \widetilde{\cL}(\kappa)$. Therefore we conclude the support of $\widetilde{\cL}(\kappa)$ is on $\pmR$ under $\N_0$. 

Turning to the case under $\P_{\delta_0}$, the above arguments work in an exactly same way as $\N_0$ and so we omit the details.
\end{proof}

\subsection{On the moments of the boundary local time measure}

In view of Theorems \ref{tl2}, \ref{tl3} and Corollaries \ref{c0}, \ref{c1}, we can get the moment measure formulas for $\cL$ and $\widetilde{\cL}(\kappa)$ and finish the proof of Theorems \ref{p0.0} and \ref{p0.0_1}. 
%Since $\widetilde{\cL}(\kappa)= c_{\ref{tl1}}(\kappa) \cL$ a.s. by Theorem \ref{tl1}, it suffices to state our results for $\cL$.
\begin{proof}[Proof of Theorem \ref{p0.0}]
(a) Let $\lambda_n$ be the sequence from Theorem \ref{t0} such that  $\cL^{\lambda_n} \to \cL$, $\N_0$-a.e. For any bounded continuous function $\phi\geq 0$ and any $k\geq 1$, we have $\cL^{\lambda_n}(\phi \cdot \chi_k) \to \cL(\phi \cdot \chi_k)$, $\N_0$-a.e., where $\chi_k$ is as in \eqref{e6.4}.
Corollary \ref{c0} will then give that $\cL^{\lambda_n}(\phi \cdot \chi_k)$ converges in $L^2(\N_0)$ to  $\cL(\phi \cdot \chi_k)$. In particular, by working with the finite measure $\N_0(\cdot \cap \{\mR \cap G_{1/4k} \neq \emptyset\})$, we have $\cL^{\lambda_n}(\phi \cdot \chi_k)$ converges in $L^1(\N_0)$ to $\cL(\phi \cdot \chi_k)$ and so
\begin{align}\label{e6.5}
&\N_0\Big(\cL(\phi \cdot \chi_k)\Big)=\lim_{n\to \infty} \N_0\Big(\cL^{\lambda_n}(\phi \cdot \chi_k)\Big)\\\nn
 =&\lim_{n\to \infty} \int \phi(x)\chi_k(x) \N_0(\lambda_n^{1+\alpha} L^x e^{-\lambda_n L^x}) dx= c_{\ref{p12}}\int |x|^{-p}  \phi(x)\chi_k(x) dx,
\end{align}
where the second equality is by Fubini's theorem and in the last equality we have used Dominated Convergence with Proposition \ref{p1.1} and Proposition \ref{p12}.  Let $k\to \infty$ and apply a Monotone Class Theorem to extend \eqref{e6.5} to any Borel measurable function $\phi$ and the proof follows by $K_{\ref{p3.1}}=c_{\ref{p12}}$. 

(b) By \eqref{eu12b}, \eqref{e6.3} follows immediately from \eqref{e6.2}. For the proof of \eqref{e6.2}, we let $\lambda_n$ be the sequence such that  $\cL^{\lambda_n} \to \cL$, $\N_0$-a.e. For any bounded continuous function $h\geq 0$ and any $k\geq 1$, we have $\N_0$-a.e. that 
\begin{align}\label{m9.0}
\begin{cases}
(i) &\lim_{n\to \infty} \cL^{\lambda_n}(\chi_k) \to \cL(\chi_k).\\
%\end{align}
%and 
%\begin{align}\label{e6.9}
 (ii) &\lim_{n\to \infty} \int h(x_1, x_2)\chi_k(x_1) \chi_k(x_2)  d\cL^{\lambda_n}(x_1) d\cL^{\lambda_n}(x_2) \\
 &\quad\quad = \int h(x_1, x_2) \chi_k(x_1) \chi_k(x_2)  d\cL(x_1) d\cL(x_2).
  \end{cases}
\end{align}
% as $n\to \infty$.
Note $h\leq \|h\|_\infty$ and so
\begin{align}\label{e6.10}
  \Big|\int h(x_1, x_2)\chi_k(x_1) \chi_k(x_2)  d\cL^{\lambda_n}(x_1) d\cL^{\lambda_n}(x_2)\Big| \leq \|h\|_\infty (\cL^{\lambda_n}(\chi_k))^2.
  \end{align}
  Use Corollary \ref{c0} and \eqref{m9.0}(i) to get $\cL^{\lambda_n}(\chi_k)$ converges in $L^2(\N_0)$ to $\cL(\chi_k)$ and thus we get $\N_0((\cL^{\lambda_n}(\chi_k))^2)$ converges to $\N_0((\cL(\chi_k))^2)$.  Use \eqref{m9.0}(i) again and work under the finite measure \mbox{$\N_0(\cdot \cap \{\mR \cap G_{1/4k}\neq \emptyset\})$} to get $\{(\cL^{\lambda_n}(\chi_k))^2, n\geq 1\}$ is uniformly integrable. By \eqref{e6.10}, the left-hand side term of \eqref{m9.0}(ii) is also uniformly integrable and hence we conclude% that
\begin{align}\label{e6.8}
    \N_0\Big(&\int h(x_1, x_2) \chi_k(x_1) \chi_k(x_2)  d\cL(x_1) d\cL(x_2)\Big)\nonumber \\
    =&\lim_{n\to \infty} \N_0\Big(\int h(x_1, x_2)\chi_k(x_1) \chi_k(x_2)  d\cL^{\lambda_n}(x_1) d\cL^{\lambda_n}(x_2)\Big)\nonumber \\
    =&K_{\ref{p3.1}}^2 \int h(x_1, x_2)\chi_k(x_1) \chi_k(x_2)   (-U_{1,2}^{\vec{\infty},\vec{x}}(0)) dx_1 dx_2,
\end{align}
the last by Theorem \ref{tl2}. Let $k\to \infty$ and apply a Monotone Class Theorem to extend \eqref{e6.8} to any Borel measurable function.
\end{proof}
\begin{proof}[Proof of Theorem \ref{p0.0_1}]
 The proof of \eqref{ae8.4} and \eqref{ae8.5} follows in a similar way to the above proof of Theorem \ref{p0.0} by using Corollary \ref{ca1.3}, Theorem \ref{tl3}, Corollary \ref{c1}. \eqref{ae8.6} follows immediately from \eqref{ae8.5}, \eqref{eu12b} and the definitions of $U_i^{\vec{\infty},\vec{x}}$ from \eqref{eu1u2}.
\end{proof}

%\begin{theorem}\label{t1.2}
%Under both $\P_{X_0}$ and $\N_{X_0}$, $\cL$ is an atomless measure.
%\end{theorem}
%
%\begin{proof}
%We first deal with $\N_0$. For any fixed $k\geq 1$, we let $I_j^n=\{k^{-1}+(j-1) 2^{-n}\leq |x|\leq k^{-1}+j 2^{-n}\}$ for $1\leq j\leq k2^n$ such that $\{I_j^n\}$ is a partition of $\{k^{-1}\leq |x|\leq k+k^{-1}\}$. 
%Use Theorem \ref{p0.0} to compute 
%\begin{align}
%    \N_0\Big(\Big(\cL(I_j^n)\Big)^2\Big) &\leq c_{\ref{p0.0}}\int_{x_1,x_2 \in I_j^n} (|x_1|^{-p}+|x_2|^{-p}) |x_1-x_2|^{2-p} dx_1 dx_2\nonumber\\
%    &\leq 2c_{\ref{p0.0}} (k^{p}+k^{p}) (k+1)^{2d-2} \frac{1}{(3-p)(4-p)} (\frac{1}{2^n})^{4-p}\nonumber\\
%    &\leq c(k) (\frac{1}{2^n})^{4-p},
%\end{align}
%and therefore 
%\begin{align}\label{ex2.2}
%    \N_0\Big(\sum_{j=1}^{k2^n}\Big(\cL(I_j^n)\Big)^2\Big) &\leq k2^n c(k) (\frac{1}{2^n})^{4-p}=C(k) (\frac{1}{2^n})^{3-p} \to 0 \text{ as } n\to \infty.
%\end{align}
%One can easily derive from \eqref{ex2.2} that $\cL$ is atomless on $\{k^{-1}\leq |x|\leq k+k^{-1}\}, \N_0$-a.e. for any $k\geq 1$. In view of \eqref{ex2.1}, we have proved that $\cL$ is atomless $\N_0$-a.e. It then follows immediately that $\cL$ is atomless $\N_{X_0}$-a.e. For the $\P_{X_0}$ case, it follows from the ensuring cluster decomposition arguments used in proving \eqref{e5.3} and in particular \eqref{ev3.19}.
%\end{proof}   
%\begin{remark}
%   In fact the above proof gives a stronger result that $\cL(|x|\in \cdot)$ is atomless $\N_0$-a.e. and $\P_{\delta_0}$-a.s.
%   \end{remark}

\section{Exit measures and zero-one law}\label{s6}

In this section we will give the proof of Theorem \ref{tl1.1}. Our approach to Theorem \ref{tl1.1} is similar to the proof of Theorem 1.2 in \cite{HMP18}; we utilize exit measures, which will be easy consequences of the following two results. The first result is proved below.

\begin{proposition}\label{p2.2}
 Let $x_1\in\R^d$ and $r_0>0$ satisfy $B_{2r_0}(x_1)\subset S(X_0)^c$.  If $0<r_1<r_0$, then $\N_{X_0}$-a.e.
\begin{equation*}%\label{e2.3.1}
\begin{cases}
&X_{G_{r_1}^{x_1}}(1)=0\text{ and }X_{G_{r_0}^{x_1}}(1)>0\text{ imply }\\
&\cL(B_{r}(x_1))>0 \text{ for every } r>r_1\ s.t.\ X_{G_{r}^{x_1}}(1)>0.
\end{cases}
\end{equation*}
\end{proposition}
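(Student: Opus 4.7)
Plan: My strategy is to apply the Special Markov Property at radius $r$ to reduce to a statement about super-Brownian motion restarted from the exit measure $\mu:=X_{G_r^{x_1}}$, and then use the alternate exit-measure construction $\widetilde{\cL}(\kappa)$ of Theorems \ref{tl1} and \ref{tl1.0.0} to show the resulting boundary local time is positive a.s.\ on the target event. I work under $\N_{X_0}$; the $\P_{X_0}$ case follows by the Poisson decomposition \eqref{ea1.2}.

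First I would apply the SMP (Proposition \ref{pv0.2}) to $G:=G_r^{x_1}$. Conditional on $\cE_G$, the excursions of the snake outside $G$ form a Poisson point process with intensity $\N_\mu$, so the contribution of $X$ in $B_r(x_1)=G^c$ is distributed as a SBM $\tilde X$ started from $\mu$, which is supported on $\partial B_r(x_1)$. Since $r>r_1$ gives $G\subset G_{r_1}^{x_1}$, any snake stopped in $G$ stays in $G_{r_1}^{x_1}$ and contributes zero to $X_{G_{r_1}^{x_1}}$; hence the event in question reads, under the conditional law, $\{\mu(1)>0,\ \tilde X_{G_{r_1}^{x_1}}(1)=0\}$. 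Similarly, only snakes that have exited $G$ can contribute to $L^x$ for $x\in B_r(x_1)$, so $\cL|_{B_r(x_1)}$ agrees with the boundary local time of $\tilde X$ on $B_r(x_1)\subset S(\mu)^c$ supplied by Theorem \ref{t0.0.1}. It therefore suffices to show the reduced claim: for any nonzero finite $\mu$ on $\partial B_r(x_1)$, $\P_\mu$-a.s.\ on $\{\tilde X_{G_{r_1}^{x_1}}(1)=0\}$, the associated boundary local time $\tilde\cL$ places positive mass on $B_r(x_1)$.

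To prove the reduced claim I would use $\widetilde{\cL}(\kappa)=c_{\ref{tl1}}(\kappa)\cL$ and the approximating measures $\widetilde{\cL}(\kappa)^\eps$ of \eqref{edef2}. On the reduced event $\tilde\mR$ enters $B_r(x_1)$ but avoids $B_{r_1}(x_1)$, so any $z^\ast$ realising $\min\{|z-x_1|:z\in\tilde\mR\}$ lies in the annulus $\{r_1\le|z-x_1|\le r\}$, and near $z^\ast$ the range lies on one side of the sphere $\{|z-x_1|=|z^\ast-x_1|\}$. This produces a set of positive Lebesgue measure of points $x\in B_r(x_1)$ (roughly a thin radial shell just inside $z^\ast$) for which, at small $\eps>0$, $B_{\eps/2}(x)$ misses $\tilde\mR$ while $B_{\eps}(x)$ meets it; by \eqref{ea1.1} this gives $X_{G_{\eps/2}^x}(1)=0$ and $X_{G_\eps^x}(1)>0$, so the integrand defining $\widetilde{\cL}(\kappa)^\eps$ is strictly positive there.

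The main obstacle is converting this qualitative pointwise positivity into a quantitative lower bound on $\widetilde{\cL}(\kappa)^\eps(B_r(x_1))$ that persists in the $\eps\downarrow 0$ limit along the a.s.-convergent subsequence of Theorem \ref{tl1.0.0}. The plan is to perform a second application of the SMP at a small radius around $z^\ast$, so as to decouple the $\eps$-scale activity from the global event $\{\tilde X_{G_{r_1}^{x_1}}(1)=0\}$; then the first- and second-moment estimates of Proposition \ref{p1.0} and Theorem \ref{p0.0_1} (especially \eqref{ae8.6} and \eqref{e6.3}) feed a Paley--Zygmund argument giving $\widetilde{\cL}(\kappa)(B_r(x_1))>0$ with conditional probability bounded away from zero, and finally a zero-one law inherited from the PPP structure of the SMP excursions, in the spirit of the shrinking-ball arguments used for Theorem 1.2 of \cite{HMP18}, upgrades this to an almost-sure statement on the target event.
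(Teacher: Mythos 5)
Your opening reduction via the special Markov property is in the right spirit, but it does not actually reduce the difficulty: the ``reduced claim'' you state --- for \emph{arbitrary} nonzero $\mu$ on $\partial B_r(x_1)$, a.s.\ on $\{\tilde X_{G_{r_1}^{x_1}}(1)=0\}$ one has $\tilde\cL(B_r(x_1))>0$ --- is just Theorem \ref{tl1.1} for the initial measure $\mu$, i.e.\ the statement this proposition exists to prove. The quantitative engine you then invoke, Paley--Zygmund with the moment formulas \eqref{ae8.4}--\eqref{ae8.6}, only yields a lower bound \emph{uniform in the conditioning} when the exit measure has small total mass: the first moment carries the factor $e^{-X_0(V^\infty(x-\cdot))}$, which decays in $\mu(1)$, while the second moment contains the product term $X_0(U_1^{\vec\infty,\vec x})X_0(U_2^{\vec\infty,\vec x})\sim\mu(1)^2$, so the ratio $(\E_\mu\cL(B))^2/\E_\mu(\cL(B)^2)$ degenerates as $\mu(1)$ grows. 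This is exactly why the paper's Lemma \ref{l7.1} carries the hypothesis $\delta=X_0'(1)\le r^2$ and is proved by rescaling and extracting a single Poisson cluster. Your plan has no mechanism forcing the exit-measure mass to be small at the radius where you apply the moment method; the paper obtains it for free by conditioning along the announcing sequence $T_{n^{-1}}$ of the predictable time $T_0$ at which $Y_r(1)=X_{G_{r_0-r}}(1)$ vanishes, where $Y_{T_{n^{-1}}}(1)=n^{-1}$ exactly because $Y_\cdot(1)$ has no negative jumps.

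The second gap is the final upgrade to an almost-sure statement. A ``second application of the SMP at a small radius around $z^\ast$'' is not available: $z^\ast$ is determined by the entire range, so balls around it are not adapted to any exit filtration, and there is no zero-one law to inherit from the PPP structure --- conditionally on $\cE_G$ only finitely many excursions (typically one) reach the relevant region, so the event $\{\tilde\cL(B_r)>0\}$ is not a tail event of independent clusters. What replaces both steps in the paper is a martingale argument: one shows $M_r:=Q_{x_0}(\cL(B_{r_0})>0\,|\,\cE_r^+)\ge q_{\ref{l7.1}}$ on $\{0<Y_r(1)<(r_0-r)^2\}$ using Proposition \ref{pv0.2}(iii) and Lemma \ref{l7.1}, lets $r\uparrow T_0$ along $T_{n^{-1}}$, and then uses the measurability $\cL(B_{r_0})\in\cE^+_{T_0-}$ (Lemma \ref{ae9.6}) to conclude that $1(\cL(B_{r_0})>0)\ge q_{\ref{l7.1}}>0$, hence $=1$, on the target event. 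Without an analogue of that measurability-at-the-predictable-time step, a conditional probability bounded away from zero does not yield an almost-sure conclusion, and your argument stops short of the proposition.
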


\begin{corollary}\label{p2.3}
 Let $x_1\in\R^d$ and $r_0>0$ satisfy $B_{2r_0}(x_1)\subset S(X_0)^c$.  If $0<r_1<r_0$, then $\P_{X_0}$-a.s.
\begin{equation*}
X_{G_{r_1}^{x_1}}(1)=0\text{ and }X_{G_{r_0}^{x_1}}(1)>0\text{ imply }\cL(B_{r_0}(x_1))>0.
\end{equation*}
\end{corollary}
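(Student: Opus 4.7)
The plan is to reduce the claim to Proposition~\ref{p2.2} via the standard Brownian snake / Poisson cluster representation of $X$ under $\P_{X_0}$. In the standard setup, write $\Xi = \sum_{j\in J}\delta_{W_j}$ for the Poisson point measure on snake space with intensity $\N_{X_0}$, so that $X$ is given by \eqref{Xtdec} and, by \eqref{ea1.2}, the exit measures decompose additively as $X_{G_r^{x_1}}(1) = \sum_{j\in J} X_{G_r^{x_1}}(W_j)(1)$ for every $r > 0$.

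\paragraph{Execution.} First I would fix an outcome in the event $E := \{X_{G_{r_1}^{x_1}}(1) = 0,\ X_{G_{r_0}^{x_1}}(1) > 0\}$. Nonnegativity of the individual summands forces $X_{G_{r_1}^{x_1}}(W_j)(1) = 0$ for every $j\in J$, while at least one index $j_0\in J$ must satisfy $X_{G_{r_0}^{x_1}}(W_{j_0})(1) > 0$. Thus the triple $(W_{j_0}, r_0, r_1)$ fulfills the hypothesis of Proposition~\ref{p2.2}, and because $J$ is countable and each $W_j$ is $\N_{X_0}$-distributed (so the $\N_{X_0}$-null set from Proposition~\ref{p2.2} lifts, via Campbell's formula applied to $\Xi$, to a $\P_{X_0}$-null exceptional set), we obtain $\cL(W_{j_0})(B_{r_0}(x_1)) > 0$, where $\cL(W_{j_0})$ denotes the boundary local time associated to the single snake $W_{j_0}$.

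\paragraph{Main obstacle.} The last step, and the main technical difficulty, is converting $\cL(W_{j_0})(B_{r_0}(x_1)) > 0$ into $\cL(B_{r_0}(x_1)) > 0$ under $\P_{X_0}$. This is delicate because $\cL$ is not additive in the Poisson excursions: the first-moment formulas \eqref{e6.1} and \eqref{ae8.4} differ by a factor $e^{-X_0(V^\infty(x-\cdot))}$ encoding the possibility that other clusters cover a neighborhood of $x$. I would exploit the geometry imposed by $E$: since no cluster enters $\bar B_{r_1}(x_1)$, every point $x\in\pmR(W_{j_0})\cap B_{r_0}(x_1)$ on the inner boundary of $\mR(W_{j_0})$ facing $x_1$ is automatically a limit of $\mR^c$-points lying inside $\bar B_{r_1}(x_1)$, so $\pmR(W_{j_0})\cap B_{r_0}(x_1)\subset\pmR$ on $E$. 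Identifying $\cL$ with $c_{\ref{tl1}}(\kappa)^{-1}\widetilde{\cL}(\kappa)$ via Theorem~\ref{tl1.0.0} and observing that on $E$ only finitely many clusters reach $B_{r_0}(x_1)$ (as $X_{G_{r_0}^{x_1}}(1) < \infty$ and each contributing cluster has positive exit mass), I would argue that for small enough $\eps > 0$ at each $x$ near the inner part of $\pmR(W_{j_0})$ only the single cluster $W_{j_0}$ contributes to $X_{G_\eps^x}$ under the constraint $X_{G_{\eps/2}^x}(1) = 0$, yielding a pointwise comparison $\widetilde{\cL}(\kappa)^\eps \geq \widetilde{\cL}(\kappa)^\eps(W_{j_0})$ on that region. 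Passing to the limit $\eps\downarrow 0$ via Theorem~\ref{tl1.0.0} then gives $\cL(B_{r_0}(x_1)) > 0$. The main obstacle is making the "only one cluster at a time" assertion rigorous, uniformly in small $\eps$; this should follow from almost-sure separation of the finitely many cluster ranges in $B_{r_0}(x_1)$ combined with the exit-measure bound \eqref{ce9.1.0} and a Borel--Cantelli-type argument.
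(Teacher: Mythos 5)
Your reduction to Proposition \ref{p2.2} via the Poisson cluster decomposition is the right architecture (it is the route the paper takes, following the derivation of Proposition 1.6 from Proposition 1.7 in \cite{HMP18}), and you have correctly isolated the crux: passing from $\cL(W_{j_0})(B_{r_0}(x_1))>0$ to $\cL(B_{r_0}(x_1))>0$. But your resolution of that crux has a genuine gap. First, the claim that $\pmR(W_{j_0})\cap B_{r_0}(x_1)\subset\pmR$ on $E$ is false in general: only the ``inner'' part of $\pmR(W_{j_0})$ is protected by the empty ball $B_{r_1}(x_1)$, while points of $\pmR(W_{j_0})$ on the far side of the cluster may lie in the interior of $\mR(W_j)$ for another $j$ and hence not in $\pmR$. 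Second, and more seriously, what you extract from Proposition \ref{p2.2} is positivity of $\cL(W_{j_0})$ on the \emph{whole} ball $B_{r_0}(x_1)$; a priori all of that mass could sit in the part of $B_{r_0}(x_1)$ covered by other clusters, precisely where your comparison $\widetilde{\cL}(\kappa)^\eps\ge\widetilde{\cL}(\kappa)^\eps(W_{j_0})$ fails (the indicator $1(X_{G_{\eps/2}^x}=0)$ for the full process is killed by the other clusters). So the $\eps$-uniform ``one cluster at a time'' argument cannot close the loop as you have set it up.

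The missing idea is the ``for \emph{every} $r>r_1$ with $X_{G_r^{x_1}}(1)>0$'' clause in Proposition \ref{p2.2}, which you never invoke and which is there exactly for this step. Only finitely many clusters meet $B_{r_0}(x_1)$ (the count is Poisson with mean $\N_{X_0}(\mR\cap B_{r_0}(x_1)\neq\emptyset)<\infty$, using $B_{2r_0}(x_1)\subset S(X_0)^c$). Take $j_0$ to be the cluster whose range comes closest to $x_1$ and set $\rho_0=d(x_1,\mR(W_{j_0}))\in[r_1,r_0)$; a.s.\ the minimum is attained by a unique cluster, so one may choose $r\in(\rho_0,r_0]$ with $B_r(x_1)\cap\mR(W_j)=\emptyset$ for every other $j$. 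Since $X_{G_r^{x_1}}(W_{j_0})(1)>0$, Proposition \ref{p2.2} applied at this $r$ gives $\cL(W_{j_0})(B_r(x_1))>0$; and on $B_r(x_1)$ one has $L^x=L^x(W_{j_0})$, hence $\cL^\lambda=\cL^\lambda(W_{j_0})$ there, and along a common a.s.\ convergent subsequence $\cL(B_{r_0}(x_1))\ge\cL\bigl(\overline{B_r(x_1)}\bigr)\ge\cL(W_{j_0})(B_r(x_1))>0$. Applying the proposition only at $r=r_0$, as you do, discards the information needed to disentangle the clusters.
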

 \begin{proof}
  It follows in a similar way to the proof of Proposition 1.6 assuming Proposition 1.7 in \cite{HMP18} by replacing $\text{dim}(\pmR\cap B_r)\ge d_f$ with \mbox{$\cL(B_r)>0$}.
\end{proof}

\begin{proof}[Proof of Theorem \ref{tl1.1}]
By using Proposition \ref{p2.2} and Corollary \ref{p2.3}, the proof of \eqref{er1.3} follows in a same way as the proof of Theorem 1.2 of \cite{HMP18}. \eqref{ae1.5} is immediate from \eqref{er1.3}. To see that with $\P_{X_0}$-probability one, $\text{Supp}(\cL)=S(X_0)^c\cap \pmR$, we pick any $x\in S(X_0)^c\cap \pmR$. There is some $\eps>0$ so that $B(x,r) \subset S(X_0)^c$ for all $0<r<\eps$  and $B(x,\eps)\cap \pmR\neq \emptyset$. Apply \eqref{er1.3} with $U=B(x,r)$ to see that $\cL(B(x,r))>0$ for all $0<r<\eps$ and so conclude $x\in \text{Supp}(\cL)$, giving $S(X_0)^c\cap \pmR \subset \text{Supp}(\cL)$. Together with Theorem \ref{t0.0.1} we have $\text{Supp}(\cL)=S(X_0)^c\cap \pmR$, $\P_{X_0}$-a.s. and the proof is complete.
\end{proof}
Now it remains to prove Proposition \ref{p2.2}. We first state a result that plays the role of Lemma 5.4 in \cite{HMP18}.
%\begin{proof}[Proof of Theorem  assuming Proposition \ref{p2.2}]
%As in the above proof of Theorem \ref{tl1.1} from Proposition \ref{p2.3}, we can derive from Proposition \ref{p2.2} that $\N_{X_0}$-a.e. for any open set  $U\subset S(X_0)^c$,
%\begin{align}\label{ev4.1}
%U\cap \pmR\neq \emptyset \Rightarrow \cL(U)>0.
%\end{align}
%Now we turn to the $\N_{0}$ case in Theorem . By \eqref{ev4.2}, we have $\N_{0}$-a.e. that $0\notin \pmR$, which gives $U\cap \pmR\neq \emptyset \Rightarrow U\backslash \{0\} \cap \pmR\neq \emptyset$. Now we may apply \eqref{ev4.1} with $U\backslash \{0\}$ in place of $U$ to complete the proof.
%\end{proof}

%\subsection{Preliminary}

\begin{lemma}\label{l7.1}
There is a constant $q_{\ref{l7.1}}>0$ so that if $X_0'\in M_F(\R^d)$ is supported on
$\{|x|=r\}$ and $\delta=X_0'(1)$ satisfies $0<\delta\le r^2$, then
\begin{equation*}
\P_{X_0'}\Big(\cL\Big(B\Big(0,r-\frac{\sqrt{\delta}}{2}\Big)\Big)>0\Big)\ge q_{\ref{l7.1}}.
\end{equation*}
\end{lemma}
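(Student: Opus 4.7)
The strategy mirrors the proof of Lemma~5.4 in~\cite{HMP18}, adapted from producing positive Hausdorff dimension to producing positive $\cL$-mass. The argument proceeds in three steps.

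\emph{Step 1 (scaling reduction).} The scaling relations for SBM, e.g.\ \eqref{e12.1}, \eqref{ev5.3}, \eqref{ef4.4}, pulled back through the definition \eqref{e7.3} of $\cL^\lambda$, imply that $\P_{X_0'}(\cL(B(0, r - \sqrt{\delta}/2)) > 0)$ is invariant under the joint rescaling $x \mapsto x/\sqrt{\delta}$ with the compensating mass and time rescalings. Since $\delta \leq r^2$ gives $r/\sqrt{\delta} \geq 1$, the problem reduces to showing: for every probability measure $\tilde X_0$ supported on $\{|x|=\tilde r\}$ with $\tilde r \geq 1$, the probability $\P_{\tilde X_0}(\cL(B(0, \tilde r - 1/2)) > 0)$ is bounded below by a universal constant $q_{\ref{l7.1}}>0$.

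\emph{Step 2 (reduction to an exit-measure event).} Fix any point $y_0 \in S(\tilde X_0)$ and set $x_1 = (1 - (r_0+\eta)/\tilde r)\, y_0$, where $\eta, r_0 \in (0, 1/8)$ are small constants chosen so that $r_0 < (r_0+\eta)/2$, $B_{2r_0}(x_1) \subset S(\tilde X_0)^c$, and $B(x_1, r_0) \subset B(0, \tilde r - 1/2)$ uniformly in $\tilde r \geq 1$. Corollary~\ref{p2.3} then yields, for any $0 < r_1 < r_0$,
\begin{equation*}
\P_{\tilde X_0}\bigl(\cL(B(0, \tilde r - 1/2)) > 0\bigr) \;\geq\; \P_{\tilde X_0}\bigl(X_{G^{x_1}_{r_1}}(1) = 0,\; X_{G^{x_1}_{r_0}}(1) > 0\bigr),
\end{equation*}
so it remains to bound the right-hand side below uniformly in $\tilde r \geq 1$ and in $\tilde X_0$.

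\emph{Step 3 (uniform lower bound on the joint exit event).} By \eqref{ev5.5}, $\P_{\tilde X_0}(X_{G^{x_1}_{r_1}}(1) = 0) = \exp(-\tilde X_0(U^{\infty, r_1}(\cdot - x_1)))$; combined with the decay of $U^{\infty, r_1}$ away from $\{|x-x_1|=r_1\}$ (cf.\ \eqref{ev5.6}) and the fact that $\mathrm{dist}(S(\tilde X_0), x_1) = r_0 + \eta$ with $\tilde X_0$ a probability measure, the right-hand side is bounded below by a positive constant independent of $\tilde r$ and $\tilde X_0$. For the joint event, one splits $\tilde X_0 = \mu_1 + \mu_2$ where $\mu_1$ is a small piece concentrated near $y_0$ and $\mu_2$ is the complement, and uses the Poisson/cluster decompositions \eqref{Xtdec} and \eqref{ae4.1} to get that the clusters produced by $\mu_1$ and $\mu_2$ are independent. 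One shows that with positive probability the clusters from $\mu_1$ reach $B(x_1, r_0)$ (using a Brownian-snake argument exploiting that, locally near $y_0$, the sphere is uniformly approximated by a hyperplane as $\tilde r \to \infty$, plus a compactness/continuity argument for bounded $\tilde r$), while simultaneously $X_{G^{x_1}_{r_1}}(1)$ remains zero with positive probability. The main obstacle is precisely this last combination: to arrange these two events to hold jointly with a uniform positive probability, it is crucial to choose $r_1$ small relative to $r_0$ so that on $\{X_{G^{x_1}_{r_0}}(1)>0\}$ the extinction event $\{X_{G^{x_1}_{r_1}}=0\}$ still has positive conditional probability via the Markov/branching property of the exit measure $X_{G^{x_1}_{r_0}}$ and the estimate \eqref{ev5.5} applied to its (random) starting mass.
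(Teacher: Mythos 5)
Your Step 1 matches the paper's opening move (rescale so that the initial measure has total mass one and lives on a sphere of radius $\ge 1$), but from Step 2 onward the argument has a fatal structural problem: it is circular. You invoke Corollary~\ref{p2.3} to convert the event $\{X_{G_{r_1}^{x_1}}(1)=0,\ X_{G_{r_0}^{x_1}}(1)>0\}$ into $\{\cL(B_{r_0}(x_1))>0\}$. But Corollary~\ref{p2.3} is deduced from Proposition~\ref{p2.2}, and the proof of Proposition~\ref{p2.2} uses Lemma~\ref{l7.1} itself as its key quantitative input (it is exactly the bound $\P_{Y_r}\bigl(\cL(B_{r_0-r-(\sqrt{Y_r(1)}/2)})>0\bigr)\ge q_{\ref{l7.1}}$ that drives the martingale argument there). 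The entire point of Lemma~\ref{l7.1} is to supply the uniform lower bound that makes Proposition~\ref{p2.2} and Corollary~\ref{p2.3} work; you cannot feed those results back into its proof.

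Independently of the circularity, Step 3 is not a proof: you state that the joint event $\{X_{G_{r_1}^{x_1}}(1)=0,\ X_{G_{r_0}^{x_1}}(1)>0\}$ has probability bounded below uniformly over all probability measures on spheres of all radii $\tilde r\ge 1$, and you yourself identify the combination of the two events as ``the main obstacle'' without resolving it. (There is also a parameter slip: with $\eta,r_0<1/8$ and $x_1=(1-(r_0+\eta)/\tilde r)y_0$ one has $B(x_1,r_0)\subset B(0,\tilde r-\eta)\not\subset B(0,\tilde r-1/2)$, so the containment you need fails; this is fixable by taking $\eta\ge 1/2$, but it signals that the geometry was not checked.) The paper's actual route avoids exit measures entirely: after the scaling reduction it uses the Poisson cluster decomposition to condition on exactly one cluster reaching the ball (the count is Poisson with mean at most $\bar m=\N_0(U_{1/2}<\infty)<\infty$ by Iscoe's theorem, giving a factor $e^{-\bar m}$ uniform in $\tilde r$ and $\tilde X_0$), reduces by translation invariance and the inclusion $B(e_1,1/2)\subset B(x_0,|x_0|-1/2)$ to bounding $\N_0(\cL(B(e_1,1/2))>0)$ below, and then applies the second moment (Paley--Zygmund) method using the explicit first and second moment formulas for $\cL$ in Theorem~\ref{p0.0}. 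That last step is the essential quantitative ingredient, and it is absent from your proposal.
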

\begin{proof} Define $X_0^{(\delta)}(A)=\delta^{-1}X'_0(\sqrt\delta A)$,
% [LM: $X_0$ CHANGED TO $X'_0$] 
so that $X_0^{(\delta)}$
 is supported on $\{|x|=r/\sqrt\delta\}$ and has total mass one.  
By scaling properties of SBM, we may
conclude that 
\begin{align}\label{ae9.0}
&\P_{X_0'}\Big(\cL\Big(B\Big(0,r-\frac{\sqrt{\delta}}{2}\Big)\Big)>0\Big)=\P_{X_0^{(\delta)}}\Big(\cL\Big(B\Big(0,\frac{r}{\sqrt{\delta}}-\frac{1}{2}\Big)\Big)>0\Big).
\end{align}
Now work in our standard set-up for SBM with initial law $X_0^{(\delta)}$ so that  \break
$X_t=\sum_{j\in J}X_t(W_j)=\int X_t(W)\Xi(dW)$ for all $t>0$, where $\Xi$ is a Poisson point process with intensity $\N_{X_0^{(\delta)}}$.
For $r\ge \sqrt\delta$ define
\begin{align*}
&\tau_\rho(W_j)=\inf\{t\ge 0:|\hat W_j(t)|\le \rho\},\\
&U_\rho(W_j)=\inf\{t\ge 0:|\hat W_j(t)-\hat W_j(0)|\ge\rho\},\\
&\text{and } N_1=\sum_{j\in J}1(\tau_{(r/\sqrt\delta)-(1/2)}(W_j)<\infty):=\#(I_{r,\delta}).
\end{align*}
Here as usual $\inf\emptyset=\infty$.  Then $N_1$ is Poisson with mean
\begin{align}\label{ae9.1}
m_{r,\delta}:=\N_{X_0^{(\delta)}}(\tau_{(r/\sqrt\delta)-(1/2)}<\infty)
&\le \N_{X_0^{(\delta)}}(U_{1/2}(W)<\infty)\\
\nn&=\N_{0}(U_{1/2}(W)<\infty):=\overline m<\infty,
\end{align}
where $X_0^{(\delta)}(1)=1$ and translation invariance are used in the equality, and the finiteness of $\bar m$ follows from Theorem~1 of \cite{Isc88}. 
We may assume (by additional randomization) that conditional on $I_{r,\delta}$, $\{W_j:j\in I_{r,\delta}\}$ are iid with law $\N_{X_0^{(\delta)}}(W\in\cdot|\tau_{(r/\sqrt\delta)-(1/2)}<\infty)$. Therefore the right-hand side of \eqref{ae9.0} is at least
\begin{align}\label{ae9.2}
\nn \P_{X_0^{(\delta)}}&(N_1=1)\N_{X_0^{(\delta)}}\Big(\cL\Big(B\Big(0,\frac{r}{\sqrt{\delta}}-\frac{1}{2}\Big)\Big)>0\Big|\tau_{\frac{r}{\sqrt\delta}-\frac{1}{2}}<\infty\Big)\\
&=\frac{m_{r,\delta}e^{-m_{r,\delta}}}{m_{r,\delta}}\N_{x_0}\Big(\cL\Big(B\Big(0,\frac{r}{\sqrt{\delta}}-\frac{1}{2}\Big)\Big)>0\Big),
\end{align}
where $x_0=(\frac{r}{\sqrt\delta})e_1$ and $e_1$ is the first unit basis vector. We also have used the facts that spherical symmetry shows we could have taken any $x_0$ on
the sphere of radius $r/\sqrt\delta$ and $\cL(B(0,\frac{r}{\sqrt{\delta}}-\frac{1}{2}))=0$ if $\tau_{\frac{r}{\sqrt\delta}-\frac{1}{2}}=\infty$ by the fact that $\text{Supp}(\cL)=\pmR, \N_{x_0}$-a.e. from Corollary \ref{c1.6} and translation variance.  Now again use translation invariance and spherical symmetry to see that the right side of \eqref{ae9.2} equals
\begin{align}\label{ae9.3}
\nn e^{-m_{r,\delta}}\N_{0}\Big(\cL\Big(B\Big(x_0,|x_0|-\frac{1}{2}\Big)\Big)>0\Big)&\geq e^{-\overline m}\N_{0}\Big(\cL(B(e_1,1/2))>0\Big)\\
&\ge e^{-\overline m} \frac{\Big(\N_{0}\Big(\cL(B(e_1,1/2))\Big)\Big)^2}{\N_{0}\Big(\Big(\cL(B(e_1,1/2))\Big)^2\Big)},
\end{align}
where the first inequality follows by \eqref{ae9.1} and $B(e_1,1/2) \subset B(x_0,|x_0|-\frac{1}{2})$ since $x_0=|x_0|e_1$ and $|x_0|\ge1$, and the last follows by the second moment method.
Now apply Theorem \ref{p0.0} (a) with $\phi(x)=1_{B(e_1,1/2)}(x)$ and Theorem \ref{p0.0} (b) with $h(x_1,x_2)=1_{B(e_1,1/2)}(x_1) 1_{B(e_1,1/2)}(x_2)$ to get
\begin{align*}
\N_{0}\Big(\cL(B(e_1,1/2))\Big)=K_{\ref{p3.1}} \int_{|x-e_1|<1/2} |x|^{-p} dx\geq K_{\ref{p3.1}} (\frac{3}{2})^{-p}|B(0,1/2)|>0.
\end{align*}
and
\begin{align*}
&\N_{0}\Big(\Big(\cL(B(e_1,1/2))\Big)^2\Big)\\%\leq K_{\ref{p3.1}}^2\int_{|x_1-e_1|,|x_2-e_1|<1/2} c_{\ref{p3.2}}(|x_1|^{-p}+|x_2|^{-p})|x_1-x_2|^{2-p} dx_1dx_2\\
\leq& K_{\ref{p3.1}}^2\int_{|x_1-e_1|,|x_2-e_1|<1/2} c_{\ref{p3.2}}(2^{p}+2^{p})|x_1-x_2|^{2-p} dx_1dx_2 <\infty.
\end{align*}
%{|x_1-e_1|,|x_2-e_1|<1/2}
Thus we have shown that the right-hand side of \eqref{ae9.3} has some lower bound $e^{-\overline m} c>0$ for some universal constant $c>0$, and so have proved the lemma with $q_{\ref{l7.1}}=e^{-\overline m}c$ . 
\end{proof}
   
 %\subsection{Zero-one law}  \label{s5.2}
Now we proceed to the proof of Proposition \ref{p2.2}. Using the setting from Proposition \ref{p2.2}, by translation invariance we may assume $x_1=0$ and fix $r_0>0$ such that
\begin{align}\label{ex3.1}
    B_{2r_0}\subset S(X_0)^c.
\end{align}

\no{\bf Notation.} We define $Y_r(\cdot)=X_{G_{r_0-r}}(\cdot)$ and $\cE_r=\cE_{G_{r_0-r}}\vee\{\N_{X_0}-\text{null sets}\}$ for $0\le r<r_0$.
It is not hard to show that $\cE_r$ is non-decreasing in $r$ (see Section 6 of \cite{HMP18}). Intuitively $\cE_r$ is the $\sigma$-field generated by the excursions of $W$ in $G_{r_0-r}$.
By Proposition 2.3 of \cite{Leg95}, $Y$ is $(\cE_r)$-adapted. Let $\cE^+_r=\cE_{r+}$ denote the associated right-continuous filtration. Note Proposition~6.2(b) in \cite{HMP18} gives a cadlag version of $Y_r(1)$ which has no negative jumps and is an $(\cE^+_r)$-supermartingale. In what follows we always work with this cadlag version of $Y_r(1)$.

In addition to $\N_{X_0}$, we will also work under the probability $Q_{X_0}(\cdot)=\N_{X_0}(\cdot|Y_0(1)>0)$, where \eqref{ex3.1} ensures that $\N_{X_0}(Y_0(1)>0)<\infty$.  Note that  
\begin{equation}\label{ae9.4}
\text{for any r.v. }Z\ge 0,\text{ and any }r\ge 0,\ Q_{X_0}(Z|\cE_r)=\N_{X_0}(Z|\cE_r)\ Q_{X_0}\text{-a.s.}
\end{equation}
because $\{Y_0(1)>0\}\in \cE_0$.  When conditioning on $\cE_r$ under $Q_{X_0}$, we are adding the slightly larger class of $Q_{X_0}$-null sets to $\cE_r$, but will not record this distinction in our notation. We write $Q_{x_0}$ for $Q_{\delta_{x_0}}$ as usual.

Let $W$ denote a generic Brownian snake under $\N_{X_0}$ or $Q_{X_0}$ with the associated ``tip process'' $\hat W(t)$ and excursion length $\sigma$.
Define
 \[T_{0}(W)=\inf\{r\in [0,r_0): Y_r(1)=0\} \in [0,r_0], \text{ where } \inf \emptyset=r_0,\] and 
 \begin{align*}
 \hat T_{0}(W)=\inf\{|\hat W(t)|: 0\leq t\leq \sigma\}=\inf\{|x|: x\in \mR\},
 \end{align*}
 where the last equality holds $\N_{X_0}$ by \eqref{ea0.0}. Clearly we have $Q_{X_0}(\cdot)=\N_{X_0}(\cdot|T_0>0)$. By Lemma 7.1 of \cite{HMP18}, we have 
  \begin{align}\label{be4.9.3}
 \N_{X_0}-a.e.\ \{T_0>0\}=\{\hat T_0<r_0\}, \text{ and on this set } \hat T_0=r_0-T_0.
  \end{align}
 Define a sequence of $(\cE^+_r)$-stopping times by
\[T_{n^{-1}}=\inf\{r\in [0,r_0): Y_r(1)\le 1/n\}\quad(\inf\emptyset = r_0).\] 
Then 
\begin{equation}\label{ae9.5}
\text{on $\{0<T_0\}$ (and so $Q_{X_0}$-a.s.) $T_{n^{-1}}\uparrow T_0$ and $T_{n^{-1}}<T_0$, }
\end{equation}
where the last inequality holds since $Y_r(1)$ has no negative jumps.  So under $Q_{X_0}$, $T_0$ is a predictable stopping time which is announced by $\{T_{n^{-1}}\}$ and so (see (12.9)(ii) in Chapter VI of \cite{RW94})
\begin{equation*}
\cE^+_{T_0-}=\vee_n\cE^+_{T_n}.
\end{equation*}
% We assume $\cE^+_r$ is augmented by $Q_{X_0}$-null sets throughout this Section.
%\no To finish the proof of  Proposition~\ref{p2.2} we need:
\begin{lemma}\label{ae9.6}
   If $X_0=\delta_{x_0}$ where $|x_0|\geq 2r_0$, then
   $\cL(B_{r_0})\in \cE_{T_0^{-}}^+.$
\end{lemma}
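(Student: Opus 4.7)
The plan is to show $\widetilde{\cL}(\kappa)(B_{r_0})\in\cE_{T_0^-}^+$ for any fixed $\kappa>0$; since $\cL=c_{\ref{tl1}}(\kappa)^{-1}\widetilde{\cL}(\kappa)$ $\N_{x_0}$-a.s.\ by Theorem~\ref{tl1}, this yields the lemma. As a first step, Theorem~\ref{p0.0}(a) gives $\N_{x_0}(\cL(\partial B_{r_0}))=K_{\ref{p0.0}}\int_{\partial B_{r_0}}|x-x_0|^{-p}\,dx=0$ since $\partial B_{r_0}$ is Lebesgue null, so $\widetilde{\cL}(\kappa)(\partial B_{r_0})=0$ $\N_{x_0}$-a.s. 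Along the sequence $\eps_m\downarrow 0$ from Theorem~\ref{tl1} on which $\widetilde{\cL}(\kappa)^{\eps_m}\to\widetilde{\cL}(\kappa)$ weakly a.s., a portmanteau argument gives $\widetilde{\cL}(\kappa)^{\eps_m}(B_{r_0})\to\widetilde{\cL}(\kappa)(B_{r_0})$ a.s., reducing the task to showing $\widetilde{\cL}(\kappa)^\eps(B_{r_0})\in\cE_{T_0^-}^+$ for each fixed small $\eps>0$.

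For each such $\eps$ and $n\ge 1$, I would approximate $\widetilde{\cL}(\kappa)^\eps(B_{r_0})$ from inside by truncating the $x$-integration to a random annulus safely separated from $\partial B_{\hat T_0}$:
\[
J_n^\eps=\int_{r_0-T_{n^{-1}}+\eps<|x|<r_0}\frac{X_{G_\eps^x}(1)}{\eps^p}\exp\Bigl(-\kappa\tfrac{X_{G_\eps^x}(1)}{\eps^2}\Bigr)\,1(X_{G_{\eps/2}^x}=0)\,dx.
\]
For $x$ in the integration region one has $\overline{B_\eps(x)}\subset\overline{G_{r_0-T_{n^{-1}}}}$; by the Brownian-snake construction of the exit measure (Chp.~V of \cite{Leg99}), both $X_{G_\eps^x}$ and $X_{G_{\eps/2}^x}$ are then functionals of snake paths staying in $\overline{G_{r_0-T_{n^{-1}}}}$ and hence are $\cE_{T_{n^{-1}}}^+$-measurable. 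The integration region is $\cE_{T_{n^{-1}}}^+$-measurable via $T_{n^{-1}}$, so Fubini in $(x,\omega)$ yields $J_n^\eps\in\cE_{T_{n^{-1}}}^+\subset\cE_{T_0^-}^+$.

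To conclude, I would show $J_n^\eps\to\widetilde{\cL}(\kappa)^\eps(B_{r_0})$ along a diagonal subsequence $(n_k,\eps_{m_k})$. The integrand is zero for $|x|+\eps<\hat T_0$ since $\overline{B_\eps(x)}\cap\mR=\emptyset$, so the missing contribution is supported on $\{\max(\eps,\hat T_0-\eps)<|x|\le r_0-T_{n^{-1}}+\eps\}$; its $\N_{x_0}$-expectation is controlled using Proposition~\ref{p1.0}'s bound on the integrand together with $|x-x_0|\ge r_0$ (from $|x_0|\ge 2r_0$ and $|x|<r_0$) and the Lebesgue measure of the annulus, which is $O(r_0-T_{n^{-1}}-\hat T_0+\eps)=O(\eps)$ as $n\to\infty$ on $\{T_0>0\}$. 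Sending $n\to\infty$ and then $\eps_{m_k}\downarrow 0$ gives $J_{n_k}^{\eps_{m_k}}\to\widetilde{\cL}(\kappa)(B_{r_0})$ in $\N_{x_0}$-probability, and since each $J_{n_k}^{\eps_{m_k}}$ lies in the complete $\sigma$-field $\cE_{T_0^-}^+$, so does the limit.

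The hardest part is the second step's measurability claim: snake paths under $\N_{x_0}$ can a priori leave $\overline{G_{r_0-T_{n^{-1}}}}$, descend into $B_{r_0-T_{n^{-1}}}$ and re-ascend into $B_\eps(x)$, and such re-entry contributions to $X_{G_\eps^x}$ live in the snake's excursions outside $G_{r_0-T_{n^{-1}}}$, which are only conditionally Poisson given $\cE_{T_{n^{-1}}}^+$. The fix is that on $\{T_0>0\}$ the total starting mass of these excursions is $Y_{T_{n^{-1}}}(1)\le 1/n\to 0$, so the re-entry contribution to $X_{G_\eps^x}$ vanishes in $L^1(\N_{x_0})$ as $n\to\infty$ and is absorbed into the truncation error bounded in the third step.
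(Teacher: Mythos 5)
Your overall architecture (approximate $\widetilde{\cL}(\kappa)(B_{r_0})$ by quantities built from exit measures supported in $G_{r_0-T_{n^{-1}}}$, pass to a diagonal limit, and use completeness of $\cE^+_{T_0-}$) is reasonable, but the central measurability step fails as written, and your proposed repair does not actually repair it. The claim that $X_{G_\eps^x}$ is $\cE^+_{T_{n^{-1}}}$-measurable whenever $\overline{B_\eps(x)}\subset G_{r_0-T_{n^{-1}}}$ is false, as you yourself note: a historical path may first exit $G_{r_0-T_{n^{-1}}}$ and only afterwards hit $\overline{B_\eps(x)}$ for the first time, and that contribution to $X_{G_\eps^x}$ lives in the excursions outside $G_{r_0-T_{n^{-1}}}$, which are only conditionally Poisson given $\cE^+_{T_{n^{-1}}}$. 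Showing that this contribution is small in $L^1$ does not make $J_n^\eps$ measurable; measurability is a property of the random variable you actually defined, not of a nearby one. To salvage the argument you must replace $J_n^\eps$ by a genuinely $\cE^+_{T_{n^{-1}}}$-measurable object --- e.g.\ the analogue of $J_n^\eps$ computed from the exit measures of the snake truncated at its exit from $G_{r_0-T_{n^{-1}}}$ --- and then estimate the difference. That estimate is not routine either: the factor $1(X_{G_{\eps/2}^x}=0)$ is a discontinuous functional, so an arbitrarily small re-entry mass can flip the indicator, and you would need a separate hitting estimate (say via $\N_y(X_{G_{\eps/2}^x}\neq 0)=U^{\infty,\eps/2}(|y-x|)\le C\eps^{-2}$ combined with $Y_{T_{n^{-1}}}(1)\le 1/n$) rather than the Lipschitz-type control your ``vanishes in $L^1$'' suggests. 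A smaller point: in the third step you combine Proposition \ref{p1.0}'s bound on the \emph{expected} integrand at fixed $x$ with the volume of a \emph{random} annulus; these cannot be multiplied directly. The deterministic pointwise bound $X_{G_\eps^x}(1)\eps^{-p}e^{-\kappa X_{G_\eps^x}(1)\eps^{-2}}\le C(\kappa)\eps^{2-p}$ together with $p<3$ does the job, but that is a different argument from the one you state.

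For comparison, the paper's proof is essentially one line: it quotes from the proof of Lemma 7.3 of \cite{HMP18} the fact that on $\{T_0>0\}$ \emph{every} Borel functional $\psi(W)$ of the whole snake is $\cE^+_{T_0-}$-measurable (intuitively, $Y_{T_0}(1)=0$, so no mass ever exits $G_{r_0-T_0}$ and the snake is determined by $\vee_n\cE^+_{T_{n^{-1}}}$). Hence $L^x\in\cE^+_{T_0-}$ for each $x\in B_{r_0}$, so $\cL^{\lambda_n}(B_{r_0})\in\cE^+_{T_0-}$, and one concludes with $\cL^{\lambda_n}\to\cL$ a.s.\ and $\N_{x_0}(\cL(\partial B_{r_0}))=0$, exactly as in your first step. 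Without access to that fact your route can in principle be completed, but only after the truncation and measurability issues above are handled explicitly.
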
   
\begin{proof}
Note Theorem \ref{t0} implies there is some $\lambda_n \to \infty$ such that $\cL^{\lambda_n}\to \cL$, $\N_{x_0}$-a.e. by translation invariance. On the other hand, by Theorem \ref{p0.0} we have $\N_{x_0}(\cL(\partial B_{r_0}))=0$ and so $\N_{x_0}$-a.e., $\cL(B_{r_0})=\lim_{n \to \infty}\cL^{\lambda_n}(B_{r_0})$. As is shown in the proof of Lemma 7.3 in \cite{HMP18}, we have $\psi(W)\in \cE_{T_0^{-}}^+$ for any Borel map $\psi$ on $C(\R^+,\cW)$. Then it follows that $L^x \in \cE_{T_0^{-}}^+$ for any $x\in B_{r_0}$ and so $\cL^{\lambda_n}(B_{r_0})\in \cE_{T_0^{-}}^+$ for any $\lambda_n>0$, thus proving $\cL(B_{r_0})\in \cE_{T_0^{-}}^+$.
\end{proof}

\begin{proof}[Proof of Proposition \ref{p2.2}] 
Clearly it suffices to fix $x_0\in S(X_0)$ and prove the result with $\N_{x_0}$  in place of $\N_{X_0}$. By translation invariance we may assume $x_1=0$, and so $|x_0|\ge 2r_0$.  
Fix $0<r_1<r_0$.   Assume $0\le r<r_0$ and $n\in\N$ is large enough so that $r+n^{-1}<r_0$.  Recall that conditional expectations with respect to $\cE_r$, under $\N_{x_0}$ and $Q_{x_0}$, agree $Q_{x_0}$-a.s.  Therefore up to $Q_{x_0}$-null sets, on 
$\{4n^{-2}\le Y_r(1)\le (r_0-r)^2\} (\in \cE_r)$ we  have
\begin{align}\label{e0.0.2} 
Q_{x_0}\Big(\cL(B_{r_0})>0\Big|\cE_r\Big)\geq& Q_{x_0}\Big(\cL(\bar{B}_{r_0-r-n^{-1}})>0\Big|\cE_r\Big)\nn\\
\geq& Q_{x_0}\Big(\limsup_{k\to \infty}\cL^{\lambda_k}(\bar{B}_{r_0-r-n^{-1}})>0\Big|\cE_r\Big)\nn\\
=&\lim_{m\to \infty} Q_{x_0}\Big(\limsup_{k\to \infty}\cL^{\lambda_k}(\bar{B}_{r_0-r-n^{-1}})>m^{-1}\Big|\cE_r\Big),
\end{align}
where the second inequality is by $\cL^{\lambda_k} \to \cL$ in $M_F$ with the $\{\lambda_k\}$ from Theorem \ref{t0}. The last equality uses monotone convergence. For each $m\geq 1$ we have
\begin{align} \label{ae10.0}
&Q_{x_0}\Big(\limsup_{k\to \infty}\cL^{\lambda_k}(\bar{B}_{r_0-r-n^{-1}})>m^{-1}\Big|\cE_r\Big)\nn\\
%&=\lim_{k \to \infty} Q_{x_0}\Big(\bigcup_{i=k}^\infty \{\cL^{\lambda_i}(\bar{B}_{r_0-r-n^{-1}})>m^{-1}\}\Big|\cE_r\Big)\nn\\
&\geq\liminf_{k \to \infty} Q_{x_0}\Big(\cL^{\lambda_k}(B_{r_0-r-n^{-1}})>m^{-1}\Big|\cE_r\Big)\nn\\
&=\liminf_{k \to \infty}\P_{Y_r}\Big(\cL^{\lambda_k}(B_{r_0-r-n^{-1}})>m^{-1}\Big)\nonumber\\
&\geq \P_{Y_r}\Bigl(\liminf_{k \to \infty}\cL^{\lambda_k}(B_{r_0-r-n^{-1}})>m^{-1}\Bigr)\quad(\text{by Fatou's Lemma})\nonumber\\
&\geq \P_{Y_r}\Bigl(\cL(B_{r_0-r-n^{-1}})>m^{-1}\Bigr)\quad(\text{by }\cL^{\lambda_{k}}|_{\bar{B}_{r_0-r-n^{-1}}} \to \cL|_{\bar{B}_{r_0-r-n^{-1}}})
\end{align}
where we have used Proposition~\ref{pv0.2}(iii) in the equality and the last inequality is by Theorem \ref{t0.0.1} and by replacing $\{\lambda_k\}$ with a  further subsequence which is still denoted by $\{\lambda_k\}$. Combining \eqref{e0.0.2} and \eqref{ae10.0}, we get up to $Q_{x_0}$-null sets, on 
$\{4n^{-2}\le Y_r(1)\le (r_0-r)^2\}$ (which is in $\cE_r$), we have
\begin{align} \label{e0.0.3} 
&Q_{x_0}\Big(\cL(B_{r_0})>0\Big|\cE_r\Big)\ge \liminf_{m\to \infty} \P_{Y_r}\Bigl(\cL(B_{r_0-r-n^{-1}})>m^{-1}\Bigr)\\
=&\P_{Y_r}\Bigl(\cL(B_{r_0-r-n^{-1}})>0\Bigr)\geq \P_{Y_r}\Bigl(\cL(B_{r_0-r-(\sqrt{Y_r(1)}/2)})>0\Bigr)\ge q_{\ref{l7.1}},\nn
\end{align}
where Lemma~\ref{l7.1} and the assumed upper bounds on $Y_r(1)$ are used in the last inequality, and the assumed lower bound on $Y_r(1)$ is used in the next to last inequality. 
Let $n\to\infty$ and take limits from above in $r\in \Q_+$ (recall $Y_r(1)$ is cadlag) to conclude that $Q_{x_0}$-a.s. $\forall r\in\Q\cap(0,r_0)$,
\begin{equation}\label{ae9.7}
M_r:=Q_{x_0}(\cL(B_{r_0})>0|\cE^+_{r})\ge q_{\ref{l7.1}}\text{ on }\{0<Y_r(1)<(r_0-r)^2\}.
\end{equation}
Here $M_r$ is a cadlag version of the bounded martingale on the left-hand side.  Using right-continuity one can strengthen \eqref{ae9.7} to $Q_{x_0}$-a.s. $\forall r\in(0,r_0)$,
\begin{equation}\label{ae9.8}
M_r=Q_{x_0}(\cL(B_{r_0})>0|\cE^+_{r})\ge q_{\ref{l7.1}}\text{ on }\{0<Y_r(1)<(r_0-r)^2\}.
\end{equation}
On $\{0<T_0\le r_0-r_1\}$, by \eqref{ae9.5} and the lack of negative jumps for $Y_r(1)$, we have  $Q_{x_0}$-a.s. that
%$T_{n^{-1}}\uparrow T_0\le r_0-r_1$, and so
\begin{equation}\label{ae9.9}
\text{for $n$ large, }T_{n^{-1}}\in(0,r_0-r_1)\text{ and }Y_{T_{n^{-1}}}(1)=n^{-1}<(r_0-T_{1/n})^2.
\end{equation}
By Corollary (17.10) in Chapter VI of \cite{RW94}, \eqref{ae9.8}, and \eqref{ae9.9},  we have $Q_{x_0}$-a.s. on
$\{0<T_0\le r_0-r_1\}\in \cE^+_{T_0-}$,
\begin{equation}\label{ae9.10}
Q_{x_0}(\cL(B_{r_0})>0|\cE^+_{T_0-})=\lim_{n\to\infty}M(T_{n^{-1}})\ge q_{\ref{l7.1}}.
\end{equation}
Multiplying the above by $1(\{0<T_0\le r_0-r_1\})$, we see from Lemma~\ref{ae9.6} that $Q_{x_0}$-a.s.,
\[1(\{\cL(B_{r_0})>0\}\cap\{0<T_0\le r_0-r_1\})\ge q_{\ref{l7.1}}1(\{0<T_0\le r_0-r_1\}),\]
and therefore by \eqref{be4.9.3},
%by Lemma~\ref{lem:T0s},
\[  r_1\leq \hat T_0<r_0 \text{ implies } \cL(B_{r_0})>0\quad Q_{x_0}-\text{a.s.}\]
This remains true if we replace $r_0$ by any $r\in(r_1,r_0]$ since we still have $B_{2r}\subset S(X_0)^c$.  Therefore we may fix $\omega$ outside a $Q_{x_0}$-null set so that for any $r\in(r_1,r_0]\cap\Q$, $r_1\leq \hat T_0<r$ implies $\cL(B_{r})>0$. By monotonicity of the conclusion in $r$ this means that $\{r_1\leq \hat T_0<r_0\}$ implies $\cL(B_{r})>0$ for all $r>\hat T_0$. This gives Proposition~\ref{p2.2} under $Q_{x_0}$.  The result under $\N_{x_0}$ is now immediate from the definition of $Q_{x_0}$, and $\{Y_0(1)>0\}=\{ \hat T_0<r_0\}$ $\N_{x_0}$-a.e. by \eqref{be4.9.3}.%(by Lemma~\ref{lem:T0s}).     
\end{proof}

\section{Change of Measure}\label{s7}
Before turning to the proof of Propositions \ref{p3.1}, \ref{p3.2} and \ref{p3.3}, we state a result on the change of measure that plays a central role in the proof. This result is a generalization of Proposition \ref{p20.1} where only radially symmetric functions are considered. We implement the ideas there and prove stronger results to deal with non-radial functions.

Let $Y=(Y_s, s\geq 0)$ denote the coordinate variables on  $C([0,\infty), \R^d)$ and set $(\cY_t)$ to be the right continuous filtration generated by $Y$. Under the law $P_x$ (Wiener measure), $Y$ is a standard $d$-dimensional Brownian motion starting from $x$. Recall $\mu, \nu$ as in \eqref{ev1.5} and recall $\Hat{P}_{x}^{(2-2\nu)}$ is the law under which, $Y$ is the unique solution of
  \begin{align}\label{eb7.3}
       \begin{cases}
        &Y_t=x+\Hat{B}_t+\int_0^t (-\nu-\mu)\frac{Y_s}{|Y_s|^2}ds, \quad t< \tau_0,\\
        &Y_t=0,\quad t\geq \tau_0,
        \end{cases}
  \end{align}
  where $\tau_\eps=\tau_\eps^{Y}=\inf\{t\geq 0: |Y_t|\leq \eps\}$ and $\Hat{B}$ is a standard $d$-dimensional Brownian motion under $\Hat{P}_{x}^{(2-2\nu)}$.  The upper index $2-2\nu<0$ on $\Hat{P}_{x}^{(2-2\nu)}$ is to remind us that under $\Hat{P}_{x}^{(2-2\nu)}$, the radial process $\{|Y_s|, s\geq 0\}$, as we will show later,  is a $(2-2\nu)$-dimensional Bessel process stopped at $0$. Now we proceed to the key proposition for proving the convergence of the second moments.

\begin{proposition}\label{p8.1}
Let $x\in \R^d-\{0\}$ and $0<\eps<|x|$. If $\Phi_{t} \geq 0$ is $\cY_{t}$-adapted, then for any Borel measurable function $g: \R^d \to \R$ such that $P_x$-a.s. $\int_0^{\tau_{\eps}} |g(Y_s)| ds<\infty$, we have
\begin{align}
E_{x}\Big(&1(\tau_\eps<\infty) \Phi_{\tau_\eps} \exp\big(-\int_0^{\tau_{\eps}} g(Y_s) ds\big)\Big)\nn \\
&=\eps^p |x|^{-p}\Hat{E}_{x}^{(2-2\nu)}\Big( \Phi_{\tau_\eps} \exp\big(-\int_0^{\tau_{\eps}} (g(Y_s)-V^\infty(Y_s)) ds\big)\Big).
\end{align}
%where $\Hat{P}_{x}^{(2-2\nu)}$ denotes the law of $Y$ as in \eqref{eb7.3}.
\end{proposition}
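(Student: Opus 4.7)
The plan is to carry out a Girsanov change of measure between $P_x$ and $\hat P_x^{(2-2\nu)}$ on the stopped $\sigma$-algebra $\cY_{\tau_\eps}$, and then use It\^o's formula to simplify the resulting Radon-Nikodym derivative into the explicit form $(|x|/\eps)^p \exp(-\int_0^{\tau_\eps} V^\infty(Y_s)\,ds)$. Once this formula for the density is in hand, the stated identity will follow by a direct algebraic rearrangement after specializing to $Z=\Phi_{\tau_\eps}\exp(-\int_0^{\tau_\eps}(g(Y_s)-V^\infty(Y_s))ds)$.

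For the Girsanov step, the drift $b(y)=-(\nu+\mu)y/|y|^2$ is bounded by $(\nu+\mu)/\eps$ on $\{|y|\ge\eps\}$, so Novikov's condition is trivially met up to $\tau_\eps$, and the Girsanov density
\begin{equation*}
\exp\!\Bigl(-(\nu+\mu)\!\int_0^{\tau_\eps}\!\frac{Y_s}{|Y_s|^2}\!\cdot dY_s-\frac{(\nu+\mu)^2}{2}\!\int_0^{\tau_\eps}\!\frac{1}{|Y_s|^2}\,ds\Bigr)
\end{equation*}
is a genuine martingale on $\cY_{\tau_\eps}$. Under the new measure, $Y$ solves \eqref{eb7.3} up to $\tau_\eps$, so pathwise uniqueness before $\tau_0$ (Remark \ref{r1.7}(b)), combined with the fact that $\hat P_x^{(2-2\nu)}(\tau_\eps<\tau_0)=1$ (the negative-dimensional Bessel radial process hits every smaller level in finite time), lets me identify this measure with $\hat P_x^{(2-2\nu)}$ on $\cY_{\tau_\eps}$.

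Next I apply It\^o's formula to $\log|Y_s|$, using $\nabla\log|y|=y/|y|^2$ and $\Delta\log|y|=(d-2)/|y|^2$, to evaluate
\begin{equation*}
\int_0^{\tau_\eps}\frac{Y_s}{|Y_s|^2}\cdot dY_s=\log\frac{\eps}{|x|}-\frac{d-2}{2}\int_0^{\tau_\eps}\frac{1}{|Y_s|^2}\,ds.
\end{equation*}
Plugging this back in, the deterministic prefactor becomes $(|x|/\eps)^{\nu+\mu}=(|x|/\eps)^p$, and the coefficient of the surviving $|Y_s|^{-2}$ time integral is $\tfrac{1}{2}(\nu+\mu)(\mu-\nu)=\tfrac{1}{2}(\mu^2-\nu^2)=-2(4-d)$, by the defining relation $\nu^2=\mu^2+4(4-d)$ in \eqref{ev1.5}. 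Since $V^\infty(y)=2(4-d)|y|^{-2}$, this collapses the Girsanov density to
\begin{equation*}
\frac{d\hat P_x^{(2-2\nu)}}{dP_x}\bigg|_{\cY_{\tau_\eps}\cap\{\tau_\eps<\infty\}}=\Bigl(\frac{|x|}{\eps}\Bigr)^{\!p}\exp\!\Bigl(-\!\int_0^{\tau_\eps} V^\infty(Y_s)\,ds\Bigr).
\end{equation*}

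The last step is pure bookkeeping: integrate $Z=\Phi_{\tau_\eps}\exp(-\int_0^{\tau_\eps}(g(Y_s)-V^\infty(Y_s))ds)$ against $\hat P_x^{(2-2\nu)}$, convert via the above density, and the two $V^\infty$ contributions cancel to leave $\exp(-\int_0^{\tau_\eps}g(Y_s)ds)$ inside $E_x$, producing precisely the right-hand side (with the factor $\eps^p|x|^{-p}$) after dividing through. The integrability hypothesis $\int_0^{\tau_\eps}|g(Y_s)|ds<\infty$ $P_x$-a.s.\ is what licenses this manipulation. I do not anticipate any serious obstacle; the only technical points are verifying the true-martingale property of the Girsanov density (immediate from the boundedness of $b$ on $\{|y|\ge\eps\}$) and confirming that $\hat P_x^{(2-2\nu)}$ fully charges $\{\tau_\eps<\infty\}$, which allows the indicator to disappear on the right-hand side.
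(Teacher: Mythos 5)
Your proposal is correct, and it takes a more direct route than the paper. The paper performs the change of measure in two stages: first a Girsanov transformation with drift $+(\nu-\mu)Y_s/|Y_s|^2$ to an auxiliary measure $\widetilde P^{(2+2\nu)}_{\eps,x}$ whose radial part is a $(2+2\nu)$-dimensional Bessel process, then an explicit computation $\widetilde P^{(2+2\nu)}_{\eps,x}(\tau_\eps<\infty)=(\eps/|x|)^{2\nu}$, and finally a separate lemma (Lemma \ref{l1.5}, itself proved by a second Girsanov step with the $-(\nu+\mu)$ drift) identifying the law of $Y_{\cdot\wedge\tau_\eps}$ under $\widetilde P^{(2+2\nu)}_{\eps,x}(\,\cdot\mid\tau_\eps<\infty)$ with $\Hat P^{(2-2\nu)}_x$; the factor $\eps^p|x|^{-p}$ then arises as $\eps^{\mu-\nu}|x|^{\nu-\mu}\cdot\eps^{2\nu}|x|^{-2\nu}$. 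You instead apply the single Girsanov transformation with drift $-(\nu+\mu)Y_s/|Y_s|^2$ directly, and your It\^o computation of the density — prefactor $(|x|/\eps)^{\nu+\mu}=(|x|/\eps)^p$ and coefficient $\tfrac12(\mu^2-\nu^2)=-2(4-d)$ turning the quadratic-variation term into $\exp(-\int_0^{\tau_\eps}V^\infty(Y_s)\,ds)$ — is exactly the identity the paper records in \eqref{e2.5}, so your shortcut is sound. What the paper's longer route buys is the intermediate object itself: the positive-dimensional Bessel representation and the conditioning identity (Corollary \ref{c1.4}) are reused repeatedly later (e.g.\ in \eqref{am2.2} and in the second-moment estimates), whereas your argument proves Proposition \ref{p8.1} and nothing more. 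One point to tighten: the Girsanov density is a true martingale only on $\cY_t$ for each finite $t$ (Novikov applies to the stopped drift on $[0,t]$); since $P_x(\tau_\eps=\infty)>0$ in $d=3$, the density process is not uniformly integrable up to $\tau_\eps$, so you cannot literally change measure "on $\cY_{\tau_\eps}$". The correct formulation is to apply the identity to $\cY_t$-measurable functionals carrying the indicator $1(\tau_\eps\le t)$ and then let $t\to\infty$ by monotone convergence on both sides, using $\Hat P^{(2-2\nu)}_x(\tau_\eps<\infty)=1$ to remove the indicator on the right — which is precisely the truncation the paper performs in \eqref{ed1.0.1}. With that adjustment your argument is complete.
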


\begin{proof}
By monotone convergence theorem we have
\begin{align}\label{ed1.0.1}
I:=&E_{x}\Big(1(\tau_\eps<\infty) \Phi_{\tau_\eps} \exp\big(-\int_0^{\tau_{\eps}} g(Y_s) ds\big)\Big)\nn \\
=&\lim_{t\to \infty}E_{x}\Big(1(\tau_\eps\leq t) \Phi_{\tau_\eps} \exp\big(-\int_0^{\tau_{\eps}} g(Y_s) ds\big)\Big)\nn\\
=&\lim_{t\to \infty}E_{x}\Big(1(\tau_\eps\leq \tau_\eps\wedge t) \Phi_{\tau_\eps \wedge t} \exp\big(-\int_0^{\tau_{\eps}\wedge t} g(Y_s) ds\big)\Big)
\end{align}
Use Ito's lemma to see that under $P_x$,
\begin{align}\label{e2.1}
   \log |Y_{\tau_{\eps} \wedge t}|=\log |Y_0|+\int_0^{\tau_{\eps} \wedge t} \frac{Y_s}{|Y_s|^2} \cdot dY_s+\frac{1}{2}\int_0^{\tau_{\eps} \wedge t} \frac{d-2}{|Y_s|^2} ds, \forall t\geq 0.
 \end{align}
  Recall $\mu, \nu$ as in \eqref{ev1.5} and consider 
\begin{align}\label{e2.2}
  M_\eps(t)=\exp\Big(\int_0^{t\wedge \tau_\eps} (\nu-\mu)\frac{Y_s}{|Y_s|^2}\cdot dY_s-\frac{1}{2}\int_0^{t\wedge \tau_\eps} \frac{(\nu-\mu)^2}{|Y_s|^2}  ds \Big).
 \end{align}
  As one can easily check, $M_\eps$ is a martingale under $P_x$. Moreover by using \eqref{e2.1} we can get
\begin{align}\label{e2.3}
  M_\eps(t)=\frac{|Y_{\tau_{\eps} \wedge t}|^{\nu-\mu}}{|Y_0|^{\nu-\mu}}\exp\Big(-\int_0^{\tau_{\eps} \wedge t} \frac{2(4-d)}{|Y_s|^2} ds \Big).
 \end{align}
An application of Girsanov's theorem (see, e.g., Chapter IV.4 of \cite{IW79}) implies there is a unique probability $\widetilde{P}_{\eps, x}^{(2+2\nu)}$ on $C([0,\infty), \R^d)$ so that for any $t\geq 0$,
  \begin{align}\label{e2.4}
  d\widetilde{P}_{\eps, x}^{(2+2\nu)}\Big|_{\cY_{t}}=\frac{|Y_{{\tau_{\eps} \wedge t}}|^{\nu-\mu}}{|x|^{\nu-\mu}}\exp\Big(-\int_0^{{\tau_{\eps} \wedge t}} \frac{2(4-d)}{|Y_s|^2} ds \Big) dP_x\Big|_{\cY_{t}},
   \end{align}
   and under $\widetilde{P}_{\eps, x}^{(2+2\nu)}$, $Y$ is the unique solution of
  \begin{align}
       Y_t=x+\widetilde{B}_{t}+\int_0^{\tau_{\eps} \wedge t} (\nu-\mu)\frac{Y_s}{|Y_s|^2}ds,
  \end{align}
  (so the drift is stopped when $Y$ hits the ball $\overline{B(0,\eps)}$). Here $\widetilde{B}$ is a standard $d$-dimensional Brownian motion with respect to $\widetilde{P}_{\eps, x}^{(2+2\nu)}$. % and is stopped at the time $\tau_\eps$.
   The upper index $2+2\nu$ on $\widetilde{P}^{(2+2\nu)}_{\eps,x}$ is to indicate that the radial process $\{|Y_{s\wedge \tau_\eps}|, s\geq 0\}$ is a $(2\nu+2)$-dimensional Bessel process stopped when it hits $\eps>0$:
 \begin{align}\label{ev2.5}
  |Y_{\tau_{\eps} \wedge t}|^2&=|x|^2+\int_0^{\tau_{\eps} \wedge t} 2Y_s \cdot (d\widetilde{B}_s+(\nu-\mu)\frac{Y_s}{|Y_s|^2}ds)+d(\tau_{\eps} \wedge t)\nonumber\\
  &=|x|^2+\int_0^{\tau_{\eps} \wedge t}2|Y_s| \sum_{i=1}^d \frac{Y_s^i}{|Y_s|} d\widetilde{B}_s^i +(2\nu+2)(\tau_{\eps} \wedge t)\nonumber\\
  &=|x|^2+\int_0^{\tau_{\eps} \wedge t} 2|Y_s| d\widetilde{\beta}_s+(2\nu+2)(\tau_{\eps} \wedge t),
  \end{align} 
 where the last follows since $\widetilde{\beta}_t=\sum_{i=1}^d \frac{Y_t^i}{|Y_t|} \widetilde{B}_t^i$ is a one-dimensional Brownian motion under $\widetilde{P}_{\eps,x}^{(2+2\nu)}$. Therefore $\{|Y_{s\wedge \tau_\eps}|^2, s\geq 0\}$ satisfies the SDE of a stopped square Bessel process of dimension $2+2\nu$ and so $\{|Y_{s\wedge \tau_\eps}|, s\geq 0\}$ is a stopped $(2+2\nu)$-dimensional Bessel process (see Chp. XI of \cite{RY94} for the definition of square Bessel process and its connection with Bessel process). It follows that
 \begin{align}\label{ec1.0.4}
  \widetilde{P}_{\eps, x}^{(2+2\nu)}(\tau_{\eps}<\infty)=\frac{\eps^{2\nu}}{|x|^{2\nu}}.
  \end{align}
  Now apply \eqref{e2.4} to see that \eqref{ed1.0.1} becomes
   \begin{align}\label{ed1.0.2}
I&=\lim_{t\to \infty} \widetilde{E}_{\eps, x}^{(2+2\nu)}\Big(1_{(\tau_\eps\leq \tau_\eps\wedge t)} \Phi_{\tau_\eps \wedge t} \exp\Big(-\int_0^{\tau_{\eps}\wedge t } (g(Y_s)-V^\infty(Y_s)) ds\Big)\frac{|Y_{\tau_{\eps} \wedge t}|^{\mu-\nu}}{|x|^{\mu-\nu}}\Big)\\
&=\lim_{t\to \infty} \widetilde{E}_{\eps, x}^{(2+2\nu)}\Big(1(\tau_\eps\leq t) \Phi_{\tau_\eps} \exp\Big(-\int_0^{\tau_{\eps}} (g(Y_s)-V^\infty(Y_s)) ds\Big)\frac{|Y_{\tau_{\eps}}|^{\mu-\nu}}{|x|^{\mu-\nu}}\Big)\nn\\
  &=\frac{\eps^{\mu-\nu}}{|x|^{\mu-\nu}}\widetilde{E}_{\eps, x}^{(2+2\nu)}\Big(1(\tau_\eps<\infty) \Phi_{\tau_\eps} \exp\Big(-\int_0^{\tau_{\eps}} (g(Y_s)-V^\infty(Y_s)) ds\Big)\Big)\nn\\
    &=\frac{\eps^{p}}{|x|^{p}}\widetilde{E}_{\eps, x}^{(2+2\nu)}\Big(\Phi_{\tau_\eps} \exp\Big(-\int_0^{\tau_{\eps}} (g(Y_s)-V^\infty(Y_s)) ds\Big)\Big|\tau_\eps<\infty\Big),\nn
  \end{align}
where we have used monotone convergence in the next to last equality and the last equality follows from \eqref{ec1.0.4} and $p=\mu+\nu$. 
 
   We interrupt the proof of the proposition for another auxiliary result.
\begin{lemma}\label{l1.5}
 For any $\eps>0$ and $|x|>\eps$, we have the law of $\{Y_{s\wedge \tau_{\eps}}, s \geq 0\}$ conditioning on $\{\tau_{\eps}<\infty\}$ under $\widetilde{P}^{(2+2\nu)}_{\eps, x}$ is equal to the law of $\{Y_{s\wedge \tau_{\eps}}, s \geq 0\}$ under $\Hat{P}^{(2-2\nu)}_x$ defined as in \eqref{eb7.3}.
  \end{lemma}
  \begin{proof}
For any $0<t_1<\cdots<t_n$ and any bounded Borel functions $\phi_i: \R^d \to \R, 1\leq i\leq n$, we use \eqref{ec1.0.4} to get
 \begin{align}\label{e2.7}
 J&:=\widetilde{E}^{(2+2\nu)}_{\eps, x}\Big(\prod_{i=1}^n \phi_i(Y_{t_i\wedge \tau_\eps}) \Big|\tau_\eps<\infty\Big)\nn\\
 &=  \widetilde{E}^{(2+2\nu)}_{\eps, x}\Big(\prod_{i=1}^n \phi_i(Y_{t_i\wedge \tau_\eps}) 1_{\{\tau_\eps<\infty\}}\Big) \cdot \frac{|x|^{2\nu}}{\eps^{2\nu}}\nn\\
 &= \widetilde{E}^{(2+2\nu)}_{\eps, x}\Big(\prod_{i=1}^n \phi_i(Y_{t_i\wedge \tau_\eps})  \widetilde{P}^{(2+2\nu)}_{\eps, Y_{t_n\wedge \tau_\eps}}({\tau_\eps<\infty})\Big) \cdot \frac{|x|^{2\nu}}{\eps^{2\nu}}\nn\\
 &= \widetilde{E}^{(2+2\nu)}_{\eps, x}\Big(\prod_{i=1}^n \phi_i(Y_{t_i\wedge \tau_\eps}) \frac{|x|^{2\nu}}{|Y_{t_n\wedge \tau_\eps}|^{2\nu}} \Big),
 \end{align}
 where the second last equality is by the strong Markov property of $Y$. Similar to the derivation of \eqref{e2.4} using \eqref{e2.1}, \eqref{e2.2} and \eqref{e2.3}, by replacing $\nu$ with $-\nu$ in \eqref{e2.2} and \eqref{e2.3}, another application of Girsanov's theorem implies there is a unique probability $\Hat{P}_{\eps, x}^{(2-2\nu)}$ on $C([0,\infty), \R^d)$ so that for any $t\geq 0$,
  \begin{align}\label{e2.5}
  d\Hat{P}_{\eps, x}^{(2-2\nu)}\Big|_{\cY_{t}}=\frac{|Y_{{\tau_\eps\wedge t}}|^{-\nu-\mu}}{|x|^{-\nu-\mu}}\exp\Big(-\int_0^{\tau_\eps\wedge t} \frac{2(4-d)}{|Y_s|^2} ds \Big) dP_x\Big|_{\cY_{t}},
   \end{align}
    and under $\Hat{P}_{\eps, x}^{(2-2\nu)}$, $Y$ is the unique solution of
  \begin{align}\label{ebaa7.3}
       Y_t=x+\Hat{B}_{t}+\int_0^{\tau_{\eps} \wedge t} (-\nu-\mu)\frac{Y_s}{|Y_s|^2}ds,
  \end{align}
  (so again the drift is stopped when $Y$ hits the ball $\overline{B(0,\eps)}$). Here $\Hat{B}$ is a standard $d$-dimensional Brownian motion with respect to $\Hat{P}_{\eps, x}^{(2-2\nu)}$. % and is stopped at time $\tau_\eps$.   
   Combining \eqref{e2.4} and \eqref{e2.5}, we can get
    \begin{align}\label{e2.6}
  \Hat{P}_{\eps, x}^{(2-2\nu)}\Big|_{\cY_{t}}=\frac{|x|^{2\nu}}{|Y_{{\tau_\eps\wedge t}}|^{2\nu}}\widetilde{P}^{(2+2\nu)}_{\eps, x}\Big|_{\cY_{t}}.
   \end{align}
Now apply \eqref{e2.6} in \eqref{e2.7} to see that
 \begin{align*}
 J= \Hat{E}^{(2-2\nu)}_{\eps, x}\Big(\prod_{i=1}^n \phi_i(Y_{t_i\wedge \tau_\eps}) \Big)=\Hat{E}^{(2-2\nu)}_{x}\Big(\prod_{i=1}^n \phi_i(Y_{t_i\wedge \tau_\eps}) \Big),
 \end{align*}
where the last equality follows since one can easily check that $\{Y_{t\wedge \tau_\eps}, t\geq 0\}$ under $\Hat{P}_{\eps, x}^{(2-2\nu)}$ is equal in law to that under $\Hat{P}^{(2-2\nu)}_x$ (see \eqref{eb7.3} and \eqref{ebaa7.3}). So the proof is complete.
  \end{proof}
Returning to the proof of Proposition \ref{p8.1}, we apply the above lemma in \eqref{ed1.0.2} to conclude
  \begin{align}\label{ed1.0.3}
  I=\frac{\eps^{p}}{|x|^{p}}\Hat{E}_{x}^{(2-2\nu)}\Big(\Phi_{\tau_\eps} \exp\big(-\int_0^{\tau_{\eps}} (g(Y_s)-V^\infty(Y_s)) ds\big)\Big),
  \end{align}
  and the proof is complete.
\end{proof}  
  One can show (as for \eqref{ev2.5}) that the radial process $\{|Y_{s\wedge \tau_0}|, s\geq 0\}$ under $\Hat{P}_{x}^{(2-2\nu)}$ is a $(2-2\nu)$-dimensional Bessel process stopped at $0$.  By applying Lemma \ref{l1.5} to the radial process $\{|Y_{s \wedge \tau_{\eps}}|, s \geq 0 \}$, we can get following ``well-known'' result on Bessel process (see Corollary 2.3 of Lawler \cite{Law18}).
\begin{corollary}\label{c1.4}
 For $\delta \in\R$, let $(\rho_t)$ denote a $\delta$-dimensional Bessel process starting from $r>0$ under $P_{r}^{(\delta)}$. For any $\gamma>0$ and any $\eps>0$ such that $r>\eps$, we have the law of $\{\rho_{s \wedge \tau_{\eps}}, s \geq 0 \}$ conditioning on $\{\tau_{\eps}<\infty\}$ under $P_{r}^{(2+2\gamma)}$ is equal to the law of $\{\rho_{s \wedge \tau_{\eps}}, s \geq 0 \}$ under $P_{r}^{(2-2\gamma)}$. 
  \end{corollary}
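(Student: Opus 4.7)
The plan is to mimic the Girsanov computation that underlies Lemma \ref{l1.5}, applied one dimension lower --- directly to the scalar Bessel SDE --- since the statement concerns only the radial process. Under $P_r^{(2+2\gamma)}$ the coordinate $\rho$ satisfies $d\rho_t = dB_t + \frac{1+2\gamma}{2\rho_t}\,dt$, and since $\gamma>0$ the process is transient and cannot reach $0$ before $\tau_\eps$.

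The first step is to introduce, in analogy with \eqref{e2.2}, the $\cF^\rho_{t\wedge\tau_\eps}$-martingale
\[ M_\eps(t) = \exp\Bigl(-2\gamma \int_0^{t\wedge\tau_\eps} \rho_s^{-1}\,dB_s - 2\gamma^2 \int_0^{t\wedge\tau_\eps} \rho_s^{-2}\,ds\Bigr). \]
Applying It\^o's formula to $\log\rho$ (in direct parallel with the derivation of \eqref{e2.3}) rewrites this explicitly as $M_\eps(t) = (r/\rho_{t\wedge\tau_\eps})^{2\gamma}$, which is bounded by $(r/\eps)^{2\gamma}$ and hence a true martingale. Girsanov's theorem then produces a probability $\widetilde Q_{\eps,r}$ with $d\widetilde Q_{\eps,r}/dP_r^{(2+2\gamma)}|_{\cF^\rho_{t\wedge\tau_\eps}} = M_\eps(t)$, under which the stopped process satisfies
\[ d\rho_t = d\widetilde B_t + \tfrac{1-2\gamma}{2\rho_t}\,dt \qquad \text{for } t \le \tau_\eps. \]
By pathwise uniqueness of the Bessel SDE on $\{\rho > \eps\}$, this is the law of $\{\rho_{s\wedge\tau_\eps}\}_{s\ge 0}$ under $P_r^{(2-2\gamma)}$.

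The second step is to read off the conditional-law identity from the Radon--Nikodym statement. For any bounded $\sigma(\rho_{s\wedge\tau_\eps}:s\ge0)$-measurable $\Phi\ge 0$, the Girsanov formula together with $\rho_{\tau_\eps\wedge t} = \eps$ on $\{\tau_\eps\le t\}$ gives
\[ E^{(2-2\gamma)}_r[\Phi\, 1_{\{\tau_\eps \le t\}}] = (r/\eps)^{2\gamma}\, E^{(2+2\gamma)}_r[\Phi\, 1_{\{\tau_\eps\le t\}}]. \]
Since $P^{(2-2\gamma)}_r(\tau_\eps<\infty)=1$, monotone convergence as $t\to\infty$ yields $E^{(2-2\gamma)}_r[\Phi] = (r/\eps)^{2\gamma}\, E^{(2+2\gamma)}_r[\Phi\, 1_{\{\tau_\eps<\infty\}}]$; specialising to $\Phi\equiv1$ recovers the classical exit probability $P^{(2+2\gamma)}_r(\tau_\eps<\infty)=(\eps/r)^{2\gamma}$, and dividing the two identities gives $E^{(2+2\gamma)}_r[\Phi \mid \tau_\eps<\infty] = E^{(2-2\gamma)}_r[\Phi]$, which is the corollary.

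An equivalent route, more in keeping with the author's suggestion, is to re-run the proof of Lemma \ref{l1.5} verbatim with $\nu$ replaced by the arbitrary parameter $\gamma>0$ (and any convenient ambient dimension $d\ge 1$): the identities \eqref{e2.2}--\eqref{e2.6} go through unchanged, and then projecting onto the radial coordinate $|Y_{\cdot\wedge\tau_\eps}|$ produces precisely the claimed equality in law. Either way, the only technical point is the limit $t\to\infty$, which is routine given the uniform boundedness of $M_\eps$ on $\{\rho_{t\wedge\tau_\eps}\ge\eps\}$; I do not expect any real obstacle beyond that bookkeeping.
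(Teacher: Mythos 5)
Your argument is correct and is in substance the paper's own: the explicit density $M_\eps(t)=(r/\rho_{t\wedge\tau_\eps})^{2\gamma}$ is exactly the radial projection of the relation \eqref{e2.6}, and the paper obtains the corollary precisely by applying Lemma \ref{l1.5} to the radial process $|Y_{\cdot\wedge\tau_\eps}|$. If anything, your direct scalar computation is the cleaner route, since Lemma \ref{l1.5} as stated only covers the particular index $\nu$ of \eqref{ev1.5}, whereas the corollary (and your one-dimensional Girsanov argument) holds for arbitrary $\gamma>0$.
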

%\begin{remark}\label{r7.5}
% By letting $\gamma=1/2$ in Corollary \ref{c1.4}, we can get the well-known result by Williams \cite{Wil74} that  for any $r>\eps>0$, we have the law of a three-dimensional Bessel process $\{\rho_s, 0\leq s \leq \tau_{\eps}\}$ conditioning on $\{\tau_{\eps}<\infty\}$ under $P_{r}^{(3)}$ is equal to the law of a one-dimensional Brownian motion $\{B_s, 0\leq s \leq \tau_{\eps}\}$ that starts from $r$ (see also Proposition 1.1(ii) of \cite{Pit75}). 
%\end{remark}

\section{Proof of Proposition \ref{p3.1} and Proposition \ref{p3.3}(i)}\label{s8}

In this section we will give the proof of Proposition \ref{p3.1} and Proposition \ref{p3.3}(i). Recall the definitions of $U^{\vec{\lambda},\vec{x},\vec{\eps}}$, $V^{\vec{\lambda},\vec{x}}$ and  $W^{\vec{\lambda},\vec{x},\eps}$ from Section \ref{s4}.

\noindent $\mathbf{Throughout\ the\ rest\ of\ this\ paper\, we\ note}$ when dealing with $U^{\vec{\lambda},\vec{x},\vec{\eps}}$, we will fix $\lambda_1,\lambda_2>0$ and let $\eps_1, \eps_2$ converge to $0$. For $V^{\vec{\lambda},\vec{x}}$ we will let $\lambda_1$, $\lambda_2$ converge to infinity; for $W^{\vec{\lambda},\vec{x},\eps}$ we will fix $\lambda_2>0$ and let $\lambda_1$ converge to infinity and $\eps$ converge to $0$.

\subsection{Preliminaries}

 \begin{lemma}\label{l7.5}
 For any $x_1\neq x_2$ and $x\neq x_1, x_2,$, we have
 \[    %\begin{cases}
    \lim_{\eps_1, \eps_2 \downarrow 0} U^{\vec{\lambda},\vec{x},\vec{\eps}}(x)=
     \lim_{\lambda_1, \lambda_2 \to \infty } V^{\vec{\lambda},\vec{x}}(x)=
      \lim_{\lambda_1 \to \infty, \eps \downarrow 0 } W^{\vec{\lambda},\vec{x},\eps}(x)=V^{\infty,\vec{x}}(x),
      %\end{cases}
      \]
  where $V^{\infty,\vec{x}}(x)$ is as in \eqref{ev}.
  \end{lemma}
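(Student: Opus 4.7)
The first equality is immediate from monotone convergence: $1-\exp(-\lambda_1 L^{x_1}-\lambda_2 L^{x_2})$ increases in each $\lambda_i$ to $\mathbf{1}_{\{L^{x_1}>0\}\cup\{L^{x_2}>0\}}$, and the limit is $\N_x$-integrable because $V^{\vec{\infty},\vec{x}}\leq V^\infty(\cdot-x_1)+V^\infty(\cdot-x_2)=\lambda_d(|x-x_1|^{-2}+|x-x_2|^{-2})<\infty$ via \eqref{ev1.3}.

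For the $U$-limit, the plan is to compute the pointwise limit of the integrand and apply dominated convergence. By the support property \eqref{ea1.1}, $\{X_{G_{\eps_i/2}^{x_i}}(1)=0\}=\{\mR\cap\overline{B(x_i,\eps_i/2)}=\emptyset\}$, which increases to $\{x_i\notin\mR\}$ as $\eps_i\downarrow 0$; on this limit event $X_{G_{\eps_i}^{x_i}}(1)=0$ holds as well for all small $\eps_i$, so the corresponding factor contributes $1$. On $\{x_i\in\mR\}$ the indicator is $0$ for every $\eps_i>0$. Hence
\[
\prod_{i=1}^2\mathbf{1}(X_{G_{\eps_i/2}^{x_i}}(1)=0)\,\exp\bigl(-\lambda_i X_{G_{\eps_i}^{x_i}}(1)/\eps_i^2\bigr)\longrightarrow \mathbf{1}(x_1\notin\mR,\,x_2\notin\mR)
\]
pointwise, and $1-(\cdot)\to\mathbf{1}_{\{x_1\in\mR\}\cup\{x_2\in\mR\}}$. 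An integrable dominating envelope is $\mathbf{1}_{\{X_{G_{\eps_1}^{x_1}}(1)>0\}\cup\{X_{G_{\eps_2}^{x_2}}(1)>0\}}$, whose $\N_x$-mass is at most $U^{\infty,\eps_1}(x-x_1)+U^{\infty,\eps_2}(x-x_2)$, uniformly bounded for small $\eps_i$ by $V^\infty(x-x_1)+V^\infty(x-x_2)$, so dominated convergence applies.

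The main obstacle is to identify $\N_x(\{x_1\in\mR\}\cup\{x_2\in\mR\})$ with $V^{\vec{\infty},\vec{x}}(x)$, which reduces to the pointwise identity $\N_x(x_i\in\mR)=V^\infty(x-x_i)=\N_x(L^{x_i}>0)$ for each $x_i\neq x$. From \eqref{ev5.5} sent to $\lambda\uparrow\infty$ one has $\N_x(X_{G_\eps^{x_i}}(1)>0)=U^{\infty,\eps}(x-x_i)$, and a maximum-principle argument based on \eqref{ev5.6} shows that $U^{\infty,\eps}\downarrow V^\infty$ on $\R^d\setminus\{0\}$ as $\eps\downarrow 0$ (since $U^{\infty,\eps}$ is monotone decreasing in $\eps$ and the limit is the unique positive solution of $\Delta u=u^2$ on $\R^d\setminus\{0\}$ with the appropriate singularity at the origin). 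The set-theoretic convergence $\{X_{G_\eps^{x_i}}(1)>0\}\downarrow\{x_i\in\mR\}$ then yields $\N_x(x_i\in\mR)=V^\infty(x-x_i)$; combined with $\{L^{x_i}>0\}\subset\{x_i\in\mR\}$ and equal $\N_x$-masses, this forces $\N_x(x_i\in\mR,\,L^{x_i}=0)=0$, closing the $U$-case. The $W$-limit is a hybrid of the two arguments: monotone convergence collapses $e^{-\lambda_1 L^{x_1}}$ to $\mathbf{1}(L^{x_1}=0)$, while the $\eps\downarrow 0$ analysis of the $x_2$-factor follows the $U$-template verbatim.
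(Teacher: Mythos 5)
Your route is viable but genuinely different from the paper's. You identify the pointwise a.e.\ limit of the integrand as $\mathbf{1}_{\{x_1\in\mR\}\cup\{x_2\in\mR\}}$ and close with dominated convergence plus the identification $\N_x\bigl(\{x_i\in\mR\}\triangle\{L^{x_i}>0\}\bigr)=0$. The paper instead works in $L^2(\N_x)$: it bounds $\bigl(U^{\vec\lambda,\vec x,\vec\eps}(x)-V^{\vec\infty,\vec x}(x)\bigr)^2$ by Jensen and a two-term triangle inequality, and then uses the special Markov property to compute each term $\N_x\bigl((\exp(-\lambda_i X_{G_{\eps_i}^{x_i}}(1)/\eps_i^2)\mathbf{1}(X_{G_{\eps_i/2}^{x_i}}=0)-\mathbf{1}_{\{L^{x_i}=0\}})^2\bigr)$ exactly as $U^{(\lambda_i+4U^{\infty,1}(2))\eps_i^{-2},\eps_i}(x-x_i)-U^{\lambda_d\eps_i^{-2},\eps_i}(x-x_i)$, which tends to $0$ by \eqref{ae1.4}. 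The payoff of the paper's route is that it only ever needs the finite-$\lambda$ convergence $U^{\lambda\eps^{-2},\eps}\to V^\infty$, which is already cited, and never has to say anything about the event $\{x_i\in\mR\}$. Your route needs two extra inputs that the paper nowhere supplies: (a) the a.e.\ identification $\{X_{G_{\eps}^{x_i}}=0\}=\{\mR\cap\overline{B(x_i,\eps)}=\emptyset\}$ (one inclusion is \eqref{ea1.1}; the converse requires equality of the two $\N_x$-masses, i.e.\ Le~Gall's identification of the ball-hitting probability with the maximal solution $U^{\infty,\eps}$), and (b) the sharp asymptotic $\lim_{\eps\downarrow0}U^{\infty,\eps}(y)=V^\infty(y)$, equivalently $\lim_{r\to\infty}r^2U^{\infty,1}(r)=2(4-d)$, which forces $\N_x(x_i\in\mR,\,L^{x_i}=0)=0$. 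The paper only records the one-sided bound $U^{\infty,1}(r)\le 3(4-d)r^{-2}$, so (b) genuinely has to be imported (it is classical, via uniqueness of the singular solution of $\Delta u=u^2$ on $\R^d\setminus\{0\}$, but you should cite or prove it rather than gesture at a maximum principle). Finally, your dominating function $\mathbf{1}_{\{X_{G_{\eps_1}^{x_1}}(1)>0\}\cup\{X_{G_{\eps_2}^{x_2}}(1)>0\}}$ varies with $\vec\eps$; fix this by noting these events decrease as $\eps_i\downarrow0$ (special Markov property), so the envelope at any fixed $\vec\eps^{\,0}$ dominates all smaller $\vec\eps$ and has finite mass $U^{\infty,\eps_1^0}(x-x_1)+U^{\infty,\eps_2^0}(x-x_2)$.
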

  \begin{proof}
 This result follows intuitively from \eqref{e20.1} and
 more details for the proof can be found % similar to that of Theorem 1.1 of \cite{Hong20} and will be 
in Appendix.
\end{proof}
%Recalling \eqref{mc1.0.1}, we have for all $\eps_1, \eps_2, \lambda_1, \lambda_2>0$,
%\begin{align}\label{ec1.0.1}
% &\frac{1}{\eps_i^{p-2}}U_i^{\vec{\lambda},\vec{x},\vec{\eps}}(x)\leq |x-x_i|^{-p}, \forall x \text{ so that } |x-x_i|>\eps_i, i=1,2.
%\end{align}
%Recalling \eqref{m10.0}, we have for all $\lambda_1, \lambda_2>0$
% \begin{align}\label{e10.0}
% &\lambda_i^{1+\alpha}V_i^{\vec{\lambda},\vec{x}}(x)\leq c_{\ref{p1.1}} |x-x_i|^{-p}, \forall x\neq x_1, x_2, i=1,2.
%\end{align}
%Recalling \eqref{m8.7} and \eqref{m8.8},  for any $\lambda_1, \lambda_2>0$ and $\eps>0$, we have for all $x$ so that $x\neq x_1$ and $|x-x_2|>\eps$, \begin{align}\label{ae6.1}
%\lambda_1^{1+\alpha} W_1^{\vec{\lambda},\vec{x},\eps}(x)\leq c_{\ref{p1.1}}|x-x_1|^{-p}, 
%\text{ and } \frac{1}{\eps^{p-2}} W_2^{\vec{\lambda},\vec{x},\eps}(x)\leq |x-x_2|^{-p}.
%\end{align} 

Use $1-ab\leq (1-a)+(1-b)$ for all $0\leq a,b\leq 1$ to see that for all $x$ so that $|x-x_i|>\eps_i, i=1,2$,
\begin{align}\label{eb1.1.4}
U^{\vec{\lambda},\vec{x},\vec{\eps}}(x)\leq & \sum_{i=1}^{2} \N_{x} \Big(1-\exp\Big(- \lambda_i \frac{X_{G_{\eps_i}^{x_i}}(1)}{\varepsilon_i^2}\Big)1(X_{G_{\eps_i/2}^{x_i}}=0)\Big)\nn\\
= &\sum_{i=1}^{2} \N_{x} \Big(1-\exp\Big(- (\lambda_i+4U^{\infty,1}(2)) \frac{X_{G_{\eps_i}^{x_i}}(1)}{\varepsilon_i^2}\Big)\Big)\nn\\
=&U^{\widetilde{\lambda}_1\eps_1^{-2},\eps_1} (x-x_1) + U^{\widetilde{\lambda}_2\eps_2^{-2},\eps_2} (x-x_2), 
\end{align}
%\begin{align}\label{eb1.1.4}
%U^{\vec{\lambda},\vec{x},\vec{\eps}}(x)\leq & \N_{x} \Big(1-\exp\Big(- \lambda_1 \frac{X_{G_{\eps_1}^{x_1}}(1)}{\varepsilon_1^2}\Big)1(X_{G_{\eps_1/2}^{x_1}}=0)\Big)\nn\\
%&\quad \quad \quad +\N_{x} \Big(1-\exp\Big(- \lambda_2 \frac{X_{G_{\eps_2}^{x_2}}(1)}{\varepsilon_2^2}\Big)1(X_{G_{\eps_2/2}^{x_2}}=0)\Big)\nn\\
%= & \N_{x} \Big(1-\exp\Big(- (\lambda_1+4U^{\infty,1}(2)) \frac{X_{G_{\eps_1}^{x_1}}(1)}{\varepsilon_1^2}\Big)\Big)\nn\\
%&\quad \quad \quad +\N_{x} \Big(1-\exp\Big(- (\lambda_2 +4U^{\infty,1}(2))\frac{X_{G_{\eps_2}^{x_2}}(1)}{\varepsilon_2^2}\Big)\Big)\nn\\
%=&U^{\widetilde{\lambda}_1\eps_1^{-2},\eps_1} (x-x_1) + U^{\widetilde{\lambda}_2\eps_2^{-2},\eps_2} (x-x_2), 
%\end{align}
where the first equality follows in a similar way to the derivation of \eqref{er1.5} and the last is by \eqref{e1.2.5} and by letting
 \begin{align}\label{ae2.4}
\widetilde{\lambda}_i:=\lambda_i+4U^{\infty,1}(2), i=1,2.
\end{align}
Next we apply $1-ab\geq (1-a)\vee (1-b)$, $\forall 0\leq a,b\leq 1$ to see that for all $x$ so that $|x-x_i|>\eps_i, i=1,2$,
\begin{align}\label{ec1.1.3}
&U^{\vec{\lambda},\vec{x},\vec{\eps}}(x)\geq U^{\widetilde{\lambda}_1\eps_1^{-2},\eps_1} (x-x_1) \vee U^{\widetilde{\lambda}_2\eps_2^{-2},\eps_2} (x-x_2).
\end{align}
% since
%  \begin{align}\label{ae2.0}
%   &\N_{x} \Big(1-\exp\Big(- \lambda_i \frac{X_{G_{\eps_i}^{x_i}}(1)}{\varepsilon_i^2}\Big)1(X_{G_{\eps_i/2}^{x_i}}=0)\Big)\nn\\
%=&\N_{x} \Big(1-\exp\Big(- \lambda_i \frac{X_{G_{\eps_i}^{x_i}}(1)}{\varepsilon_i^2}\Big)\P_{X_{G_{\eps_i}^{x_i}}}(X_{G_{\eps_i/2}^{x_i}}=0) \Big)\nn\\  
%=&\N_{x} \Big(1-\exp\Big(-(\lambda_i+4U^{\infty,1}(2)) \frac{X_{G_{\eps_i}^{x_i}}(1)}{\varepsilon_i^2}\Big)\Big)\nn\\
%=&U^{\widetilde{\lambda}_i\eps_i^{-2},\eps_i} (x-x_i), \text{ for } i=1,2,
%  \end{align}
    % where 
 %Similar to \eqref{eb1.1.4}, one can use $1-ab\geq (1-a)\vee (1-b), \forall 0\leq a,b\leq 1$ to get for all $x \text{ so that } |x-x_i|>\eps_i, i=1,2$,
Similar to the above derivations, one can also show that for all $x\neq x_1, x_2$,
\begin{align}\label{e10.2}
V^{\lambda_1}(x-x_1)\vee V^{\lambda_2}(x-x_2) \leq V^{\vec{\lambda},\vec{x}}(x)\leq V^{\lambda_1}(x-x_1)+V^{\lambda_2}(x-x_2),
\end{align}
and for all $x\neq x_1$ and $|x-x_2|>\eps,$
\begin{align}\label{8.1}
\begin{cases}
&W^{\vec{\lambda},\vec{x},\eps}(x) \leq V^{\lambda_1}(x-x_1)+U^{\widetilde{\lambda}_2\eps^{-2},\eps}(x-x_2),\\
&W^{\vec{\lambda},\vec{x},\eps}(x)\geq V^{\lambda_1}(x-x_1)\vee U^{\widetilde{\lambda}_2\eps^{-2},\eps}(x-x_2).
\end{cases}
\end{align}
%and
%\begin{align}\label{8.4}
%W^{\vec{\lambda},\vec{x},\eps}(x)\geq \max\{ \},\forall x\neq x_1 \text{ and } |x-x_2|>\eps.
%\end{align}
 By (4.1) of \cite{HMP18} we have $4U^{\infty,1}(2)\geq 4V^\infty(2)=\lambda_d$ and so $\widetilde{\lambda}_i\geq \lambda_d$. Then it follows from (4.17) of \cite{Hong20} that 
\begin{align}\label{ae8.0}
U^{\widetilde{\lambda}_i\eps_i^{-2}, \eps_i}(x)\geq V^{\infty}(x), \text{ for all } |x|\geq \eps_i \text{ for } i=1,2.
\end{align}
%It then follows from \eqref{ae2.4} and scaling \eqref{ev5.3} that 
Together with \eqref{ec1.1.3}, we have for all $x \text{ so that } |x-x_i|\geq \eps_i, i=1,2$, 
 \begin{align}\label{e4.11}
 U^{\vec{\lambda},\vec{x},\vec{\eps}}(x) \geq V^{\infty}(x-x_1)\vee  V^{\infty}(x-x_2),
 \end{align}
and by \eqref{8.1} we have for all $x\neq x_1 \text{ and } |x-x_2|>\eps$,
\begin{align}\label{ae7.4}
W^{\vec{\lambda},\vec{x},\eps}(x)\geq V^{\lambda_1}(x-x_1) \vee V^\infty(x-x_2).
\end{align}

 Fix $x_{1}\neq x_{2}$ and $x\neq x_{1}, x_{2}$. Let $(B_t)$ denote a $d$-dimentional Brownian motion starting from $x$ under $P_x$. Let $r_{\lambda_i}=\lambda_0 \lambda_i^{-\frac{1}{4-d}}, i=1,2,$ where $\lambda_0$ will be chosen to be some fixed large constant below. Set $T_{r_{\lambda}}=T_{r_{\lambda_1}}^1 \wedge T_{r_{\lambda_2}}^2$ where $T_{r_{\lambda_i}}^i=\inf\{t\geq 0: |B_t-x_i|\leq r_{\lambda_i}\}, i=1,2$. Let $\lambda_1, \lambda_2>0$ be large so that
\begin{align}\label{5.1}
0<4(r_{\lambda_1} \vee r_{\lambda_2})\leq \min\{|x-x_1|, |x-x_2|, |x_1-x_2|\}.
\end{align}
 The following result is from Lemma 9.4 of \cite{MP17}.

\begin{lemma}\label{l10.3}
For any $t>0$, we have for $i=1,2$, \[V_i^{\vec{\lambda},\vec{x}}(x)=E_x\Big( V_i^{\vec{\lambda},\vec{x}}(B(t\wedge T_{r_{\lambda}} ))\exp\big(-\int_0^{t\wedge T_{r_{\lambda}} } V^{\vec{\lambda},\vec{x}}(B_s) ds\big)\Big).\]
\end{lemma}

 \begin{lemma}\label{al4.0}
Let $G=G_{\eps_1}^{x_1} \cap G_{\eps_2}^{x_2}$. Then $U^{\vec{\lambda},\vec{x},\vec{\eps}}$ is a $C^2$ function on $G$ and solves
\begin{align}
\Delta U^{\vec{\lambda},\vec{x},\vec{\eps}}=(U^{\vec{\lambda},\vec{x},\vec{\eps}})^2 \text{ on } G.
\end{align}
%Moreover, \[ U^{\vec{\lambda},\vec{x},\varepsilon}(x) \leq  (\lambda_1+4U^{\infty,1}(2)) \eps_1^{-2}+(\lambda_2+4U^{\infty,1}(2)) \eps_2^{-2},\ \forall x\in G.\]
\end{lemma}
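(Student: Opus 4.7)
The plan is to localize to small balls and invoke the standard PDE theory for log-Laplace functionals of exit measures. First, I would verify that $U^{\vec{\lambda},\vec{x},\vec{\eps}}$ is locally bounded on $G$: the upper bound \eqref{eb1.1.4} together with the properties of $U^{\widetilde{\lambda}_i\eps_i^{-2},\eps_i}$ recorded in \eqref{ev5.2}--\eqref{ev5.6} shows that for any compact $K\subset G$, $\sup_{x\in K}U^{\vec{\lambda},\vec{x},\vec{\eps}}(x)<\infty$.

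Next, fix $y\in G$ and choose $\delta>0$ so small that $\overline{B_\delta(y)}\subset G$. Since $\overline{B_\delta(y)}$ has positive distance from $(G_{\eps_i}^{x_i})^c$ and $(G_{\eps_i/2}^{x_i})^c$ for $i=1,2$, the special Markov property in Proposition \ref{pv0.1}(i) (taking the generating open set to be $B_\delta(y)$ and the $D_i$'s to be $G_{\eps_i}^{x_i}$ and $G_{\eps_i/2}^{x_i}$) applied under $\P_{\delta_y}$ together with \eqref{e4.3.1} gives
\begin{equation*}
\E_{\delta_y}\Big(\prod_{i=1}^2 \exp\Big(-\lambda_i\frac{X_{G_{\eps_i}^{x_i}}(1)}{\eps_i^2}\Big)1(X_{G_{\eps_i/2}^{x_i}}=0)\,\Big|\,\cE_{B_\delta(y)}\Big)=\exp\Big(-X_{B_\delta(y)}(U^{\vec{\lambda},\vec{x},\vec{\eps}})\Big).
\end{equation*}
Taking the expectation of both sides and recalling \eqref{e4.3.1} again on the left yields the key identity
\begin{equation*}
e^{-U^{\vec{\lambda},\vec{x},\vec{\eps}}(y)}=\E_{\delta_y}\big(e^{-X_{B_\delta(y)}(U^{\vec{\lambda},\vec{x},\vec{\eps}})}\big).
\end{equation*}

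This identity coincides with the Laplace functional \eqref{e7.1} of the exit measure $X_{B_\delta(y)}$ with boundary data $g=U^{\vec{\lambda},\vec{x},\vec{\eps}}\big|_{\partial B_\delta(y)}$. By \eqref{e7.2} (i.e. Proposition V.9 of \cite{Leg99}) the function $u(x):=-\log\E_{\delta_x}(e^{-X_{B_\delta(y)}(g)})$ is $C^2$ on $B_\delta(y)$ and satisfies $\Delta u=u^2$ there; since we have just shown $u\equiv U^{\vec{\lambda},\vec{x},\vec{\eps}}$ on $B_\delta(y)$, this gives the claimed regularity and PDE at $y$. As $y\in G$ was arbitrary, the result follows on all of $G$.

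The main obstacle is technical: applying the classical exit-measure PDE requires the boundary data on $\partial B_\delta(y)$ to be continuous (or at least the functional $U^g$ to be well-defined in a suitable generalized sense). I would address this by first proving continuity of $U^{\vec{\lambda},\vec{x},\vec{\eps}}$ on $G$ directly from its probabilistic definition, using dominated convergence together with the local boundedness shown in the first step and the stochastic continuity in the initial point of the exit measures $X_{G_{\eps_i}^{x_i}}$ (exactly as in the proof of the analogous smoothness of $U^{\infty,\veps}$ recorded in \eqref{ev5.6}). An alternative route is a monotone approximation, replacing the indicator $1(X_{G_{\eps_i/2}^{x_i}}=0)$ by $\exp(-n X_{G_{\eps_i/2}^{x_i}}(1))$, passing to the PDE for the resulting continuous log-Laplace functionals (as in Chp. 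V of \cite{Leg99}), and sending $n\to\infty$.
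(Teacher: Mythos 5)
Your proposal follows essentially the same route as the paper: bound $U^{\vec{\lambda},\vec{x},\vec{\eps}}$ via \eqref{eb1.1.4}, localize to a small ball $D$ with $\overline D\subset G$, use the special Markov property to identify $U^{\vec{\lambda},\vec{x},\vec{\eps}}$ on $D$ with the log-Laplace functional of $X_D$ with boundary data $U^{\vec{\lambda},\vec{x},\vec{\eps}}|_{\partial D}$, and then invoke the exit-measure PDE theory. (The paper phrases the identity under $\N_x$, i.e.\ $u(x)=\N_x(1-e^{-X_D(u)})$ for all $x\in D$, rather than at the single center point under $\P_{\delta_y}$; your derivation extends to all $x$ in the ball in the same way, which is what you need to conclude $u\equiv U^{\vec{\lambda},\vec{x},\vec{\eps}}$ on $D$, so make that explicit.)

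The one place you diverge is the final step. You correctly flag that \eqref{e7.1}--\eqref{e7.2} as stated require \emph{continuous} boundary data, and you propose either proving continuity of $U^{\vec{\lambda},\vec{x},\vec{\eps}}$ on $G$ or running a monotone approximation in $n$ for the indicator. The paper avoids this entirely by citing Theorem V.6 of \cite{Leg99}, which gives interior $C^2$ regularity and $\Delta u=u^2$ for $u(x)=\N_x(1-\exp(-\int u\,dX_D))$ assuming only that the integrand is bounded and nonnegative on $\partial D$ -- exactly what \eqref{eb1.1.4} (together with the monotonicity of $r\mapsto U^{\lambda,\eps}(r)$, giving the global bound $\widetilde\lambda_1\eps_1^{-2}+\widetilde\lambda_2\eps_2^{-2}$) provides. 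So your continuity detour, while viable in principle, is unnecessary; and of your two fallbacks the approximation one would be the safer bet, since continuity of the map $x\mapsto\N_x(\cdots 1(X_{G_{\eps_i/2}^{x_i}}=0))$ involves a discontinuous functional of the exit measure and is best handled by first converting the indicator into $\exp(-4U^{\infty,1}(2)X_{G_{\eps_i}^{x_i}}(1)/\eps_i^2)$ via the special Markov property, as in \eqref{er1.5}.
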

\begin{proof}
The proof follows in a similar way to that of Lemma S.1.1 of \cite{HMP18_supp} and will be given in Appendix.
\end{proof}

Set $T_{2\eps_i}^i=\inf\{t\geq 0: |B_t-x_i|\leq 2\eps_i \},  i=1, 2$ and $T_{\eps}=T_{2\eps_1}^1 \wedge T_{2\eps_2}^2$. Let $\eps_1, \eps_2>0$ be small so that
% \begin{align}\label{ec1.0.2}
$0<4(\eps_1\vee \eps_2)<\min\{|x_1-x|, |x_2-x|, |x_1-x_2|\}.$
%\end{align}
 \begin{lemma}\label{l4.3}
For any $t>0$, we have for $i=1,2$, \[U_i^{\vec{\lambda},\vec{x},\vec{\eps}}(x)=E_x\Big( U_i^{\vec{\lambda},\vec{x},\vec{\eps}}(B({t\wedge T_{\eps}}))\exp\big(-\int_0^{t\wedge T_{\eps}} U^{\vec{\lambda},\vec{x},\vec{\eps}}(B_s) ds\big)\Big).\] 
\end{lemma}
\begin{proof}
By using Lemma \ref{al4.0}, the proof is similar to the derivation of Lemma \ref{l10.3}.
\end{proof}

\begin{lemma}\label{8.5}
Let $G=\{x: x\neq x_1\} \cap G_\eps^{x_2}$. Then $W^{\vec{\lambda},\vec{x},\varepsilon}$ is a $C^2$ function on $G$ and solves
\begin{align}\label{8.6}
\Delta W^{\vec{\lambda},\vec{x},\varepsilon}=(W^{\vec{\lambda},\vec{x},\varepsilon})^2 \text{ on } G.
\end{align}
\end{lemma}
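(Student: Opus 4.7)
The plan is to mirror the proof of Lemma \ref{al4.0}: localize via the special Markov property (Proposition \ref{pv0.2}(i)) on small balls $B\subset G$, and then invoke the classical $C^2$ theory for exit-measure log-Laplace functionals recalled in \eqref{e7.1}--\eqref{e7.2}.

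First I would establish that $W^{\vec{\lambda},\vec{x},\varepsilon}$ is bounded and continuous on $G$. Boundedness on compact subsets of $G$ follows immediately from the upper bound in \eqref{8.1}, since both $V^{\lambda_1}(\cdot-x_1)$ and $U^{\widetilde{\lambda}_2\varepsilon^{-2},\varepsilon}(\cdot-x_2)$ are locally bounded on $G$. For continuity at $x_0\in G$, I would use translation invariance of $\N$ to write $W^{\vec{\lambda},\vec{x},\varepsilon}(x)=\N_0(1-\Phi^{(-x)})$, where $\Phi^{(-x)}$ is the defining functional with $x_i$ replaced by $x_i-x$, and then apply Dominated Convergence: the factor $e^{-\lambda_1 L^{x_1-x}}$ converges by continuity of $y\mapsto L^y$ on $S(\delta_0)^c$, while the exit-measure factors (including the indicator $1(X_{G_{\varepsilon/2}^{x_2-x}}=0)$) converge $\N_0$-a.e., using that the set of snake paths whose range touches a fixed sphere $\{|y-x_2|=\varepsilon/2\}$ has $\N_0$-measure zero.

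Next, fix $x_0\in G$ and pick $r>0$ small enough that $\bar B\subset G$ for $B=B(x_0,r)$, with $d(\bar B,\{x_1\})>0$ and $d(\bar B,\overline{B(x_2,\varepsilon)})>0$. Apply Proposition \ref{pv0.2}(i) with $B$ playing the role of $G$, $D_1=G_{\varepsilon}^{x_2}$, $D_2=G_{\varepsilon/2}^{x_2}$, $\psi_0(L_{\bar B^c})=e^{-\lambda_1 L^{x_1}}$ (a bounded measurable point-evaluation at $x_1\in\bar B^c$), and $\Phi_1(X_{D_1},X_{D_2})=\exp(-\lambda_2 X_{D_1}(1)/\varepsilon^2)\,1(X_{D_2}=0)$. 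Combining with \eqref{e10.01} applied to the $\partial B$-supported random measure $X_B$ (whose support manifestly satisfies the separation hypotheses of \eqref{e10.01}) yields, for each $x\in B$,
\begin{equation*}
W^{\vec{\lambda},\vec{x},\varepsilon}(x)=\N_x\bigl(1-e^{-X_B(g)}\bigr),\qquad g:=W^{\vec{\lambda},\vec{x},\varepsilon}\big|_{\partial B}.
\end{equation*}
By the continuity from the first step, $g$ is continuous on $\partial B$, so \eqref{e7.1}--\eqref{e7.2} identify the right-hand side with the unique $C^2$ solution $U^g$ on $B$ of $\Delta U^g=(U^g)^2$ with boundary datum $g$. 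Hence $W^{\vec{\lambda},\vec{x},\varepsilon}$ agrees with $U^g$ on $B$, is $C^2$ on $B$, and satisfies \eqref{8.6} there. Since $x_0\in G$ was arbitrary, the conclusion extends to all of $G$.

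The main obstacle is the continuity step, and in particular the justification that the discontinuous indicator factor $1(X_{G_{\varepsilon/2}^{x_2}}=0)$ is continuous $\N_0$-a.e.\ in the starting point. This rests on the $\N_0$-negligibility of snake paths whose range just touches the sphere $\{|y-x_2|=\varepsilon/2\}$, a zero-set fact that should follow from the absolute continuity of the distribution of the radial extent of the range of SBM at a given center.
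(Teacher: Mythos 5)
Your overall strategy is the one the paper uses: the proof of Lemma \ref{8.5} is declared to follow that of Lemma \ref{al4.0}, which localizes on a small ball $D$ with $\overline D\subset G$ via the special Markov property (Proposition \ref{pv0.2}(i)) together with \eqref{e10.01} applied to $X_0=X_D$, obtaining $W^{\vec\lambda,\vec x,\eps}(x)=\N_x\bigl(1-\exp(-\int W^{\vec\lambda,\vec x,\eps}\,dX_D)\bigr)$ on $D$, and then invokes the PDE correspondence for exit measures. Your localization step and your observation that local boundedness on $\overline D$ comes from the upper bound in \eqref{8.1} are exactly right (global boundedness fails here, since $W^{\vec\lambda,\vec x,\eps}\ge V^{\lambda_1}(\cdot-x_1)\to\infty$ at $x_1$, so the localization is genuinely needed).

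Where you diverge is the continuity step, and that is also where the gap sits. The paper never proves continuity of $W^{\vec\lambda,\vec x,\eps}$: instead of \eqref{e7.1}--\eqref{e7.2} (which require continuous boundary data) it cites Theorem V.6 of \cite{Leg99}, which delivers interior $C^2$ regularity and $\Delta u=u^2$ on $D$ for bounded \emph{measurable} boundary data on $\partial D$, so local boundedness suffices. By insisting on continuous $g=W^{\vec\lambda,\vec x,\eps}|_{\partial B}$ you are forced to prove continuity of $W^{\vec\lambda,\vec x,\eps}$, and your argument for it is not sound as written: after translating, you need $X_{G_\eps^{x_2-x}}(1)\to X_{G_\eps^{x_2-x_0}}(1)$ $\N_0$-a.e.\ as $x\to x_0$, but exit measures are in general only cadlag, not continuous, as the domain varies (compare the cadlag-with-positive-jumps behaviour of $r\mapsto X_{G_r}(1)$ exploited in Section \ref{s6}); a.e.\ continuity at a fixed deterministic translate is plausible but requires a separate argument, and the same issue affects the indicator $1(X_{G_{\eps/2}^{x_2-x}}=0)$ beyond the sphere-touching null set you mention. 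The fix is simply to drop the continuity detour: keep the local boundedness from \eqref{8.1} and conclude with Theorem V.6 of \cite{Leg99}, as in the proof of Lemma \ref{al4.0}.
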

\begin{proof}
It follows in a similar manner to the proof of Lemma \ref{al4.0}.
\end{proof}

Let $r_{\lambda_1}=\lambda_0 \lambda_1^{-\frac{1}{4-d}}$ where $\lambda_0$ will be chosen to be some fixed large constant below. Set $T_{\lambda_1, \eps}=T_{r_{\lambda_1}}^1 \wedge T_{2\eps}^2$ where $T_{r_{\lambda_1}}^1=\inf\{t\geq 0: |B_t-x_1|\leq r_{\lambda_1} \}$ and $T_{2\eps}^2=\inf\{t\geq 0: |B_t-x_2|\leq 2\eps \}$. Let $\lambda_1>0$ large and $\eps>0$ small such that  $0<4(r_{\lambda_1}\vee \eps)<\min\{|x_1-x|, |x_2-x|,  |x_1-x_2|\}$.   
\begin{lemma}\label{8.9}
For any $t>0$, we have for $i=1,2$, \[W_i^{\vec{\lambda},\vec{x},\eps}(x)=E_x\Big( W_i^{\vec{\lambda},\vec{x},\eps}(B({t\wedge T_{\lambda_1, \eps}}))\exp\big(-\int_0^{t\wedge T_{\lambda_1, \eps}} W^{\vec{\lambda},\vec{x},\eps}(B_s) ds\big)\Big).\]
\end{lemma}
\begin{proof}
By using Lemma \ref{8.5}, the proof follows in a similar way to that of Lemma \ref{l10.3}.
%The proof follows in a similar way to Lemma \ref{l10.3} by using Lemma \ref{8.5}.
\end{proof}

\subsection{Proof of Proposition \ref{p3.1} and Proposition \ref{p3.3}(i)}
Given the similarities of the proofs of Propositions \ref{p3.1}(i),  \ref{p3.1}(ii) and  \ref{p3.3}(i), we will only give the proof of Proposition \ref{p3.1}(ii) here and  other proofs can be found in Appendix. 
\begin{proof}[Proof of Proposition \ref{p3.1}(ii)]
By symmetry it suffices to consider the case $i=1$. Recall Lemma \ref{l4.3} to get
\begin{align*}
\frac{1}{\eps_1^{p-2}}U_1^{\vec{\lambda},\vec{x},\vec{\eps}}(x)=\frac{1}{\eps_1^{p-2}}\lim_{t\to \infty}E_x\Big( U_1^{\vec{\lambda},\vec{x},\vec{\eps}}(B({t\wedge T_{\eps}}))\exp\big(-\int_0^{t\wedge T_{\eps}} U^{\vec{\lambda},\vec{x},\vec{\eps}}(B_s) ds\big)\Big),
\end{align*}
where $T_\eps=T_{2\eps_1}^1\wedge T_{2\eps_2}^2$ and $T_{2\eps_i}^i=\inf\{t\geq 0: |B_t-x_i|\leq 2\eps_i\}$ for $i=1,2$. 
By \eqref{mc1.0.1}, we have $U_1^{\vec{\lambda},\vec{x},\vec{\eps}}(x) \to 0$ as $|x|\to \infty$ and $U_1^{\vec{\lambda},\vec{x},\vec{\eps}}(B({t\wedge T_{\eps}}))$ is uniformly bounded for all $t\geq 0$. Apply Dominated Convergence to see that
   \begin{align}\label{m9.3}
 \frac{1}{\eps_1^{p-2}}U_1^{\vec{\lambda},\vec{x},\vec{\eps}}(x) &=\frac{1}{\eps_1^{p-2}} E_x\Big(1_{\{T_\eps<\infty\}} U_1^{\vec{\lambda},\vec{x},\vec{\eps}}(B({T_{\eps}}))\exp\big(-\int_0^{T_{\eps}} U^{\vec{\lambda},\vec{x},\vec{\eps}}(B_s) ds\big)\Big)\nn\\
 &=\sum_{i=1}^2  E_x\Big(1_{\{T_{2\eps_i}^i<\infty\}} 1_{\{T_{2\eps_i}^i<T_{2\eps_{3-i}}^{3-i}\}} \frac{1}{\eps_1^{p-2}}U_1^{\vec{\lambda},\vec{x},\vec{\eps}}(B({T_{2\eps_i}^i})) \nn\\
 &\quad \quad \quad \quad \quad \quad  \exp\big(-\int_0^{T_{2\eps_i}^i} U^{\vec{\lambda},\vec{x},\vec{\eps}}(B_s) ds\big)\Big):=I_1+I_2.
  \end{align}
  We first deal with $I_2$. Note in the integrand of $I_2$ we may assume that  $|B({T_{2\eps_2}^2})-x_2|=2\eps_2$ and so for $\eps_2>0$ small we have $|x_1-B({T_{2\eps_2}^2})|>\Delta/2$ where $\Delta=|x_1-x_2|$. Apply \eqref{mc1.0.1} with $x=B({T_{2\eps_2}^2})$ to get
\begin{align}\label{eb1.2.1}
  &\frac{1}{\eps_1^{p-2}}U_1^{\vec{\lambda},\vec{x},\vec{\eps}}(B({T_{2\eps_2}^2}))\leq  |B({T_{2\eps_2}^2})-x_1|^{-p}\leq\Delta^{-p} 2^p.
    \end{align}
    Let $\tau_r=\inf\{t\geq 0: |B_t|\leq r\}$ and use the above and \eqref{e4.11} to see that  $I_2$ becomes
   \begin{align}\label{m9.4}
   I_2&\leq  2^p \Delta^{-p} E_x\Big(1_{\{T_{2\eps_2}^2<\infty\}} 1_{\{T_{2\eps_2}^2<T_{2\eps_{1}}^{1}\}} \exp\big(-\int_0^{T_{2\eps_2}^2} U^{\vec{\lambda},\vec{x},\vec{\eps}}(B_s) ds\big)\Big)\\
      &\leq  2^p \Delta^{-p} E_{x-x_2}\Big(1_{\{\tau_{2\eps_2}<\infty\}}  \exp\big(-\int_0^{\tau_{2\eps_2}} V^\infty(B_s) ds\big)\Big)\nn\\
      &= 2^p \Delta^{-p} (2\eps_2/|x-x_2|)^{p} \to 0 \text{ as } \eps_2 \downarrow 0,\nn
   \end{align}
   where in the last equality we have used Proposition \ref{p8.1} with $g=V^\infty$.
   
   Now we will turn to $I_1$. 
%   Recall that
%  \begin{align}
%   I_1=E_x\Big(1_{\{T_{2\eps_1}^1<\infty\}} 1_{\{T_{2\eps_1}^1<T_{\eps_{2}}^{2}\}} \frac{1}{\eps_1^{p-2}}U_1^{\vec{\lambda},\vec{x},\vec{\eps}}(B({T_{2\eps_1}^1}))\exp\big(-\int_0^{T_{2\eps_1}^1} U^{\vec{\lambda},\vec{x},\vec{\eps}}(B_s) ds\big)\Big).
%   \end{align}
%Under the law $P_x$ (Wiener measure), $Y$ is a standard $d$-dimensional Brownian motion starting from $x$
   Let $(Y_t, t\geq 0)$ be the $d$-dimensional coordinate process under Wiener measure, $P_x$. By slightly abusing the notation, we set 
$\tau_r=\tau_r^Y=\inf\{t\geq 0: |Y_t|\leq r\}$ for any $r>0$, and set
      \begin{align}\label{eb4.1}
     T^{'}_{2\eps_2}= T^{',Y}_{2\eps_2}=\inf\{t\geq 0: |Y_t-(x_2-x_1)|\leq 2\eps_2\}.
     \end{align}
Then use translation invariance of $Y$ to get
\begin{align*}%\label{ae2.1} 
I_1&=E_{x-x_1}\Big(1_{\{\tau_{2\eps_1}<\infty\}}  1_{\{\tau_{2\eps_1}<T^{'}_{2\eps_{2}}\}}  \frac{1}{\eps_1^{p-2}}U_1^{\vec{\lambda},\vec{x},\vec{\eps}}(Y_{\tau_{2\eps_1}}+x_1)\\
 &\quad \quad \quad \quad \quad \quad\exp\big(-\int_0^{\tau_{2\eps_1}} U^{\vec{\lambda},\vec{x},\vec{\eps}}(Y_s+x_1) ds\big)\Big).
\end{align*}            
 Recall that $\Hat{P}_{x}^{(2-2\nu)}$ is the law of $Y$ starting from $x$ such that $Y$ satisfies the SDE as in \eqref{eb7.3}.
%  \begin{align}\label{eb7.3}
%        dY_t=d\Hat{B}_t+(-\nu-\mu)\frac{Y_t}{|Y_t|^2}dt, \quad t\leq \tau_0.
%  \end{align}
 % where $\Hat{B}$ is a standard d-dimensional Brownian motion under $\Hat{P}^{(2-2\nu)}_x$. 
  Apply Proposition \ref{p8.1} with $g(\cdot)=U^{\vec{\lambda},\vec{x},\vec{\eps}}(\cdot+x_1)$ in the above to get
     \begin{align}\label{ec1.0.3}
   I_1=&\frac{(2\eps_1)^{p}}{|x-x_1|^{p}} \Hat{E}^{(2-2\nu)}_{x-x_1}\Big(1_{\{\tau_{2\eps_1}<T^{'}_{2\eps_{2}}\}} \frac{1}{\eps_1^{p-2}}U_1^{\vec{\lambda},\vec{x},\vec{\eps}}(Y_{\tau_{2\eps_1}}+x_1)\nn \\
   &\quad \quad \quad \times \exp\Big(-\int_0^{\tau_{2\eps_1}} (U^{\vec{\lambda},\vec{x},\vec{\eps}}(Y_s+x_1)-V^\infty(Y_s)) ds\Big)\Big)\nn\\
 =&\frac{2^{p}}{|x-x_1|^{p}} \Hat{E}^{(2-2\nu)}_{x-x_1}\Big([1_{\{\tau_{2\eps_1}<T^{'}_{2\eps_{2}}\}}] \times [\eps_1^{2}U_1^{\vec{\lambda},\vec{x},\vec{\eps}}(Y_{\tau_{2\eps_1}}+x_1)]\nn \\
   &\quad \quad \quad \times \Big[\exp\Big(-\int_0^{\tau_{2\eps_1}} (U^{\vec{\lambda},\vec{x},\vec{\eps}}(Y_s+x_1)-U^{\widetilde{\lambda}_1 \eps_1^{-2}, \eps_1}(Y_s)) ds\Big)\Big]\nn\\
   &\quad \quad \quad \times \Big[\exp\Big(-\int_0^{\tau_{2\eps_1}} (U^{\widetilde{\lambda}_1 \eps_1^{-2}, \eps_1}(Y_s)-V^\infty(Y_s)) ds\Big)\Big]\Big)\nn\\
   :=&\frac{2^p}{|x-x_1|^{p}} \Hat{E}^{(2-2\nu)}_{x-x_1} ([J_1] [J_2] [J_3][J_4]),
   \end{align}
   where $\widetilde{\lambda}_1$ is as in \eqref{ae2.4} and we have ordered the fours terms in square brackets as $J_1, \dots, J_4$.
   
We first consider  $J_2$. Recall \eqref{e4.3.2} and use translation invariance to get 
%=\eps_1^{2}U_1^{\vec{\lambda},\vec{x},\vec{\eps}}(Y({\tau_{2\eps_1}})+x_1).$ 
 \begin{align*}%\label{eb6.5}
 J_2=&\N_{Y_{\tau_{2\eps_1}}+x_1}\Bigg({X_{G_{\eps_1}^{x_1}}(1)}\prod_{i=1}^2\exp\Big(-\lambda_i \frac{X_{G_{\eps_i}^{x_i}}(1)}{\eps_i^{2}}\Big)1(X_{G_{\eps_i/2}^{x_i}}=0) \Bigg) \\
 =&\N_{Y_{\tau_{2\eps_1}}}\Bigg({X_{G_{\eps_1}}(1)}  \exp\Big(- \lambda_1 \frac{X_{G_{\eps_1}}(1)}{\eps_1^{2}}\Big)1(X_{G_{\eps_1/2}}=0)\nn\\
 &\quad \quad \quad \times \exp\Big(- \lambda_2 \frac{X_{G_{\eps_2}^{x_2-x_1}}(1)}{\eps_2^{2}}\Big)1(X_{G_{\eps_2/2}^{x_2-x_1}}=0) \Bigg).
   \end{align*} 
By the scaling of Brownian snake and its exit measure under the excursion measure $\N_x$ (see, e.g., the proof of Proposition V.9 in \cite{Leg99}), we have
 \begin{align}\label{eb6.5}
 J_2=& \eps_1^{-2} \N_{Y_{\tau_{2\eps_1}}/\eps_1}\Bigg(\eps_1^2 {X_{G_{1}}(1)}\exp\Big(- \lambda_1 {X_{G_{1}}(1)}\Big)1(X_{G_{1/2}}=0) \nn\\
 &\quad \quad \quad \times\exp\Big(- \lambda_2 \frac{X_{G_{\eps_2/\eps_1}^{(x_2-x_1)/\eps_1}}(1)}{(\eps_2/\eps_1)^{2}}\Big)1\Big(X_{G_{\eps_2/2\eps_1}^{(x_2-x_1)/\eps_1}}=0\Big) \Bigg)\nn \\
\overset{\text{law}}{=}& \N_{Y_{\tau_{2}}}\Bigg( {X_{G_{1}}(1)}\exp\Big(- \lambda_1 {X_{G_{1}}(1)}\Big)1(X_{G_{1/2}}=0) \nn\\
 &\quad \quad \quad \times\exp\Big(- \lambda_2 \frac{X_{G_{\eps_2/\eps_1}^{(x_2-x_1)/\eps_1}}(1)}{(\eps_2/\eps_1)^{2}}\Big)1\Big(X_{G_{\eps_2/2\eps_1}^{(x_2-x_1)/\eps_1}}=0\Big)\Bigg),
   \end{align} 
   where the last equality is by the scaling of $Y$.
Note for any $K>0$, we have \[\Big|\frac{x_2-x_1}{\eps_1}\Big|-\frac{\eps_2}{\eps_1}\geq \frac{|x_2-x_1|/2}{\eps_1}>K \text{ for } \eps_1, \eps_2 \text{ small enough, }\] and so by \eqref{ea0.0} and \eqref{ea1.1} we conclude $\N_{Y_{\tau_{2}}}$-a.e.
\[X_{G_{\eps_2/\eps_1}^{(x_2-x_1)/\eps_1}}(1)=X_{G_{\eps_2/2\eps_1}^{(x_2-x_1)/\eps_1}}(1)=0 \text{ for } \eps_1, \eps_2 \text{ small enough. }\]
Therefore an application of Dominated Convergence will give us
  \begin{align}\label{eb1.3.6}
 &\lim_{\eps_{1}, \eps_{2} \downarrow 0}  \N_{Y_{\tau_{2}}}\Bigg( {X_{G_{1}}(1)}\exp\Big(- \lambda_1 {X_{G_{1}}(1)}\Big)1(X_{G_{1/2}}=0)\nn\\
 &\quad \quad \quad \times \exp\Big(- \lambda_2 \frac{X_{G_{\eps_2/\eps_1}^{(x_2-x_1)/\eps_1}}(1)}{(\eps_2/\eps_1)^{2}}\Big)1\Big(X_{G_{\eps_2/2\eps_1}^{(x_2-x_1)/\eps_1}}=0\Big)\Bigg)\nn \\
 =& \N_{Y_{\tau_{2}}}\Big( {X_{G_{1}}(1)}e^{- \lambda_1 {X_{G_{1}}(1)}}1(X_{G_{1/2}}=0)\Big)\nn\\
 =& \N_{2e_1}\Big( {X_{G_{1}}(1)}e^{- \lambda_1 {X_{G_{1}}(1)}}1(X_{G_{1/2}}=0)\Big) =U_1^{\widetilde{\lambda}_1,1}(2),
     \end{align} 
     where the next to last equality is by spherical symmetry and $e_1$ is the first unit basis vector. In the last equality we have used \eqref{er1.5}, \eqref{er12.2} with $\eps=1$, $x=2e_1$ and  \eqref{ae2.4}.
       In view of \eqref{eb6.5} and \eqref{eb1.3.6}, we have proved 
\begin{align*}
J_2=\eps_1^{2}\ U_1^{\vec{\lambda},\vec{x},\vec{\eps}}(Y_{\tau_{2\eps_1}}+x_1) \to U_1^{\widetilde{\lambda}_1,1}(2) \text{ in distribution}\text{ as  } \eps_1, \eps_2 \downarrow 0.
\end{align*}
Since $U_1^{\widetilde{\lambda}_1,1}(2)$ is a constant, we conclude that under $\Hat{P}_{x-x_1}^{(2-2\nu)}$,
\begin{align}\label{eb6.4}
J_2=\eps_1^{2}\ U_1^{\vec{\lambda},\vec{x},\vec{\eps}}(Y_{\tau_{2\eps_1}}+x_1) \to U_1^{\widetilde{\lambda}_1,1}(2) \text{ in probability}\text{ as } \eps_1, \eps_2 \downarrow 0.
\end{align}

We continue to show that with  $\Hat{P}^{(2-2\nu)}_{x-x_1}$-probability one,
   \begin{align}\label{ec2.1.0}
   J_1=1_{\{\tau_{2\eps_1}<T^{'}_{2\eps_{2}}\}} \to 1 \text{ as } \eps_1, \eps_2 \downarrow 0,
   \end{align}
   and 
      \begin{align}\label{ec2.1.1}
   J_3=\exp\big(-&\int_0^{\tau_{2\eps_1}} (U^{\vec{\lambda},\vec{x},\vec{\eps}}(Y_s+x_1)-U^{\widetilde{\lambda}_1\eps_1^{-2},\eps_1}(Y_s)) ds\big)\nn \\
     &\to \exp\big(-\int_0^{\tau_{0}} (V^{\vec{\infty},\vec{x}}(Y_s+x_1)-V^\infty(Y_s)) ds\big) \text{ as } \eps_1, \eps_2 \downarrow 0.
     \end{align}

  Since the drift of $\{Y_t, t\geq 0\}$ as in \eqref{eb7.3} is bounded up to time $\tau_{\eps}$ for any $\eps>0$ and since Brownian motion in $d\geq 2$ won't hit points, we conclude by Girsanov's theorem (recall \eqref{e2.5}) that $\{Y_t, t\geq 0\}$ won't hit the point $x_1-x_2\neq 0$ and so with $\Hat{P}_{x-x_1}^{(2-2\nu)}$ probability one,
  \begin{align}\label{eb1.1.0}
\exists \delta(\omega)>0 \text{ so that } |Y_s-(x_2-x_1)|>\delta \text{ for all } 0\leq s\leq \tau_0,
    \end{align}
     which implies (recall \eqref{eb4.1})
    \begin{align}\label{eb4.2}
  T^{'}_{2\eps_{2}}=\infty \text{ for all } 0<\eps_2<\delta(\omega)/2, \quad  \Hat{P}_{x-x_1}^{(2-2\nu)}-a.s.
    \end{align}
Since $\tau_0$ under $\Hat{P}_{x-x_1}^{(2-2\nu)}$ is the hitting time $\tau_0$ of a $(2-2\nu)$-dimensional Bessel process, it follows that with $\Hat{P}_{x-x_1}^{(2-2\nu)}$-probability one (see, e.g., Exercise (1.33) in Chp. XI of \cite{RY94})
  \begin{align}\label{eb1.1.1}
\tau_0<\infty,\quad  \Hat{P}_{x-x_1}^{(2-2\nu)}-a.s.
    \end{align}
    Therefore by \eqref{eb4.2} we have \eqref{ec2.1.0}. 
    
%     $\Hat{P}_{x-x_1}^{(2-2\nu)}-a.s.$ that
%    \begin{align}\label{eb1.2.3}
%    J_1=1_{\{\tau_{2\eps_1}<T^{'}_{\eps_{2}}\}}  \to 1 \text{ as  } \eps_1, \eps_2 \downarrow 0.
%     \end{align}
  Fix $\omega$ outside a null set such that both \eqref{eb1.1.0} and \eqref{eb1.1.1} holds. For all $0<\eps_{1}, \eps_{2}<\delta(\omega)/2$, we have   \begin{align}\label{eb1.1.2}
|Y_{s}|\geq 2\eps_1 \text{ and } |Y_s-(x_2-x_1)|>\delta>2\eps_2, \text{ for all } 0<s<\tau_{2\eps_{1}}.
    \end{align}
Now apply \eqref{eb1.1.4} with $Y_{s}+x_1$ in place of $x$  to get 
  \begin{align}\label{ae6.3}
&(U^{\vec{\lambda},\vec{x},\vec{\eps}}(Y_s+x_1)-U^{\widetilde{\lambda}_1\eps_1^{-2},\eps_1}(Y_s)) 1_{\{0<s<\tau_{2\eps_{1}}\}}\nn\\
 &\leq U^{\widetilde{\lambda}_2\eps_2^{-2},\eps_2}(Y_s-(x_2-x_1)) 1_{\{0<s<\tau_{2\eps_{1}}\}}\leq U^{\widetilde{\lambda}_2\eps_2^{-2},\eps_2}(\delta) 1_{\{0<s<\tau_{0}\}},
 \end{align}
 where in the last inequality we have used \eqref{eb1.1.2} and the fact that \break $r\mapsto U^{\lambda,\eps}(r)$ is decreasing from Lemma 3.2(b) of \cite{MP17}.
 Corollary 4.3 of \cite{HMP18} gives us 
\[U^{\widetilde{\lambda}_2,1}(x)\leq U^{\infty,1}(x)\leq 3(4-d)|x|^{-2}, \forall |x|>1 \text{ large}.\]
By scaling of $U^{\lambda,\eps}$ from \eqref{ev5.3}, we have for $\eps_2>0$ small,
%there is some constant $c>0$ so that for all $\lambda\geq \lambda_d$,
%\begin{align}\label{ae1.3}
%V^\infty(x) \leq U^{\lambda,1}(x)\leq U^{\infty,1}(x)\leq V^\infty(x)+\frac{c}{|x|^p} \text{ for } |x|\geq 1 \text{ large }.
%\end{align}
%By Lemma 4.3 of \cite{Hong20}, we have
%\begin{align}\label{ae6.4}
\begin{align}\label{aea6.3}
U^{\widetilde{\lambda}_2\eps_2^{-2},\eps_2}(\delta) =\eps_2^{-2} U^{\widetilde{\lambda}_2,1}(\delta/\eps_2) \leq 3(4-d)\delta^{-2}\leq 6\delta^{-2}.
\end{align}
Combining \eqref{ae6.3} and \eqref{aea6.3}, we have for $\eps_2>0$ small,
  \begin{align}\label{eb1.1.3}
  (U^{\vec{\lambda},\vec{x},\vec{\eps}}(Y_s+x_1)-U^{\widetilde{\lambda}_1\eps_1^{-2},\eps_1}(Y_s)) 1_{\{0<s<\tau_{2\eps_{1}}\}}\leq 6\delta^{-2}1_{\{0<s<\tau_{0}\}}.
   \end{align}
 Since we have $\tau_0(\omega)<\infty$ by \eqref{eb1.1.1}, we conclude the left-hand side term of \eqref{eb1.1.3} is bounded by an integrable bound.
%  \begin{align}\label{e4.4.0}
% \int_0^{\infty} 3(4-d)\delta^{-2}1_{\{0<s<\tau_{0}\}}  ds = 6\delta^{-2} \tau_0<\infty.
%  \end{align}
By (4.38) of \cite{Hong20} we have
\begin{align}\label{ae1.4}
\lim_{\eps \downarrow 0} U^{\lambda \eps^{-2}, \eps}(x)=V^\infty(x), \forall x\neq 0, \text{ for any }\lambda>0.
\end{align}
Now use Dominated Convergence with Lemma \ref{l7.5}, \eqref{eb1.1.3} and \eqref{ae1.4} to see that with $\Hat{P}_{x-x_1}^{(2-2\nu)}$-probability one,% we have
  \begin{align*}%\label{eb1.1.5}
 \lim_{\eps_{1}, \eps_{2} \downarrow 0}\int_0^{\tau_{2\eps_1}}& (U^{\vec{\lambda},\vec{x},\vec{\eps}}(Y_s+x_1)-U^{\widetilde{\lambda}_1\eps_1^{-2},\eps_1}(Y_s))ds\\
&= \int_0^{\tau_{0}} (V^{\vec{\infty},\vec{x}}(Y_s+x_1)-V^{\infty}(Y_s))ds,
 \end{align*}
 thus proving \eqref{ec2.1.1} holds.

Combine \eqref{eb6.4}, \eqref{ec2.1.0} and \eqref{ec2.1.1} to see that under $ \Hat{P}_{x-x_1}^{(2-2\nu)}$, we have
\begin{align}\label{ce1.3}
J_1 J_2 J_3 \to U_1^{\widetilde{\lambda}_1,1}(2) & \exp\big(-\int_0^{\tau_{0}} (V^{\vec{\infty},\vec{x}}(Y_s+x_1)-V^\infty(Y_s)) ds\big)\nn\\
&  \text{ in probability} \text{ as  } \eps_1, \eps_2 \downarrow 0.
\end{align}
Recall \eqref{mc1.0.1} to see that
  \begin{align}
 J_2=\eps_1^{2}U_1^{\vec{\lambda},\vec{x},\vec{\eps}}(Y_{\tau_{2\eps_1}}+x_1)\leq \eps_1^p |Y_{\tau_{2\eps_1}}|^{-p}=2^{-p},
  \end{align} 
 and together with \eqref{ec1.1.3} we have
   %\begin{align}
   $0\leq J_1J_2J_3\leq 2^{-p},  \Hat{P}_{x-x_1}^{(2-2\nu)}$-a.s.
  % \end{align} 
Recalling $J_4$ as in \eqref{ec1.0.3}, 
%\[J_4=\exp\big(-\int_0^{\tau_{2\eps_1}} (U^{\widetilde{\lambda}_1 \eps_1^{-2}, \eps_1}(Y_s)-V^\infty(Y_s)) ds\big),\] 
we have $0\leq J_4\leq 1$ by \eqref{ae8.0}. By \eqref{ev1.2} and the definition of $V^{\vec{\infty},\vec{x}}$ as in \eqref{ev}, we have
\begin{align}\label{ae8.10}
V^{\vec{\infty},\vec{x}}(x)\geq V^\infty(x-x_1) \vee V^\infty(x-x_2),  \quad \forall x \neq x_1, x_2.
\end{align}
Now use \eqref{ce1.3} and bounded convergence theorem to see that
\begin{align*}
&\Big|\Hat{E}_{x-x_1}^{(2-2\nu)}(J_1J_2J_3J_4)-\Hat{E}_{x-x_1}^{(2-2\nu)}\Big(U_1^{\widetilde{\lambda}_1,1}(2)\\
&\quad \quad  \quad \quad \quad  \quad \times \exp\big(-\int_0^{\tau_{0}} (V^{\vec{\infty},\vec{x}}(Y_s+x_1)-V^\infty(Y_s)) ds\big) \times J_4\Big)\Big| \nn \\
&\leq\Hat{E}_{|x-x_1|}^{(2-2\nu)}\Big(\Big|J_1J_2J_3-U_1^{\widetilde{\lambda}_1,1}(2) \exp\big(-\int_0^{\tau_{0}} (V^{\vec{\infty},\vec{x}}(Y_s+x_1)-V^\infty(Y_s)) ds\big)\Big|\Big)\\
& \to 0\text{ as } \eps_1, \eps_2 \downarrow 0.
\end{align*} 
In view of \eqref{ec1.0.3}, we conclude  
\begin{align}\label{ae2.5}
 &\lim_{\eps_1, \eps_2 \downarrow 0} I_1= \frac{2^pU_1^{\widetilde{\lambda}_1,1}(2)}{|x-x_1|^p}\nn\\
 & \times \lim_{\eps_1, \eps_2 \downarrow 0} \Hat{E}_{x-x_1}^{(2-2\nu)}\Big( \exp\big(-\int_0^{\tau_{0}} (V^{\vec{\infty},\vec{x}}(Y_s+x_1)-V^\infty(Y_s)) ds\big) \times J_4\Big),
\end{align} 
providing we can show the limit on the right-hand side exists.
We claim that there is some constant $C(\widetilde{\lambda}_1)>0$ such that
\begin{align}\label{ae5.9}
 &\lim_{\eps_1 \downarrow 0} \Hat{E}_{x-x_1}^{(2-2\nu)}\Big( \exp\big(-\int_0^{\tau_{0}} (V^{\vec{\infty},\vec{x}}(Y_s+x_1)-V^\infty(Y_s)) ds\big) \times J_4\Big)\nn\\
     &=C(\widetilde{\lambda}_1) \Hat{E}_{x-x_1}^{(2-2\nu)}\Big(\exp\big(-\int_0^{\tau_{0}} (V^{\vec{\infty},\vec{x}}(Y_s+x_1)-V^\infty(Y_s)) ds\big)\Big).
     \end{align}
   It will then follow from \eqref{m9.3}, \eqref{m9.4}, \eqref{ae2.5} and \eqref{ae5.9} that
\begin{align*}
     &\lim_{\eps_1, \eps_2 \downarrow 0} \frac{1}{\eps_1^{p-2}} U_1^{\vec{\lambda},\vec{x},\vec{\eps}}(x)=2^pU_1^{\widetilde{\lambda}_1,1}(2)C(\widetilde{\lambda}_1)\nn\\
     &\quad \quad \quad |x-x_1|^{-p} \Hat{E}_{x-x_1}^{(2-2\nu)}\Big(\exp\big(-\int_0^{\tau_{0}} (V^{\vec{\infty},\vec{x}}(Y_s+x_1)-V^\infty(Y_s)) ds\big)\Big),
\end{align*}
  and the proof is complete by letting 
%\begin{align*}
  $C_{\ref{p3.1}}(\lambda_1)=2^pU_1^{\widetilde{\lambda}_1,1}(2)C(\widetilde{\lambda}_1)$.
 %\end{align*} 
 
  It remains to prove \eqref{ae5.9}. First by monotone convergence theorem and \eqref{ae8.10}, we have
\begin{align}\label{am1.3}
\lim_{\delta\downarrow 0} \Hat{E}_{x-x_1}^{(2-2\nu)}\Big(&\exp\big(-\int_0^{\tau_{\delta}} (V^{\vec{\infty},\vec{x}}(Y_s+x_1)-V^\infty(Y_s)) ds\big)\nn\\
&-\exp\big(-\int_0^{\tau_{0}} (V^{\vec{\infty},\vec{x}}(Y_s+x_1)-V^\infty(Y_s)) ds\big)\Big)=0.
\end{align} 
Since $0\leq J_4\leq 1$ for any $\eps_1>0$,  it follows from monotonicity and \eqref{ae8.10} that
\begin{align}\label{am1.4}
& \Big|\Hat{E}_{x-x_1}^{(2-2\nu)}\Big(\exp\big(-\int_0^{\tau_{\delta}} (V^{\vec{\infty},\vec{x}}(Y_s+x_1)-V^\infty(Y_s)) ds\big)\times J_4\Big)\\
&-\Hat{E}_{x-x_1}^{(2-2\nu)}\Big(\exp\big(-\int_0^{\tau_{0}} (V^{\vec{\infty},\vec{x}}(Y_s+x_1)-V^\infty(Y_s)) ds\big)\times J_4\Big)\Big|\nn\\
\leq&\Hat{E}_{x-x_1}^{(2-2\nu)}\Big(\exp\big(-\int_0^{\tau_{\delta}} (V^{\vec{\infty},\vec{x}}(Y_s+x_1)-V^\infty(Y_s)) ds\big)\nn\\
&\quad \quad -\exp\big(-\int_0^{\tau_{0}} (V^{\vec{\infty},\vec{x}}(Y_s+x_1)-V^\infty(Y_s)) ds\big)\Big) \to 0 \text{ as } \delta \downarrow 0\nn
\end{align} 
uniformly for all $\eps_1>0$ by \eqref{am1.3}.
Fixing any $\delta>0$, we will show that
\begin{align}\label{a6.0}
&\lim_{\eps_1 \downarrow 0} \Hat{E}_{x-x_1}^{(2-2\nu)}\Big(\exp\big(-\int_0^{\tau_{\delta}} (V^{\vec{\infty},\vec{x}}(Y_s+x_1)-V^\infty(Y_s)) ds\big) \times J_4\Big)\nn
\\
&= C(\widetilde{\lambda}_1) \Hat{E}_{x-x_1}^{(2-2\nu)}\Big(\exp\big(-\int_0^{\tau_{\delta}} (V^{\vec{\infty},\vec{x}}(Y_s+x_1)-V^\infty(Y_s)) ds\big)\Big).
\end{align}
Then one can easily conclude from \eqref{am1.3}, \eqref{am1.4} and \eqref{a6.0} that \eqref{ae5.9} holds.

It remains to prove \eqref{a6.0}. Recall $(\rho_s)$ is a $\gamma$-dimensional Bessel process starting from $r>0$ under $P_r^{(\gamma)}$ and let $\tau_\eps=\tau_\eps^\rho=\inf\{t\geq 0: \rho_t\leq \eps\}$. Lemma 4.5 of \cite{Hong20} implies that
%We interrupt the proof for an auxiliary result.  Lemma 4.5 in \cite{Hong20}.
%\begin{lemma} \label{ae13.5} 
for any $\lambda>0$, there is some constant $0<C_{\ref{ae13.5}}(\lambda)<\infty$ so that for all $x\neq 0$,
\begin{align}\label{ae13.5}
%&\sup_{\eps>0} E_{|x|}^{(2+2\nu)}\Big(\exp\big(-\int_0^{ \tau_{2\eps}}  (U^{\lambda \eps^{-2}, \eps}-V^\infty)(\rho_s) ds\big)\Big|\tau_{2\eps}<\infty\Big)\nn \\
&\lim_{\eps\downarrow 0} E_{|x|}^{(2+2\nu)}\Big(e^{-\int_0^{ \tau_{2\eps}}  (U^{\lambda \eps^{-2}, \eps}-V^\infty)(\rho_s) ds}\Big|\tau_{2\eps}<\infty\Big)=C_{\ref{ae13.5}}(\lambda).
\end{align}
%\end{lemma}
%\begin{proof}
%The proof follows in a similar way to that of Lemma 4.5 in \cite{Hong20}.% and it can also be found in Appendix \ref{aa}.
%\end{proof}
%Returning to the proof of \eqref{a6.0},% we first recall that 
%\[J_4=\exp\big(-\int_0^{\tau_{2\eps_1}} (U^{\widetilde{\lambda}_1 \eps_1^{-2}, \eps_1}(Y_s)-V^\infty(Y_s)) ds\big).\]
For $0<\eps_1<\delta/2$ we apply the strong Markov property of $(Y_s, s\geq 0)$ to get 
\begin{align}\label{am2.1}
&\Hat{E}_{x-x_1}^{(2-2\nu)}\Big(\exp\big(-\int_0^{\tau_{\delta}} (V^{\vec{\infty},\vec{x}}(Y_s+x_1)-V^\infty(Y_s)) ds\big)\nn\\
&\quad \quad \quad \times \exp\big(-\int_0^{\tau_{2\eps_1}} (U^{\widetilde{\lambda}_1 \eps_1^{-2}, \eps_1}(Y_s)-V^\infty(Y_s)) ds\big)\Big)\nn\\
%=&\Hat{E}_{x-x_1}^{(2-2\nu)}\Big(\exp\big(-\int_0^{\tau_{\delta}} (V^{\vec{\infty},\vec{x}}(Y_s+x_1)-V^\infty(Y_s)) ds\big)\exp\big(-\int_0^{\tau_{\delta}} (U^{\widetilde{\lambda}_1 \eps_1^{-2}, \eps_1}(Y_s)-V^\infty(Y_s)) ds\big)\nn\\
%&\quad \quad \quad \exp\big(-\int_{\tau_{\delta}}^{\tau_{2\eps_1}} (U^{\widetilde{\lambda}_1 \eps_1^{-2}, \eps_1}(Y_s)-V^\infty(Y_s)) ds\big) \Big)\nn\\
=&\Hat{E}_{x-x_1}^{(2-2\nu)}\Bigg(\exp\big(-\int_0^{\tau_{\delta}} (V^{\vec{\infty},\vec{x}}(Y_s+x_1)-V^\infty(Y_s)) ds\big)\nn\\
&\quad \quad \quad \times\exp\big(-\int_0^{\tau_{\delta}} (U^{\widetilde{\lambda}_1 \eps_1^{-2}, \eps_1}(Y_s)-V^\infty(Y_s)) ds\big)\nn\\
&\quad \quad \quad \times \Hat{E}_{Y_{\tau_\delta}}^{(2-2\nu)} \Big(\exp\big(-\int_0^{\tau_{2\eps_1}} (U^{\widetilde{\lambda}_1 \eps_1^{-2}, \eps_1}(Y_s)-V^\infty(Y_s))ds\big) \Big)\Bigg).
\end{align} 
For the last term on the right-hand side of \eqref{am2.1}, we can use the fact that  under $\Hat{P}_{x}^{(2-2\nu)}$, $\{|Y_{s\wedge \tau_{2\eps_1}}|, s\geq 0\}$ is a stopped $(2-2\nu)$-dimensional Bessel process and then use Corollary \ref{c1.4} to get 
\begin{align}\label{am2.2}
 &\Hat{E}_{Y_{\tau_\delta}}^{(2-2\nu)} \Big(\exp\big(-\int_0^{\tau_{2\eps_1}} (U^{\widetilde{\lambda}_1 \eps_1^{-2}, \eps_1}(Y_s)-V^\infty(Y_s))ds \Big)\Big)\nn\\
 &={E}_{|Y_{\tau_\delta}|}^{(2-2\nu)}\Big(\exp\big(-\int_0^{\tau_{2\eps_1}} (U^{\widetilde{\lambda}_1 \eps_1^{-2}, \eps_1}(\rho_s)-V^\infty(\rho_s))ds\big) \Big)\nn\\
  &={E}_{\delta}^{(2+2\nu)}\Big(\exp\big(-\int_0^{\tau_{2\eps_1}} (U^{\widetilde{\lambda}_1 \eps_1^{-2}, \eps_1}(\rho_s)-V^\infty(\rho_s))ds\big) \Big|\tau_{2\eps_1}<\infty \Big)\nn\\
  &\to C_{\ref{ae13.5}}(\widetilde{\lambda}_1) \text{ as } \eps_1 \downarrow 0,
\end{align} 
 where the last is by \eqref{ae13.5}.
Next since $\delta>0$ is fixed, by \eqref{ae1.4} it follows that with $\Hat{P}_{x-x_1}^{(2-2\nu)}$-probability one,
\begin{align}\label{am2.3}
\lim_{\eps_1 \downarrow 0} \exp\big(-\int_0^{\tau_{\delta}} (U^{\widetilde{\lambda}_1 \eps_1^{-2}, \eps_1}(Y_s)-V^\infty(Y_s)) ds\big) = 1.
\end{align} 
In view of \eqref{ae8.10} and \eqref{ae8.0}, with $\Hat{P}_{x-x_1}^{(2-2\nu)}$-probability one, for any $\eps_1>0$ we have
\begin{align}\label{am2.4}
&\exp\big(-\int_0^{\tau_{\delta}} (V^{\vec{\infty},\vec{x}}(Y_s+x_1)-V^\infty(Y_s)) ds\big)\nn\\
& \times \exp\big(-\int_0^{\tau_{\delta}} (U^{\widetilde{\lambda}_1 \eps_1^{-2}, \eps_1}(Y_s)-V^\infty(Y_s)) ds\big)\nn\\
& \times\Hat{E}_{Y_{\tau_\delta}}^{(2-2\nu)} \Big(\exp\big(-\int_0^{\tau_{2\eps_1}} (U^{\widetilde{\lambda}_1 \eps_1^{-2}, \eps_1}(Y_s)-V^\infty(Y_s))ds\big) \Big)\leq 1.
\end{align} 
Combine \eqref{am2.2}, \eqref{am2.3} and \eqref{am2.4} to see that the integrand in \eqref{am2.1} converges pointwise a.s. as $\eps_1 \downarrow 0$ and is bounded by $1$. Therefore we apply Bounded Convergence Theorem to conclude 
\begin{align*}
&\lim_{\eps_1 \downarrow 0} \Hat{E}_{x-x_1}^{(2-2\nu)}\Big(\exp\big(-\int_0^{\tau_{\delta}} (V^{\vec{\infty},\vec{x}}(Y_s+x_1)-V^\infty(Y_s)) ds\big) \times J_4\Big)\nn\\
%&=\lim_{\eps_1 \downarrow 0} \Hat{E}_{x-x_1}^{(2-2\nu)}\Big(\exp\big(-\int_0^{\tau_{\delta}} (V^{\vec{\infty},\vec{x}}(Y_s+x_1)-V^\infty(Y_s)) ds\big)\exp\big(-\int_0^{\tau_{\delta}} (U^{\widetilde{\lambda}_1 \eps_1^{-2}, \eps_1}(Y_s)-V^\infty(Y_s)) ds\big)\nn\\
%&\quad \quad \quad \times \Hat{E}_{Y_{\tau_\delta}}^{(2-2\nu)} \Big(\exp\big(-\int_0^{\tau_{2\eps_1}} (U^{\widetilde{\lambda}_1 \eps_1^{-2}, \eps_1}(Y_s)-V^\infty(Y_s))ds\big) \Big)\Big)\\
&= C_{\ref{ae13.5}}(\widetilde{\lambda}_1) \Hat{E}_{x-x_1}^{(2-2\nu)}\Big(\exp\big(-\int_0^{\tau_{\delta}} (V^{\vec{\infty},\vec{x}}(Y_s+x_1)-V^\infty(Y_s)) ds\big)\Big),
\end{align*}
and the proof of \eqref{a6.0} is complete.
\end{proof}

%\begin{proof}[Proof of Proposition \ref{p3.3}(i)] 
%By using Lemma \ref{8.9}, it follows in a similar way to the above proofs of Proposition \ref{p3.1}(i) and Proposition \ref{p3.1}(ii).
%\end{proof}

\section{Convergence of the second moments} \label{s9}

In this section we will give the proofs of Proposition \ref{p3.2} and Proposition \ref{p3.3}(ii).
\subsection{Preliminaries}
%Recall \eqref{ec1.0.1}  to get for all $\eps_1, \eps_2, \lambda_1, \lambda_2>0$,
%\begin{align}\label{ec1.0.1}
% &\frac{1}{\eps_i^{p-2}}U_i^{\vec{\lambda},\vec{x},\vec{\eps}}(x)\leq |x-x_i|^{-p}, \forall x \text{ so that } |x-x_i|>\eps_i, i=1,2.
%\end{align}
%Recall \eqref{e10.0} to get for all $\lambda_1, \lambda_2>0$,
% \begin{align}\label{e10.0}
% &\lambda_i^{1+\alpha}V_i^{\vec{\lambda},\vec{x}}(x)\leq c_{\ref{p1.1}} |x-x_i|^{-p}, \forall x\neq x_1, x_2, i=1,2.
%\end{align}
%Recall \eqref{ae6.1} and \eqref{ae6.2} to get for all $\lambda_1, \lambda_2>0$ and $\eps>0$,
% \begin{align}\label{ae6.1}
%&\lambda_1^{1+\alpha} W_1^{\vec{\lambda},\vec{x},\eps}(x)\leq c_{\ref{p1.1}}|x-x_1|^{-p}, \forall x \text{ so that } x\neq x_1 \text{ and } |x-x_2|>\eps,
%\end{align} 
%and
% \begin{align}\label{ae6.2}
%&\frac{1}{\eps^{p-2}} W_2^{\vec{\lambda},\vec{x},\eps}(x)\leq |x-x_2|^{-p}, \forall x \text{ so that } x\neq x_1 \text{ and } |x-x_2|>\eps.
%\end{align} 
%The following lemma is a slight modification of Lemma 9.3(b) in \cite{MP17} applied with \eqref{m10.0}, \eqref{mc1.0.1}, and \eqref{m8.7}, \eqref{m8.8}.%  \eqref{ae6.2}.
\begin{lemma}\label{l4.2}
For any $\lambda_1, \lambda_2, \eps_1, \eps_2, \eps>0$, the following holds for all $x$ so that $|x-x_1| \wedge |x-x_2| >(\eps_1 \vee \eps_2 \vee \eps)$:
%\begin{enumerate}[(i)]
 \begin{align*}
 \begin{cases}
 -U_{1,2}^{\vec{\lambda},\vec{x},\vec{\eps}}(x)\leq \min\{ 2\lambda_{2}^{-1} |x-x_1|^{-p} \eps_1^{p-2},& 2\lambda_{1}^{-1} |x-x_2|^{-p} \eps_2^{p-2}\},\\
 -V_{1,2}^{\vec{\lambda},\vec{x}}(x)\leq  \min\{2\lambda_{2}^{-1}c_{\ref{p1.1}}\lambda_1^{-(1+\alpha)} |x-x_1|^{-p},& 2\lambda_{1}^{-1}c_{\ref{p1.1}}\lambda_2^{-(1+\alpha)} |x-x_2|^{-p}\},\\
 -W_{1,2}^{\vec{\lambda},\vec{x},\eps}(x)\leq \min\{ 2\lambda_{2}^{-1}c_{\ref{p1.1}}\lambda_1^{-(1+\alpha)} |x-x_1|^{-p},& 2\lambda_{1}^{-1} |x-x_2|^{-p} \eps^{p-2}\}.
\end{cases}
\end{align*}
% (i)\begin{align*}
% -U_{1,2}^{\vec{\lambda},\vec{x},\vec{\eps}}(x)&\leq  \Big(\frac{2}{\lambda_{2}} |x-x_1|^{-p} \eps_1^{p-2} \wedge \frac{2}{\lambda_{1}} |x-x_2|^{-p} \eps_2^{p-2}\Big)
% \end{align*}
% %\item
% (ii)
% \begin{align*}
%-V_{1,2}^{\vec{\lambda},\vec{x},\vec{\eps}}(x)&\leq  \Big(\frac{2}{\lambda_2}c_{\ref{p1.1}}\lambda_1^{-(1+\alpha)} |x-x_1|^{-p}\wedge \frac{2}{\lambda_1}c_{\ref{p1.1}}\lambda_2^{-(1+\alpha)} |x-x_2|^{-p}\Big)
%\end{align*}
%%\item
%(iii)
%\begin{align*}
%-W_{1,2}^{\vec{\lambda},\vec{x},\eps}(x)&\leq \Big(\frac{2}{\lambda_2}c_{\ref{p1.1}}\lambda_1^{-(1+\alpha)} |x-x_1|^{-p}  \wedge \frac{2}{\lambda_{1}} |x-x_2|^{-p} \eps^{p-2}\Big)
%\end{align*}
%\end{enumerate}
\end{lemma}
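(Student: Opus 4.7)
The plan is to prove all three bounds by the same routine device, then take the minimum. The elementary inequality $te^{-\lambda t}\le 2/\lambda$ for $t,\lambda>0$ (sharper: $\le 1/(e\lambda)$, from maximizing at $t=1/\lambda$) allows us to ``peel off'' one of the two factors inside each expectation at the cost of a prefactor $2/\lambda_i$, dropping any excess exponential or indicator ($\le 1$). What remains is a first-derivative expression of the form already estimated in Section 3, and we finish by invoking Proposition~\ref{p1.1} or \eqref{ce9.1.0}.

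Concretely, for $-V_{1,2}^{\vec{\lambda},\vec{x}}(x)=\N_x\!\bigl(L^{x_1}L^{x_2} e^{-\lambda_1 L^{x_1}-\lambda_2 L^{x_2}}\bigr)$, apply the inequality to $L^{x_2}e^{-\lambda_2 L^{x_2}}$ and drop the indicator, giving
\[
-V_{1,2}^{\vec{\lambda},\vec{x}}(x)\le \tfrac{2}{\lambda_2}\N_x\!\bigl(L^{x_1}e^{-\lambda_1 L^{x_1}}\bigr)=\tfrac{2}{\lambda_2}V_1^{\lambda_1}(x-x_1)\le \tfrac{2c_{\ref{p1.1}}}{\lambda_2\lambda_1^{1+\alpha}}|x-x_1|^{-p},
\]
by translation invariance of $\N$ and Proposition~\ref{p1.1}; swapping the roles of indices $1,2$ gives the companion bound, and the min yields the $V_{1,2}$ estimate.

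For $-U_{1,2}^{\vec{\lambda},\vec{x},\vec{\eps}}(x)$, write $Y_i=X_{G_{\eps_i}^{x_i}}(1)/\eps_i^2$ and apply the same inequality to $Y_2 e^{-\lambda_2 Y_2}$. Drop the surviving factor $e^{-\lambda_2 Y_2/2}1(X_{G_{\eps_2/2}^{x_2}}=0)\le 1$ to obtain
\[
-U_{1,2}^{\vec{\lambda},\vec{x},\vec{\eps}}(x)\le \tfrac{2\eps_1^{p-2}}{\lambda_2}\N_x\!\Bigl(\tfrac{X_{G_{\eps_1}^{x_1}}(1)}{\eps_1^{p}}\exp\!\bigl(-\lambda_1 Y_1\bigr)1(X_{G_{\eps_1/2}^{x_1}}=0)\Bigr)\le \tfrac{2\eps_1^{p-2}}{\lambda_2}|x-x_1|^{-p},
\]
where the last inequality is \eqref{ce9.1.0} applied under $\N_x$ (equivalently, $\N_0$ after translating by $x-x_1$). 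Symmetry gives the other bound, and taking min finishes the $U_{1,2}$ claim.

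For $-W_{1,2}^{\vec{\lambda},\vec{x},\eps}(x)$ the two bounds correspond to the two choices of which factor to peel: applying the inequality to $L^{x_1}e^{-\lambda_1 L^{x_1}}$ and then using \eqref{ce9.1.0} yields $2\lambda_1^{-1}|x-x_2|^{-p}\eps^{p-2}$, while applying it to the $X_{G_\eps^{x_2}}$-factor and then using Proposition~\ref{p1.1} yields $2c_{\ref{p1.1}}\lambda_2^{-1}\lambda_1^{-(1+\alpha)}|x-x_1|^{-p}$. There is no real obstacle here; the only point requiring care is to retain the correct indicator $1(X_{G_{\eps_i/2}^{x_i}}=0)$ on the factor that is being kept, since that indicator is part of the hypothesis of \eqref{ce9.1.0}. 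All other steps are straightforward bookkeeping.
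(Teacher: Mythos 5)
Your argument is correct and delivers exactly the stated bounds (your pointwise inequality in fact gives the sharper constant $1/e$ in place of $2$), but it runs through a different mechanism than the paper for the key reduction step. The paper works entirely with the $\lambda$-dependence of the functions: it first observes (citing Lemma S.1.2 of \cite{HMP18_supp}) that $-U_{1,2}^{\vec{\lambda},\vec{x},\vec{\eps}}=-\partial_{\lambda_1}U_2^{\vec{\lambda},\vec{x},\vec{\eps}}$ is decreasing in $\lambda_1$, so its value at $\lambda_1$ is at most its average over $[\lambda_1/2,\lambda_1]$, which telescopes to $\frac{2}{\lambda_1}\bigl(U_2^{(\lambda_1/2,\lambda_2),\vec{x},\vec{\eps}}-U_2^{\vec{\lambda},\vec{x},\vec{\eps}}\bigr)\le\frac{2}{\lambda_1}U_2^{(\lambda_1/2,\lambda_2),\vec{x},\vec{\eps}}$, and then \eqref{mc1.0.1} (resp.\ \eqref{m10.0}, \eqref{m8.7}, \eqref{m8.8}) finishes. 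You instead bound the integrand pointwise via $te^{-\lambda t}\le 2/\lambda$, which collapses the two-point expectation directly to a one-point first-moment quantity and avoids the monotonicity input altogether; the estimates you then invoke (Proposition~\ref{p1.1} and \eqref{ce9.1.0}) are precisely the ones underlying \eqref{mc1.0.1} and \eqref{m10.0}, so the final constants agree. Your route is the more elementary and self-contained of the two, and your remark about retaining the indicator $1(X_{G_{\eps_i/2}^{x_i}}=0)$ on the surviving factor is the one genuine point of care: \eqref{ce9.1.0} holds for all $\kappa>0$ only because the indicator effectively enlarges the exponent to $\lambda_i+4U^{\infty,1}(2)\ge\lambda_d$, and discarding it would lose the uniform-in-$\eps_i$ bound when $\lambda_i$ is small.
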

\begin{proof}
Similar to the derivation of Lemma S.1.2 in \cite{HMP18_supp}, it is easy to conclude from the definition (see \eqref{e3.5}) that $-U_{1,2}^{\vec{\lambda},\vec{x},\vec{\eps}}(x)$ is strictly decreasing in $\vec{\lambda}\in (0,\infty)^2$. So we can use this monotonicity and $U_2^{\vec{\lambda},\vec{x},\vec{\eps}}\geq 0$ (see \eqref{e4.3.2}) to get
\begin{align*}
-U_{1,2}^{\vec{\lambda},\vec{x},\vec{\eps}}(x)&\leq \frac{2}{\lambda_1} \int_{\lambda_1/2}^{\lambda_1} -\frac{\partial}{\partial \lambda_1^{'}} U_2^{(\lambda_1^{'},\lambda_2),\vec{x},\vec{\eps}}(x) d\lambda_1^{'}\\
&\leq \frac{2}{\lambda_1}  U_2^{(\lambda_1/2, \lambda_2),\vec{x},\vec{\eps}}(x) \leq \frac{2}{\lambda_1}  |x-x_2|^{-p} \eps_2^{p-2},
\end{align*}
the last by \eqref{mc1.0.1}. The result for $-U_{1,2}^{\vec{\lambda},\vec{x},\vec{\eps}}$ follows by symmetry. The proofs for $-V_{1,2}^{\vec{\lambda},\vec{x}}$ and $-W_{1,2}^{\vec{\lambda},\vec{x},\eps}$ will follow in a similar way by using \eqref{m10.0}, \eqref{m8.7} and \eqref{m8.8}.
\end{proof}
%
%\noindent $\mathbf{Throughout\ the\ rest\ of\ this\ section\, we\ note\ that}$ when dealing with $U^{\vec{\lambda},\vec{x},\vec{\eps}}$ we will fix $\lambda_1,\lambda_2>0$ and let $\eps_1, \eps_2$ converge to $0$. For $V^{\vec{\lambda},\vec{x}}$ we will let $\lambda_1$, $\lambda_2$ converge to infinity and for $W^{\vec{\lambda},\vec{x},\eps}$ we will fix $\lambda_2>0$ and let $\lambda_1$ converge to infinity and $\eps$ converge to $0$.

Fix $x_{1}\neq x_{2}$ and $x\neq x_{1}, x_{2}$.
Let $P_x$ denote the law of $d$-dimensional Brownian motion $B$ starting from $x$. Recall $r_{\lambda_1}, r_{\lambda_2}$ and $T_{r_{\lambda}}$ as in Lemma \ref{l10.3}.
%Let $r_{\lambda_i}=\lambda_0 \lambda_i^{-\frac{1}{4-d}}, i=1,2,$ where $\lambda_0$ will be chosen to be some fixed large constant below. Let $\lambda_1, \lambda_2>0$ be large so that  \[0<4(r_{\lambda_1}\vee r_{\lambda_2})<\min\{|x_1-x|, |x_2-x|,  |x_1-x_2|\}.\] Set $T_{r_{\lambda_i}}^i=\inf\{t\geq 0: |B_t-x_i|\leq r_{\lambda_i} \}$ and $T_{r_{\lambda}}=T_{r_{\lambda_1}}^1 \wedge T_{r_{\lambda_2}}^2$. 
The following result is from Lemma 9.5 of \cite{MP17}.
\begin{lemma}\label{l4.5}
For all $\lambda_1,\lambda_2>0$ large,
\begin{align*}
 -V_{1,2}^{\vec{\lambda},\vec{x}}(x)
 =&E_x\Big( \int_0^{T_{r_\lambda}} \prod_{i=1}^2 V_i^{\vec{\lambda},\vec{x}}(B_t)\exp\Big(-\int_0^{t} V^{\vec{\lambda},\vec{x}}(B_s) ds\Big)dt\Big)\\
 &+E_x\Big( \exp\Big(-\int_0^{ T_{r_\lambda}} V^{\vec{\lambda},\vec{x}}(B_s) ds\Big)1(T_{r_\lambda}<\infty) (-V_{1,2}^{\vec{\lambda},\vec{x}}(B_{T_{r_\lambda}}))\Big).
 \end{align*}
\end{lemma}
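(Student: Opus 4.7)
The plan is to recognise Lemma \ref{l4.5} as the Feynman--Kac representation of the mixed second derivative $-V_{1,2}^{\vec{\lambda},\vec{x}}$, obtained by twice differentiating in $\vec{\lambda}$ the nonlinear PDE satisfied by $V^{\vec\lambda,\vec x}$, and then stopping Brownian motion at the first time it enters either small ball.

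The first step is to establish the relevant PDEs on $D:=(\R^d\setminus\{x_1,x_2\})$. By an argument analogous to the one sketched for $U^{\vec\lambda,\vec x,\vec\eps}$ in Lemma \ref{al4.0}, using Proposition \ref{pv0.2}(i) to relate $V^{\vec\lambda,\vec x}$ to exit measures from small balls around a third point and standard elliptic regularity, one shows that $V^{\vec\lambda,\vec x}\in C^2(D)$ and
\begin{equation*}
\tfrac12\Delta V^{\vec\lambda,\vec x}=\tfrac12(V^{\vec\lambda,\vec x})^2\quad\text{on }D.
\end{equation*}
Monotone convergence (cf.\ \eqref{e12.2}, \eqref{e4.4.1}, \eqref{e4.4.2}) lets us differentiate both sides in $\lambda_i$ and then in $\lambda_{3-i}$ inside the integral, and a Harnack/compactness argument allows us to pass derivatives through $\Delta$, yielding
\begin{equation*}
\tfrac12\Delta V_i^{\vec\lambda,\vec x}=V^{\vec\lambda,\vec x}V_i^{\vec\lambda,\vec x},\qquad \tfrac12\Delta(-V_{1,2}^{\vec\lambda,\vec x})=V^{\vec\lambda,\vec x}(-V_{1,2}^{\vec\lambda,\vec x})-V_1^{\vec\lambda,\vec x}V_2^{\vec\lambda,\vec x}\quad\text{on }D.
\end{equation*}

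The second step is a standard Feynman--Kac computation. Set $D_\lambda:=G_{r_{\lambda_1}}^{x_1}\cap G_{r_{\lambda_2}}^{x_2}$, so that $T_{r_\lambda}$ is the exit time of $B$ from $D_\lambda$. Define, for $t\ge 0$,
\begin{align*}
M_t:=&\;(-V_{1,2}^{\vec\lambda,\vec x})(B_{t\wedge T_{r_\lambda}})\exp\Bigl(-\int_0^{t\wedge T_{r_\lambda}}V^{\vec\lambda,\vec x}(B_s)\,ds\Bigr)\\
&\;+\int_0^{t\wedge T_{r_\lambda}}V_1^{\vec\lambda,\vec x}(B_s)V_2^{\vec\lambda,\vec x}(B_s)\exp\Bigl(-\int_0^{s}V^{\vec\lambda,\vec x}(B_r)\,dr\Bigr)ds.
\end{align*}
Applying Itô's formula and using the PDE for $-V_{1,2}^{\vec\lambda,\vec x}$ inside $D_\lambda$ kills the drift, so $M_t$ is a local martingale; the uniform bounds of Proposition \ref{p1.1}, \eqref{m10.0} and Lemma \ref{l4.2} (applied to $B_{t\wedge T_{r_\lambda}}$, which stays outside the two small balls, so all quantities are uniformly bounded) show that $M_t$ is in fact bounded up to $T_{r_\lambda}$. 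Taking expectations gives $-V_{1,2}^{\vec\lambda,\vec x}(x)=E_x(M_t)$ for every $t>0$.

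The third and final step is to send $t\to\infty$ and identify the two terms of the lemma. The integral term in $M_t$ is monotone in $t$ and converges, by Monotone Convergence, to the first term on the right-hand side of the claimed identity. For the boundary term, on $\{T_{r_\lambda}<\infty\}$ it equals the second term of the claim for all $t>T_{r_\lambda}$, so it contributes exactly $E_x[(-V_{1,2}^{\vec\lambda,\vec x})(B_{T_{r_\lambda}})\exp(-\int_0^{T_{r_\lambda}}V^{\vec\lambda,\vec x}(B_s)\,ds)\,1(T_{r_\lambda}<\infty)]$. On $\{T_{r_\lambda}=\infty\}$ (only possible when $d=3$, since $d=2$ BM is neighbourhood recurrent), the bound $|-V_{1,2}^{\vec\lambda,\vec x}(y)|\le c(|y-x_1|^{-p}+|y-x_2|^{-p})$ from Lemma \ref{l4.2} together with transience ($|B_t|\to\infty$) forces $(-V_{1,2}^{\vec\lambda,\vec x})(B_t)\to 0$, and hence that contribution vanishes in the limit. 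Combining yields the stated identity.

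The main obstacle is Step 1: verifying $C^2$ regularity of $V^{\vec\lambda,\vec x}$ on $D$ and justifying interchanging $\partial/\partial\lambda_i$ with $\Delta$. This is the analogue of Lemma \ref{al4.0} but with two singularities rather than the exit-measure formulation; once the regularity is in hand, the Itô/Feynman--Kac steps are routine and the dominated-convergence passage is guaranteed by the moment bounds already collected in Section \ref{s4}.
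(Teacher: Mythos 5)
Your proposal is correct and follows essentially the same route as the source: the paper simply cites Lemma 9.5 of \cite{MP17}, whose proof is exactly this Feynman--Kac argument (differentiate the semilinear equation $\Delta V^{\vec\lambda,\vec x}=(V^{\vec\lambda,\vec x})^2$ twice in $\vec\lambda$, apply It\^o's formula to the stopped process, and pass to the limit $t\to\infty$ using the bounds of \eqref{m10.0} and Lemma \ref{l4.2} together with transience/neighbourhood recurrence), and it is the same scheme the paper reuses for Lemmas \ref{l4.4} and \ref{l4.6}.
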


%   Set $T_{\eps_i}^i=\inf\{t\geq 0: |B_t-x_i|\leq 2\eps_i \},  i=1, 2$ and $T_{\eps}=T_{\eps_1}^1 \wedge T_{\eps_2}^2$. Let $\eps_1, \eps_2>0$ be small so that
% \begin{align}\label{ae8.11}
%0<4(\eps_1\vee \eps_2)<\min\{|x_1-x|, |x_2-x|, |x_1-x_2|\}.
%\end{align}
\begin{lemma}\label{l4.4}
For all $\eps_1, \eps_2>0$ small, we have
\begin{align*}
 -U_{1,2}^{\vec{\lambda},\vec{x},\vec{\eps}}(x)
 =&E_x\Big( \int_0^{T_\eps} \prod_{i=1}^2 U_i^{\vec{\lambda},\vec{x},\vec{\eps}}(B_t)\exp\Big(-\int_0^{t} U^{\vec{\lambda},\vec{x},\vec{\eps}}(B_s) ds\Big)dt\Big)\\
 &+E_x\Big( \exp\Big(-\int_0^{ T_{\eps}} U^{\vec{\lambda},\vec{x},\vec{\eps}}(B_s) ds\Big)1(T_{\eps}<\infty) (-U_{1,2}^{\vec{\lambda},\vec{x},\vec{\eps}}(B_{T_{\eps}}))\Big),
 \end{align*}
 where $T_{\eps}$ is as in Lemma \ref{l4.3}.
\end{lemma}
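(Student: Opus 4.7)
The strategy is to derive a linear semilinear PDE satisfied by the cross-derivative $U_{1,2}^{\vec{\lambda},\vec{x},\vec{\eps}}$ on $G = G_{\eps_1}^{x_1}\cap G_{\eps_2}^{x_2}$ and then convert it into the desired integral representation by a Feynman--Kac / It\^o argument, in exact analogy with the proof of Lemma \ref{l4.5} (Lemma 9.5 of \cite{MP17}) for the $V$-side. By Lemma \ref{al4.0}, $U^{\vec{\lambda},\vec{x},\vec{\eps}}$ is $C^2$ on $G$ and satisfies $\Delta U^{\vec{\lambda},\vec{x},\vec{\eps}} = (U^{\vec{\lambda},\vec{x},\vec{\eps}})^2$. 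Differentiating this identity first in $\lambda_i$ and then in $\lambda_{3-i}$ (justified locally in $\vec\lambda$ by the probabilistic representations \eqref{e4.3.2} and \eqref{e3.5} together with standard interior elliptic regularity for $U_i^{\vec{\lambda},\vec{x},\vec{\eps}}$) one expects
\[\tfrac{1}{2}\Delta U_{1,2}^{\vec{\lambda},\vec{x},\vec{\eps}} = U_1^{\vec{\lambda},\vec{x},\vec{\eps}}\, U_2^{\vec{\lambda},\vec{x},\vec{\eps}} + U^{\vec{\lambda},\vec{x},\vec{\eps}}\, U_{1,2}^{\vec{\lambda},\vec{x},\vec{\eps}} \quad \text{on } G.\]

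Set $v = -U_{1,2}^{\vec{\lambda},\vec{x},\vec{\eps}} \geq 0$, $c = U^{\vec{\lambda},\vec{x},\vec{\eps}} \geq 0$ and $f = U_1^{\vec{\lambda},\vec{x},\vec{\eps}}\, U_2^{\vec{\lambda},\vec{x},\vec{\eps}} \geq 0$, so that the PDE above reads $\tfrac{1}{2}\Delta v = cv - f$ on $G$, a standard linear Feynman--Kac equation with killing potential $c$ and source $f$. Applying It\^o's formula to $e^{-\int_0^{t \wedge T_\eps} c(B_s)\,ds}\, v(B_{t \wedge T_\eps})$, using that $B$ stays in $G$ on $[0, T_\eps)$ and that the local martingale part has expectation zero up to the bounded stopping time $t\wedge T_\eps$, gives
\begin{align*}
v(x) &= E_x\Big[\int_0^{t \wedge T_\eps} f(B_s)\, e^{-\int_0^s c(B_r)\,dr}\, ds\Big] \\
&\quad + E_x\Big[e^{-\int_0^{t \wedge T_\eps} c(B_s)\,ds}\, v(B_{t \wedge T_\eps})\Big].
\end{align*}
Letting $t \to \infty$, the first term converges by monotone convergence to the first summand in the claimed formula. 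For the second term, on $\{T_\eps < \infty\}$ it converges to $\exp(-\int_0^{T_\eps} c(B_s)\,ds)\, v(B_{T_\eps})$, while on $\{T_\eps = \infty\}$ one has $|B_t| \to \infty$ so the factors $|B_t - x_i|^{-p}$ from the bound on $-U_{1,2}^{\vec{\lambda},\vec{x},\vec{\eps}}$ in Lemma \ref{l4.2} drive $v(B_t) \to 0$; the uniform bounds from Lemma \ref{l4.2} and \eqref{mc1.0.1} provide the dominating function needed to apply dominated convergence.

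The main obstacle is the rigorous justification that one may differentiate the PDE $\Delta U = U^2$ twice in $(\lambda_1,\lambda_2)$ to produce the displayed PDE for $U_{1,2}^{\vec{\lambda},\vec{x},\vec{\eps}}$ with the appropriate regularity in $x$. This can either be handled by an induction on $\lambda$-derivatives applied to the mild (integral) form of the equation for $U^{\vec{\lambda},\vec{x},\vec{\eps}}$ derived from \eqref{e4.3.1} and Proposition \ref{pv0.2}, combined with standard Schauder / interior elliptic estimates to upgrade to $C^2$ regularity in $x$ on $G$, or alternatively by repeating essentially verbatim the argument used to establish Lemma 9.5 of \cite{MP17}; once this regularity and the PDE for $U_{1,2}^{\vec{\lambda},\vec{x},\vec{\eps}}$ are secured, the Feynman--Kac computation above is routine and completes the proof.
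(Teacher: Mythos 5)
Your proof is correct and takes essentially the same route as the paper: the paper's proof of Lemma \ref{l4.4} simply says it follows from Lemma \ref{l4.3} in the same manner as Lemma \ref{l4.5} (Lemma 9.5 of \cite{MP17}), which is exactly the argument you give — differentiate $\Delta U^{\vec{\lambda},\vec{x},\vec{\eps}}=(U^{\vec{\lambda},\vec{x},\vec{\eps}})^2$ twice in $(\lambda_1,\lambda_2)$ to get $\tfrac12\Delta(-U_{1,2}^{\vec{\lambda},\vec{x},\vec{\eps}})=U^{\vec{\lambda},\vec{x},\vec{\eps}}(-U_{1,2}^{\vec{\lambda},\vec{x},\vec{\eps}})-U_1^{\vec{\lambda},\vec{x},\vec{\eps}}U_2^{\vec{\lambda},\vec{x},\vec{\eps}}$ and then apply It\^o/Feynman--Kac up to $t\wedge T_\eps$, letting $t\to\infty$ with the bounds of Lemma \ref{l4.2} and \eqref{mc1.0.1}. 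The regularity issue you flag (justifying the $\lambda$-differentiation of the PDE) is handled in \cite{MP17} exactly as you suggest, so there is no gap.
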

\begin{proof}
In view of Lemma \ref{l4.3}, it follows in a similar manner to the proof of  Lemma \ref{l4.5}.
\end{proof}
%Let $r_{\lambda_1}=\lambda_0 \lambda_1^{-\frac{1}{4-d}}$ where $\lambda_0$ will be chosen to be some fixed large constant below. Let $\lambda_1>0$ large and $\eps>0$ small such that  $0<4(r_{\lambda_1}\vee \eps)<\min\{|x_1-x|, |x_2-x|,  |x_1-x_2|\}$. Set $T_{r_{\lambda_1}}^1=\inf\{t\geq 0: |B_t-x_1|\leq r_{\lambda_1} \}$,  $T_{2\eps}^2=\inf\{t\geq 0: |B_t-x_2|\leq 2\eps \}$ and $T_{\lambda_1, \eps}=T_{r_{\lambda_1}}^1 \wedge T_{2\eps}^2$. 
\begin{lemma}\label{l4.6}
For all $\lambda_1>0$ large and $\eps>0$ small,
\begin{align*}
 -&W_{1,2}^{\vec{\lambda},\vec{x},{\eps}}(x)
 =E_x\Big( \int_0^{T_{\lambda_1, \eps}} \prod_{i=1}^2 W_i^{\vec{\lambda},\vec{x},\vec{\eps}}(B_t)\exp\Big(-\int_0^{t} W^{\vec{\lambda},\vec{x},\vec{\eps}}(B_s) ds\Big)dt\Big)\\
 &+E_x\Big( \exp\Big(-\int_0^{ T_{\lambda_1, \eps}} W^{\vec{\lambda},\vec{x},\vec{\eps}}(B_s) ds\Big)1(T_{\lambda_1, \eps}<\infty) (-W_{1,2}^{\vec{\lambda},\vec{x},\vec{\eps}}(B_{T_{\lambda_1, \eps}}))\Big),
 \end{align*}
  where $T_{\lambda_1,\eps}$ is as in Lemma \ref{8.9}.
\end{lemma}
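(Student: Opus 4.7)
The plan is to follow the same pattern as the proofs of Lemmas \ref{l4.5} and \ref{l4.4}: derive a semilinear PDE for $-W_{1,2}^{\vec{\lambda},\vec{x},\eps}$ and then convert it into the desired Feynman--Kac representation via It\^o's formula, stopping at $T_{\lambda_1,\eps}$. By Lemma \ref{8.5}, the function $W^{\vec{\lambda},\vec{x},\eps}$ is $C^2$ on the open set $G=\{x\neq x_1\}\cap G_{\eps}^{x_2}$ and satisfies $\Delta W^{\vec{\lambda},\vec{x},\eps}=(W^{\vec{\lambda},\vec{x},\eps})^2$ there. Differentiating this identity in $\lambda_1$ gives $\tfrac12 \Delta W_1^{\vec{\lambda},\vec{x},\eps}=W^{\vec{\lambda},\vec{x},\eps}W_1^{\vec{\lambda},\vec{x},\eps}$, and a further differentiation in $\lambda_2$ yields
\begin{equation*}
\tfrac12\Delta(-W_{1,2}^{\vec{\lambda},\vec{x},\eps})-W^{\vec{\lambda},\vec{x},\eps}\,(-W_{1,2}^{\vec{\lambda},\vec{x},\eps})=-W_1^{\vec{\lambda},\vec{x},\eps}W_2^{\vec{\lambda},\vec{x},\eps}\text{ on }G.
\end{equation*}
Justifying these interchanges of derivatives (and the required $C^2$ regularity in $x$ of $W_i^{\vec{\lambda},\vec{x},\eps}$ and $W_{1,2}^{\vec{\lambda},\vec{x},\eps}$) goes through exactly as in the analogous step for $V_{1,2}^{\vec{\lambda},\vec{x}}$ in Lemma~9.5 of \cite{MP17} and for $U_{1,2}^{\vec{\lambda},\vec{x},\vec{\eps}}$ in Lemma \ref{l4.4}, by the probabilistic representations \eqref{m8.00} and standard regularity of the heat semigroup; I would simply cite these arguments.

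Next, under $P_x$ with $0<4(r_{\lambda_1}\vee\eps)<\min\{|x-x_1|,|x-x_2|,|x_1-x_2|\}$, I would apply It\^o's formula to the process
\begin{equation*}
M_t:=(-W_{1,2}^{\vec{\lambda},\vec{x},\eps})(B_t)\exp\Bigl(-\int_0^t W^{\vec{\lambda},\vec{x},\eps}(B_s)\,ds\Bigr), \quad 0\le t\le T_{\lambda_1,\eps},
\end{equation*}
using the PDE above to identify the finite-variation part and cancel the $W(-W_{1,2})$ term. This produces a local martingale plus a drift equal to $-W_1^{\vec{\lambda},\vec{x},\eps}(B_t)W_2^{\vec{\lambda},\vec{x},\eps}(B_t)\exp(-\int_0^t W^{\vec{\lambda},\vec{x},\eps}(B_s)\,ds)$. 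Taking expectations on $[0,t\wedge T_{\lambda_1,\eps}]$ gives, after rearrangement,
\begin{align*}
-W_{1,2}^{\vec{\lambda},\vec{x},\eps}(x)
&= E_x\!\!\left[\int_0^{t\wedge T_{\lambda_1,\eps}}\!\!\prod_{i=1}^2 W_i^{\vec{\lambda},\vec{x},\eps}(B_s)\exp\!\Bigl(-\!\!\int_0^s\!\! W^{\vec{\lambda},\vec{x},\eps}(B_r)\,dr\Bigr)ds\right]\\
&\quad+E_x\!\left[M_{t\wedge T_{\lambda_1,\eps}}\right].
\end{align*}
The local-martingale character and the boundedness of $W^{\vec{\lambda},\vec{x},\eps}$ away from $x_1$ and $B_{\eps}(x_2)$ (see \eqref{8.1}) justify taking expectations; standard localisation arguments are available if needed.

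Finally, I would let $t\to\infty$. The first term passes to
$E_x\bigl[\int_0^{T_{\lambda_1,\eps}}W_1 W_2\,e^{-\int_0^s W}\,ds\bigr]$ by monotone convergence, since the integrand is nonnegative. For the second term, on $\{T_{\lambda_1,\eps}<\infty\}$ we simply have $M_{t\wedge T_{\lambda_1,\eps}}\to M_{T_{\lambda_1,\eps}}$, while on $\{T_{\lambda_1,\eps}=\infty\}$ we need $M_t\to 0$. Here $|B_t|\to\infty$ a.s.\ for $d=2,3$ transient BM (or at worst $\int_0^t W\to\infty$ by \eqref{ae7.4} when $d=2$), and by Lemma \ref{l4.2} we have $|{-}W_{1,2}^{\vec{\lambda},\vec{x},\eps}(y)|\le 2\lambda_2^{-1}c_{\ref{p1.1}}\lambda_1^{-(1+\alpha)}|y-x_1|^{-p}\to 0$ as $|y|\to\infty$; combined with $\exp(-\int_0^t W\,ds)\le 1$ this gives $M_t\to 0$ and a uniform bound. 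Dominated convergence then delivers the claimed identity. The only real point of care is the uniform domination needed to pass the limit inside $E_x$, but Lemma \ref{l4.2} provides a global bound on $-W_{1,2}^{\vec{\lambda},\vec{x},\eps}$ on the set $\{|y-x_1|>r_{\lambda_1}\}\cap\{|y-x_2|>\eps\}$ that suffices for this; this is the step I would check most carefully, but it is entirely analogous to the corresponding step in Lemma \ref{l4.5}.
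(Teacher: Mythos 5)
Your proof is correct and follows essentially the same route as the paper, which simply invokes Lemma \ref{8.9} and the argument of Lemma \ref{l4.5} (i.e.\ Lemma 9.5 of \cite{MP17}): twice-differentiated semilinear PDE, It\^o/Feynman--Kac up to $T_{\lambda_1,\eps}$, then $t\to\infty$ using the uniform bound from Lemma \ref{l4.2}. (Minor remark: in $d=2$ the event $\{T_{\lambda_1,\eps}=\infty\}$ is already null by neighbourhood recurrence, so your fallback via \eqref{ae7.4} is not needed.)
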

\begin{proof}
In view of Lemma \ref{8.9}, it follows in a similar manner to the proof of Lemma \ref{l4.5}.
\end{proof}

\begin{lemma}\label{l4.7}
For any $x_1\neq x_2$, if $|x-x_1| \wedge |x-x_2|>\eps_0$ for some $\eps_0>0$, then there is some constant $C_{\ref{l4.7}}(\eps_0)>0$ so that for all  $\eps_1, \eps_2>0$ small,
\begin{align}
0\leq  \frac{1}{\eps_1^{p-2}} \frac{1}{\eps_2^{p-2}}(-U_{1,2}^{\vec{\lambda},\vec{x},\vec{\eps}}(x))) \leq C_{\ref{l4.7}}(\eps_0)(1+|x_1-x_2|^{2-p}),
\end{align}
and for all $\lambda_1\geq 1$ large and $\eps>0$ small,
\begin{align}
0\leq \frac{ \lambda_1^{1+\alpha}}{\eps^{p-2}}(-W_{1,2}^{\vec{\lambda},\vec{x},\eps}(x))) \leq C_{\ref{l4.7}}(\eps_0)(1+|x_1-x_2|^{2-p}).
\end{align}
\end{lemma}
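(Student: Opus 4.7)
The plan is to mirror the proof of Proposition 6.1 in \cite{MP17}, which established the analogous bound \eqref{eu12b2} for $V_{1,2}^{\vec{\lambda},\vec{x}}$, by combining the Feynman-Kac type decompositions from Lemmas \ref{l4.4} and \ref{l4.6} with the sup-norm bounds from \eqref{mc1.0.1}, \eqref{m8.7}, \eqref{m8.8} and the Brownian lower bounds \eqref{e4.11}, \eqref{ae7.4}. For the $U_{1,2}$ bound, decompose $-U_{1,2}^{\vec{\lambda},\vec{x},\vec{\eps}}(x)=I_1+I_2$ via Lemma~\ref{l4.4}. For the volume integral $I_1$, apply \eqref{mc1.0.1} to each factor $U_i^{\vec{\lambda},\vec{x},\vec{\eps}}(B_t)\le \eps_i^{p-2}|B_t-x_i|^{-p}$ and use \eqref{e4.11} to replace $U^{\vec{\lambda},\vec{x},\vec{\eps}}(B_s)$ in the exponential by $V^\infty(B_s-x_1)$; after dividing by $\eps_1^{p-2}\eps_2^{p-2}$ this reduces the problem to bounding
\[
E_x\Big(\int_0^\infty |B_t-x_1|^{-p}|B_t-x_2|^{-p}\exp\Big(-\int_0^t V^\infty(B_s-x_1)\,ds\Big)\,dt\Big)
\]
by $C(\eps_0)(1+|x_1-x_2|^{2-p})$, which is precisely the bound carried out in the proof of Proposition 6.1 of \cite{MP17} via a Girsanov change of measure to a $(2+2\nu)$-dimensional Bessel process (a close cousin of our Proposition~\ref{p8.1}) followed by an explicit scaling estimate.

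For the boundary term $I_2$, on the event $\{T_\eps=T^1_{2\eps_1}<\infty\}$ we have $|B_{T_\eps}-x_1|=2\eps_1$, and for $\eps_1$ small enough (say $\eps_1\le |x_1-x_2|/4$) also $|B_{T_\eps}-x_2|\ge|x_1-x_2|/2$. Applying Lemma~\ref{l4.2} gives
\[
-U_{1,2}^{\vec{\lambda},\vec{x},\vec{\eps}}(B_{T_\eps})\le 2\lambda_1^{-1}\eps_2^{p-2}\,2^p|x_1-x_2|^{-p},
\]
and then by \eqref{e4.11} and Proposition~\ref{p8.1} applied with $g=V^\infty$ (or equivalently Proposition~\ref{p20.1}), the expectation of the exponential factor evaluates to $(2\eps_1/|x-x_1|)^p$. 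Dividing by $\eps_1^{p-2}\eps_2^{p-2}$ yields a contribution of order $\eps_1^2\lambda_1^{-1}|x-x_1|^{-p}|x_1-x_2|^{-p}$, which vanishes (in particular is bounded by $C(\eps_0)|x_1-x_2|^{-p}\le C(\eps_0)(1+|x_1-x_2|^{2-p})$). The symmetric case with $T_\eps=T^2_{2\eps_2}$ is identical. Combining the two estimates gives the required bound on $\eps_1^{-(p-2)}\eps_2^{-(p-2)}(-U_{1,2}^{\vec{\lambda},\vec{x},\vec{\eps}}(x))$.

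The bound for $-W_{1,2}^{\vec{\lambda},\vec{x},\eps}$ is completely analogous, using Lemma~\ref{l4.6} in place of Lemma~\ref{l4.4}. In the main integral one now uses \eqref{m8.7} to obtain $W_1^{\vec{\lambda},\vec{x},\eps}(B_t)\le c_{\ref{p1.1}}\lambda_1^{-(1+\alpha)}|B_t-x_1|^{-p}$ and \eqref{m8.8} to obtain $W_2^{\vec{\lambda},\vec{x},\eps}(B_t)\le \eps^{p-2}|B_t-x_2|^{-p}$, together with the lower bound \eqref{ae7.4} for $W^{\vec{\lambda},\vec{x},\eps}$ in the exponential. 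After multiplying through by $\lambda_1^{1+\alpha}/\eps^{p-2}$ one reduces to the very same Brownian integral as in the $U_{1,2}$ case, to which the $C(\eps_0)(1+|x_1-x_2|^{2-p})$ estimate from \cite{MP17} applies. The boundary term at $T^1_{r_{\lambda_1}}$ is controlled by the second inequality in Lemma~\ref{l4.2} combined with Proposition~\ref{p8.1}, yielding a term of order $r_{\lambda_1}^p \lambda_1^{-1}\eps^{p-2}|x_1-x_2|^{-p}$ times the $\lambda_1^{1+\alpha}/\eps^{p-2}$ prefactor, which by the definition $r_{\lambda_1}=\lambda_0\lambda_1^{-1/(4-d)}$ and $p-2=(4-d)\alpha$ is again vanishing (hence bounded uniformly by the desired quantity). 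The boundary term at $T^2_{2\eps}$ is handled the same way as the corresponding term in the $U_{1,2}$ case.

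The main (and only nontrivial) obstacle is the Brownian integral estimate
\[
E_x\Big(\int_0^\infty |B_t-x_1|^{-p}|B_t-x_2|^{-p}\exp\Big(-\int_0^t V^\infty(B_s-x_1)\,ds\Big)\,dt\Big)\le C(\eps_0)(1+|x_1-x_2|^{2-p});
\]
this is not being reproved here but is imported from the proof of Proposition 6.1 of \cite{MP17}, where it is derived through a Girsanov shift of the ambient Brownian motion near $x_1$ into a $(2+2\nu)$-dimensional Bessel-type process, a split of the time integral according to whether $B_t$ is near $x_1$ or near $x_2$, and explicit Bessel integral calculations yielding the $|x_1-x_2|^{2-p}$ divergence as $|x_1-x_2|\downarrow 0$.
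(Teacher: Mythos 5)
Your proposal is correct and follows essentially the same route as the paper, whose own proof simply defers to Proposition 6.1 of \cite{MP17} and Proposition 5.1 of \cite{HMP18}: the decomposition via Lemmas \ref{l4.4} and \ref{l4.6}, the pointwise bounds \eqref{mc1.0.1}, \eqref{m8.7}, \eqref{m8.8}, the lower bounds \eqref{e4.11}, \eqref{ae7.4}, Lemma \ref{l4.2} for the boundary terms, and the imported Brownian--Bessel integral estimate are exactly the ingredients the paper uses in its Section \ref{s9} arguments. One small repair: in the boundary term the parenthetical inequality $|x_1-x_2|^{-p}\le C(1+|x_1-x_2|^{2-p})$ is false as $|x_1-x_2|\downarrow 0$, so instead keep the factor $\eps_1^{2}$ you already derived and use $\eps_1\le |x_1-x_2|/4$ (which is needed for Lemma \ref{l4.4} to apply in any case) to conclude $\eps_1^{2}|x_1-x_2|^{-p}\le |x_1-x_2|^{2-p}/16$.
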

\begin{proof}
In view of Lemma \ref{l4.4} and Lemma \ref{l4.6}, it follows in a similar manner to the proof of Proposition 6.1 of \cite{MP17} and Proposition 5.1 of \cite{HMP18}.
\end{proof}

\subsection{Proof of Proposition \ref{p3.2} and \ref{p3.3}(ii)}
Given the similarities of the proofs of Propositions \ref{p3.2}(i),  \ref{p3.2}(ii) and  \ref{p3.3}(ii), we will only give the proof of \ref{p3.2}(ii) here and  other proofs can be found in Appendix.
\begin{proof}[Proof of Proposition \ref{p3.2}(ii)]
 For any  $x_1\neq x_2$, we fix any $x\neq x_1, x_2$. In order to find the limit of $\eps_1^{-(p-2)} \eps_2^{-(p-2)}(-U_{1,2}^{\vec{\lambda},\vec{x},\vec{\eps}}(x))$ as $\eps_1, \eps_2\downarrow 0,$ by Lemma \ref{l4.4}, it suffices to calculate the limits of following as $\eps_1, \eps_2\downarrow 0.$
\begin{align}\label{m5.1}
%&\frac{1}{\eps_1^{p-2}} \frac{1}{\eps_2^{p-2}}(-U_{1,2}^{\vec{\lambda},\vec{x},\vec{\eps}}(x))=
 K_1+K_2&:=\frac{1}{\eps_1^{p-2}} \frac{1}{\eps_2^{p-2}} E_x\Big( \int_0^{T_\eps} \prod_{i=1}^2 U_i^{\vec{\lambda},\vec{x},\vec{\eps}}(B_t)\exp\Big(-\int_0^{t} U^{\vec{\lambda},\vec{x},\vec{\eps}}(B_s) ds\Big)dt\Big)\nn\\
 & +\frac{1}{\eps_1^{p-2}} \frac{1}{\eps_2^{p-2}}E_x\Big( \exp\Big(-\int_0^{ T_{\eps}} U^{\vec{\lambda},\vec{x},\vec{\eps}}(B_s) ds\Big)1_{(T_{\eps}<\infty)} (-U_{1,2}^{\vec{\lambda},\vec{x},\vec{\eps}}(B_{T_{\eps}}))\Big).
%  \to& C_{\ref{p3.1}}(\lambda_1)C_{\ref{p3.1}}(\lambda_2)E_x\Big( \int_0^{\infty} \prod_{i=1}^2 U_i^{\vec{\infty},\vec{x}}(B_t)\exp\Big(-\int_0^{t} V^{\vec{\infty},\vec{x}}(B_s) ds\Big)dt\Big)\nn
\end{align}
 Recall $T_\eps=T_{2\eps_i}^i \wedge T_{2\eps_{3-i}}^{3-i}$ where $T_{2\eps_i}^i=\inf\{t\geq 0: |B_t|\leq 2\eps_i\}, i=1,2$. Let $\eps_1, \eps_2>0$ be small so that $0<4(\eps_1\vee \eps_2)<|x_1-x_2|$. 
 
 We first consider $K_2$. On $\{T_\eps<\infty\}$, by considering $T_\eps=T_{2\eps_i}^i<T_{2\eps_{3-i}}^{3-i}$ we may set $B_{T_\eps}=B_{T_{2\eps_i}^i}$ so that $|B_{T_{2\eps_i}^i}-x_i|=2\eps_i$ and \mbox{$|x_{3-i}-B_{T_{2\eps_i}^i}|\geq \Delta/2$} where $\Delta=|x_1-x_2|$.  Lemma \ref{l4.2} and the above imply
\[
-U_{1,2}^{\vec{\lambda},\vec{x},\vec{\eps}}(B_{T_\eps})=-U_{1,2}^{\vec{\lambda},\vec{x},\vec{\eps}}(B_{T_{2\eps_i}^i})\leq \frac{2}{\lambda_i} \Delta^{-p} 2^p \varepsilon_{3-i}^{p-2} \leq c\Delta^{-p} \varepsilon_{3-i}^{p-2}.
\]
 This shows that 
 \begin{align}\label{e4.5}
 K_2&\leq \frac{1}{\eps_1^{p-2}}\frac{1}{\eps_2^{p-2}} \sum_{i=1}^2 c\Delta^{-p} \varepsilon_{3-i}^{p-2} \nn\\
&\quad \quad \times E_x\Big( 1_{\{T_{2\eps_i}^i<\infty\}}1_{\{T_{2\eps_i}^i<T_{2\eps_{3-i}}^{3-i}\}}\exp\Big(-\int_0^{ T_{2\eps_i}^i} U^{\vec{\lambda},\vec{x},\vec{\eps}}(B_s) ds\Big) \Big).
 \end{align}
% Let $\tau_{r}=\inf\{t: |B_t|\leq r\}$. Use \eqref{e4.11} in \eqref{e4.5} to see that for $i=1,2$,
From \eqref{m9.4} we have for $i=1,2,$
  \begin{align*}%}\label{e4.6}
 &E_x\Big(1_{\{T_{2\eps_i}^i<\infty\}}1_{\{T_{2\eps_i}^i<T_{2\eps_{3-i}}^{3-i}\}}\exp\Big(-\int_0^{ T_{2\eps_i}^i} U^{\vec{\lambda},\vec{x},\vec{\eps}}(B_s) ds\Big) \Big)\leq  (2\eps_i)^p|x-x_i|^{-p},
% \leq &E_x\Big( 1(T_{2\eps_i}^i<\infty)\exp\Big(-\int_0^{ T_{2\eps_i}^i} V^\infty(B_s-x_i) ds\Big) \Big)\nonumber\\
%=&E_{x-x_i}\Big( 1(\tau_{2\eps_i}<\infty) \exp\Big(-\int_0^{ \tau_{2\eps_i}} V^\infty(B_s)  ds\Big) \Big)=
  \end{align*}
%where we have used Proposition \ref{p20.1} with $g=V^\infty$ in the last equality.
and so \eqref{e4.5} becomes 
 \begin{align}\label{e4.7}
K_2&\leq \frac{1}{\eps_1^{p-2}}\frac{1}{\eps_2^{p-2}} c\Delta^{-p}\sum_{i=1}^2 \varepsilon_{3-i}^{p-2} (2\eps_i)^p|x-x_i|^{-p}\nn\\
&\leq C\Delta^{-p}(\eps_1^2+\eps_2^2) \sum_{i=1}^2  |x-x_i|^{-p} \to 0 \text{ as } \eps_1, \eps_2 \downarrow 0. 
 \end{align}
Turning to $K_1$, we first recall
 \begin{align}\label{ce5.4.2}
K_1&=\int   \int_0^\infty  \frac{1}{\eps_1^{p-2}}  \frac{1}{\eps_2^{p-2}} U_1^{\vec{\lambda},\vec{x},\vec{\eps}}(B_t) U_2^{\vec{\lambda},\vec{x},\vec{\eps}}(B_t)\nn \\
&\quad \quad \quad \quad \exp\Big(-\int_0^{t} U^{\vec{\lambda},\vec{x},\vec{\eps}}(B_s) ds\Big)1_{(t\leq T_\eps)}  dt dP_x.
 \end{align}
  We know $T_\eps \to \infty$ as $\eps_1, \eps_2 \downarrow 0$ since Brownian motion in  $d\geq 2$ doesn't hit points. By Proposition \ref{p3.1} and Lemma \ref{l7.5}, for ${Leb}\times P_x$-a.e. $(t,\omega)$, we have
 \begin{align}\label{ae10.2}
\lim_{\eps_1, \eps_2 \downarrow 0} &\frac{1}{\eps_1^{p-2}}  \frac{1}{\eps_2^{p-2}} U_1^{\vec{\lambda},\vec{x},\vec{\eps}}(B_t) U_2^{\vec{\lambda},\vec{x},\vec{\eps}}(B_t) \exp\Big(-\int_0^{t} U^{\vec{\lambda},\vec{x},\vec{\eps}}(B_s) ds\Big)1(t\leq T_\eps)\nn\\
&=C_{\ref{p3.1}}(\lambda_1) C_{\ref{p3.1}}(\lambda_2)U_1^{\vec{\infty},\vec{x}}(B_t)U_2^{\vec{\infty},\vec{x}}(B_t)\exp\Big(-\int_0^{t} V^{\vec{\infty},\vec{x}}(B_s) ds\Big).
 \end{align}
Recall the definition of $U_{1,2}^{\vec{\infty},\vec{x}}(x)$ as in \eqref{eu12}. If we can find an integrable bound for the left-hand side term of \eqref{ae10.2},  by Dominated Convergence we can conclude from \eqref{ce5.4.2} and \eqref{ae10.2} that 
 \begin{align}\label{ae10.2.9}
\lim_{\eps_1, \eps_2 \downarrow 0} K_1=C_{\ref{p3.1}}(\lambda_1) C_{\ref{p3.1}}(\lambda_2) (-U_{1,2}^{\vec{\infty},\vec{x}}(x)),
 \end{align}
  and the proof will be finished by Lemma \ref{l4.4}, \eqref{m5.1}, \eqref{e4.7} and \eqref{ae10.2.9}. 

 It suffices to find an integrable bound for the left-hand side term of \eqref{ae10.2}. Recall \eqref{mc1.0.1}
% \begin{align}\label{e1.10}
%     \frac{1}{\eps_i^{p-2}}U_i^{\vec{\lambda},\vec{x},\vec{\eps}}(x)\leq  |x-x_i|^{-p}, \forall x \text{ so that } |x-x_i|>\eps_i, i=1,2.
% \end{align}
and \eqref{e4.11}  to see that
\begin{align}\label{e4.14}
& \frac{1}{\eps_1^{p-2}}  \frac{1}{\eps_2^{p-2}} U_1^{\vec{\lambda},\vec{x},\vec{\eps}}(B_t) U_2^{\vec{\lambda},\vec{x},\vec{\eps}}(B_t)\exp\Big(-\int_0^{t} U^{\vec{\lambda},\vec{x},\vec{\eps}}(B_s) ds\Big)1(t\leq T_\eps) \\
 \leq & |B_t-x_1|^{-p}|B_t-x_2|^{-p}  \exp\Big(-\int_0^{t} U^{\vec{\lambda},\vec{x},\vec{\eps}}(B_s) ds\Big)1(t\leq T_\eps) \nn \\
 \leq &\sum_{i=1}^2 |B_t-x_i|^{-p}|B_t-x_{3-i}|^{-p}  1(|B_t-x_i|\leq |B_t-x_{3-i}|) \nn\\
 &\quad \quad \quad \quad \quad \quad   \exp\Big(-\int_0^{t} V^\infty(B_s-x_i) ds\Big) \nn \\
  \leq &2^p\sum_{i=1}^2  |B_t-x_i|^{-p} (|B_t-x_i|^{-p}\wedge \Delta^{-p})  \exp\Big(-\int_0^{t} V^\infty(B_s-x_i) ds\Big).\nn
  \end{align}
  where in the last inequality we have used $|B_t-x_{3-i}|\geq (|B_t-x_i|\vee (\Delta/2))$ on $\{|B_t-x_i|\leq |B_t-x_{3-i}|\}$. 
  
  It remains to show that for $i=1,2$,
 \begin{align}\label{2.2}
I_i:&=\int   \int_0^\infty |B_t-x_i|^{-p}(|B_t-x_i|^{-p}\wedge \Delta^{-p})\nn\\
&\quad \quad \quad  \quad \quad \quad \exp\Big(-\int_0^{t} V^\infty(B_s-x_i) ds\Big) dt dP_x<\infty.
  \end{align}
%then Dominated Convergence will imply that 
%  \begin{align}
%&\lim_{\eps_1, \eps_2 \downarrow 0} \int   \int_0^\infty  \frac{1}{\eps_1^{p-2}}  \frac{1}{\eps_2^{p-2}} U_1^{\vec{\lambda},\vec{x},\vec{\eps}}(B_t) U_2^{\vec{\lambda},\vec{x},\vec{\eps}}(B_t) \exp\Big(-\int_0^{t} U^{\vec{\lambda},\vec{x},\vec{\eps}}(B_s) ds\Big)1(t\leq T_\eps)  dt dP_x.\nn \\
%=&C_{\ref{p3.1}}(\lambda_1) C_{\ref{p3.1}}(\lambda_2)\int   \int_0^\infty   U_1^{\vec{\infty},\vec{x}}(B_t)U_2^{\vec{\infty},\vec{x}}(B_t)\exp\Big(-\int_0^{t} V^{\vec{\infty},\vec{x}}(B_s) ds\Big) dt dP_x,
%  \end{align}
% 
% It remains to prove \eqref{2.2} and
 %and it suffices to consider $i=1$. 
 Let $r_\eps=2\eps$. For $i=1,2,$ by translation invariance and monotone convergence we have
    \begin{align}\label{1.3}
 I_i=&E_{x-x_i}\Big(\int_0^\infty |B_t|^{-p}(|B_t|^{-p}\wedge \Delta^{-p}) \exp\Big(-\int_0^{t} V^\infty(B_s) ds\Big) dt\Big)\nn\\
 =&\lim_{\eps\downarrow 0} E_{x-x_i}\Big(\int_0^{\tau_{r_\eps}} |B_t|^{-p}(|B_t|^{-p}\wedge \Delta^{-p}) \exp\Big(-\int_0^{t} V^\infty(B_s) ds\Big) dt\Big).
% \leq& \limsup_{\eps\downarrow 0} E_{x-x_1}\Big(\int_0^{\tau_{r_\eps}} |B_t|^{-p}(|B_t|^{-p}\wedge \Delta^{-p}) \nn\\
% &\quad \quad \quad\exp\Big(\int_0^{t} \frac{2^p D^\lambda(2) \eps^{p-2}}{|B_s|^p} ds\Big) \exp\Big(-\int_0^{t} \frac{2(4-d)}{|B_s|^2} ds\Big) dt\Big),\nn
   \end{align}
%The right-hand side term of \eqref{1.3} is exactly the term that appears in (S.18) in the Supplementary Material of \cite{HMP18}.
By (S.18) and (S.20) of \cite{HMP18_supp}, we have
\begin{align}\label{e20.3}
E_{x-x_i}\Big(\int_0^{\tau_{r_\eps}} &|B_t|^{-p}(|B_t|^{-p}\wedge \Delta^{-p}) \exp\Big(\int_0^{t} \frac{2^p D^\lambda(2) \eps^{p-2}}{|B_s|^p} ds\Big)\nn\\
& \exp\Big(-\int_0^{t} \frac{2(4-d)}{|B_s|^2} ds\Big) dt\Big) \leq C\Delta^{2-p} |x-x_i|^{-p}, \forall \eps>0\text{ small, }
 \end{align}
where $D^\lambda(2)=U^{\infty,1}(2)-U^{\lambda,1}(2)$ with $\lambda>0$ large. Therefore we conclude from \eqref{1.3} and \eqref{e20.3} that $I_i<\infty$ and the proof of \eqref{2.2} is complete. It remains to prove \eqref{eu12b}. Recall the definition of $(-U_{1,2}^{\vec{\infty},\vec{x}}(x))$ as in \eqref{eu12}. By \eqref{eu1u2}, \eqref{ae8.10} and \eqref{e4.14}, it follows immediately from \eqref{1.3} and \eqref{e20.3}.
 \end{proof}

\bibliographystyle{plain}
\def\cprime{$'$}

\clearpage

\appendix

\section{Proof of Theorems \ref{t0.0.1} and \ref{tl1.0.0} under $\P_{X_0}$}\label{aa}

We deal with the case $\P_{X_0}$ for the general initial condition $X_0$ and recall $S(X_0)$ is the closed support of $X_0$. Recall $S(X_0)^{\geq \delta}=\{x: d(x,S(X_0))\geq \delta\}$ for any $\delta>0$, where $d(x, S(X_0))=\inf\{|x-y|: y\in S(X_0)\}$. Similarly we define $S(X_0)^{>\delta}, S(X_0)^{\leq \delta}$ and $S(X_0)^{<\delta}$.

We first give the convergence of $\cL^\lambda$ to $\cL$ and $\widetilde{\cL}(\kappa)^\eps$ to $\widetilde{\cL}(\kappa)$ and then find some constant $c_{\ref{tl1}}(\kappa)>0$ so that %or $\P_{\delta_0}$-a.s.
 $\widetilde{\cL}(\kappa)= c_{\ref{tl1}}(\kappa) \cL$ a.s. Next we show that the support of $\widetilde{\cL}(\kappa)$ is contained in $\pmR \cap S(X_0)^c$ and it follows that the support of $\cL$ will also be on $\pmR\cap S(X_0)^c$, thus finishing both proofs of Theorem \ref{t0.0.1} and Theorem \ref{tl1.0.0}. Since the proof for the convergence of $\cL^\lambda$ and $\widetilde{\cL}(\kappa)^\eps$ are similar, we will only give the proof for the latter. 

 Let $\{\phi_m\}_{m=1}^\infty$ be a countable determining class for $M_F(\R^d)$ consisting of bounded, continuous functions and we take $\phi_1=1$. 
Define  
\[\cC_{X_0}=\{\psi_{m,k}^{X_0}: \psi_{m,k}^{X_0}=\phi_m \cdot \chi_{k}^{X_0}, m\geq 1, k\geq 1\},\]
where $\chi_{k}^{X_0}$ is a continuous modification of $1_{B_k\cap  S(X_0)^{>1/k}}$ so that \mbox{$\chi_{k}^{X_0}(x)=1$} for all $x\in B_k\cap  S(X_0)^{>1/k}$ and $\chi_{k}^{X_0}(x)=0$ for all $x\in  S(X_0)^{<1/(2k)}$ or $|x|\geq k+1$ .
%\begin{align}\label{ex6.4}
%\chi_{k}^{X_0}^{X_0}(x)=
%\begin{cases}
%1, &\text{ if } x\in B_k\cap  S(X_0)^{>1/k},\\
%0, &\text{ if } d(x,S(X_0))\leq (2k)^{-1} \text{ or } |x|\geq k+1\\
%\text{continuous}, &\text{ on } (2k)^{-1}\leq d(x,S(X_0))\leq k^{-1} \text{ and } k\leq |x|\leq k+1.
%\end{cases}
%\end{align}
Corollary \ref{c1} implies that for any $\psi_{m,k}^{X_0}\in \cC_{X_0}$, we have $\widetilde{\cL}(\kappa)^\eps(\psi_{m,k}^{X_0})$ converges in $L^2(\P_{X_0})$ to some $\widetilde{l}(\psi_{m,k}^{X_0})$ and by taking a subsequence  we get almost sure convergence. Define subsequences iteratively and take a diagonal subsequence $\eps_n \downarrow 0$ (we may assume $0<\eps_n<1$ for all $n\geq 1$) to get 
\begin{align}\label{e1.5.5}
    \widetilde{\cL}(\kappa)^{\eps_n}(\psi_{m,k}^{X_0}) \to \widetilde{l}(\psi_{m,k}^{X_0}) \text{ as } \eps_n \downarrow 0, \text{ for all } m,k\geq 1,\ \P_{X_0}\text{-a.s.}
\end{align}

%For the $\N_{X_0}$ case, since $\N_{X_0}(\cdot)=\int \N_x(\cdot) X_0(dx)$ and \eqref{e0.1}, \eqref{e0.2} hold $\N_x$-a.e. for all $x$ by translation invariance, it follows that
%\begin{align}\label{{e2.4.5}}
%\sup_{\eps_n>0} \widetilde{\cL}(\kappa)^{\eps_n}(1)<\infty, \quad \N_{X_0}-a.e.,
%\end{align}
%and 
%\begin{align}\label{e1.5.3}
%\sup_{\eps_n>0} \widetilde{\cL}(\kappa)^{\eps_n}(\{x: |x|>k\})=0,  \text{ for } k \text{ large},  \quad \N_{X_0}-a.e.
%\end{align}
%Note \eqref{e1.5.3} also implies the tightness of $\{\widetilde{\cL}(\kappa)^{\eps_n}\}$ and together with \eqref{e2.4.5} we get the relative compactness of $\{\widetilde{\cL}(\kappa)^{\eps_n}\}$ by Prohorov's theorem (see, e.g., Theorem 7.8.7 of \cite{AD00}).
%Therefore any subsequence admits a further sequence along which the measures converge to some $\widetilde{\cL}(\kappa)$ in the weak topology. It remains to check all limit points coincide which is easy to see by \eqref{e1.5.5} since $\cC_{X_0}$ is a determining class on $M_F(\overline{S(X_0)^c}).$ In conclusion, for any sequence $\eps_k\downarrow 0$, we can find a subsequence $\eps_{k_n} \downarrow 0$ such that $\widetilde{\cL}(\kappa)^{\eps_{k_n}} \to \widetilde{\cL}(\kappa), \N_{X_0}$-a.e., which implies that $\widetilde{\cL}(\kappa)^\eps \overset{P}{\rightarrow} \widetilde{\cL}(\kappa)$ under $\N_{X_0}$. \\

For any fixed $0<\delta<1$ we will consider the restriction of $\{\widetilde{\cL}(\kappa)^{\eps_n}\}$ to $S(X_0)^{\geq \delta}$ and we write $\widetilde{\cL}(\kappa)^{\eps_n}_\delta\equiv \widetilde{\cL}(\kappa)^{\eps_n}|_{S(X_0)^{\geq\delta}}$ (recall $\mu|_K(\cdot)=\mu(\cdot \cap K)$). 

First we use Corollary III.1.5 of \cite{Per02} to see that  with $\P_{X_0}$-probability one there is some $\beta'(\omega) \in (0,1] $ such that for all $0<t<\beta'$, the closed support of $X_t$ is within the region $\{x: d(x,S(X_0))<3(t\log (1/t))^{1/2}\}$. Pick $0<\beta<\beta'$ small enough so that $3(\beta\log (1/\beta))^{1/2}<\delta$ and hence 
\begin{align*}
\mR \cap S(X_0)^{\geq \delta}\subset \bigcup_{t\geq \beta} \text{Supp}(X_t). 
\end{align*}
By Corollary III.1.7 of \cite{Per02} we conclude from the above that for any $\delta>0$,
\begin{align}\label{de1.0.3}
\mR \cap S(X_0)^{\geq \delta} \text{ is bounded}, \quad \P_{X_0}-a.s. 
\end{align}

Next we claim that for any $0<\delta<1$ and any $\eps_n\downarrow 0$, with $\P_{X_0}$-probability one we have 
\begin{align}\label{e1.0.99}
\sup_{0<\eps_n<\delta/2} \widetilde{\cL}(\kappa)^{\eps_n}\Big((\mR\cap S(X_0)^{\geq \delta/2})^{>1}\cap S(X_0)^{\geq \delta}\Big)=0.
\end{align}
To see this, we fix $\eps<\delta/2<1$. For all $x\in (\mR\cap S(X_0)^{\geq \delta/2})^{>1}$, we have $\overline{B_\eps(x)}\subset  (\mR\cap S(X_0)^{\geq \delta/2})^c$. Next for all $x\in S(X_0)^{\geq \delta}$, we have $\overline{B_\eps(x)}\subset  S(X_0)^{\geq \delta/2}$ and in particular $\overline{B_\eps(x)}\subset  (\mR\cap S(X_0)^{\leq \delta/2})^c$. Therefore we conclude $x\in (\mR\cap S(X_0)^{\geq \delta/2})^{>1}\cap S(X_0)^{\geq \delta}$ would imply $\overline{B_\eps(x)} \subset \mR^c$, and by \eqref{ea1.1} we have $X_{G_\eps^x}(1)=0$. It then follows that for all $0<\eps_n<\delta/2$,
\begin{align*}
&\E_{X_0}\Big(\widetilde{\cL}(\kappa)^{\eps_n}\Big((\mR\cap S(X_0)^{\geq \delta/2})^{>1}\cap S(X_0)^{\geq \delta}\Big)\Big)\\
&\leq \E_{X_0}\Big(\int \frac{X_{G_\eps^x}(1)}{\eps^{p}}\exp\Big(-\kappa \frac{X_{G_{\eps}^{x}}(1)}{\eps^2}\Big) 1(\overline{B_\eps(x)} \subset \mR^c) dx \Big)\nn\\
&=\int\E_{X_0}\Big(\frac{X_{G_\eps^x}(1)}{\eps^{p}} \exp\Big(-\kappa \frac{X_{G_{\eps}^{x}}(1)}{\eps^2}\Big)1(\overline{B_\eps(x)} \subset \mR^c) \Big)dx=0,
\end{align*}
%where the second last equality is by Fubini's theorem. 
Thus we get \eqref{e1.0.99} by taking a countable union of null sets. 

Now use \eqref{de1.0.3} and \eqref{e1.0.99}  to see that with $\P_{X_0}$-probability one, for $M\geq 1$ large we have
\begin{align}\label{e1.5.4}
\sup_{0<\eps_n<\delta/2} \widetilde{\cL}(\kappa)^{\eps_n}_\delta &\Big(\{x: |x|\geq M\}\Big)\leq \sup_{0<\eps_n<\delta/2} \widetilde{\cL}(\kappa)^{\eps_n}_\delta\Big((\mR\cap S(X_0)^{\geq \delta/2})^{>1}\Big)\nn\\
=& \sup_{0<\eps_n<\delta/2} \widetilde{\cL}(\kappa)^{\eps_n}\Big((\mR\cap S(X_0)^{\geq \delta/2})^{>1}\cap S(X_0)^{\geq \delta}\Big)=0.
\end{align}
%Next
% we will show that
%\begin{align}\label{e5.3}
%\sup_{0<\eps_n<\delta} \widetilde{\cL}(\kappa)^{\eps_n}_\delta(1)<\infty,\quad \P_{X_0}-a.s.
%\end{align}
For any $M>1$, by using \eqref{e1.5.5} with $m=1$, we conclude with $\P_{X_0}$-probability one, for $k\geq 1$ large, we have
\begin{align*}%\label{e1.5.2}
\sup_{0<\eps_n<\delta/2}\widetilde{\cL}(\kappa)^{\eps_n}_\delta(B_M)&= \sup_{0<\eps_n<\delta/2}\widetilde{\cL}(\kappa)^{\eps_n}(S(X_0)^{\geq \delta} \cap B_M)\\
&\leq \sup_{0<\eps_n<\delta/2} \widetilde{\cL}(\kappa)^{\eps_n}(\chi_{k}^{X_0})<\infty,
\end{align*}
Together with \eqref{e1.5.4}, we have
\begin{align}\label{e1.5.2}
\sup_{0<\eps_n<\delta/2} \widetilde{\cL}(\kappa)^{\eps_n}_\delta(1)&\leq \sup_{0<\eps_n<\delta/2} \widetilde{\cL}(\kappa)^{\eps_n}_\delta(B_M)\nn\\
&\quad \quad  +\sup_{0<\eps_n<\delta/2} \widetilde{\cL}(\kappa)^{\eps_n}_\delta \Big(\{x: |x|\geq M\}\Big)<\infty.
\end{align}
Note \eqref{e1.5.4} also implies the tightness of $\{\widetilde{\cL}(\kappa)^{\eps_n}_\delta, 0<\eps_n<\delta/2\}$ and together with \eqref{e1.5.2}, we get the relative compactness of $\{\widetilde{\cL}(\kappa)^{\eps_n}_\delta, 0<\eps_n<\delta/2\}$ by Prohorov's theorem (see, e.g., Theorem 7.8.7 of \cite{AD00}). Therefore any subsequence admits a further sequence along which the measures converge to some $\widetilde{l}(\kappa)_\delta$ supported on $S(X_0)^{\geq \delta}$ in the weak topology. It remains to check all limit point coincide which is easy to see by \eqref{e1.5.5} since $\cC_{X_0}$ is a determining class on $M_F(S(X_0)^{\geq \delta}).$ Therefore for any $\delta>0$, under $\P_{X_0}$ we have $\widetilde{\cL}(\kappa)^{\eps_n}_\delta \overset{P}{\rightarrow} \widetilde{l}(\kappa)_\delta$ as $\eps \downarrow 0$.

Note by definition, $\widetilde{l}(\kappa)_{\delta'}$ and $\widetilde{l}(\kappa)_\delta$ agree on $S(X_0)^{\geq \delta}$ for all $\delta\geq \delta'>0$. Take $\delta=1/k$ and define a $\sigma$-finite measure $\widetilde{\cL}(\kappa)$ on $S(X_0)^{c}$ by 
\begin{align}
\widetilde{\cL}(\kappa)|_{S(X_0)^{\geq 1/k}}\equiv \widetilde{l}(\kappa)_{1/k}, \forall k\geq 1.
\end{align}
Thus we conclude $\widetilde{\cL}(\kappa)^\eps|_{S(X_0)^{\geq 1/k}} \overset{P}{\rightarrow} \widetilde{\cL}(\kappa)|_{S(X_0)^{\geq 1/k}}$ as $\eps\downarrow 0$ under $\P_{X_0}$ for all $k\geq 1$ and by taking a diagonal subsequence, we can find some sequence $\eps_n \downarrow 0$ so that $\widetilde{\cL}(\kappa)^{\eps_n}|_{S(X_0)^{\geq 1/k}} \to \widetilde{\cL}(\kappa)|_{S(X_0)^{\geq 1/k}}, \forall k\geq 1$ a.s. as $n \to \infty$.

With the construction of $\widetilde{\cL}(\kappa)$, and by a similar argument for the construction of $\cL$ complete under $\P_{X_0}$, we now show $\P_{X_0}$-a.s. that $\widetilde{\cL}(\kappa)=c_{\ref{tl1}}(\kappa){\cL}$. By the above construction, it suffices to show that for any $k\geq 1$, we have $\P_{X_0}$-a.s. that $ \widetilde{\cL}(\kappa)|_{S(X_0)^{\geq 1/k}}=c_{\ref{tl1}}(\kappa){\cL}|_{S(X_0)^{\geq 1/k}}$.

 Similar to the derivation of \eqref{e9.4.1}, by Corollary \ref{c1} and Corollary \ref{c4}, we can get  $\P_{X_0}$-a.s. that $C_{\ref{p3.1}}(\kappa){\cL}(\psi_{m,k}^{X_0})=K_{\ref{p3.1}}\widetilde{\cL}(\kappa)(\psi_{m,k}^{X_0})$ for all $m,k\geq 1$ and so we have $C_{\ref{p3.1}}(\kappa){\cL}|_{S(X_0)^{\geq 1/k}}=K_{\ref{p3.1}}\widetilde{\cL}(\kappa)|_{S(X_0)^{\geq 1/k}}$ for any $k\geq 1$. Then it follows that $\P_{X_0}$-a.s. $\widetilde{\cL}(\kappa)=c_{\ref{tl1}}(\kappa){\cL}$ as noted above.
 
Finally by using Proposition \ref{p4.0}, one can show that $\widetilde{\cL}(\kappa)$ (and hence $\cL$) is supported on $\pmR$ in a similar way to the proof of Theorem \ref{t0} under $\N_0$ in Section \ref{s5.2}. The construction of $\widetilde{\cL}(\kappa)$ will then give us that $\widetilde{\cL}(\kappa)$ is supported on $\pmR \cap S(X_0)^c$. The proof is then complete.

\section{Proof of Lemmas \ref{c13.5}, \ref{l7.5} and \ref{al4.0} }\label{ab}

\subsection{Proof of Lemma \ref{c13.5}}

The scalings of Bessel process $\rho_s$ and $V^\infty, V^\lambda$ give us that
  \begin{align}\label{aec1.0.1}
 &E_{|x|}^{(2+2\nu)}\Big(\exp\Big(\gamma \int_0^{\tau_{r_\lambda}} (V^\infty-V^{\lambda})(\rho_s) ds\Big)\Big|\tau_{r_\lambda}<\infty\Big)\nn\\
 =&E_{|x|/r_\lambda}^{(2+2\nu)}\Big(\exp\Big(\gamma \int_0^{ \tau_{1}} (V^\infty-V^{\lambda r_\lambda^{4-d}})(\rho_s) ds\Big)\Big|\tau_{1}<\infty\Big)\nn\\
  =&E_{|x|/r_\lambda}^{(2+2\nu)}\Big(\exp\Big(\gamma \int_0^{ \tau_{1}} (V^\infty-V^{\lambda_0^{4-d}})(\rho_s) ds\Big)\Big|\tau_{1}<\infty\Big),
 \end{align}
 where we have used $r_\lambda=\lambda_0 \lambda^{-\frac{1}{4-d}}$ in the last line. For any $r>1$, we let 
 \begin{align}\label{e20.2}
 f(r):=&E_{r}^{(2+2\nu)}\Big(\exp\Big(\gamma \int_0^{ \tau_{1}} (V^\infty-V^{\lambda_0^{4-d}})(\rho_s) ds\Big)\Big|\tau_{1}<\infty\Big)\nn\\
 =&E_{r}^{(2+2\nu)}\Big(\exp\Big(\gamma \int_0^{ \tau_{1}} (V^\infty-V^{\lambda_0^{4-d}})(\rho_s) ds\Big)1(\tau_{1}<\infty)\Big) r^{2\nu},
 \end{align}
where the second line is by $P_{r}^{(2+2\nu)}(\tau_R<\infty)=(R/r)^{2\nu}$ for any $r>R>0$. 
By \eqref{aec1.0.1} and the definition of $r_\lambda$, it suffices to show that there is some constant $C_{\ref{c13.5}}(\lambda_0, \nu,\gamma)>0$ so that $\sup_{r>1} f(r)=\lim_{r\to \infty} f(r)=C_{\ref{c13.5}}(\lambda_0, \nu,\gamma)$.
% , which will follow in a similar way to the proof of Lemma 4.5 of \cite{Hong20}(see (4.12) of \cite{Hong20}) and use Lemma \ref{l13.5}(ii).

Let $r>R>1$ and apply the strong Markov Property in \eqref{e20.2} to get
\begin{align}\label{ae7.0}
f(r)&=E_{r}^{(2+2\nu)}\Big(\exp\Big(\gamma \int_0^{ \tau_{R}} (V^\infty-V^{\lambda_0^{4-d}})(\rho_s) ds\Big)1(\tau_{R}<\infty)\Big)\nn\\
&\quad \quad \quad E_{R}^{(2+2\nu)}\Big(\exp\Big(\gamma \int_0^{ \tau_{1}} (V^\infty-V^{\lambda_0^{4-d}})(\rho_s) ds\Big)1(\tau_{1}<\infty)\Big)  r^{2\nu}\nn\\
%&=E_{R}^{(2+2\nu)}\Big(\exp\Big(\gamma \int_0^{ \tau_{r}} (V^\infty-V^{\lambda_0^{4-d}})(\rho_s) ds\Big)\Big|\tau_{r}<\infty\Big) \frac{r^{2\nu}}{R^{2\nu}}\nn\\
%&\quad \quad \quad E_{r}^{(2+2\nu)}\Big(\exp\Big(\gamma \int_0^{ \tau_{1}} (V^\infty-V^{\lambda_0^{4-d}})(\rho_s) ds\Big)\Big|\tau_{1}<\infty\Big)\frac{1}{r^{2\nu}}  R^{2\nu}\nn\\
&=E_{r}^{(2+2\nu)}\Big(\exp\Big(\gamma \int_0^{ \tau_{R}} (V^\infty-V^{\lambda_0^{4-d}})(\rho_s) ds\Big)\Big|\tau_{R}<\infty\Big)\nn\\
&\quad \quad \quad E_{R}^{(2+2\nu)}\Big(\exp\Big(\gamma \int_0^{ \tau_{1}} (V^\infty-V^{\lambda_0^{4-d}})(\rho_s) ds\Big)\Big|\tau_{1}<\infty\Big)\geq f(R),
%&\geq  E_{R}^{(2+2\nu)}\Big(\exp\Big(\gamma \int_0^{ \tau_{1}} (V^\infty-V^{\lambda_0^{4-d}})(\rho_s) ds\Big)\Big|\tau_{1}<\infty\Big)=f(R),
\end{align}
and it follows that $r\mapsto f(r)$ is monotone increasing for $r>1$. By using Lemma \ref{l12} and Lemma \ref{l13.5} (ii), we have
   \begin{align*}
   %\sup_{r\geq 1}E_{r}^{(2+2\nu)}\Big(\exp\Big(\gamma \int_0^{ \tau_{1}} (V^\infty-V^{\lambda_0^{4-d}})(\rho_s) ds\Big)\Big|\tau_{1}<\infty\Big)\\
    \sup_{r\geq 1} f(r)  \leq  &  \sup_{r\geq 1}E_{r}^{(2+2\nu)}\Big(\exp\Big( \int_0^{ \tau_{1}} c_{\ref{l12}} \gamma \lambda_0^{-(p-2)} \rho_s^{-p} ds\Big)\Big|\tau_{1}<\infty\Big)<\infty,
 \end{align*}
 if we choose $\lambda_0$ large enough so that $2\gamma c_{\ref{l12}} \lambda_0^{-(p-2)} \leq 4c_{\ref{l12}} \lambda_0^{-(p-2)}<\nu^2$. Hence we conclude
 $\sup_{r\geq 1} f(r)=\lim_{r\to \infty} f(r)=C_{\ref{c13.5}}(\lambda_0, \nu, \gamma)$ for some constant $C_{\ref{c13.5}}(\lambda_0, \nu, \gamma)>0$ and the proof is complete as noted above.
% Recall $r_\lambda=\lambda_0 \lambda^{-\frac{1}{4-d}}$. Now we conclude from \eqref{aec1.0.1} that
%% $\lambda \mapsto E_{|x|}^{(2+2\nu)}\Big(\exp\Big(\gamma\int_0^{ \tau_{r_\lambda}}  (V^\infty-V^{{\lambda}})(\rho_s) ds\Big)\Big|\tau_{r_\lambda}<\infty\Big)$ is monotone increasing for $\lambda>0$ and so
%\begin{align*}
%&\sup_{\lambda>0} E_{|x|}^{(2+2\nu)}\Big(\exp\Big(\gamma\int_0^{ \tau_{r_\lambda}}  (V^\infty-V^{{\lambda}})(\rho_s) ds\Big)\Big|\tau_{r_\lambda}<\infty\Big)\nn \\
%=&\lim_{\lambda\to \infty} E_{|x|}^{(2+2\nu)}\Big(\exp\Big(\gamma\int_0^{ \tau_{r_\lambda}}  (V^\infty-V^{{\lambda}})(\rho_s) ds\Big)\Big|\tau_{r_\lambda}<\infty\Big)=C_{\ref{c13.5}}(\lambda_0, \nu,\gamma).
%\end{align*}

\subsection{Proof of Lemma \ref{l7.5}}
Recall from \eqref{ev} that \[V^{\infty,\vec{x}}(x)=\N_x\Big(\{L^{x_1}>0\} \cup \{L^{x_2}>0\} \Big)<\infty,\]where the finiteness is by $\N_0(L^x>0)=V^\infty(x)<\infty$. Therefore by \eqref{e9.1} and monotone convergence theorem we have
\begin{align*}
&V^{\infty,\vec{x}}(x)-V^{\vec{\lambda},\vec{x}}(x)=\N_x\Big(1-1_{\{L^{x_1}=0\} \cap \{L^{x_2}=0\}}\Big)-\N_x\Big(1-e^{-\lambda_1 L^{x_1}-\lambda_2 L^{x_2}}\Big)\\
&=\N_x\Big(e^{-\lambda_1 L^{x_1}-\lambda_2 L^{x_2}}1_{\{L^{x_1}>0\} \cup \{L^{x_2}>0\}} \Big)\to 0 \text{ as } \lambda_1, \lambda_2 \to \infty. 
\end{align*}
Before turning to the proof for $U^{\vec{\lambda},\vec{x},\vec{\eps}}$ and $W^{\vec{\lambda},\vec{x},\eps}$, we first note that for all $x$ so that $|x-x_i|>\eps_i$, $i=1,2$, we have 
\begin{align}\label{ae1.1}
&K_i\equiv \N_x\Big(\Big(\exp\Big(-\lambda_i \frac{X_{G_{\eps_i}^{x_i}}(1)}{\eps_i^2}\Big)1(X_{G_{\eps_i/2}^{x_i}}=0)-1_{\{L^{x_i}=0\}} \Big)^2\Big)\\
&=\N_x\Big(\Big(\exp\Big(-2\lambda_i \frac{X_{G_{\eps_i}^{x_i}}(1)}{\eps_i^2}\Big)1(X_{G_{\eps_i/2}^{x_i}}=0)\nn\\
&\quad \quad -2\exp\Big(-\lambda_i \frac{X_{G_{\eps_i}^{x_i}}(1)}{\eps_i^2}\Big) 1(X_{G_{\eps_i/2}^{x_i}}=0)1_{\{L^{x_i}=0\}} +1_{\{L^{x_i}=0\}}\Big)\nn\\
&=\N_x\Big(\Big(\exp\Big(-2\lambda_i \frac{X_{G_{\eps_i}^{x_i}}(1)}{\eps_i^2}\Big)\P_{X_{G_{\eps_i}^{x_i}}}(X_{G_{\eps_i/2}^{x_i}}=0)\nn\\
& \quad -2\exp\Big(-\lambda_i \frac{X_{G_{\eps_i}^{x_i}}(1)}{\eps_i^2}\Big)\P_{X_{G_{\eps_i}^{x_i}}}(X_{G_{\eps_i/2}^{x_i}}=0, L^{x_i}=0) +\P_{X_{G_{\eps_i}^{x_i}}}(L^{x_i}=0)\Big),\nn
\end{align}
where the last follows by Proposition \ref{pv0.2}(i).
Apply  \eqref{ev1.2} to see that 
\[\P_{X_{G_{\eps_i}^{x_i}}}(L^{x_i}=0)=\exp\Big(-V^\infty(\eps_i) X_{G_{\eps_i}^{x_i}}(1)\Big)=\exp\Big(-\lambda_d \frac{X_{G_{\eps_i}^{x_i}}(1)}{\eps_i^2}\Big),\]
and as in the derivation of \eqref{er1.5}, we have
\[\P_{X_{G_{\eps_i}^{x_i}}}(X_{G_{\eps_i/2}^{x_i}}=0)=\exp\Big(-4U^{\infty,1}(2) \frac{X_{G_{\eps_i}^{x_i}}(1)}{\eps_i^2}\Big).\]
Use Proposition \ref{pv0.1}(i) to get
\begin{align*}
\P_{X_{G_{\eps_i}^{x_i}}}(X_{G_{\eps_i/2}^{x_i}}=0, L^{x_i}=0)
=&\P_{X_{G_{\eps_i}^{x_i}}}\Big(1(X_{G_{\eps_i/2}^{x_i}}=0)\P_{X_{G_{\eps_i/2}^{x_i}}}(L^{x_i}=0)\Big)\\
=&\P_{X_{G_{\eps_i}^{x_i}}}(X_{G_{\eps_i/2}^{x_i}}=0).
\end{align*}
Returning to \eqref{ae1.1}, we have
\begin{align}\label{e10.1.1}
K_i=&\N_x\Big(\exp\Big(-(2\lambda_i+4U^{\infty,1}(2)) \frac{X_{G_{\eps_i}^{x_i}}(1)}{\eps_i^2}\Big)\\
&\quad  -2\exp\Big(-(\lambda_i+4U^{\infty,1}(2)) \frac{X_{G_{\eps_i}^{x_i}}(1)}{\eps_i^2}\Big)+\exp\Big(-\lambda_d \frac{X_{G_{\eps_i}^{x_i}}(1)}{\eps_i^2}\Big) \Big)\nn\\
\leq&\N_x\Big(\exp\Big(-\lambda_d \frac{X_{G_{\eps_i}^{x_i}}(1)}{\eps_i^2}\Big)-\exp\Big(-(\lambda_i+4U^{\infty,1}(2)) \frac{X_{G_{\eps_i}^{x_i}}(1)}{\eps_i^2}\Big)\Big)\nn\\
=&U^{(\lambda_i+4U^{\infty,1}(2))  \eps_i^{-2}, \eps_i}(x-x_i)-U^{\lambda_d \eps_i^{-2}, \eps_i}(x-x_i)\to 0\text{ as } \eps_i \downarrow 0,\nn
%=&\eps_i^{-2}\Big(U^{(\lambda_i+4U^{\infty,1}(2)), 1}((x-x_i)/\eps_i)-U^{\lambda_d,1}((x-x_i)/\eps_i)\Big)\nn\\
%\leq& (\lambda_i+4U^{\infty,1}(2)-\lambda_d) |x-x_i|^{-p} \eps_i^{p-2},
\end{align}
where the equality is by \eqref{e1.2.5} and the last follows from \eqref{ae1.4}.\\

 Turning to $U^{\vec{\lambda},\vec{x},\vec{\eps}}(x)$, for $\eps_1, \eps_2>0$ small enough, by definition we have
\begin{align*}
I=&U^{\vec{\lambda},\vec{x},\vec{\eps}}(x)-V^{\infty,\vec{x}}(x)\\
=&\N_{x} \Big(1-\prod_{i=1}^2\exp\Big(- \lambda_i \frac{X_{G_{\eps_i}^{x_i}}(1)}{\varepsilon_i^2}\Big)1(X_{G_{\eps_i/2}^{x_i}}=0)\Big)-\N_x\Big(1-1_{\{L^{x_1}=0\}} 1_{\{L^{x_2}=0\}}\Big)\\
=&\N_x\Big(1_{\{L^{x_1}=0\}} 1_{\{L^{x_2}=0\}}-\prod_{i=1}^2 \exp\Big(- \lambda_i \frac{X_{G_{\eps_i}^{x_i}}(1)}{\varepsilon_i^2}\Big)1(X_{G_{\eps_i/2}^{x_i}}=0) \Big).
\end{align*}
By Jensen's inequality we have 
\begin{align*}
I^2\leq &\N_x\Big(\Big(\prod_{i=1}^2 \exp\Big(- \lambda_i \frac{X_{G_{\eps_i}^{x_i}}(1)}{\varepsilon_i^2}\Big)1(X_{G_{\eps_i/2}^{x_i}}=0)-1_{\{L^{x_1}=0\}} 1_{\{L^{x_2}=0\}}\Big)^2\Big)\\
\leq &2\N_x\Big(\Big(\prod_{i=1}^2 \exp\Big(- \lambda_i \frac{X_{G_{\eps_i}^{x_i}}(1)}{\varepsilon_i^2}\Big)1(X_{G_{\eps_i/2}^{x_i}}=0)\\
 &\quad \quad \quad \quad -1_{\{L^{x_1}=0\}}\exp\Big(-\lambda_2 \frac{X_{G_{\eps_2}^{x_2}}(1)}{\varepsilon_2^2}\Big)1(X_{G_{\eps_2/2}^{x_2}}=0) \Big)^2\Big)\\
 +&2\N_x\Big(\Big(1_{\{L^{x_1}=0\}}\exp\Big(-\lambda_2 \frac{X_{G_{\eps_2}^{x_2}}(1)}{\varepsilon_2^2}\Big)1(X_{G_{\eps_2/2}^{x_2}}=0)-1_{\{L^{x_1}=0\}} 1_{\{L^{x_2}=0\}}\Big)^2\Big)
\end{align*}
where the last inequality is by $(a+b)^2 \leq 2a^2+2b^2, \forall a,b\in \R$. Then we have
\begin{align*}
I^2&\leq 2\N_x\Big(\Big(\exp\Big(-\lambda_1 \frac{X_{G_{\eps_1}^{x_1}}(1)}{\varepsilon_1^2}\Big)1(X_{G_{\eps_1/2}^{x_1}}=0)-1_{\{L^{x_1}=0\}} \Big)^2\Big)\\
 &+ 2\N_x\Big(\Big(\exp\Big(-\lambda_2 \frac{X_{G_{\eps_2}^{x_2}}(1)}{\varepsilon_2^2}\Big)1(X_{G_{\eps_2/2}^{x_2}}=0)-1_{\{L^{x_2}=0\}} \Big)^2\Big)=2K_1+2K_2\to 0
% \leq &2(\lambda_1+4U^{\infty,1}(2)-\lambda_d) |x-x_1|^{-p} \eps_1^{p-2}+2(\lambda_2+4U^{\infty,1}(2)-\lambda_d) \eps_2^{p-2} 
\end{align*}
as $\eps_1, \eps_2 \downarrow 0$ where we have used \eqref{e10.1.1} in the last line.

 Turning to $W^{\vec{\lambda},\vec{x},{\eps}}(x)$,  for $\eps>0$ small enough we have
\begin{align*}
&J=W^{\vec{\lambda},\vec{x},\eps}(x)-V^{\infty,\vec{x}}(x)\\
&=\N_{x} \Big(1-e^{-\lambda_1 L^{x_1}}\exp\Big(-\lambda_2 \frac{X_{G_{\eps}^{x_2}}(1)}{\eps^2} \Big)1_{(X_{G_{\eps/2}^{x_2}}=0)}\Big)-\N_x\Big(1-1_{\{L^{x_1}=0\}} 1_{\{L^{x_2}=0\}}\Big)\\
&=\N_x\Big(1_{\{L^{x_1}=0\}} 1_{\{L^{x_2}=0\}}-e^{-\lambda_1 L^{x_1}}\exp\Big(-\lambda_2 \frac{X_{G_{\eps}^{x_2}}(1)}{\eps^2} \Big)1(X_{G_{\eps/2}^{x_2}}=0)\Big).
\end{align*}
By Jensen's inequality we have 
\begin{align*}
J^2\leq &\N_x\Big(\Big(e^{-\lambda_1 L^{x_1}}\exp\Big(-\lambda_2 \frac{X_{G_{\eps}^{x_2}}(1)}{\eps^2} \Big)1(X_{G_{\eps/2}^{x_2}}=0)-1_{\{L^{x_1}=0\}} 1_{\{L^{x_2}=0\}}\Big)^2\Big)\\
\leq &2\N_x\Big(\Big(e^{-\lambda_1 L^{x_1}}\exp\Big(-\lambda_2 \frac{X_{G_{\eps}^{x_2}}(1)}{\eps^2} \Big)1(X_{G_{\eps/2}^{x_2}}=0)\\
&\quad \quad \quad \quad -1_{\{L^{x_1}=0\}}\exp\Big(-\lambda_2 \frac{X_{G_{\eps}^{x_2}}(1)}{\varepsilon^2}\Big)1(X_{G_{\eps/2}^{x_2}}=0)\Big)^2\Big)\\
 +&2\N_x\Big(\Big(1_{\{L^{x_1}=0\}}\exp\Big(-\lambda_2 \frac{X_{G_{\eps}^{x_2}}(1)}{\varepsilon^2}\Big)1(X_{G_{\eps/2}^{x_2}}=0)-1_{\{L^{x_1}=0\}} 1_{\{L^{x_2}=0\}}\Big)^2\Big)\\
 \leq &2\N_x\Big(\Big(e^{-\lambda_1 L^{x_1}}-1_{\{L^{x_1}=0\}} \Big)^2\Big)\\
 &\quad \quad \quad \quad + 2\N_x\Big(\Big(\exp\Big(-\lambda_2 \frac{X_{G_{\eps}^{x_2}}(1)}{\varepsilon^2}\Big)1_{(X_{G_{\eps/2}^{x_2}}=0)}-1_{\{L^{x_2}=0\}} \Big)^2\Big)\\
 \leq &2 \N_x\Big(e^{-2\lambda_1 L^{x_1}} 1_{\{L^{x_1}>0\}} \Big)+2K_2 \to 0,
\end{align*}
as $\lambda_1 \to \infty$ and $\eps \downarrow 0$ where we have used monotone convergence theorem and \eqref{e10.1.1} in the last line.
%\end{proof}

\subsection{Proof of Lemma \ref{al4.0}}
Recall $G=G_{\eps_1}^{x_1} \cap G_{\eps_2}^{x_2}$. For all $x\in G$ we let \[u(x)\equiv  U^{\vec{\lambda},\vec{x},\vec{\eps}}=\N_x\Big(1-\prod_{i=1}^2 \exp\Big(-\lambda_i \frac{X_{G_{\eps_i}^{x_i}}(1)}{\varepsilon_i^2}\Big)1(X_{G_{\eps_i/2}^{x_i}}=0) \Big).\]
Define 
\begin{align}\label{ae5.2}
\widetilde{\lambda}_i=\lambda_i+4U^{\infty,1}(2), i=1,2,
\end{align}
and recall \eqref{eb1.1.4} to get for all $x\in G$,
\begin{align}\label{ae4.02}
u(x)&\leq U^{\widetilde{\lambda}_1 \eps_1^{-2},\eps_1}(x-x_1)+U^{\widetilde{\lambda}_2 \eps_2^{-2},\eps_2}(x-x_2)\leq \widetilde{\lambda}_1 \eps_1^{-2}+\widetilde{\lambda}_2 \eps_2^{-2},
\end{align}
where the last inequality follows from that $r\mapsto U^{\lambda,\eps}(r)$ is decreasing by Lemma 3.2(b) of \cite{MP17} and that $U^{\lambda,\eps}(\eps)=\lambda$. Next, for any $x'\in G$, let $D$ be an open ball that contains $x'$, whose closure is in $G$. Use Proposition \ref{pv0.2}(i) to see that for $x\in D$,
\begin{align*}
&u(x)=\N_x\Big(1-\prod_{i=1}^2 \exp\Big(-\lambda_i \frac{X_{G_{\eps_i}^{x_i}}(1)}{\varepsilon_i^2}\Big)1(X_{G_{\eps_i/2}^{x_i}}=0) \Big)\\
=&\N_x\Big(1-\E_{X_D} \Big( \prod_{i=1}^2 \exp\Big(-\lambda_i \frac{X_{G_{\eps_i}^{x_i}}(1)}{\varepsilon_i^2}\Big)1(X_{G_{\eps_i/2}^{x_i}}=0)\Big)\Big)\\
=& \N_x\Big(1- \exp\Big(-\int  u(y) X_D(dy)  \Big)\Big),
\end{align*}
 the last  equality by \eqref{e4.3.1} with $X_0=X_D$. Therefore 
 \[u(x)=\N_x\Big( 1-\exp\Big(-\int  u(y) X_D(dy)\Big)\Big),\quad\forall x\in D.\] 
 Note $u$ is bounded in $G$ by \eqref{ae4.02}, and hence on $\partial D$. Use Theorem V.6 of \cite{Leg99} to conclude \[\Delta u(x)=(u(x))^2, \ \forall x\in D,\text{ and, in particular, for }x=x'.\] 
 Since $x'$ is arbitrary, it holds for all $x\in G$.
%\end{proof}

\section{Proof of Propositions \ref{p3.1}(i) and \ref{p3.3}(i)}\label{ac}

%\subsection{Proof of Proposition \ref{p3.1} (i)}
%We first turn to the %proof of Proposition \ref{p3.1} (i) for the case of $V_i^{\vec{\lambda},\vec{x}}(x)$.
\begin{proof}[Proof of Proposition \ref{p3.1}(i)] By symmetry it suffices to consider the case $i=1$.
Recall Lemma \ref{l10.3} to get
  \begin{align*}
\lambda_1^{1+\alpha} V_1^{\vec{\lambda},\vec{x}}(x)=&\lambda_1^{1+\alpha}\lim_{t\to \infty}E_x\Big( V_1^{\vec{\lambda},\vec{x}}(B(t\wedge T_{r_{\lambda}}))\exp\Big(-\int_0^{t\wedge T_{r_{\lambda}}} V^{\vec{\lambda},\vec{x}}(B_s) ds\Big)\Big).
  \end{align*}
where $T_{r_{\lambda}}=T_{r_{\lambda_1}}^1 \wedge T_{r_{\lambda_2}}^2$ and $T_{r_{\lambda_i}}^i=\inf\{t\geq 0: |B_t-x_i|\leq r_{\lambda_i} \}$. Here $r_{\lambda_i}=\lambda_0 \lambda_i^{-\frac{1}{4-d}}$ and we will choose $\lambda_0$ to be some fixed large constant below. By \eqref{m10.0}, we have $V_1^{\vec{\lambda},\vec{x}}(x) \to 0$ as $|x|\to \infty$ and $V_1^{\vec{\lambda},\vec{x}}(B(t\wedge T_{r_{\lambda}}))$ is uniformly bounded for all $t\geq 0$. 
Apply Dominated Convergence to see that
  \begin{align}\label{m1.1}
\lambda_1^{1+\alpha} V_1^{\vec{\lambda},\vec{x}}(x)=&\lambda_1^{1+\alpha} E_x\Big(1_{\{T_{r_{\lambda}}<\infty\}} V_1^{\vec{\lambda},\vec{x}}(B(T_{r_{\lambda}}))\exp\Big(-\int_0^{T_{r_{\lambda}}} V^{\vec{\lambda},\vec{x}}(B_s) ds\Big)\Big)\nn\\
 =& \sum_{i=1}^2 E_x\Big(1_{\{T_{r_{\lambda_i}}^i<\infty\}} 1_{\{T_{r_{\lambda_i}}^i<T_{r_{\lambda_{3-i}}}^{3-i}\}} \lambda_1^{1+\alpha}V_1^{\vec{\lambda},\vec{x}}(B(T_{r_{\lambda_i}}^i))\nn\\
 &\quad\quad \quad \quad \quad \exp\Big(-\int_0^{T_{r_{\lambda_i}}^i} V^{\vec{\lambda},\vec{x}}(B_s) ds\Big)\Big):=I_1+I_2,
  \end{align}
  We first deal with $I_2$. Note in the integrand of $I_2$ we may assume that $|B(T_{r_{\lambda_2}}^2)-x_2|=r_{\lambda_2}$ and so by \eqref{5.1} we have \mbox{$|x_1-B(T_{r_{\lambda_2}}^2)|>\Delta/2$} where $\Delta=|x_1-x_2|$. Apply \eqref{m10.0} with $x=B(T_{r_{\lambda_2}}^2)$ to get
\begin{align}\label{e10.4}
  &\lambda_1^{1+\alpha}V_1^{\vec{\lambda},\vec{x}}(B(T_{r_{\lambda_2}}^2) )\leq c_{\ref{p1.1}}  |B(T_{r_{\lambda_2}}^2)-x_1|^{-p}\leq c_{\ref{p1.1}} \Delta^{-p} 2^p.
    \end{align}
    Let $\tau_r=\inf\{t\geq 0: |B_t|\leq r\}$ and use \eqref{e10.4} and \eqref{e10.2} to see that  $I_2$ becomes
   \begin{align}\label{m1.2}
   I_2&\leq   c_{\ref{p1.1}}2^p \Delta^{-p} E_x\Big(1_{\{T_{r_{\lambda_2}}^2<\infty\}} 1_{\{T_{r_{\lambda_2}}^2<T_{r_{\lambda_{1}}}^{1}\}} \exp\Big(-\int_0^{T_{r_{\lambda_2}}^2} V^{\vec{\lambda},\vec{x}}(B_s) ds\Big)\Big)\\
      &\leq  c_{\ref{p1.1}} 2^p \Delta^{-p} E_{x-x_2}\Big(1_{\{\tau_{r_{\lambda_2}}<\infty\}}  \exp\Big(-\int_0^{\tau_{r_{\lambda_2}}} V^{\lambda_2}(B_s) ds\Big)\Big)\nn\\
      &=  c_{\ref{p1.1}} 2^p \Delta^{-p} r_{\lambda_2}^p |x-x_2|^{-p} E_{|x-x_2|}^{(2+2\nu)}\Big( \exp\Big(\int_0^{\tau_{r_{\lambda_2}}} (V^\infty-V^{\lambda_2})(\rho_s) ds\Big)\Big|\tau_{r_{\lambda_2}}<\infty\Big)\nn\\
&\leq c_{\ref{p1.1}} 2^p \Delta^{-p} r_{\lambda_2}^p |x-x_2|^{-p} C_{\ref{c13.5}}(\lambda_0, \nu,1) \to 0 \text{ as }  \lambda_2 \to \infty,\nn
   \end{align}
   where we have used Proposition \ref{p20.1} in the equality and we choose $\lambda_0> c_{\ref{c13.5}}$ to apply Lemma \ref{c13.5} in the last inequality.
   
   Now we will turn to $I_1$. Let $(Y_t, t\geq 0)$ be the $d$-dimensional coordinate process under Wiener measure, $P_x$. By slightly abusing the notation, we set $\tau_r=\tau_r^Y=\inf\{t\geq 0: |Y_t|\leq r\}$ for any $r>0$, and set
     \begin{align}\label{e10.5}
     T^{'}_{r_{\lambda_2}}= T^{',Y}_{r_{\lambda_2}}=\inf\{t\geq 0: |Y_t-(x_2-x_1)|\leq r_{\lambda_2}\}
     \end{align}
Then use translation invariance of $Y$ to get
  \begin{align*}
 I_1=&E_{x-x_1}\Big(1_{\{\tau_{r_{\lambda_1}}<\infty\}}  1_{\{\tau_{r_{\lambda_1}}<T^{'}_{r_{\lambda_2}}\}}  \lambda_1^{1+\alpha}V_1^{\vec{\lambda},\vec{x}}(Y({\tau_{r_{\lambda_1}}})+x_1)\nn \\
   &\quad \quad \quad \times \exp\Big(-\int_0^{\tau_{r_{\lambda_1}}} V^{\vec{\lambda},\vec{x}}(Y_s+x_1) ds\Big)\Big).
  \end{align*}
 Recall that $\Hat{P}_{x}^{(2-2\nu)}$ is the law of $Y$ starting from $x$ such that $Y$ is the unique solution of
  \begin{align}\label{e10.6}
       \begin{cases}
        &Y_t=x+\Hat{B}_t+\int_0^t (-\nu-\mu)\frac{Y_s}{|Y_s|^2}ds, \quad t< \tau_0,\\
        &Y_t=0, t\geq \tau_0,
        \end{cases}
  \end{align}
  where $\Hat{B}$ is a standard $d$-dimensional Brownian motion under $\Hat{P}_{x}^{(2-2\nu)}$. 
    Apply Proposition \ref{p8.1} with $g(\cdot)=V^{\vec{\lambda},\vec{x}}(\cdot+x_1)$ in the above to get
   \begin{align}\label{e10.7}
   I_1=&\frac{r_{\lambda_1}^{p}}{|x-x_1|^{p}} \Hat{E}^{(2-2\nu)}_{x-x_1}\Big(1_{\{\tau_{r_{\lambda_1}}<T^{'}_{r_{\lambda_2}}\}} \lambda_1^{1+\alpha}V_1^{\vec{\lambda},\vec{x}}(Y({\tau_{r_{\lambda_1}}})+x_1)\nn \\
   &\quad \quad \quad \times \exp\Big(-\int_0^{\tau_{r_{\lambda_1}}} (V^{\vec{\lambda},\vec{x}}(Y_s+x_1)-V^\infty(Y_s)) ds\Big)\Big)\nn\\
 =&\frac{1}{|x-x_1|^{p}} \Hat{E}^{(2-2\nu)}_{x-x_1}\Bigg([1_{\{\tau_{r_{\lambda_1}}<T^{'}_{r_{\lambda_2}}\}}]  [r_{\lambda_1}^{p} \lambda_1^{1+\alpha} V_1^{\vec{\lambda},\vec{x}}(Y({\tau_{r_{\lambda_1}}})+x_1)]\nn \\
   & \quad \times \Big[\exp\Big(-\int_0^{\tau_{r_{\lambda_1}}} (V^{\vec{\lambda},\vec{x}}(Y_s+x_1)-V^{\lambda_1}(Y_s)) ds\Big)\Big]\nn\\
   & \quad \times\Big[\exp\Big(\int_0^{\tau_{r_{\lambda_1}}} (V^\infty(Y_s)-V^{\lambda_1}(Y_s)) ds\Big)\Big]\Bigg)\nn\\
   :=&\frac{1}{|x-x_1|^{p}} \Hat{E}^{(2-2\nu)}_{x-x_1} ([\widetilde{J_1}] [\widetilde{J_2}] [\widetilde{J_3}] [\widetilde{J_4}]).
   \end{align}
where  we have ordered the fours terms in square brackets as $\widetilde{J_1}, \dots, \widetilde{J_4}$.

We first consider $\widetilde{J_2}$. Recall \eqref{e4.4.1} and use translation invariance to get 
\begin{align*}%\label{e10.8}
 \widetilde{J_2}=&r_{\lambda_1}^{p} \lambda_1^{1+\alpha} \N_{Y({\tau_{r_{\lambda_1}}})+x_1}\Big(L^{x_1} \exp(-\lambda_1 L^{x_1}-\lambda_2 L^{x_2})\Big) \\
 =&r_{\lambda_1}^{p} \lambda_1^{1+\alpha}   \N_{Y({\tau_{r_{\lambda_1}}})}\Big(L^0 \exp(-\lambda_1 L^{0}-\lambda_2 L^{x_2-x_1})\Big).
\end{align*} 
By the scaling of Brownian snake and its local time $(L^{x_0})$ under the excursion measure $\N_x$ (see, e.g., Proof of Proposition V.9 in \cite{Leg99}), we have
\begin{align}\label{e10.8}
 \widetilde{J_2}=& r_{\lambda_1}^{p} \lambda_1^{1+\alpha}  r_{\lambda_1}^{-2} \N_{Y({\tau_{r_{\lambda_1}}})/r_{\lambda_1}}\Big(r_{\lambda_1}^{4-d} L^0 e^{- {\lambda_1} r_{\lambda_1}^{4-d} L^0}e^{-\lambda_2 r_{\lambda_1}^{4-d} L^{(x_2-x_1)/r_{\lambda_1}}} \Big)\nn \\
   =& \lambda_0^{p+2-d} \N_{Y({\tau_{r_{\lambda_1}}})/r_{\lambda_1}}\Big(L^0 e^{- \lambda_0^{4-d} L^0}e^{-\lambda_2 r_{\lambda_1}^{4-d} L^{(x_2-x_1)/r_{\lambda_1}} } \Big)\nn \\
\overset{\text{law}}{=}& \lambda_0^{p+2-d} \N_{Y({\tau_{1}})}\Big(L^0 e^{- \lambda_0^{4-d}L^0}e^{-\lambda_2 r_{\lambda_1}^{4-d} L^{(x_2-x_1)/r_{\lambda_1}} } \Big),
      \end{align} 
   where in the next to last equality we have used the definitions of $r_{\lambda_1}$ and $\alpha$ and the last equality follows from the scaling of $Y$.
Note for any $K>0$, we have \[\Big|\frac{x_2-x_1}{r_{\lambda_1}}\Big|>K \text{ for } \lambda_1 \text{ large enough, }\] and so by the compactness of the support of SBM (see \eqref{ea0.0}), we conclude $\N_{Y({\tau_{1}})}$-a.e.,
\[L^{(x_2-x_1)/r_{\lambda_1}}=0 \text{ for } \lambda_1 \text{ large enough.}\]
Therefore an application of Dominated Convergence Theorem will give us
  \begin{align}\label{e10.9}
 &\lim_{\lambda_1, \lambda_2 \to \infty}   \N_{Y({\tau_{1}})}\Big( L^0 \exp(-\lambda_0^{4-d} L^0-\lambda_2 r_{\lambda_1}^{4-d} L^{(x_2-x_1)/r_{\lambda_1}}) \Big),\nn \\
 =& \N_{Y({\tau_{1}})}\Big( L^0 \exp(-\lambda_0^{4-d} L^0)\Big)=\N_{e_1}\Big( L^0 \exp(-\lambda_0^{4-d} L^0) \Big)=V_1^{\lambda_0^{4-d}}(1),
     \end{align} 
     where in the next to last equality we have used spherical symmetry and $e_1$ is the first unit basis vector. The last equality follows by \eqref{e12.2}.
       In view of \eqref{e10.8}, we have proved 
\begin{align}
 \widetilde{J_2}=r_{\lambda_1}^{p} \lambda_1^{1+\alpha}V_1^{\vec{\lambda},\vec{x}}(Y({\tau_{r_{\lambda_1}}})+x_1) \overset{d}{\to} \lambda_0^{p+2-d}V_1^{\lambda_0^{4-d}}(1) \text{ in distribution} 
 \end{align}
as $\lambda_1, \lambda_2 \to \infty$, and furthermore, under $\Hat{P}_{x-x_1}^{(2-2\nu)}$ we have
\begin{align}\label{e10.10}
 \widetilde{J_2}=r_{\lambda_1}^{p} \lambda_1^{1+\alpha} V_1^{\vec{\lambda},\vec{x}}(Y({\tau_{r_{\lambda_1}}})+x_1) \to\lambda_0^{p+2-d}V_1^{\lambda_0^{4-d}}(1) \text{ in probability}
\end{align} 
 as $\lambda_1, \lambda_2 \to \infty$ since $\lambda_0^{p+2-d}V_1^{\lambda_0^{4-d}}(1)$ is a constant.\\
 
By \eqref{ec2.1.0}, with  $\Hat{P}^{(2-2\nu)}_{x-x_1}$-probability one we have
   \begin{align}\label{e10.11}
 \widetilde{J_1}=1_{\{\tau_{r_{\lambda_1}}<T^{'}_{r_{\lambda_2}}\}} \to 1 \text{ as } \lambda_1, \lambda_2 \to \infty.
   \end{align}
As for \eqref{ec2.1.1}, we use Lemma \ref{l7.5} to see that with  $\Hat{P}^{(2-2\nu)}_{x-x_1}$-probability one,
      \begin{align}\label{e10.12}
    \widetilde{J_3}=\exp\Big(-&\int_0^{\tau_{r_{\lambda_1}}} (V^{\vec{\lambda},\vec{x}}(Y_s+x_1)-V^{\lambda_1}(Y_s)) ds\Big)\nn \\
     &\to \exp\Big(-\int_0^{\tau_{0}} (V^{\vec{\infty},\vec{x}}(Y_s+x_1)-V^\infty(Y_s)) ds\Big) \text{ as } \lambda_1, \lambda_2 \to \infty.
     \end{align}
Here one can see from \eqref{e10.2} that
    \begin{align*}
0&\leq V^{\vec{\lambda},\vec{x}}(Y_s+x_1)-V^{\lambda_1}(Y_s)\leq  V^{\lambda_2}(Y_s-(x_2-x_1))\leq V^\infty(Y_s-(x_2-x_1)),
   \end{align*} 
   and so apply Dominated Convergence as before.\\
  
Combining \eqref{e10.10}, \eqref{e10.11} and \eqref{e10.12}, we conclude that under $\Hat{P}_{x-x_1}^{(2-2\nu)}$,
\begin{align}\label{ce1.4.5}
 \widetilde{J_1} \widetilde{J_2} \widetilde{J_3}  \to \lambda_0^{p+2-d}V_1^{\lambda_0^{4-d}}(1) & \exp\Big(-\int_0^{\tau_{0}} (V^{\vec{\infty},\vec{x}}(Y_s+x_1)-V^\infty(Y_s)) ds\Big)\nn\\
&  \text{ in probability as  } \lambda_1, \lambda_2 \to \infty.
\end{align}
Recall \eqref{m10.0} to see that
  \begin{align*}
 \widetilde{J_2}= &r_{\lambda_1}^{p} \lambda_1^{1+\alpha}V_1^{\vec{\lambda},\vec{x}}(Y({\tau_{r_{\lambda_1}}})+x_1)
 \leq r_{\lambda_1}^{p}  c_{\ref{p1.1}} |Y({\tau_{r_{\lambda_1}}})|^{-p}=c_{\ref{p1.1}}.
\end{align*} 
Use \eqref{e10.2} to see that $ \widetilde{J_3}\leq 1$ and so conclude
      \begin{align}\label{ce1.4.6}
      \widetilde{J_1} \widetilde{J_2} \widetilde{J_3} \leq c_{\ref{p1.1}},  \quad \Hat{P}_{x-x_1}^{(2-2\nu)}-a.s.
      \end{align}
Recall \eqref{ae8.10} and use \eqref{ce1.4.5}, \eqref{ce1.4.6} and bounded convergence theorem to get
        \begin{align}\label{5.3}
     \lim_{\lambda_1, \lambda_2 \to \infty} \Hat{E}_{x-x_1}^{(2-2\nu)}\Big(&\Big(\lambda_0^{p+2-d}V_1^{\lambda_0^{4-d}}(1) \exp\Big(-\int_0^{\tau_{0}} (V^{\vec{\infty},\vec{x}}(Y_s+x_1)-V^\infty(Y_s)) ds\Big)\nn \\
     &- \widetilde{J_1} \widetilde{J_2} \widetilde{J_3}\Big)^2\Big)=0.
            \end{align}
Recalling $\widetilde{J_4}$ as in \eqref{e10.7}, we use the fact that under $\Hat{P}_{x-x_1}^{(2-2\nu)}$, the process $\{|Y_{s\wedge \tau_{r_{\lambda_1}}}|, s\geq 0\}$ is a stopped $(2-2\nu)$-dimensional Bessel process and then use Corollary \ref{c1.4} to get for all $\lambda_1>0$,
\begin{align}\label{5.4}
&\Hat{E}_{x-x_1}^{(2-2\nu)}(\widetilde{J_4}^2)=\Hat{E}_{x-x_1}^{(2-2\nu)}\Big(\exp\Big(2\int_0^{\tau_{r_{\lambda_1}}} (V^{\infty}(Y_s)-V^{\lambda_1}(Y_s)) ds\Big)\Big)\nn \\
=&{E}_{|x-x_1|}^{(2-2\nu)}\Big(\exp\Big(2\int_0^{\tau_{r_{\lambda_1}}} (V^{\infty}(\rho_s)-V^\lambda(\rho_s)) ds\Big)\Big)\nn \\
=&{E}_{|x-x_1|}^{(2+2\nu)}\Big(\exp\Big(2\int_0^{\tau_{r_{\lambda_1}}} (V^{\infty}(\rho_s)-V^\lambda(\rho_s)) ds\Big)\Big|\tau_{r_{\lambda_1}}<\infty\Big)\nn\\
\leq& C_{\ref{c13.5}}(\lambda_0, \nu, 2)<\infty,
\end{align} 
 where we have chosen $\lambda_0>c_{\ref{c13.5}}$ so that we can apply Lemma \ref{c13.5} in the last inequality. Now we can conclude that
\begin{align*}
&\Big|\Hat{E}_{x-x_1}^{(2-2\nu)}(\widetilde{J_1} \widetilde{J_2} \widetilde{J_3}\widetilde{J_4} )-\Hat{E}_{x-x_1}^{(2-2\nu)}\Big(\lambda_0^{p+2-d}V_1^{\lambda_0^{4-d}}(1) \\
&\quad \quad \quad \quad \times \exp\Big(-\int_0^{\tau_{0}} (V^{\vec{\infty},\vec{x}}(Y_s+x_1)-V^\infty(Y_s)) ds\Big) \times \widetilde{J_4}\Big)\Big| \nn \\
\leq&\Hat{E}_{|x-x_1|}^{(2-2\nu)}\Big( \widetilde{J_4}\times \Big|\widetilde{J_1} \widetilde{J_2} \widetilde{J_3}-\lambda_0^{p+2-d}V_1^{\lambda_0^{4-d}}(1) \nn\\
&\quad \quad \quad \quad \times \exp\Big(-\int_0^{\tau_{0}} (V^{\vec{\infty},\vec{x}}(Y_s+x_1)-V^\infty(Y_s)) ds\Big)\Big|\Big)\nn\\
\leq  & \Big(\Hat{E}_{x-x_1}^{(2-2\nu)} (\widetilde{J_4}^2) \Big)^{1/2} \Big(\Hat{E}_{x-x_1}^{(2-2\nu)}\Big(\widetilde{J_1} \widetilde{J_2} \widetilde{J_3}-\lambda_0^{p+2-d}V_1^{\lambda_0^{4-d}}(1)\\
&\quad \quad \quad \quad \times \exp\Big(-\int_0^{\tau_{0}} (V^{\vec{\infty},\vec{x}}(Y_s+x_1)-V^\infty(Y_s)) ds\Big) \Big)^2\Big)^{1/2} \to 0
\end{align*} 
as $\lambda_1, \lambda_2 \to \infty$, where the second inequality is by Cauchy-Schwartz and the convergence to $0$ follows from \eqref{5.3} and \eqref{5.4}. In view of \eqref{e10.7}, we conclude
\begin{align}\label{6.1}
&\lim_{\lambda_1, \lambda_2 \to \infty} I_1=\frac{\lambda_0^{p+2-d}V_1^{\lambda_0^{4-d}}(1)}{|x-x_1|^p}\\
& \lim_{\lambda_1, \lambda_2 \to \infty} \Hat{E}_{x-x_1}^{(2-2\nu)}\Big(\exp\Big(-\int_0^{\tau_{0}} (V^{\vec{\infty},\vec{x}}(Y_s+x_1)-V^\infty(Y_s)) ds\Big) \cdot \widetilde{J_4}\Big),\nn
 \end{align}     
 providing we can show the limit on the right-hand side exists.
 
Recall $C_{\ref{c13.5}}(\lambda_0, \nu,1)$ as in Lemma \ref{c13.5}. We claim that
\begin{align}\label{5.9}
    & \lim_{\lambda_1 \to \infty}  \Hat{E}_{x-x_1}^{(2-2\nu)}\Big(\exp\Big(-\int_0^{\tau_{0}} (V^{\vec{\infty},\vec{x}}(Y_s+x_1)-V^\infty(Y_s)) ds\Big) \cdot \widetilde{J_4}\Big)\\
     =&C_{\ref{c13.5}}(\lambda_0, \nu,1) \Hat{E}_{x-x_1}^{(2-2\nu)}\Big(\exp\Big(-\int_0^{\tau_{0}} (V^{\vec{\infty},\vec{x}}(Y_s+x_1)-V^\infty(Y_s)) ds\Big)\Big).\nn
     \end{align}
    It will then follow from \eqref{m1.1}, \eqref{m1.2}, \eqref{6.1} and \eqref{5.9} that
\begin{align*}
     &\lim_{\lambda_1,\lambda_2 \to \infty}\lambda_1^{1+\alpha} V_1^{\vec{\lambda},\vec{x}}(x)=\lambda_0^{p+2-d}V_1^{\lambda_0^{4-d}}(1)C_{\ref{c13.5}}(\lambda_0, \nu,1)\nn\\
&\quad \quad |x-x_1|^{-p} \Hat{E}_{x-x_1}^{(2-2\nu)}\Big(\exp\Big(-\int_0^{\tau_{0}} (V^{\vec{\infty},\vec{x}}(Y_s+x_1)-V^\infty(Y_s)) ds\Big)\Big),
\end{align*}
  and the proof will be complete by letting $K_{\ref{p3.1}}=\lambda_0^{p+2-d}V_1^{\lambda_0^{4-d}}(1)C_{\ref{c13.5}}(\lambda_0, \nu,1)$. Recall $c_{\ref{p12}}$ as in Lemma \ref{p12} (see \eqref{6.3}) to conclude $K_{\ref{p3.1}}=c_{\ref{p12}}$.\\ % It is interesting to see that the same constant appears in .\\
  
    It remains to prove \eqref{5.9}. First by \eqref{ae8.10} and monotone convergence theorem, we have
\begin{align}\label{m1.3}
\lim_{\delta\to 0}& \Big|\Hat{E}_{x-x_1}^{(2-2\nu)}\Big(\exp\Big(-\int_0^{\tau_{\delta}} (V^{\vec{\infty},\vec{x}}(Y_s+x_1)-V^\infty(Y_s)) ds\Big)\Big)\nn\\
&-\Hat{E}_{x-x_1}^{(2-2\nu)}\Big(\exp\Big(-\int_0^{\tau_{0}} (V^{\vec{\infty},\vec{x}}(Y_s+x_1)-V^\infty(Y_s)) ds\Big)\Big)\Big|=0.
\end{align} 
Since $\widetilde{J_4}$ has an uniform $L^2$ bound for all $\lambda_1>0$ by \eqref{5.4}, by Cauchy-Schwartz we have
\begin{align}\label{m1.4}
& \Big|\Hat{E}_{x-x_1}^{(2-2\nu)}\Big(\exp\Big(-\int_0^{\tau_{\delta}} (V^{\vec{\infty},\vec{x}}(Y_s+x_1)-V^\infty(Y_s)) ds\Big)\times \widetilde{J_4}\Big)\\
&-\Hat{E}_{x-x_1}^{(2-2\nu)}\Big(\exp\Big(-\int_0^{\tau_{0}} (V^{\vec{\infty},\vec{x}}(Y_s+x_1)-V^\infty(Y_s)) ds\Big)\times \widetilde{J_4}\Big)\Big|\nn\\
\leq&\Big(\Hat{E}_{x-x_1}^{(2-2\nu)} (\widetilde{J_4}^2) \Big)^{1/2} \Big(\Hat{E}_{x-x_1}^{(2-2\nu)}\Big(\exp\Big(-\int_0^{\tau_{\delta}} (V^{\vec{\infty},\vec{x}}(Y_s+x_1)-V^\infty(Y_s)) ds\Big)\nn\\
&\quad  -\exp\Big(-\int_0^{\tau_{0}} (V^{\vec{\infty},\vec{x}}(Y_s+x_1)-V^\infty(Y_s)) ds\Big)\Big)^2\Big)^{1/2} \to 0 \text{ as } \delta \downarrow 0\nn
%\leq& C_{\ref{c13.5}}(\lambda_0, \nu, 2)^{1/2}\Big(\Hat{E}_{x-x_1}^{(2-2\nu)}\Big(\exp\Big(-\int_0^{\tau_{\delta}} (V^{\vec{\infty},\vec{x}}(Y_s+x_1)-V^\infty(Y_s)) ds\Big)\nn\\
%&\quad -\exp\Big(-\int_0^{\tau_{0}} (V^{\vec{\infty},\vec{x}}(Y_s+x_1)-V^\infty(Y_s)) ds\Big)\Big)^2\Big)^{1/2} 
\end{align} 
uniformly for all $\lambda_1>0$, where the last follows from monotone convergence theorem and \eqref{5.4}.
Fixing any $\delta>0$, we will show that
\begin{align}\label{6.0}
&\lim_{\lambda_1 \to \infty} \Hat{E}_{x-x_1}^{(2-2\nu)}\Big(\exp\Big(-\int_0^{\tau_{\delta}} (V^{\vec{\infty},\vec{x}}(Y_s+x_1)-V^\infty(Y_s)) ds\Big) \cdot \widetilde{J_4}\Big)
\\
=& C_{\ref{c13.5}}(\lambda_0, \nu,1) \Hat{E}_{x-x_1}^{(2-2\nu)}\Big(\exp\Big(-\int_0^{\tau_{\delta}} (V^{\vec{\infty},\vec{x}}(Y_s+x_1)-V^\infty(Y_s)) ds\Big)\Big),\nn
\end{align}
     and one can easily conclude from \eqref{m1.3}, \eqref{m1.4} and \eqref{6.0} that \eqref{5.9} holds.

It remains to prove \eqref{6.0}. 
%Recall that 
%\[\widetilde{J_4}=\exp\Big(\int_0^{\tau_{r_{\lambda_1}}} (V^{\infty}-V^{\lambda_1})(Y_s) ds\Big).\]
For $r_{\lambda_1}<\delta$ we use strong Markov property of $(Y_s, s\geq 0)$ to get 
\begin{align}\label{m2.1}
&\Hat{E}_{x-x_1}^{(2-2\nu)}\Big(\exp\Big(-\int_0^{\tau_{\delta}} (V^{\vec{\infty},\vec{x}}(Y_s+x_1)-V^\infty(Y_s)) ds\Big)\nn\\
&\quad\quad \quad \times\exp\Big(\int_0^{\tau_{r_{\lambda_1}}} (V^{\infty}-V^{\lambda_1})(Y_s) ds\Big)\Big)\nn\\
%=&\Hat{E}_{x-x_1}^{(2-2\nu)}\Big(\exp\Big(-\int_0^{\tau_{\delta}} (V^{\vec{\infty},\vec{x}}(Y_s+x_1)-V^\infty(Y_s)) ds\Big)\exp\Big(\int_0^{\tau_{\delta}} (V^{\infty}(Y_s)-V^{\lambda_1}(Y_s)) ds\Big)\nn\\
%&\quad \quad \quad \exp\Big(\int_{\tau_{\delta}}^{\tau_{r_{\lambda_1}}} (V^{\infty}(Y_s)-V^{\lambda_1}(Y_s)) ds\Big) \Big)\nn\\
=&\Hat{E}_{x-x_1}^{(2-2\nu)}\Bigg(\exp\Big(-\int_0^{\tau_{\delta}} (V^{\vec{\infty},\vec{x}}(Y_s+x_1)-V^\infty(Y_s)) ds\Big)\nn\\
&\quad\quad \quad \times \exp\Big(\int_0^{\tau_{\delta}} (V^{\infty}(Y_s)-V^{\lambda_1}(Y_s)) ds\Big)\nn\\
&\quad \quad \quad \times \Hat{E}_{Y_{\tau_\delta}}^{(2-2\nu)} \Big(\exp\Big(\int_0^{\tau_{r_{\lambda_1}}} (V^{\infty}(Y_s)-V^{\lambda_1}(Y_s)) ds\Big) \Big)\Bigg).
\end{align} 
Using Corollary \ref{c1.4} as in \eqref{5.4}, we have 
\begin{align}\label{m2.2}
 &\Hat{E}_{Y_{\tau_\delta}}^{(2-2\nu)} \Big(\exp\Big(\int_0^{\tau_{r_{\lambda_1}}} (V^{\infty}(Y_s)-V^{\lambda_1}(Y_s)) ds\Big) \Big)\nn\\
&={E}_{|Y_{\tau_\delta}|}^{(2-2\nu)}\Big(\exp\Big(\int_0^{\tau_{r_{\lambda_1}}} (V^{\infty}(\rho_s)-V^{\lambda_1}(\rho_s)) ds\Big) \Big)\nn\\
  &={E}_{\delta}^{(2+2\nu)}\Big(\exp\Big(\int_0^{\tau_{r_{\lambda_1}}} (V^{\infty}(\rho_s)-V^{\lambda_1}(\rho_s)) ds\Big)\Big|\tau_{r_{\lambda_1}}<\infty \Big)\nn\\
&\uparrow C_{\ref{c13.5}}(\lambda_0, \nu,1) \text{ as } \lambda_1 \to \infty,
\end{align} 
where the last follows from Lemma \ref{c13.5} by choosing $\lambda_0>c_{\ref{c13.5}}$.
Next since $\delta>0$ is fixed, we have
\begin{align}\label{m2.3}
\lim_{\lambda_1 \to \infty} \exp\Big(\int_0^{\tau_{\delta}} (V^{\infty}(Y_s)-V^{\lambda_1}(Y_s)) ds\Big) \to 1, \Hat{P}_{x-x_1}^{(2-2\nu)}-a.s.
\end{align} 
In view of \eqref{m2.2} and \eqref{ae8.10}, with $\Hat{P}_{x-x_1}^{(2-2\nu)}$-probability one, for any $\lambda_1>0$ we have
\begin{align}\label{m2.4}
&\exp\Big(-\int_0^{\tau_{\delta}} (V^{\vec{\infty},\vec{x}}(Y_s+x_1)-V^\infty(Y_s)) ds\Big)\nn\\
&  \times \exp\Big(\int_0^{\tau_{\delta}} (V^{\infty}(Y_s)-V^{\lambda_1}(Y_s)) ds\Big)\nn\\
&  \times \Hat{E}_{Y_{\tau_\delta}}^{(2-2\nu)} \Big(\exp\Big(\int_0^{\tau_{r_{\lambda_1}}} (V^{\infty}(Y_s)-V^{\lambda_1}(Y_s)) ds\Big) \Big)\nn\\
&\leq \exp\Big(\int_0^{\tau_{\delta}} V^{\infty}(Y_s) ds\Big)\cdot C_{\ref{c13.5}}(\lambda_0, \nu,1).
\end{align} 
Similar to \eqref{m2.2}, we apply Corollary \ref{c1.4} and Lemma \ref{l13.5}(i) to get
\begin{align}\label{m2.5}
&\Hat{E}_{x-x_1}^{(2-2\nu)}\Big(\exp\Big(\int_0^{\tau_{\delta}} V^{\infty}(Y_s) ds\Big)\Big)={E}_{|x-x_1|}^{(2-2\nu)}\Big(\exp\Big(\int_0^{\tau_{\delta}} V^{\infty}(\rho_s) ds\Big)\Big)\\
=&{E}_{|x-x_1|}^{(2+2\nu)}\Big(\exp\Big(\int_0^{\tau_{\delta}} V^{\infty}(\rho_s) ds\Big)\Big|\tau_\delta<\infty\Big)\nn\\
=&{E}_{|x-x_1|/\delta}^{(2+2\nu)}\Big(\exp\Big(\int_0^{\tau_{1}} V^{\infty}(\rho_s) ds\Big)\Big|\tau_1<\infty\Big)=(|x-x_1|/\delta)^{\nu-\mu}<\infty,\nn
\end{align} 
where the second last equality is by scaling of Bessel process.
Combine \eqref{m2.2}-\eqref{m2.5} to see that the integrand in \eqref{m2.1} converges pointwise a.s. and is bounded by (the integrable) $\exp\Big(\int_0^{\tau_{\delta}} V^{\infty}(Y_s) ds\Big)\cdot C_{\ref{c13.5}}(\lambda_0, \nu,1)$. Therefore by Dominated Convergence we conclude that 
\begin{align*}
&\lim_{\lambda_1 \to \infty} \Hat{E}_{x-x_1}^{(2-2\nu)}\Big(\exp\Big(-\int_0^{\tau_{\delta}} (V^{\vec{\infty},\vec{x}}(Y_s+x_1)-V^\infty(Y_s)) ds\Big)\\
&\quad \quad \quad \quad \quad \quad \quad  \times \exp\Big(\int_0^{\tau_{r_{\lambda_1}}} (V^{\infty}-V^{\lambda_1})(Y_s) ds\Big)\Big)\\
=&C_{\ref{c13.5}}(\lambda_0, \nu,1)  \Hat{E}_{x-x_1}^{(2-2\nu)}\Big(\exp\Big(-\int_0^{\tau_{\delta}} (V^{\vec{\infty},\vec{x}}(Y_s+x_1)-V^\infty(Y_s)) ds\Big)\Big),
\end{align*} 
and the proof of \eqref{6.0} is complete.
\end{proof}

\begin{proof}[Proof of Proposition \ref{p3.3}(i)]
%In view of the proof of Proposition \ref{p3.1},
 We will only give the convergence of $\lambda_1^{1+\alpha} W_1^{\vec{\lambda},\vec{x},\eps}(x)$ and leave the details for the convergence of $\frac{1}{\eps^{p-2}} W_2^{\vec{\lambda},\vec{x},\eps}(x)$ to the readers.
Recall Lemma \ref{8.9} to see that
  \begin{align*}%\label{bm1.1}
& \lambda_1^{1+\alpha} W_1^{\vec{\lambda},\vec{x},\eps}(x)\\
=&\lambda_1^{1+\alpha}\lim_{t\to \infty}E_x\Big( W_1^{\vec{\lambda},\vec{x},\eps}(B(t\wedge T_{\lambda_1,\eps}))\exp\Big(-\int_0^{t\wedge T_{\lambda_1, \eps}} W^{\vec{\lambda},\vec{x},\eps}(B_s) ds\Big)\Big),
  \end{align*}
where $T_{\lambda_1,\eps}=T_{r_{\lambda_1}}^1 \wedge T_{2\eps}^2$ and $T_{r_{\lambda_1}}^1=\inf\{t\geq 0: |B_t-x_1|\leq r_{\lambda_1} \}$ and $T_{2\eps}^2=\inf\{t\geq 0: |B_t-x_2|\leq 2\eps\}$. Here $r_{\lambda_1}=\lambda_0 \lambda_1^{-\frac{1}{4-d}}$ and we will choose $\lambda_0$ to be some fixed large constant below. By \eqref{m8.7}, we have $W_1^{\vec{\lambda},\vec{x},\eps}(x) \to 0$ as $|x|\to \infty$ and $W_1^{\vec{\lambda},\vec{x},\eps}(B(t\wedge T_{r_{\lambda},\eps}))$ is uniformly bounded for all $t\geq 0$. Apply Dominated Convergence to get
  \begin{align}\label{bm1.1}
& \lambda_1^{1+\alpha} W_1^{\vec{\lambda},\vec{x},\eps}(x)\\
 =&\lambda_1^{1+\alpha} E_x\Big(1_{\{T_{\lambda_1,\eps}<\infty\}} W_1^{\vec{\lambda},\vec{x},\eps}(B(T_{\lambda_1, \eps}))\exp\Big(-\int_0^{T_{\lambda_1, \eps}} W^{\vec{\lambda},\vec{x},\eps}(B_s) ds\Big)\Big)\nn\\
 =& E_x\Big(1_{\{T_{r_{\lambda_1}}^1<\infty\}} 1_{\{T_{r_{\lambda_1}}^1<T_{2\eps}^{2}\}} \lambda_1^{1+\alpha}W_1^{\vec{\lambda},\vec{x},\eps}(B(T_{r_{\lambda_1}}^1))\exp\Big(-\int_0^{T_{r_{\lambda_1}}^1} W^{\vec{\lambda},\vec{x},\eps}(B_s) ds\Big)\Big)\nn\\
 & + E_x\Big(1_{\{T_{2\eps}^2<\infty\}} 1_{\{T_{2\eps}^2<T_{r_{\lambda_{1}}}^{1}\}} \lambda_1^{1+\alpha}W_1^{\vec{\lambda},\vec{x},\eps}(B(T_{2\eps}^2))\exp\Big(-\int_0^{T_{2\eps}^2} W^{\vec{\lambda},\vec{x},\eps}(B_s) ds\Big)\Big)\nn\\
 :=&I_1+I_2,\nn
  \end{align}
  We first deal with $I_2$. Note in the integrand of $I_2$ we may assume that $|B(T_{2\eps}^2)-x_2|=2\eps$ and so for $\eps<|x_1-x_2|/4$ we have $|x_1-B(T_{2\eps}^2)|>\Delta/2$ where $\Delta=|x_1-x_2|$. Apply \eqref{m8.7} with $x=B(T_{2\eps}^2)$ to get
\begin{align}\label{be10.4}
  &\lambda_1^{1+\alpha}W_1^{\vec{\lambda},\vec{x},\eps}(B(T_{2\eps}^2) )\leq c_{\ref{p1.1}}  |B(T_{2\eps}^2)-x_1|^{-p}\leq c_{\ref{p1.1}} \Delta^{-p} 2^p.
    \end{align}
    Let $\tau_r=\inf\{t\geq 0: |B_t|\leq r\}$ and use \eqref{be10.4} and \eqref{ae7.4} to see that  $I_2$ becomes
   \begin{align}\label{bm1.2}
   I_2&\leq   c_{\ref{p1.1}}2^p \Delta^{-p} E_x\Big(1_{\{T_{2\eps}^2<\infty\}} 1_{\{T_{2\eps}^2<T_{r_{\lambda_{1}}}^{1}\}} \exp\Big(-\int_0^{T_{2\eps}^2} W^{\vec{\lambda},\vec{x},\eps}(B_s) ds\Big)\Big)\nn\\
  &\leq  c_{\ref{p1.1}} 2^p \Delta^{-p} E_{x-x_2}\Big(1_{\{\tau_{2\eps}<\infty\}}  \exp\Big(-\int_0^{\tau_{2\eps}} V^{\infty}(B_s) ds\Big)\Big)\nn\\
      &=  c_{\ref{p1.1}} 2^p \Delta^{-p} (2\eps/|x-x_2|)^{p}\to 0 \text{ as }  \eps \downarrow 0,
   \end{align}
   where we have used Proposition \ref{p20.1} in the last equality with $g=V^\infty$. 
 
   Now we will turn to $I_1$. Let $(Y_t, t\geq 0)$ be the $d$-dimensional coordinate process under Wiener measure, $P_x$. By slightly abusing the notation, we set $\tau_r=\tau_r^Y=\inf\{t\geq 0: |Y_t|\leq r\}$ for any $r>0$, and set
     \begin{align}\label{be10.5}
     T^{'}_{2\eps}= T^{',Y}_{2\eps}=\inf\{t\geq 0: |Y_t-(x_2-x_1)|\leq 2\eps\}.
     \end{align}
Then use translation invariance of $Y$ to get
  \begin{align*}
   I_1=&E_{x-x_1}\Big(1_{\{\tau_{r_{\lambda_1}}<\infty\}}  1_{\{\tau_{r_{\lambda_1}}<T^{'}_{2\eps}\}}  \lambda_1^{1+\alpha}W_1^{\vec{\lambda},\vec{x},\eps}(Y({\tau_{r_{\lambda_1}}})+x_1)\nn \\
   &\quad \quad \quad \times \exp\Big(-\int_0^{\tau_{r_{\lambda_1}}} W^{\vec{\lambda},\vec{x},\eps}(Y_s+x_1) ds\Big)\Big).
   \end{align*}
 Recall that $\Hat{P}_{x}^{(2-2\nu)}$ is the law of $Y$ starting from $x$ such that $Y$ satisfy the SDE as in \eqref{e10.6}. Now apply Proposition \ref{p8.1} with $g(\cdot)=W^{\vec{\lambda},\vec{x},\eps}(\cdot+x_1)$ to get
  \begin{align}\label{be10.7}
   I_1=&\frac{r_{\lambda_1}^{p}}{|x-x_1|^{p}} \Hat{E}^{(2-2\nu)}_{x-x_1}\Big(1_{\{\tau_{r_{\lambda_1}}<T^{'}_{2\eps}\}} \lambda_1^{1+\alpha}W_1^{\vec{\lambda},\vec{x},\eps}(Y({\tau_{r_{\lambda_1}}})+x_1)\nn \\
   &\quad \quad \quad \times \exp\Big(-\int_0^{\tau_{r_{\lambda_1}}} (W^{\vec{\lambda},\vec{x},\eps}(Y_s+x_1)-V^\infty(Y_s)) ds\Big)\Big)\nn\\
 =&\frac{1}{|x-x_1|^{p}} \Hat{E}^{(2-2\nu)}_{x-x_1}\Bigg([1_{\{\tau_{r_{\lambda_1}}<T^{'}_{2\eps}\}}]  [r_{\lambda_1}^{p} \lambda_1^{1+\alpha} W_1^{\vec{\lambda},\vec{x},\eps}(Y({\tau_{r_{\lambda_1}}})+x_1)]\nn \\
   & \quad\quad \quad \times \Big[\exp\Big(-\int_0^{\tau_{r_{\lambda_1}}} (W^{\vec{\lambda},\vec{x},\eps}(Y_s+x_1)-V^{\lambda_1}(Y_s)) ds\Big)\Big]\nn\\
  & \quad\quad \quad \times \Big[\exp\Big(\int_0^{\tau_{r_{\lambda_1}}} (V^\infty(Y_s)-V^{\lambda_1}(Y_s)) ds\Big)\Big]\Bigg)\nn\\
   :=&\frac{1}{|x-x_1|^{p}} \Hat{E}^{(2-2\nu)}_{x-x_1} ([\Hat{J_1}] [\Hat{J_2}] [\Hat{J_3}] [\Hat{J_4}]).
   \end{align}
  We first consider $\Hat{J_2}$. Recall the definition of $W_1^{\vec{\lambda},\vec{x},\eps}$ as in Section \ref{s4} and use translation invariance to get 
\begin{align*}%\label{e10.8}
 \Hat{J_2}=&r_{\lambda_1}^{p} \lambda_1^{1+\alpha} \N_{Y({\tau_{r_{\lambda_1}}})+x_1}\Big(L^{x_1} e^{-\lambda_1 L^{x_1}}\exp\Big(-\lambda_2 \frac{X_{G_\eps^{x_2}}(1)}{\eps^2}\Big) 1_{\{X_{G_{\eps/2}^{x_2}}=0\}}\Big) \\
 =&r_{\lambda_1}^{p} \lambda_1^{1+\alpha}   \N_{Y({\tau_{r_{\lambda_1}}})}\Big(L^0 e^{-\lambda_1 L^{0}}\exp\Big(-\lambda_2 \frac{X_{G_\eps^{x_2-x_1}}(1)}{\eps^2}\Big) 1_{\{X_{G_{\eps/2}^{x_2-x_1}}=0\}}\Big).
 \end{align*} 
By the scaling of Brownian snake and its local time and exit measure under the excursion measure $\N_x$ (see, e.g., Proof of Proposition V.9 in \cite{Leg99}), we have
\begin{align*}
 \Hat{J_2}=& r_{\lambda_1}^{p} \lambda_1^{1+\alpha}  r_{\lambda_1}^{-2} \N_{Y({\tau_{r_{\lambda_1}}})/r_{\lambda_1}}\Bigg(r_{\lambda_1}^{4-d} L^0 \exp(- {\lambda_1} r_{\lambda_1}^{4-d} L^0)\nn\\
&\quad \quad \times \exp\Big(- \lambda_2 \frac{X_{G_{\eps/r_{\lambda_1}}^{(x_2-x_1)/r_{\lambda_1}}}(1)}{(\eps/r_{\lambda_1})^{2}}\Big)1\Big(X_{G_{\eps/2r_{\lambda_1}}^{(x_2-x_1)/r_{\lambda_1}}}=0\Big)  \Bigg)
      \end{align*} 
Use the definitions of $r_{\lambda_1}$ and $\alpha$ to see that the above becomes
\begin{align}\label{be10.8}
   \Hat{J_2} =& \lambda_0^{p+2-d} \N_{Y({\tau_{r_{\lambda_1}}})/r_{\lambda_1}}\Big(L^0 \exp(- \lambda_0^{4-d} L^0)\\
&\quad \quad \times \exp\Big(- \lambda_2 \frac{X_{G_{\eps/r_{\lambda_1}}^{(x_2-x_1)/r_{\lambda_1}}}(1)}{(\eps/r_{\lambda_1})^{2}}\Big)1\Big(X_{G_{\eps/2r_{\lambda_1}}^{(x_2-x_1)/r_{\lambda_1}}}=0\Big)  \Big) \nn\\
\overset{\text{law}}{=}& \lambda_0^{p+2-d} \N_{Y({\tau_{1}})}\Big(L^0 \exp(- \lambda_0^{4-d} L^0)\nn\\
&\quad \quad \times \exp\Big(- \lambda_2 \frac{X_{G_{\eps/r_{\lambda_1}}^{(x_2-x_1)/r_{\lambda_1}}}(1)}{(\eps/r_{\lambda_1})^{2}}\Big)1\Big(X_{G_{\eps/2r_{\lambda_1}}^{(x_2-x_1)/r_{\lambda_1}}}=0\Big)  \Big),\nn
      \end{align} 
 where the last equality follows from the scaling of $Y$.
Note for any $K>0$,  for all $0<\eps<|x_1-x_2|/2$, we have 
\[\Big|\frac{x_2-x_1}{r_{\lambda_1}}\Big|-\frac{\eps}{r_{\lambda_1}}\geq \frac{|x_1-x_2|/2}{r_{\lambda_1}}>K \text{ for } \lambda_1 \text{ large enough, }\] 
and so by \eqref{ea0.0} and \eqref{ea1.1} we conclude $\N_{Y({\tau_{1}})}$-a.e.
\[X_{G_{\eps/r_{\lambda_1}}^{(x_2-x_1)/r_{\lambda_1}}}(1)=X_{G_{\eps/2r_{\lambda_1}}^{(x_2-x_1)/r_{\lambda_1}}}(1)=0 \text{ for } \lambda_1 \text{ large enough.}\]
Therefore an application of Dominated Convergence Theorem will give us
  \begin{align}\label{be10.9}
 \lim_{\lambda_1 \to \infty, \eps \downarrow 0}  & \N_{Y({\tau_{1}})}\Big( L^0 e^{-\lambda_0^{4-d} L^0}\exp\Big(- \lambda_2 \frac{X_{G_{\eps/r_{\lambda_1}}^{(x_2-x_1)/r_{\lambda_1}}}(1)}{(\eps/r_{\lambda_1})^{2}}\Big)1\Big(X_{G_{\eps/2r_{\lambda_1}}^{(x_2-x_1)/r_{\lambda_1}}}=0\Big) \Big)\nn \\
 =& \N_{Y({\tau_{1}})}\Big( L^0 e^{-\lambda_0^{4-d} L^0}\Big)=\N_{e_1}\Big( L^0 e^{-\lambda_0^{4-d} L^0} \Big)=V_1^{\lambda_0^{4-d}}(1),
     \end{align} 
     where in the next to last equality we have used spherical symmetry and $e_1$ is the first unit basis vector. The last equality follows by \eqref{e12.2}.
       In view of \eqref{be10.8}, we have proved 
\begin{align}
\Hat{J_2}=r_{\lambda_1}^{p} \lambda_1^{1+\alpha}W_1^{\vec{\lambda},\vec{x},\eps}(Y_{\tau_{r_{\lambda_1}}}+x_1) \to \lambda_0^{p+2-d}V_1^{\lambda_0^{4-d}}(1)  \text{ in distribution}
\end{align}
as $\lambda_1 \to \infty, \eps \downarrow 0$, and furthermore under $\Hat{P}_{x-x_1}^{(2-2\nu)}$, we have
\begin{align}\label{be10.10}
\Hat{J_2}=r_{\lambda_1}^{p} \lambda_1^{1+\alpha} W_1^{\vec{\lambda},\vec{x},\eps}(Y_{\tau_{r_{\lambda_1}}}+x_1) \to\lambda_0^{p+2-d}V_1^{\lambda_0^{4-d}}(1) \text{ in probability}
\end{align} 
as $\lambda_1 \to \infty, \eps \downarrow 0$ since $\lambda_0^{p+2-d}V_1^{\lambda_0^{4-d}}(1)$ is a constant.\\
 
By \eqref{ec2.1.0}, with  $\Hat{P}^{(2-2\nu)}_{x-x_1}$-probability one we have
   \begin{align}\label{be10.11}
   \Hat{J_1}=1_{\{\tau_{r_{\lambda_1}}<T^{'}_{2\eps}\}} \to 1 \text{ as }\lambda_1 \to \infty, \eps \downarrow 0.
   \end{align}
As for \eqref{ec2.1.1}, we use Lemma \ref{l7.5} to see that with  $\Hat{P}^{(2-2\nu)}_{x-x_1}$-probability one,
      \begin{align}\label{be10.12}
   \Hat{J_3}=\exp\Big(-&\int_0^{\tau_{r_{\lambda_1}}} (W^{\vec{\lambda},\vec{x},\eps}(Y_s+x_1)-V^{\lambda_1}(Y_s)) ds\Big)\nn \\
     &\to \exp\Big(-\int_0^{\tau_{0}} (V^{\vec{\infty},\vec{x}}(Y_s+x_1)-V^\infty(Y_s)) ds\Big) \text{ as } \lambda_1 \to \infty, \eps \downarrow 0.
     \end{align}
Here one can see from \eqref{8.1} that
    \begin{align}\label{ce4.2.1}
0\leq W^{\vec{\lambda},\vec{x},\eps}(Y_s+x_1)-V^{\lambda_1}(Y_s)\leq  U^{\widetilde{\lambda}_2\eps^{-2},\eps}(Y_s-(x_2-x_1)),
   \end{align} 
where $\widetilde{\lambda}_2$ is as in \eqref{ae2.4}. Then argue as in the derivation of \eqref{aea6.3} and apply Dominated Convergence as before.\\
 
Combine \eqref{be10.10}, \eqref{be10.11} and \eqref{be10.12} to see that under $\Hat{P}_{x-x_1}^{(2-2\nu)}$, we have
\begin{align}\label{ce3.4.6}
\Hat{J_1} \Hat{J_2} \Hat{J_3} \to \lambda_0^{p+2-d}V_1^{\lambda_0^{4-d}}(1) & \exp\Big(-\int_0^{\tau_{0}} (V^{\vec{\infty},\vec{x}}(Y_s+x_1)-V^\infty(Y_s)) ds\Big)\nn\\
&  \text{ in probability }  \text{as  } \lambda_1\to \infty, \eps\downarrow 0.
\end{align}
Recall from \eqref{m8.7} to see that
  \begin{align*}
 \widetilde{J_2}\leq &r_{\lambda_1}^{p} \lambda_1^{1+\alpha}W_1^{\vec{\lambda},\vec{x},\eps}(Y({\tau_{r_{\lambda_1}}})+x_1)
 \leq r_{\lambda_1}^{p}  c_{\ref{p1.1}} |Y({\tau_{r_{\lambda_1}}})|^{-p}=c_{\ref{p1.1}}.
\end{align*} 
By \eqref{ce4.2.1} we have $\Hat{J_3}\leq 1$ and so conclude
      \begin{align}\label{ce3.4.5}
      \Hat{J_1}\Hat{J_2} \Hat{J_3} \leq c_{\ref{p1.1}}, \ \Hat{P}_{x-x_1}^{(2-2\nu)}-a.s.
      \end{align}
  Recall \eqref{ae8.10} and use \eqref{ce3.4.6}, \eqref{ce3.4.5} and bounded convergence theorem to get
        \begin{align}\label{b5.3}
     \lim_{\lambda_1\to \infty, \eps\downarrow 0} \Hat{E}_{x-x_1}^{(2-2\nu)}\Big(&\Big(\lambda_0^{p+2-d}V_1^{\lambda_0^{4-d}}(1) \exp\Big(-\int_0^{\tau_{0}} (V^{\vec{\infty},\vec{x}}(Y_s+x_1)-V^\infty(Y_s)) ds\Big)\nn \\
     &-\Hat{J_1}\Hat{J_2} \Hat{J_3}\Big)^2\Big)=0.
            \end{align}
Recall $\widetilde{J_4}$ from \eqref{e10.7} to see that
\[
\Hat{J_4}=\exp\Big(\int_0^{\tau_{r_{\lambda_1}}} (V^\infty(Y_s)-V^{\lambda_1}(Y_s)) ds\Big)=\widetilde{J_4}.
\]
By \eqref{5.4} and by choosing $\lambda_0>c_{\ref{c13.5}}$, we have
\begin{align}\label{b5.4}
\Hat{E}_{x-x_1}^{(2-2\nu)}(\Hat{J_4}^2)=\Hat{E}_{x-x_1}^{(2-2\nu)}(\widetilde{J_4}^2)\leq C_{\ref{c13.5}}(\lambda_0, \nu, 2)<\infty, \forall \lambda_1>0.
\end{align} 
Now we conclude
\begin{align*}
&\Big|\Hat{E}_{x-x_1}^{(2-2\nu)}(\Hat{J_1}\Hat{J_2}\Hat{J_3}\Hat{J_4})-\Hat{E}_{x-x_1}^{(2-2\nu)}\Big(\lambda_0^{p+2-d}V_1^{\lambda_0^{4-d}}(1) \\
&\quad \quad \quad \quad \exp\Big(-\int_0^{\tau_{0}} (V^{\vec{\infty},\vec{x}}(Y_s+x_1)-V^\infty(Y_s)) ds\Big) \cdot \Hat{J_4}\Big)\Big| \nn \\
\leq&\Hat{E}_{|x-x_1|}^{(2-2\nu)}\Big( \Hat{J_4} \cdot \Big|\Hat{J_1}\Hat{J_2}\Hat{J_3}-\lambda_0^{p+2-d}V_1^{\lambda_0^{4-d}}(1) \\
&\quad \quad \quad \quad \exp\Big(-\int_0^{\tau_{0}} (V^{\vec{\infty},\vec{x}}(Y_s+x_1)-V^\infty(Y_s)) ds\Big)\Big|\Big)\nn\\
\leq  &\Hat{E}_{x-x_1}^{(2-2\nu)}\Big(\Big(\lambda_0^{p+2-d}V_1^{\lambda_0^{4-d}}(1) \exp\Big(-\int_0^{\tau_{0}} (V^{\vec{\infty},\vec{x}}(Y_s+x_1)-V^\infty(Y_s)) ds\Big)\nn \\
     &-\Hat{J_1}\Hat{J_2} \Hat{J_3}\Big)^2\Big)^{1/2} \Big(\Hat{E}_{x-x_1}^{(2-2\nu)} (\Hat{J_4}^2) \Big)^{1/2} \to 0\text{ as } \lambda_1\to \infty, \eps\downarrow 0,
\end{align*}
where the second inequality is by Cauchy-Schwartz and the convergence to $0$ follows from \eqref{b5.3} and \eqref{b5.4}. In view of \eqref{be10.7}, we have
\begin{align}\label{b6.1}
\lim_{\lambda_1\to \infty, \eps\downarrow 0} I_1=&\frac{\lambda_0^{p+2-d}V_1^{\lambda_0^{4-d}}(1)}{|x-x_1|^p}\times \nn\\
& \lim_{\lambda_1 \to \infty}\Hat{E}_{x-x_1}^{(2-2\nu)}\Big(\exp\Big(-\int_0^{\tau_{0}} (V^{\vec{\infty},\vec{x}}(Y_s+x_1)-V^\infty(Y_s)) ds\Big) \cdot \Hat{J_4}\Big)\nn\\
 =&\frac{\lambda_0^{p+2-d}V_1^{\lambda_0^{4-d}}(1)}{|x-x_1|^p}\times \nn\\
 &  C_{\ref{c13.5}}(\lambda_0, \nu,1) \Hat{E}_{x-x_1}^{(2-2\nu)}\Big(\exp\Big(-\int_0^{\tau_{0}} (V^{\vec{\infty},\vec{x}}(Y_s+x_1)-V^\infty(Y_s)) ds\Big)\Big),
     \end{align}
where the last equality is by \eqref{5.9} (recall $\Hat{J_4}=\widetilde{J_4}$).

Now we conclude from \eqref{bm1.1}, \eqref{bm1.2} and \eqref{b6.1}  that
\begin{align*}
     \lim_{\lambda_1 \to \infty, \eps \downarrow 0}&\lambda_1^{1+\alpha} W_1^{\vec{\lambda},\vec{x},\eps}(x)=\lambda_0^{p+2-d}V_1^{\lambda_0^{4-d}}(1)C_{\ref{c13.5}}(\lambda_0, \nu,1)|x-x_1|^{-p} \\
&\Hat{E}_{x-x_1}^{(2-2\nu)}\Big(\exp\Big(-\int_0^{\tau_{0}} (V^{\vec{\infty},\vec{x}}(Y_s+x_1)-V^\infty(Y_s)) ds\Big)\Big),
\end{align*}
  and the proof is complete.
   \end{proof}

%\section{second moment}

\section{Proof of Proposition \ref{p3.2}(i) and Proposition \ref{p3.3}(ii)}\label{ad}
%We first turn to the
\begin{proof}[Proof of Proposition \ref{p3.2}(i)]
 For any  $x_1\neq x_2$, we fix $x\neq x_1, x_2$. In order the find the limit of $\lambda_1^{1+\alpha} \lambda_2^{1+\alpha} (-V_{1,2}^{\vec{\lambda},\vec{x}}(x))$ as $\lambda_1, \lambda_2 \to \infty$, by Lemma \ref{l4.5}, it suffices to find the limits of the following as $\lambda_1, \lambda_2 \to \infty$.
 \begin{align}\label{cm6.0}
 K_1+K_2& \equiv \lambda_1^{1+\alpha} \lambda_2^{1+\alpha}E_x\Big( \int_0^{T_{r_\lambda}} \prod_{i=1}^2 V_i^{\vec{\lambda},\vec{x}}(B_t)\exp\Big(-\int_0^{t} V^{\vec{\lambda},\vec{x}}(B_s) ds\Big)dt\Big)\nn\\
 +&\lambda_1^{1+\alpha} \lambda_2^{1+\alpha} E_x\Big( \exp\Big(-\int_0^{T_{r_\lambda}} V^{\vec{\lambda},\vec{x}}(B_s) ds\Big)1_{(T_{r_\lambda}<\infty)} (-V_{1,2}^{\vec{\lambda},\vec{x}}(B_{T_{r_\lambda}}))\Big).
 %\to K_{\ref{p3.1}}^2 E_x\Big( \int_0^{\infty} \prod_{i=1}^2 U_i^{\vec{\infty},\vec{x}}(B_t)\exp\Big(-\int_0^{t} V^{\vec{\infty},\vec{x}}(B_s) ds\Big)dt\Big)\nn
\end{align}
 % Then by the definition of $U_{1,2}^{\vec{\infty},\vec{x}}(x)$ as in \eqref{eu12}, it suffices to show the following
 In the above $T_{r_{\lambda}}=T_{r_{\lambda_1}}^1 \wedge T_{r_{\lambda_2}}^2$ and $T_{r_{\lambda_i}}^i=\inf\{t\geq 0: |B_t-x_i|\leq r_{\lambda_i} \}$. Here $r_{\lambda_i}=\lambda_0 \lambda_i^{-\frac{1}{4-d}}$ and we will choose $\lambda_0$ to be some fixed large constant below. Let $\lambda_1, \lambda_2>0$ be large so that
 \begin{align}\label{ce8.4}
0<4(r_{\lambda_1}\vee r_{\lambda_2})<\min\{|x_1-x|, |x_2-x|,  |x_1-x_2|\}.
\end{align}
We first consider $K_2$.  On $\{T_{r_\lambda}<\infty\}$, by considering $T_{r_\lambda}=T_{r_{\lambda_i}}^i<T_{r_{\lambda_{3-i}}}^{3-i}$ we may set $x_\lambda(\omega)=B(T_{r_\lambda})=B(T_{r_{\lambda_i}}^i)$ so that  $|x_\lambda-x_i|=r_{\lambda_i}$ and by \eqref{ce8.4} we have $|x_{3-i}-x_\lambda|\geq \Delta/2$ where $\Delta=|x_1-x_2|$.  Lemma \ref{l4.2} and the above imply
\[
(-V_{1,2}^{\vec{\lambda},\vec{x},\vec{\eps}}(B(T_{r_\lambda})))\leq \frac{2}{\lambda_{i}}c_{\ref{p1.1}} \lambda_{3-i}^{-(1+\alpha)}\Delta^{-p} 2^p\leq c\Delta^{-p} \frac{1}{\lambda_{3-i}^{1+\alpha}} \frac{1}{\lambda_{i}}.\]
 This shows that 
 \begin{align}\label{1.4}
 K_2\leq \lambda_1^{1+\alpha}& \lambda_2^{1+\alpha}\sum_{i=1}^2  c\Delta^{-p} \frac{1}{\lambda_{3-i}^{1+\alpha}} \frac{1}{\lambda_{i}}\nn\\
 &\quad \quad E_x\Big( 1(T_{r_{\lambda_i}}^i<\infty)1(T_{r_{\lambda_i}}^i<T_{r_{\lambda_{3-i}}}^{3-i})\exp\Big(-\int_0^{ T_{r_{\lambda_i}}^i} V^{\vec{\lambda},\vec{x}}(B_s)ds\Big) \Big).
 \end{align}
From \eqref{m1.2}, by choosing $\lambda_0>c_{\ref{c13.5}}$ we have for $i=1,2,$
   \begin{align}\label{1.6}
 &E_x\Big( 1(T_{r_{\lambda_i}}^i<\infty)1(T_{r_{\lambda_i}}^i<T_{r_{\lambda_{3-i}}}^{3-i})\exp\Big(-\int_0^{ T_{r_{\lambda_i}}^i} V^{\vec{\lambda},\vec{x}}(B_s) ds\Big) \Big)\nn\\
 &\leq r_{\lambda_i}^p |x-x_i|^{-p} C_{\ref{c13.5}}(\lambda_0,\nu,1),
  \end{align}
%  
%By \eqref{e10.2}, we have
% \begin{align}\label{1.5}
% V^{\vec{\lambda},\vec{x}}(x) \geq V^{\lambda_1}(x-x_1) \vee V^{\lambda_2}(x-x_2),\ \forall x \text{ so that } x\neq x_1, x_2.
% \end{align}
% Let $\tau_{r}=\inf\{t: |B_t|\leq r\}$. Use the above in \eqref{1.4}  to see that for $i=1,2$,
%  \begin{align}\label{1.6}
% &E_x\Big( 1(T_{r_{\lambda_i}}^i<\infty)\exp\Big(-\int_0^{ T_{r_{\lambda_i}}^i} V^{\vec{\lambda},\vec{x}}(B_s) ds\Big) \Big)\nonumber\\
%\leq&E_{x-x_i}\Big( 1(\tau_{r_{\lambda_i}}<\infty) \exp\Big(-\int_0^{ \tau_{r_{\lambda_i}}} V^{\lambda_i}(B_s)  ds\Big) \Big)\nn \\
%=& r_{\lambda_i}^p |x-x_i|^{-p} E_{|x-x_i|}^{(2+2\nu)}\Big( \exp\Big(\int_0^{ \tau_{r_{\lambda_i}}} (V^\infty-V^{\lambda_i})(\rho_s)  ds\Big)\Big|\tau_{r_{\lambda_i}}<\infty\Big),
%  \end{align}
%where the last line follows from Proposition \ref{p20.1} with $g=V^{\lambda_i}$.
%Choose $\lambda_0>c_{\ref{c13.5}}$ so that we can use Lemma \ref{c13.5} with $\gamma=1$ in \eqref{1.6} to see that
and so \eqref{1.4} becomes 
 \begin{align}\label{1.7}
K_2\leq& \lambda_1^{1+\alpha} \lambda_2^{1+\alpha} c\Delta^{-p}\sum_{i=1}^2 \frac{1}{\lambda_{3-i}^{1+\alpha}} \frac{1}{\lambda_{i}} r_{\lambda_i}^p |x-x_i|^{-p} C_{\ref{c13.5}}(\lambda_0,\nu,1)\nn\\
\leq &C\Delta^{-p} \lambda_0^p(\lambda_1^{-\frac{2}{4-d}}+\lambda_2^{-\frac{2}{4-d}}) \sum_{i=1}^2  |x-x_i|^{-p} \to 0\text{ as } \lambda_1, \lambda_2 \to \infty,
 \end{align}
where in the last equality we have used the definitions of $r_{\lambda_i}$ and $\alpha$. 

Now we will turn to $K_1$. Recall 
 \begin{align*}
K_1=\int   \int_0^\infty & \lambda_1^{1+\alpha} \lambda_2^{1+\alpha} V_1^{\vec{\lambda},\vec{x}}(B_t)  V_2^{\vec{\lambda},\vec{x}}(B_t)\nn\\
&\quad \quad \exp\Big(-\int_0^{t} V^{\vec{\lambda},\vec{x}}(B_s) ds\Big)1(t\leq T_{r_\lambda})  dt dP_x.
 \end{align*}
By Proposition \ref{p3.1} and Lemma \ref{l7.5}, for $Leg\times P_x$-a.e. $(t,\omega)$, we have
 \begin{align}\label{2.4}
 \lim_{\lambda_1, \lambda_2 \to \infty}& \lambda_1^{1+\alpha} \lambda_2^{1+\alpha} V_1^{\vec{\lambda},\vec{x}}(B_t)  V_2^{\vec{\lambda},\vec{x}}(B_t)\exp\Big(-\int_0^{t} V^{\vec{\lambda},\vec{x}}(B_s) ds\Big)1(t\leq T_{r_\lambda})\nn\\
&=K_{\ref{p3.1}}^2 U_1^{\vec{\infty},\vec{x}}(B_t)U_2^{\vec{\infty},\vec{x}}(B_t)\exp\Big(-\int_0^{t} V^{\vec{\infty},\vec{x}}(B_s) ds\Big).
 \end{align}
%Next recall from \eqref{m10.0} that for $i=1,2,$
% \begin{align}\label{e1.10.1}
%   \lambda_i^{1+\alpha} V_i^{\vec{\lambda},\vec{x}}(x) \leq c_{\ref{p1.1}} |x-x_i|^{-p}, \forall x\neq x_1, x_2.
% \end{align}
Use the bounds \eqref{m10.0} and \eqref{e10.2}  to see that
\begin{align}\label{e4.14.1}
& \lambda_1^{1+\alpha} \lambda_2^{1+\alpha} V_1^{\vec{\lambda},\vec{x}}(B_t)  V_2^{\vec{\lambda},\vec{x}}(B_t)\exp\Big(-\int_0^{t} V^{\vec{\lambda},\vec{x}}(B_s) ds\Big)1(t\leq T_{r_\lambda})  \nonumber\\
 \leq & c_{\ref{p1.1}}^2 |B_t-x_1|^{-p}|B_t-x_2|^{-p} \exp\Big(-\int_0^{t} V^{\vec{\lambda},\vec{x}}(B_s)  ds\Big) 1(t\leq T_{r_\lambda})\nn \\
 \leq &c_{\ref{p1.1}}^2\sum_{i=1}^2  |B_t-x_1|^{-p}|B_t-x_2|^{-p} 1(|B_t-x_i|\leq |B_t-x_{3-i}|)\nn \\
 &\quad \quad \quad \quad \exp\Big(-\int_0^{t}V^{\lambda_i}(B_s-x_i) ds\Big) 1(t\leq T_{r_\lambda})\nn \\
  \leq &c_{\ref{p1.1}}^2 2^p\Delta^{-p}\sum_{i=1}^2  |B_t-x_i|^{-p} \exp\Big(-\int_0^{t} V^{\lambda_i}(B_s-x_i) ds\Big) 1(t\leq T_{r_{\lambda_i}}^i) .
  \end{align}
  where we have used $|B_t-x_{3-i}|>\Delta/2$ on $\{|B_t-x_i|\leq |B_t-x_{3-i}|\}$ and $T_{r_\lambda}\leq T_{r_{\lambda_i}}^i$ in the last inequality. It is clear that for $Leg\times P_x$-a.e. $(t, \omega)$ we have
   \begin{align}\label{2.1}
 &\lim_{\lambda_i \to \infty}    |B_t-x_i|^{-p} \exp\Big(-\int_0^{t} V^{\lambda_i}(B_s-x_i) ds\Big) 1(t\leq T_{r_{\lambda_i}}^i)\nn \\
 &= |B_t-x_i|^{-p} \exp\Big(-\int_0^{t} V^{\infty}(B_s-x_i) ds\Big).
  \end{align}
  In view of \eqref{2.4}, \eqref{e4.14.1} and \eqref{2.1}, if one can show that for $i=1,2$,
  \begin{align}\label{2.1.1}
 \lim_{\lambda_i \to \infty} & \int   \int_0^\infty   |B_t-x_i|^{-p} \exp\Big(-\int_0^{t} V^{\lambda_i}(B_s-x_i) ds\Big) 1(t\leq T_{r_{\lambda_i}}^i) dt dP_x\nn \\
 &=\int   \int_0^\infty   |B_t-x_i|^{-p} \exp\Big(-\int_0^{t} V^{\infty}(B_s-x_i) ds\Big) dt dP_x<\infty,
  \end{align}
then a generalized Dominated Convergence Theorem (see, e.g., Exercise 20 of Chp. 2 of \cite{Fol99}) implies that 
  \begin{align}\label{ce3.1.1}
\lim_{\lambda_1, \lambda_2 \to \infty}K_1&=\lim_{\lambda_1, \lambda_2 \to \infty} \int   \int_0^\infty  \lambda_1^{1+\alpha} \lambda_2^{1+\alpha} V_1^{\vec{\lambda},\vec{x}}(B_t)  V_2^{\vec{\lambda},\vec{x}}(B_t)\nn\\
&\quad \quad  \quad  \quad \quad  \exp\Big(-\int_0^{t} V^{\vec{\lambda},\vec{x}}(B_s) ds\Big)1(t\leq T_{r_\lambda})  dt dP_x \nn\\
&=K_{\ref{p3.1}}^2 \int   \int_0^\infty U_1^{\vec{\infty},\vec{x}}(B_t)  U_2^{\vec{\infty},\vec{x}}(B_t)\exp\Big(-\int_0^{t} V^{\vec{\infty},\vec{x}}(B_s) ds\Big) dt dP_x\nn\\
&=K_{\ref{p3.1}}^2(-U_{1,2}^{\vec{\infty},\vec{x}}(x)),
  \end{align}
 where the last is by  \eqref{eu12}. The proof will then be finished by Lemma \ref{l4.5}, \eqref{cm6.0}, \eqref{1.7} and \eqref{ce3.1.1}.
  
  It remains to prove \eqref{2.1.1} and it suffices to consider $i=1$.
%By \eqref{2.2} we have
We first show that for any $0<q<6-p$, we have
    \begin{align}\label{me6.3.4}
&\sup_{\lambda>0} \int   \int_0^\infty   |B_t-x_1|^{-q} \nn\\
&\quad \quad  \quad \quad \quad \quad \exp\Big(-\int_0^{t} V^{\lambda}(B_s-x_1) ds\Big)1(t\leq T_{r_{\lambda}}^1) dt dP_x<\infty.
  \end{align}
  Assuming the above, we can apply Fatou's Lemma to see that
\begin{align}\label{me3.4.2}
&\int   \int_0^\infty   |B_t-x_1|^{-q} \exp\Big(-\int_0^{t} V^{\infty}(B_s-x_1) ds\Big) dt dP_x\\
&\leq  \liminf_{\lambda\to \infty} \int   \int_0^\infty   |B_t-x_1|^{-q}  \nn\\
&\quad \quad  \quad \quad \quad \quad \exp\Big(-\int_0^{t} V^{\lambda}(B_s-x_1) ds\Big)1(t\leq T_{r_{\lambda}}^1) dt dP_x<\infty,\nn
\end{align}
thus giving the finiteness in \eqref{2.1.1} (recall $p\in (2,3)$). 

To see that \eqref{me6.3.4} holds, by Fubini's theorem and translation invariance we have
    \begin{align}\label{3.2}
I(\lambda):=& \int   \int_0^\infty   |B_t-x_1|^{-q} \exp\Big(-\int_0^{t} V^{\lambda}(B_s-x_1) ds\Big)1(t\leq T_{r_{\lambda}}^1) dt dP_x\nn\\
=& \int_0^\infty  E_{x-x_1}\Big( |B_t|^{-q} \exp\Big(-\int_0^{t} V^{\lambda}(B_s) ds\Big) 1(t\leq \tau_{r_{\lambda}}) \Big) dt,
  \end{align}
    where $\tau_r=\inf\{t\geq 0: |B_t|\leq r\}$ for any $r>0$.
%where $\mu, \nu$ are as in \eqref{ev1.5} and the last line follows from Lemma \ref{l12.4}.
%\begin{align}\label{4.1}
% &E_{x-x_1}\Big( |B_t|^{-p} \exp\Big(-\int_0^{t} V^{\lambda}(B_s) ds\Big) 1(t\leq \tau_{r_{\lambda}})\Big)\nn\\
% =&E_{|x-x_1|}^{(2+2\mu)}\Big( \rho_t^{-p} \exp\Big(-\int_0^{t} V^{\lambda}(\rho_s) ds\Big) 1(t\leq \tau_{r_{\lambda}})\Big)\nn\\
%=&|x-x_1|^{\nu-\mu} E_{|x-x_1|}^{(2+2\nu)}\Big( \rho_{t}^{-p-\nu+\mu} \exp\Big(\int_0^{t} (V^{\infty}-V^{\lambda})(\rho_s) ds\Big) 1(t\leq \tau_{r_{\lambda}})\Big),\nn
%\end{align}
Let $\mu, \nu$ are as in \eqref{ev1.5} and then apply Lemma \ref{l12.4} to get
    \begin{align}\label{3.6}
&E_{x-x_1}\Big( |B_t|^{-q} \exp\Big(-\int_0^{t} V^{\lambda}(B_s) ds\Big) 1(t\leq \tau_{r_{\lambda}}) \Big) \\
=&E_{|x-x_1|}^{(2+2\mu)}\Big( \rho_t^{-q} \exp\Big(-\int_0^{t} V^{\lambda}(\rho_s) ds\Big) 1(t\leq \tau_{r_{\lambda}})\Big)\nn\\
=&|x-x_1|^{\nu-\mu}E_{|x-x_1|}^{(2+2\nu)}\Big(\rho_t^{-q-\nu+\mu} \exp\Big(\int_0^{t} (V^\infty-V^{\lambda})(\rho_s) ds\Big)1_{(t\leq \tau_{r_{\lambda}})} \Big),\nn
  \end{align}
    where we slightly abuse the notation and let $\tau_r=\tau_r^\rho=\inf\{t\geq 0: \rho_t\leq r\}$ for any $r>0$.
 Use the above to see that \eqref{3.2} becomes
    \begin{align*}
&I(\lambda)=|x-x_1|^{\nu-\mu} \int_0^\infty E_{|x-x_1|}^{(2+2\nu)}\Big(\rho_t^{-q-\nu+\mu} \exp\Big(\int_0^{t} (V^\infty-V^{\lambda})(\rho_s) ds\Big)1_{(t\leq \tau_{r_{\lambda}})} \Big)dt\\
&=|x-x_1|^{\nu-\mu} E_{|x-x_1|}^{(2+2\nu)}\Big(\int_0^{\tau_{r_{\lambda}}} \rho_t^{-q-\nu+\mu} \exp\Big(\int_0^{t} (V^\infty-V^{\lambda})(\rho_s) ds\Big) dt\Big),
  \end{align*}
  where the second equality is by Fubini's theorem. Now use the scaling of Bessel process and $V^\infty, V^\lambda$ (recall $r_\lambda=\lambda_0 \lambda^{-\frac{1}{4-d}}$) to see that
    \begin{align}\label{me4.3.1}
I(\lambda)=|x-x_1|^{\nu-\mu} & E_{|x-x_1|/r_\lambda}^{(2+2\nu)}\Big(\int_0^{\tau_1} r_\lambda^{2-q-\nu+\mu} \rho_t^{-q-\nu+\mu} \nn\\
&\quad \quad \quad  \quad \quad   \exp\Big(\int_0^{t} (V^\infty-V^{\lambda_0^{4-d}})(\rho_s) ds\Big) dt\Big)\nn\\
\leq |x-x_1|^{\nu-\mu}  &E_{|x-x_1|/r_\lambda}^{(2+2\nu)}\Big(\int_0^{\tau_1} r_\lambda^{2-q-\nu+\mu} \rho_t^{-q-\nu+\mu}\nn \\
&\quad \quad  \quad  \exp\Big(\int_0^{t} c_{\ref{l12}}\lambda_0^{-(p-2)} \rho_s^{-p} ds\Big) dt\Big),
  \end{align}
 where the last inequality is by Lemma \ref{l12}.
 
   We interrupt the proof for another auxiliary result from \cite{MP17}.
\begin{lemma} \label{l3.1}
There is some universal constant $c_{\ref{l3.1}}>0$ such that for any $r>0$ with $r<|x|$ and $0<\delta<(p-2)(2-\mu)$ and $2+\mu-\nu<q<6-p$, we have
\begin{align*}
E_{|x|/r}^{(2+2\nu)} \Big(  \int_0^{\tau_1} {\rho_t}^{-q-\nu+\mu}& \exp\Big(\int_0^{t} \delta \rho_s^{-p}  ds\Big) dt\Big)\leq c_{\ref{l3.1}}  r^{-2+q+\nu-\mu} |x|^{2-q-\nu+\mu}.
\end{align*}
\end{lemma}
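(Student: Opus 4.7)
The plan is to reduce the expectation to a Feynman--Kac solution of a linear ODE on $[1,\infty)$ and bound it by an explicit power supersolution. Setting $a := |x|/r > 1$, I first apply Lemma~\ref{l12.4} (noting $\lambda^2/2 = 2(4-d) = \rho^2 V^\infty(\rho)$) and Fubini to rewrite
\[
J = a^{\mu-\nu}\,E_a^{(d)}\!\left[\int_0^{\tau_1}\rho_t^{-q}\exp\!\left(\int_0^t\bigl(\delta\rho_s^{-p} - V^\infty(\rho_s)\bigr)\,ds\right)dt\right],\quad d=2+2\mu.
\]
Since $h(x) := |x|^{-p}$ satisfies $\tfrac{1}{2}\Delta h = V^\infty h$ on $\R^d\setminus\{0\}$ by \eqref{ev1.4}, an $h$-transform using $h$ converts $d$-dimensional Brownian motion (stopped at $\tau_1$) into the process $Y$ of \eqref{be1.0} under $\Hat{P}^{(2-2\nu)}$, whose radial part is a $(2-2\nu)$-Bessel. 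Using $\mu-\nu-p = -2\nu$, Corollary~\ref{c1.4}, and $P_a^{(2+2\nu)}(\tau_1<\infty) = a^{-2\nu}$, this reduces to
\[
J = \phi(a) := E_a^{(2+2\nu)}\!\left[1_{\{\tau_1<\infty\}}\int_0^{\tau_1}\rho_t^{p-q}\exp\!\left(\delta\int_0^t\rho_s^{-p}\,ds\right)dt\right].
\]

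By Feynman--Kac, $\phi$ solves on $(1,\infty)$ the linear boundary-value problem $L_\nu\phi + \delta y^{-p}\phi + y^{p-q} = 0$ with $\phi(1)=0=\phi(\infty)$, where $L_\nu := \tfrac{1}{2}\partial_y^2 + \tfrac{1+2\nu}{2y}\partial_y$ generates the $(2+2\nu)$-Bessel. I look for a positive supersolution of the form $\psi(y) = Ky^\gamma$ with the natural choice $\gamma = 2+p-q$. Direct computation gives $L_\nu\psi = \tfrac{K\gamma(\gamma-2\nu)}{2} y^{p-q}$, and since $p > 2$ and $y\geq 1$, the perturbation $\delta y^{-p}\psi = K\delta y^{2-q}$ is dominated by $K\delta y^{p-q}$. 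The supersolution inequality reduces to
\[
\delta < -\tfrac{\gamma(\gamma-2\nu)}{2} = \tfrac{(2+p-q)\bigl(q-(2+\mu-\nu)\bigr)}{2},
\]
whose right side equals $2(p-2)(2-\mu)$ at $q=6-p$ and attains its maximum $\nu^2/2$ at $q=2+\mu$; the hypothesis $\delta < (p-2)(2-\mu)$ thus ensures the inequality with margin over the central portion of the $q$-range. The probabilistic maximum principle (It\^o applied to $\psi(\rho_{t\wedge\tau_1})\exp(\delta\int_0^{t\wedge\tau_1}\rho^{-p})$ followed by optional sampling) gives $\phi(a) \leq Ka^{2+p-q}$, and hence $J \leq Ka^{2-q-\nu+\mu} = Kr^{q+\nu-\mu-2}|x|^{2-q-\nu+\mu}$, the claimed bound.

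The main obstacle is that the algebraic condition on $\delta$ degenerates at the endpoint $q = 2+\mu-\nu$, so the single-power supersolution with $\gamma = 2+p-q$ fails for $q\in(2+\mu-\nu,\,2+\mu-\sqrt{\nu^2-2\delta})$: in that subrange, $2+p-q$ falls outside the admissible interval $\gamma \in(\nu-\sqrt{\nu^2-2\delta},\nu+\sqrt{\nu^2-2\delta})$ (nonempty since $2\delta < \nu^2$, verified case-by-case for $d\in\{1,2,3\}$), and a single power cannot simultaneously match the growth of the source $y^{p-q}$ and satisfy the perturbed inequality. To treat this residual $q$-interval I would combine two ingredients: (i) Cauchy--Schwarz against the exponential factor, $\phi(a)\leq (E_a[1_{\tau_1<\infty}e^{2\delta\int\rho^{-p}}])^{1/2}\,(E_a[1_{\tau_1<\infty}(\int_0^{\tau_1}\rho^{p-q})^2])^{1/2}$, with the first factor controlled by Lemma~\ref{l13.5}(ii) to give $\leq Ca^{-\nu}$ provided $4\delta\leq\nu^2$; and (ii) an iterated Markov-property estimate on the second factor, bounding it by $a^{\beta}$ with $\beta$ determined from the power-supersolution analysis of the (unperturbed) Green function for $L_\nu$. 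Tracking the exponents carefully to verify that the product recovers the exponent $2-q-\nu+\mu$ is the technical crux; the constants produced may depend on $q$ and $\delta$ but not on $r,|x|$, consistent with the claim of a universal $c_{\ref{l3.1}}$.
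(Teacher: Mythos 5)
Your reduction is sound in substance: with $a=|x|/r$ and $\beta=2+p-q$, applying the Markov property to $P_y^{(2+2\nu)}(\tau_1<\infty)=y^{-2\nu}$ (or your longer route through Lemma~\ref{l12.4}, the $h$-transform by $|x|^{-p}$ and Corollary~\ref{c1.4}) shows the left-hand side equals $a^{-2\nu}u(a)$ with $u(a)=E_a^{(2-2\nu)}\bigl[\int_0^{\tau_1}\rho_t^{p-q}e^{\delta\int_0^t\rho_s^{-p}ds}dt\bigr]$, so the target is $u(a)\le Ka^{\beta}$. Your bookkeeping, however, is internally inconsistent: the quantity you call $\phi$ (with $1_{\{\tau_1<\infty\}}$ inside an unconditioned $E^{(2+2\nu)}$) equals $a^{-2\nu}u(a)$, its Feynman--Kac source is $y^{p-q-2\nu}$ rather than $y^{p-q}$, and the formula $L(Ky^{\beta})=\tfrac{K\beta(\beta-2\nu)}{2}y^{\beta-2}$ is the action of the $(2-2\nu)$-Bessel generator, not of $L_\nu$ as you define it. These slips cancel, and the supersolution step is correct for $u$: one needs $\delta<\beta(2\nu-\beta)/2=(2+p-q)\bigl(q-(2+\mu-\nu)\bigr)/2$, which under the stated hypotheses holds exactly for $q\ge 2+\mu-\sqrt{\nu^2-2\delta}$. (Constants depending on $q,\delta$ are unavoidable: already at $\delta=0$ the Green function of the $(2+2\nu)$-Bessel gives a factor $\bigl(q-(2+\mu-\nu)\bigr)^{-1}$, so ``universal'' must be read as independent of $r,|x|$ only, which is how the lemma is used.)

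The genuine gap is the residual range $q\in(2+\mu-\nu,\,2+\mu-\sqrt{\nu^2-2\delta})$, and the Cauchy--Schwarz patch you propose cannot close it: the second factor $E_a^{(2+2\nu)}\bigl[1_{\{\tau_1<\infty\}}\bigl(\int_0^{\tau_1}\rho_t^{p-q}dt\bigr)^2\bigr]=a^{-2\nu}E_a^{(2-2\nu)}\bigl[\bigl(\int_0^{\tau_1}\rho_t^{p-q}dt\bigr)^2\bigr]$ is \emph{infinite} throughout that range. Indeed, by the occupation-time identity this second moment equals $2E_a^{(2-2\nu)}\bigl[\int_0^{\tau_1}\rho_t^{p-q}v(\rho_t)dt\bigr]$ with $v(y)=E_y^{(2-2\nu)}\bigl[\int_0^{\tau_1}\rho^{p-q}\bigr]\gtrsim y^{2+p-q}$, and the $(2-2\nu)$-Bessel Green function on $(1,\infty)$ makes $E_a^{(2-2\nu)}\bigl[\int_0^{\tau_1}\rho^{\alpha}dt\bigr]$ finite only for $\alpha<2\nu-2$; with $\alpha=2+2(p-q)$ this forces $q>2+\mu$, which excludes the entire residual range. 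Since $\int_0^{\tau_1}\rho_t^{p-q}dt$ has no finite moment of order $>1$ there, no H\"older splitting of the exponential from the polynomial can work, and even reorganizing the argument around the original integrand $\rho_t^{-q-\nu+\mu}$ (whose square \emph{is} integrable for $q>2+\mu-\nu$) would still require an unconditioned exponential-moment bound covering $\{\tau_1=\infty\}$, which Lemma~\ref{l13.5} does not supply. Closing the gap requires keeping the exponential weight attached to the occupation time scale by scale --- a dyadic decomposition with the strong Markov property iterated against the conditional bounds of Lemma~\ref{l13.5}, which is exactly what the paper outsources to (9.23)--(9.27) and Lemma~9.6(b) of \cite{MP17} --- or a supersolution of non-power type (e.g.\ carrying a factor $\exp(cy^{2-p})$) that absorbs the potential $\delta y^{-p}$ when $\beta$ is close to $2\nu$. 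As written, the lemma remains unproven on a nonempty range of $q$.
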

\begin{proof}
This is included in the proof of Proposition~6.1 of \cite{MP17} with  $r=r_{\lambda}$. In particular, the above expectation appears in (9.23) of \cite{MP17} and is bounded by $eJ_i$ in (9.27) of that paper.  Following the inequalities in that work, noting we only 
need to use Lemma 9.6(b) with $a=1$, $\gamma>1$ and $\gamma+p-2<1+\nu$ where $2\gamma=q+\nu-\mu$, we arrive at the above bound.
\end{proof}
%are ready to make our final promised choice of $\lambda_0$ in the definition of $r_\lambda$. Assume it is 
Returning to \eqref{me4.3.1}, we choose $\lambda_0>0$ so
that $c_{\ref{l12}}\lambda_0^{-(p-2)}<(p-2)(2-\mu)$. If $\lambda$ is sufficiently large so that $r_\lambda<|x-x_1|$ we may apply
Lemma \ref{l3.1} to conclude
    \begin{align*}
I(\lambda) &\leq |x-x_1|^{\nu-\mu} r_\lambda^{2-q-\nu+\mu} c_{\ref{l3.1}} |x-x_1|^{2-q-\nu+\mu} r_\lambda^{-2+q+\nu-\mu}\\
&=c_{\ref{l3.1}} |x-x_1|^{2-q}<\infty,
  \end{align*}
 and we finish the proof of \eqref{me6.3.4}.

%  
%Once \eqref{3.5} is established, for any $\eps'>0$, we can use \eqref{m6.4} and \eqref{m6.5} to pick $T=T(\eps')>0$ so that 
%\begin{align}\label{4.5}
%\int   \int_T^\infty   |B_t-x_1|^{-p} \exp\Big(-\int_0^{t} V^{\lambda}(B_s-x_1) ds\Big) 1(t\leq T_{r_{\lambda}}^1) dt dP_x<\eps'
%  \end{align}
%  and 
%\begin{align}\label{4.6}
%\int   \int_T^\infty   |B_t-x_1|^{-p} \exp\Big(-\int_0^{t} V^{\infty}(B_s-x_1) ds\Big) dt dP_x<\eps'.
%  \end{align}
%  For this fixed $T=T(\eps')>0$,  by a triangle inequality and by using \eqref{3.5} we have
%      \begin{align}\label{4.7}
% &\Big|\int   \int_0^\infty   |B_t-x_1|^{-p} \exp\Big(-\int_0^{t} V^{\lambda}(B_s-x_1) ds\Big) 1(t\leq T_{r_{\lambda}}^1) dt dP_x\nn\\
% &-\int   \int_0^\infty   |B_t-x_1|^{-p} \exp\Big(-\int_0^{t} V^{\infty}(B_s-x_1) ds\Big) dt dP_x\Big|\nn\\
%& \leq  2\eps'+\Big|\int  \int_0^T   |B_t-x_1|^{-p} \exp\Big(-\int_0^{t} V^{\lambda}(B_s-x_1) ds\Big) 1(t\leq T_{r_{\lambda}}^1) dt dP_x\nn \\
% &-\int   \int_0^T   |B_t-x_1|^{-p} \exp\Big(-\int_0^{t} V^{\infty}(B_s-x_1) ds\Big) dt dP_x\Big| \to 2\eps' \text{ as }\lambda \to \infty.
%  \end{align}
%Since $\eps'>0$ is arbitrary, \eqref{2.1.1} follows and the proof is complete.
%  
% It remains to prove \eqref{3.5}. 

 %Now for the proof of \eqref{2.1.1}, by \eqref{m6.4} and \eqref{m6.5} it suffices to show that for any fixed $T>0$,
 Next we show that for any fixed $T>0$,
    \begin{align}\label{3.5}
 \lim_{\lambda \to \infty} & \int   \int_0^T   |B_t-x_1|^{-p} \exp\Big(-\int_0^{t} V^{\lambda}(B_s-x_1) ds\Big) 1(t\leq T_{r_{\lambda}}^1) dt dP_x\nn\\
 =&\int   \int_0^T   |B_t-x_1|^{-p} \exp\Big(-\int_0^{t} V^{\infty}(B_s-x_1) ds\Big) dt dP_x.
  \end{align}
Since we are working under a finite measure $1(t\leq T) dt dP_x$, it suffices to show that $\{ |B_t-x_1|^{-p} \exp\Big(-\int_0^{t} V^{\lambda}(B_s-x_1) ds\Big) 1(t\leq T_{r_{\lambda}}^1)\}$ is a uniformly integrable family indexed by $\lambda$ sufficiently large. This in turn will follow from a $(1+\gamma)$ moment bound for $\gamma>0$ which is uniform in $\lambda$ sufficiently large. Since $p\in (2,3)$, we can pick $\gamma>0$ small such that $q:=p(1+\gamma)<6-p$. Therefore by \eqref{me6.3.4} we have
  \begin{align*}%\label{me3.3.2}
&\int  \int_0^T   |B_t-x_1|^{-p(1+\gamma)} \exp\Big(-(1+\gamma)\int_0^{t} V^{\lambda}(B_s-x_1) ds\Big) 1(t\leq T_{r_{\lambda}}^1) dt dP_x\\
&\leq \int   \int_0^\infty   |B_t-x_1|^{-q} \exp\Big(-\int_0^{t} V^{\lambda}(B_s-x_1) ds\Big) 1(t\leq T_{r_{\lambda}}^1) dt dP_x<\infty
  \end{align*}
  and \eqref{3.5} follows as noted above. 
  
Use \eqref{me3.4.2} with $q=p$ to get 
  \begin{align}\label{m6.4}
\lim_{T \to \infty} \int   \int_T^\infty   |B_t-x_1|^{-p} \exp\Big(-\int_0^{t} V^{\infty}(B_s-x_1) ds\Big) dt dP_x=0.
  \end{align}
  We claim that
   \begin{align}\label{ae3.1}
 &\lim_{T\to \infty} \sup_{\lambda>0}\int   \int_T^\infty   |B_t-x_1|^{-p} \nn\\
 &\quad \quad \quad \exp\Big(-\int_0^{t} V^{\lambda}(B_s-x_1) ds\Big) 1_{(t\leq T_{r_{\lambda}}^1)} dt dP_x=0.
  \end{align}
Then the proof of \eqref{2.1.1} will follow immediately from \eqref{3.5}, \eqref{m6.4} and \eqref{ae3.1}.

It remains to prove \eqref{ae3.1}.
Similar to the derivation of \eqref{3.2} and \eqref{3.6} with $q=p$, we have
\begin{align}\label{me4.4.1}
\int   &\int_T^\infty  |B_t-x_1|^{-p} \exp\Big(-\int_0^{t} V^{\lambda}(B_s-x_1) ds\Big) 1_{(t\leq T_{r_{\lambda}}^1)} dt dP_x\\
 = |x-x_1|^{\nu-\mu} &\int_T^\infty  E_{|x-x_1|}^{(2+2\nu)}\Big(\rho_t^{-p-\nu+\mu} \exp\Big(\int_0^{t} (V^\infty-V^{\lambda})(\rho_s) ds\Big)1_{(t\leq \tau_{r_{\lambda}})} \Big)dt.\nn
\end{align}
Use $p=\mu+\nu$ to see that the integrand of the right-hand side term of the above equals
\begin{align}\label{4.2}
&E_{|x-x_1|}^{(2+2\nu)}\Big( \rho_{t}^{-2\nu} \exp\Big(\int_0^{t} (V^{\infty}-V^{\lambda})(\rho_s) ds\Big) 1(t\leq \tau_{r_{\lambda}})\Big)\nn\\
&\leq\Big(E_{|x-x_1|}^{(2+2\nu)}( \rho_{t}^{-2\nu})\Big)^{1/2}  \nn\\
& \times \Big(E_{|x-x_1|}^{(2+2\nu)}\Big( \rho_{t}^{-2\nu} \exp\Big(\int_0^{t} 2(V^{\infty}-V^{\lambda})(\rho_s) ds\Big) 1(t\leq \tau_{r_{\lambda}})\Big)\Big)^{1/2},
\end{align}
where in the inequality we have applied Cauchy-Schwarz inequality.

For the first term on the right-hand side of \eqref{4.2}, we use the scaling of Bessel process to get
\begin{align}\label{4.3}
E_{|x-x_1|}^{(2+2\nu)}( \rho_{t}^{-2\nu})=t^{-\nu} E_{|x-x_1|}^{(2+2\nu)}( \rho_{1}^{-2\nu}):= t^{-\nu} C(\nu, |x-x_1|),
\end{align}
where the finiteness of $E_{|x-x_1|}^{(2+2\nu)}( \rho_{1}^{-2\nu})$ follows easily from the known transition density of Bessel process (see, e.g. Chp. XI of \cite{RY94}). For the second term on the right-hand side of \eqref{4.2}, by (2.c) of \cite{Yor92} one can conclude that for any $r>\eps>0$,
 \begin{align}
  {P}_{r}^{(2-2\nu)}\Big|_{\cF_{\tau_\eps\wedge t}^\rho}=\frac{r^{2\nu}}{\rho_{{\tau_\eps\wedge t}}^{2\nu}}{P}^{(2+2\nu)}_r\Big|_{\cF_{\tau_\eps\wedge t}^\rho}.
   \end{align}
Use the above to get
\begin{align}\label{4.4}
&E_{|x-x_1|}^{(2+2\nu)}\Big( \rho_{t}^{-2\nu} \exp\Big(\int_0^{t} 2(V^{\infty}-V^{\lambda})(\rho_s) ds\Big) 1(t\leq \tau_{r_{\lambda}})\Big)\\
&=E_{|x-x_1|}^{(2+2\nu)}\Big( \rho_{t\wedge \tau_{r_{\lambda}} }^{-2\nu} \exp\Big(\int_0^{t\wedge \tau_{r_{\lambda}}} 2(V^{\infty}-V^{\lambda})(\rho_s) ds\Big) 1(t\leq t\wedge \tau_{r_{\lambda}})\Big)\nn\\
&=|x-x_1|^{-2\nu} E_{|x-x_1|}^{(2-2\nu)}\Big(\exp\Big(\int_0^{t\wedge \tau_{r_{\lambda}}} 2(V^{\infty}-V^{\lambda})(\rho_s) ds\Big) 1(t\leq t\wedge \tau_{r_{\lambda}})\Big)\nn\\
&\leq |x-x_1|^{-2\nu} E_{|x-x_1|}^{(2-2\nu)}\Big(\exp\Big(\int_0^{ \tau_{r_{\lambda}}} 2(V^{\infty}-V^{\lambda})(\rho_s) ds\Big) \Big)\nn\\
&= |x-x_1|^{-2\nu} E_{|x-x_1|}^{(2+2\nu)}\Big(\exp\Big(\int_0^{ \tau_{r_{\lambda}}} 2(V^{\infty}-V^{\lambda})(\rho_s) ds\Big)\Big|\tau_{r_\lambda}<\infty \Big)\nn\\
&\leq  |x-x_1|^{-2\nu}  C_{\ref{c13.5}}(\lambda_0, \nu,2), \nn
\end{align}
where the last equality is by Corollary \ref{c1.4} and in the last inequality we have used Lemma \ref{c13.5} with $\lambda_0>c_{\ref{c13.5}}$ and $\gamma=2$. 
Now we conclude from \eqref{4.2}, \eqref{4.3} and \eqref{4.4} that
\begin{align*}
&E_{|x-x_1|}^{(2+2\nu)}\Big( \rho_{t}^{-2\nu} \exp\Big(\int_0^{t} (V^{\infty}-V^{\lambda})(\rho_s) ds\Big) 1(t\leq \tau_{r_{\lambda}})\Big)\nn\\
&\leq  C(\nu, |x-x_1|)^{1/2} t^{-\nu/2}C_{\ref{c13.5}}(\lambda_0, \nu,2)^{1/2} |x-x_1|^{-\nu}=C(|x-x_1|) t^{-\nu/2}.
\end{align*}
Returning to \eqref{me4.4.1},  we apply the above to get
\begin{align}\label{m6.5}
&\int   \int_T^\infty   |B_t-x_1|^{-p} \exp\Big(-\int_0^{t} V^{\lambda_1}(B_s-x_1) ds\Big) 1(t\leq T_{r_{\lambda_1}}^1) dt dP_x\nn\\
&\leq |x-x_1|^{\nu-\mu} \int_T^\infty C(|x-x_1|) t^{-\nu/2} dt \nn\\
&\leq C(|x-x_1|)\cdot (\nu/2-1)^{-1} T^{1-\nu/2} \to 0 \text{ as } T\to \infty,
  \end{align}
  thus giving \eqref{ae3.1}. The proof is then complete.
  \end{proof}
  
%\no  Next we finish the
\begin{proof}[Proof of Proposition \ref{p3.3}(ii)]
For any  $x_1\neq x_2$, we fix $x\neq x_1, x_2$. In order the find the limit of $\lambda_1^{1+\alpha}\eps^{-(p-2)} (-W_{1,2}^{\vec{\lambda},\vec{x},\eps}(x))$ as $\lambda_1 \to \infty, \eps \downarrow 0$, by Lemma \ref{l4.6}, it suffices to find the limits of the following as $\lambda_1 \to \infty, \eps \downarrow 0$.
 \begin{align}\label{m6.0}
 K_1+K_2& \equiv \frac{\lambda_1^{1+\alpha} }{\eps^{p-2}} E_x\Big( \int_0^{T_{\lambda_1,\eps}} \prod_{i=1}^2 W_i^{\vec{\lambda},\vec{x},\eps}(B_t)\exp\Big(-\int_0^{t} W^{\vec{\lambda},\vec{x},\eps}(B_s) ds\Big)dt\Big)\nn\\
 +&\frac{\lambda_1^{1+\alpha} }{\eps^{p-2}}  E_x\Big( \exp\Big(-\int_0^{T_{\lambda_1,\eps}} W^{\vec{\lambda},\vec{x},\eps}(B_s) ds\Big)1_{(T_{\lambda_1,\eps}<\infty)} (-W_{1,2}^{\vec{\lambda},\vec{x},\eps}(B( T_{\lambda_1,\eps})))\Big).
 %\to K_{\ref{p3.1}}^2 E_x\Big( \int_0^{\infty} \prod_{i=1}^2 U_i^{\vec{\infty},\vec{x}}(B_t)\exp\Big(-\int_0^{t} V^{\vec{\infty},\vec{x}}(B_s) ds\Big)dt\Big)\nn
\end{align}
 % Then by the definition of $U_{1,2}^{\vec{\infty},\vec{x}}(x)$ as in \eqref{eu12}, it suffices to show the following
 In the above $T_{\lambda_1,\eps}=T_{r_{\lambda_1}}^1 \wedge T_{2\eps}^2$ where $T_{r_{\lambda_1}}^1=\inf\{t\geq 0: |B_t-x_1|\leq r_{\lambda_1} \}$ and $T_{2\eps}^2=\inf\{t\geq 0: |B_t-x_2|\leq 2\eps \}$. Here $r_{\lambda_1}=\lambda_0 \lambda_1^{-\frac{1}{4-d}}$ and we will choose $\lambda_0$ to be some fixed large constant below. Let $\eps>0$ small and $\lambda_1>0$ large so that
\begin{align}\label{ae3.3}
0<4(r_{\lambda_1}\vee \eps)<\min\{|x_1-x|, |x_2-x|,  |x_1-x_2|\}.
\end{align}
We first consider $K_2$.  On $\{T_{\lambda_1,\eps}<\infty\}$, by considering $T_{\lambda_1,\eps}=T_{r_{\lambda_1}}^1<T_{2\eps}^{2}$ we may set $x_\lambda(\omega)=B(T_{\lambda_1,\eps})=B(T_{r_{\lambda_1}}^1)$ so that  $|x_\lambda-x_1|=r_{\lambda_1}$ and hence $|x_{2}-x_\lambda|\geq \Delta/2$ where $\Delta=|x_1-x_2|$.  Lemma \ref{l4.2} and the above imply
\[
(-W_{1,2}^{\vec{\lambda},\vec{x},\eps}(B(T_{\lambda_1,\eps})))\leq 2\lambda_{1}^{-1}|B(T_{\lambda_1,\eps})-x_2|^{-p} \eps^{p-2}\leq 2^{p+1}\Delta^{-p} \lambda_{1}^{-1}\eps^{p-2}.\]
Similarly by considering $T_{\lambda_1,\eps}=T_{2\eps}^{2}<T_{r_{\lambda_1}}^1$  we have $|B(T_{2\eps}^{2})-x_1|\geq \Delta/2$ and hence by Lemma \ref{l4.2},
\begin{align*}%\label{ae3.3}
(-W_{1,2}^{\vec{\lambda},\vec{x},\eps}(B(T_{\lambda_1,\eps})))&\leq 2\lambda_{2}^{-1}c_{\ref{p1.1}} \lambda_{1}^{-(1+\alpha)} |B(T_{\lambda_1,\eps})-x_1|^{-p}\\
&\leq 2^{p+1}\Delta^{-p} c_{\ref{p1.1}} \lambda_{2}^{-1} \lambda_{1}^{-(1+\alpha)}.
\end{align*}
 This shows that 
 \begin{align}\label{a1.4}
 K_2\leq  &  \frac{\lambda_1^{1+\alpha} }{\eps^{p-2}}2^{p+1}\Delta^{-p}\lambda_{1}^{-1}\eps^{p-2}\nn\\
 &\quad \quad E_x\Big( 1_{(T_{r_{\lambda_1}}^1<\infty)}1_{(T_{r_{\lambda_1}}^1<T_{2\eps}^{2})}\exp\Big(-\int_0^{ T_{r_{\lambda_1}}^1} W^{\vec{\lambda},\vec{x},\eps}(B_s)ds\Big) \Big)\nn\\
+& \frac{\lambda_1^{1+\alpha} }{\eps^{p-2}} 2^{p+1}\Delta^{-p}c_{\ref{p1.1}} \lambda_{2}^{-1}\lambda_{1}^{-(1+\alpha)}\nn\\
 &\quad \quad E_x\Big( 1_{(T_{2\eps}^{2}<\infty)}1_{(T_{2\eps}^{2}<T_{r_{\lambda_1}}^1)}\exp\Big(-\int_0^{T_{2\eps}^{2}} W^{\vec{\lambda},\vec{x},\eps}(B_s)ds\Big) \Big).
 \end{align}
By \eqref{ae7.4},  for all $x \text{ so that } x\neq x_1 \text{ and } |x-x_2|>\eps$ we have
 \begin{align}\label{a1.5}
 W^{\vec{\lambda},\vec{x},\eps}(x) \geq V^{\lambda_1}(x-x_1) \vee V^{\infty}(x-x_2).
 \end{align}
 Let $\tau_{r}=\inf\{t: |B_t|\leq r\}$. Use the above to see that
  \begin{align}\label{a1.6}
 &E_x\Big( 1_{(T_{r_{\lambda_1}}^1<\infty)}1_{(T_{r_{\lambda_1}}^1<T_{2\eps}^{2})}\exp\Big(-\int_0^{ T_{r_{\lambda_1}}^1} W^{\vec{\lambda},\vec{x},\eps}(B_s)ds\Big) \Big)\nn\\
\leq&E_{x-x_1}\Big( 1(\tau_{r_{\lambda_1}}<\infty) \exp\Big(-\int_0^{ \tau_{r_{\lambda_1}}} V^{\lambda_1}(B_s)  ds\Big) \Big)\nn \\
\leq & r_{\lambda_1}^p |x-x_1|^{-p} C_{\ref{c13.5}}(\lambda_0,\nu,1)
  \end{align}
where the last line follows in a similar way to the derivation of \eqref{m1.2} by choosing $\lambda_0>c_{\ref{c13.5}}$. 

Similarly by \eqref{a1.5} and \eqref{bm1.2} we have
  \begin{align}\label{a1.6a}
 &E_x\Big( 1_{(T_{2\eps}^{2}<\infty)}1_{(T_{2\eps}^{2}<T_{r_{\lambda_1}}^1)}\exp\Big(-\int_0^{T_{2\eps}^{2}} W^{\vec{\lambda},\vec{x},\eps}(B_s)ds\Big) \Big)\nn\\
  \leq&E_{x-x_2}\Big( 1(\tau_{2\eps}<\infty) \exp\Big(-\int_0^{ \tau_{2\eps}} V^{\infty}(B_s)  ds\Big) \Big)=(2\eps/|x-x_1|)^{p}.
  \end{align}
Apply \eqref{a1.6} and \eqref{a1.6a} in \eqref{a1.4} to get
 \begin{align}\label{a1.7}
  K_2\leq  &  \frac{\lambda_1^{1+\alpha} }{\eps^{p-2}}2^{p+1}\Delta^{-p}\lambda_{1}^{-1}\eps^{p-2} r_{\lambda_1}^p |x-x_1|^{-p} C_{\ref{c13.5}}(\lambda_0,\nu,1)\nn\\
&+ \frac{\lambda_1^{1+\alpha} }{\eps^{p-2}} 2^{p+1}\Delta^{-p}c_{\ref{p1.1}} \lambda_{2}^{-1}\lambda_{1}^{-(1+\alpha)}(2\eps/|x-x_1|)^{p}\nn\\
\leq&C\Delta^{-p} \lambda_0^p |x-x_1|^{-p}\lambda_{1}^{-\frac{2}{4-d}} +C\lambda_{2}^{-1}\Delta^{-p}|x-x_2|^{-p} \eps^2  \to 0
 \end{align}
as $\lambda_1 \to \infty, \eps \downarrow 0$, where in the last equality we have used the definitions of $r_{\lambda_1}$ and $\alpha$.

Now we will turn to $K_1$. Recall 
 \begin{align*}%\label{a1.7}
K_1&=\int   \int_0^\infty  \frac{ \lambda_1^{1+\alpha}}{\eps^{p-2}} W_1^{\vec{\lambda},\vec{x},\eps}(B_t)  W_2^{\vec{\lambda},\vec{x},\eps}(B_t)\\
&\quad \quad \quad \exp\Big(-\int_0^{t} W^{\vec{\lambda},\vec{x},\eps}(B_s) ds\Big)1(t\leq T_{\lambda_1,\eps})  dt dP_x.
 \end{align*}
By Proposition \ref{p3.3} and Lemma \ref{l7.5}, for $Leb\times P_x$-a.e. $(t,\omega)$, we have
 \begin{align}\label{a2.4}
\lim_{\lambda_1 \to \infty, \eps \downarrow 0} &\frac{\lambda_1^{1+\alpha}}{\eps^{p-2}} W_1^{\vec{\lambda},\vec{x},\eps}(B_t)  W_2^{\vec{\lambda},\vec{x},\eps}(B_t)\exp\Big(-\int_0^{t} W^{\vec{\lambda},\vec{x},\eps}(B_s) ds\Big)1(t\leq T_{\lambda_1,\eps})\nn\\
&=K_{\ref{p3.1}}C_{\ref{p3.1}}(\lambda_2) U_1^{\vec{\infty},\vec{x}}(B_t)U_2^{\vec{\infty},\vec{x}}(B_t)\exp\Big(-\int_0^{t} V^{\vec{\infty},\vec{x}}(B_s) ds\Big).
 \end{align}
%Next recall from \eqref{m8.7} and \eqref{ae6.2} that for all $\lambda_1, \lambda_2, \eps>0$, if $x\neq x_1$ and $|x-x_2|>\eps$, then
% \begin{align}\label{ae1.10.1}
%   \lambda_1^{1+\alpha} W_1^{\vec{\lambda},\vec{x},\eps}(x) \leq c_{\ref{p1.1}} |x-x_1|^{-p}\text{ and } \frac{1}{\eps^{p-2}} W_2^{\vec{\lambda},\vec{x},\eps}(x) \leq  |x-x_i|^{-p}.
% \end{align}
Use the bounds \eqref{m8.7}, \eqref{m8.8} and \eqref{a1.5} to see that
\begin{align}\label{ae4.14.1}
& \frac{\lambda_1^{1+\alpha} }{\eps^{p-2}}  W_1^{\vec{\lambda},\vec{x},\eps}(B_t)  W_2^{\vec{\lambda},\vec{x},\eps}(B_t)\exp\Big(-\int_0^{t} W^{\vec{\lambda},\vec{x},\eps}(B_s) ds\Big)1(t\leq T_{\lambda_1,\eps})  \nonumber\\
 \leq & c_{\ref{p1.1}} |B_t-x_1|^{-p}|B_t-x_2|^{-p} \exp\Big(-\int_0^{t} W^{\vec{\lambda},\vec{x},\eps}(B_s)  ds\Big) 1(t\leq T_{\lambda_1,\eps})\nn \\
 = &c_{\ref{p1.1}}\sum_{i=1}^2  |B_t-x_1|^{-p}|B_t-x_2|^{-p} 1(|B_t-x_i|\leq |B_t-x_{3-i}|)\nn \\
 &\quad \quad \quad \quad \exp\Big(-\int_0^{t} W^{\vec{\lambda},\vec{x},\eps}(B_s) ds\Big) 1(t\leq T_{\lambda_1,\eps})\nn \\
  \leq &c_{\ref{p1.1}} 2^p\Delta^{-p}  |B_t-x_1|^{-p} \exp\Big(-\int_0^{t} V^{\lambda_1}(B_s-x_1) ds\Big) 1(t\leq T_{r_{\lambda_1}}^1)\nn\\
&\quad \quad +c_{\ref{p1.1}} 2^p\Delta^{-p}  |B_t-x_2|^{-p} \exp\Big(-\int_0^{t} V^{\infty}(B_s-x_2) ds\Big) 
  \end{align}
  where we have used $|B_t-x_{3-i}|>\Delta/2$ on $\{|B_t-x_i|\leq |B_t-x_{3-i}|\}$ and $T_{\lambda_1,\eps}\leq T_{r_{\lambda_1}}^1$ in the last inequality. It is clear that for $Leb\times P_x$-a.e. $(t, \omega)$,
\begin{align}\label{a2.1}
 &\lim_{\lambda_1 \to \infty}    |B_t-x_1|^{-p} \exp\Big(-\int_0^{t} V^{\lambda_1}(B_s-x_1) ds\Big) 1(t\leq T_{r_{\lambda_1}}^1)\nn \\
 &= |B_t-x_1|^{-p} \exp\Big(-\int_0^{t} V^{\infty}(B_s-x_1) ds\Big).
\end{align}
By \eqref{2.1.1} we have
 \begin{align}\label{a2.1.1}
 \lim_{\lambda_1 \to \infty} & \int   \int_0^\infty   |B_t-x_1|^{-p} \exp\Big(-\int_0^{t} V^{\lambda_1}(B_s-x_1) ds\Big) 1(t\leq T_{r_{\lambda_1}}^1) dt dP_x \nn\\
 &=\int  \int_0^\infty   |B_t-x_1|^{-p} \exp\Big(-\int_0^{t} V^{\infty}(B_s-x_1) ds\Big) dt dP_x<\infty,
  \end{align}
and  by \eqref{me3.4.2} with $q=p$ we have 
 \begin{align}\label{a2.1.2}
\int  \int_0^\infty   |B_t-x_2|^{-p} \exp\Big(-\int_0^{t} V^{\infty}(B_s-x_2) ds\Big) dt dP_x<\infty.
  \end{align}
 In view of \eqref{a2.4}, \eqref{ae4.14.1}, \eqref{a2.1}, \eqref{a2.1.1} and \eqref{a2.1.2},  a generalized Dominated Convergence Theorem (see, e.g., Exercise 20 of Chp. 2 of \cite{Fol99}) implies that 
  \begin{align*}
&\lim_{\lambda_1 \to \infty, \eps \downarrow 0} K_1=\lim_{\lambda_1 \to \infty, \eps \downarrow 0} \int   \int_0^\infty   \frac{\lambda_1^{1+\alpha}}{\eps^{p-2}} W_1^{\vec{\lambda},\vec{x},\eps}(B_t)  W_2^{\vec{\lambda},\vec{x},\eps}(B_t)\nn\\
&\quad \quad \quad \quad \exp\Big(-\int_0^{t} W^{\vec{\lambda},\vec{x},\eps}(B_s) ds\Big)1_{\{t\leq T_{\lambda_1,\eps}\}} dt dP_x\nn \\
=&K_{\ref{p3.1}} C_{\ref{p3.1}}(\lambda_2) \int   \int_0^\infty U_1^{\vec{\infty},\vec{x}}(B_t)  U_2^{\vec{\infty},\vec{x}}(B_t) \exp\Big(-\int_0^{t} V^{\vec{\infty},\vec{x}}(B_s) ds\Big) dt dP_x\\
=&K_{\ref{p3.1}} C_{\ref{p3.1}}(\lambda_2) (-U_{1,2}^{\vec{\infty},\vec{x}}(x)),
  \end{align*}
   where the last is by  \eqref{eu12}. The proof will then be finished by Lemma \ref{l4.6}, \eqref{m6.0}, \eqref{a1.7} and the above.
  \end{proof}

\end{document}